\numberwithin{equation}{section}
\ifodd\thepage{\footnotesize {On maximality of involutions of hyper-K\"ahler manifolds and punctual Hilbert schemes of surfaces}
\hfill{\bf\thepage}}\else
\newcommand{\sectionpoint}{\if \@empty\titlesec \else}              
\newcommand*{\justifyheading}{\raggedright}
\titleformat{\section}{\normalfont \Large \bfseries \justifyheading}{\thesection.}{0.5em}{}
\titleformat{\subsection}[runin]{\normalfont \large \bfseries \justifyheading}{\thesubsection.}{0.5em}{}[.]
\titleformat{\subsubsection}[runin]{\normalfont \normalsize \bfseries \justifyheading}{\thesubsubsection.}{0.5em}{}[  \protect{\rule[3pt]{10pt}{0.5pt}}]
\theoremstyle{plain}
\newtheorem*{theorem*}{Theorem}
\newtheorem{theorem}{Theorem}[section]
\newtheorem{prop}[theorem]{Proposition}
\newtheorem{lemma}[theorem]{Lemma}
\newtheorem{cor}[theorem]{Corollary}
\theoremstyle{definition}
\theoremstyle{remark}
\newtheorem{rmk}[theorem]{Remark}
\newtheorem{ex}[theorem]{Example}
\newtheorem{question}[theorem]{Question}
\renewcommand{\tilde}{\widetilde}
\newcommand{\CC}{\mathbb{C}}
\newcommand{\RR}{\mathbb{R}}
\DeclareMathOperator{\F2}{\mathbb{F}_2}
\newcommand{\PP}{\mathbb{P}}
\newcommand{\QQ}{\mathbb{Q}}
\newcommand{\ZZ}{\mathbb{Z}}
\newcommand{\m}{\mathfrak{m}}
\newcommand{\p}{\mathfrak{p}}
\newcommand{\z}{\mathfrak{z}}
\renewcommand{\1}{\mathds{1}}
\DeclareMathOperator{\End}{End}
\DeclareMathOperator{\GL}{GL}
\DeclareMathOperator{\Hom}{Hom}
\DeclareMathOperator{\Hilb}{Hilb}
\DeclareMathOperator{\id}{id}
\DeclareMathOperator{\Mon}{Mon}
\DeclareMathOperator{\pt}{pt}
\DeclareMathOperator{\rk}{rk}
\DeclareMathOperator{\Sym}{Sym}
\newcommand{\Aut}{\mathrm{Aut}}
\newcommand{\Bl}{\mathrm{Bl}}
\renewcommand{\div}{\mathrm{div}}
\newcommand{\Pic}{\mathsf{Pic}}
\newcommand{\Hdg}{\mathrm{Hdg}}
\newcommand{\sgn}{\mathsf{sgn}}
\newcommand{\tors}{\mathrm{tors}}
\renewcommand{\Im}{\mathrm{Im}}
\DeclareMathOperator{\K3}{\mathrm{K3}}
\begin{document}

\title{\textbf{On maximality of involutions of hyper-K\"ahler manifolds and punctual Hilbert schemes of surfaces}}

\author{Simone Billi, Lie Fu, Annalisa Grossi, Viatcheslav Kharlamov}
\date{ }
\maketitle

\begin{abstract}
Given a holomorphic or anti-holomorphic involution on a complex variety, the Smith inequality says that the total $\F2$-Betti number of the fixed locus is no greater than the total $\F2$-Betti number of the ambient variety. The involution is called maximal when the equality is achieved. In this paper, we investigate maximality of involutions of compact hyper-K\"ahler manifolds and of Hilbert schemes of points on surfaces. We obtain both positive and negative results. 

On one hand, given a smooth projective surface $S$ with $H^1(S, \F2)=0$ equipped with a holomorphic (resp.~anti-holomorphic) involution $\sigma$, we establish the following necessary and sufficient condition for the maximality of the induced involution on the $n$th Hilbert scheme of points: the induced involution is maximal if and only if $\sigma$ is a maximal involution of $S$ and it acts on $H^2(S, \ZZ)$ trivially (resp.~as $-\id$). This generalizes and completes previous partial results of Fu and Kharlamov--R\u asdeaconu.

On the other hand, we show that for $n\geq 2$, a hyper-Kähler manifold of K3\(^{[n]}\)-deformation type admits neither maximal anti-holomorphic involutions (i.e.~real structures), nor maximal holomorphic (symplectic or anti-symplectic) involutions. In other words, such hyper-K\"ahler manifolds do not admit maximal (AAB), (ABA), (BAA) or (BBB) brane involutions in the sense of Kapustin--Witten.
\end{abstract}

\medskip

\renewcommand{\thefootnote}{}
\null\footnotetext{L.F.\ is supported by the University of Strasbourg Institute for Advanced Study (USIAS), by the Agence Nationale de la Recherche (ANR) under projects ANR-20-CE40-0023 and ANR-24-CE40-4098, and by the International Emerging Actions (IEA) project of CNRS.
S.B. and A.G. were partially supported by the European Union - NextGenerationEU under the National Recovery and Resilience Plan (PNRR) - Mission 4 Education and research - Component 2 From research to business - Investment 1.1 Notice Prin 2022 - DD N. 104 del 2/2/2022, from title \lq\lq Symplectic varieties: their interplay with Fano manifolds and derived categories\rq\rq, proposal code 2022PEKYBJ – CUP J53D23003840006.
S.B. and A.G. are members of the INdAM group GNSAGA, and was partially supported by GNSAGA.

\noindent{\textbf{Keywords:} hyper-K\"ahler manifolds, Hilbert schemes, involutions, real structures, Smith theory.}\\
\noindent{\textbf{2020 MSC:} 14P25, 14J42, 14J28, 14C05, 55M35.}
}

\renewcommand{\thefootnote}{\arabic{footnote}} 

\small{\tableofcontents}
\newpage

\section{Introduction}

\subsection{Smith inequality}

For a topological space equipped with an involution satisfying mild conditions \footnote{For example,
a topological space 
admitting the structure of a CW-complex that is respected by the involution, which is always the case for smooth involutions on differentiable manifolds.}, the Smith theory relates the topology of the fixed locus and that of the ambient space. In particular, we have the following fundamental inequality relating their total $\F2$-Betti numbers (see for example \cite{Bredon}):

\begin{theorem}[Smith inequality]
	\label{thm:SmithThom}
	Let $X$ be a topological space and $\sigma$ an involution of $X$. Assume that $X$ has the structure of a finite simplicial complex that is respected by $\sigma$.
	Let $X^{\sigma}$ be the fixed locus. We have the following inequality for the total $\F2$-Betti numbers 
	\begin{equation}
		\label{eqn:SmithThomInequality}
		b_*(X^{\sigma}, \F2)\leq b_*(X, \F2).
	\end{equation}
\end{theorem}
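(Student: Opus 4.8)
The plan is to invoke P.\,A.\ Smith's classical theory of periodic maps; since the statement is needed only as a black box, in practice one simply cites it (e.g.\ \cite[Ch.\,III]{Bredon}), but here is the shape of the argument one would reproduce.

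First I would reduce everything to the level of $\F2$-chain complexes. After replacing the given simplicial structure by its barycentric subdivision, one may assume that $\sigma$ acts simplicially with $X^{\sigma}$ a full subcomplex. Let $C:=C_*(X;\F2)$ be the simplicial chain complex: it is a bounded complex of finitely generated modules over the group ring $R:=\F2[\ZZ/2]\cong\F2[\tau]/(\tau^2)$, where $\tau:=1+\sigma_*$, so $\tau^2=0$. Write $L:=C_*(X^{\sigma};\F2)\subseteq C$ for the subcomplex spanned by the $\sigma$-fixed simplices; then $\tau L=0$, while in each degree the quotient $\bar C:=C/L$ is a \emph{free} $R$-module, since the remaining simplices are permuted in free orbits $\{e,\sigma e\}$. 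In particular $\ker(\tau)=\operatorname{im}(\tau)$ on each $\bar C_n$.

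Next I would set up the Smith exact sequences. Putting $C':=\tau C+L\subseteq C$, one checks that $C'=\ker(\tau\colon C\to C)$, so that $\tau$ induces an isomorphism of complexes $C/C'\isomarrow\tau C\subseteq C'$, and that $C'/L=\tau\bar C$. This produces three short exact sequences of $\F2$-chain complexes,
\[ 0\to C'\to C\xrightarrow{\;\tau\;}\tau C\to0,\qquad 0\to L\to C'\to\tau\bar C\to0,\qquad 0\to\tau\bar C\to\bar C\xrightarrow{\;\tau\;}\tau\bar C\to0, \]
the last one using the freeness of $\bar C$. Passing to homology with $\F2$-coefficients and chasing the associated long exact sequences — Smith's classical argument — delivers $b_*(X^{\sigma},\F2)\le b_*(X,\F2)$, and in fact the sharper statement that $b_*(X,\F2)-b_*(X^{\sigma},\F2)$ is a nonnegative even integer.

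The hard part is the final extraction: feeding the three sequences into the crude subadditivity $b_*(B)\le b_*(A)+b_*(D)$, valid for any short exact sequence $0\to A\to B\to D\to0$ of bounded $\F2$-complexes, loses a factor of two. The sharp bound requires exploiting that all the connecting maps are governed by the single nilpotent operator $\tau$, so that their ranks are linked. Equivalently, one may package the situation into the Borel construction: $H^*_{\ZZ/2}(X;\F2)$ is a finitely generated graded module over $H^*(B\ZZ/2;\F2)=\F2[u]$, its rank after inverting $u$ equals $b_*(X^{\sigma},\F2)$ by the localization theorem, and that rank is bounded by $b_*(X,\F2)$ via the Serre spectral sequence $H^p(\ZZ/2;H^q(X;\F2))\Rightarrow H^{p+q}_{\ZZ/2}(X;\F2)$, whose $E_2$-page has total $\F2$-dimension at most $b_*(X,\F2)$ in each column $p\ge1$. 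This localization input — or, chain-theoretically, the precise bookkeeping of the connecting homomorphisms — is the heart of Smith theory; being entirely classical, we content ourselves with the reference \cite[Ch.\,III]{Bredon} rather than reproducing it.
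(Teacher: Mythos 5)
The paper offers no proof of this classical theorem---it simply cites Bredon---and your proposal does essentially the same thing while additionally supplying a correct sketch of the standard Smith-theoretic machinery behind that citation. Your chain-level setup (freeness of $C/L$ over $\F2[\tau]/(\tau^2)$ after barycentric subdivision, the identification $\ker\tau=\tau C+L$ with $\tau C\cap L=0$, and the telescoping of the ranks of $H_*(\tau C)$ across degrees, or alternatively the localization-theorem route through the Borel construction) is the correct skeleton of the argument, so there is nothing to fix.
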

Recall that for a topological space $W$, its total $\F2$-Betti number is defined as $b_*(W, \F2):=\sum_{i} b_i(W, \F2)$ with $b_i(W, \F2):=\dim_{\F2} H^i(W, \F2)$.

When the equality holds in \Cref{thm:SmithThom}, we say that the pair $(X, \sigma)$ is \textit{maximal}. With a slight abuse of terminology, we also say that the involution $\sigma$ is maximal (when $X$ is clear from the context), or $X$ is maximal (when the involution is the natural one, e.g.~the real structure when $X$ is defined over $\RR$).

\subsection{Motivation from real geometry}
A \textit{real structure} on a complex manifold $X$ is an anti-holomorphic involution, that is, a diffeomorphism
\[\sigma \colon X\to X\]
satisfying $\sigma^2=\id_X$ and $\sigma^*I=-I$, where $I$ denotes the complex structure on $X$. A \textit{real variety} (or $\mathbb{R}$-variety) refers to a pair $(X, \sigma)$ consisting of a complex manifold $X$ and a real structure $\sigma$ on it.

The \textit{real locus} of $(X, \sigma)$, denoted by $X(\RR)$, is defined to be the fixed locus of the involution $\sigma$. When $X(\RR)\neq \emptyset$,  it is a differentiable submanifold of $X$, and its real dimension is equal to the complex dimension of $X$.

By \Cref{thm:SmithThom}, for $(X,\sigma)$ a real variety, we have the following inequality for the total $\F2$-Betti numbers 
\begin{equation}
	\label{eqn:SmithThomInequality-real}
	b_*(X(\RR), \F2)\leq b_*(X, \F2).
\end{equation}
In the case where $X$ is a Riemann surface, \eqref{eqn:SmithThomInequality-real} says that the real locus can have at most $g+1$ connected components (which are circles) -- a famous classical result of Harnack \cite{Harnack} and Klein \cite{Klein-1876}.

When the equality in \eqref{eqn:SmithThomInequality-real} holds, we call $(X, \sigma)$ a \textit{maximal} real variety (or \textit{M-variety}).
Since the solution of the concrete questions on plane sextic curves and space quartic surfaces raised by D.~Hilbert in his 16-th problem, achieved in the 1970's (\cite{Gudkov}, \cite{Kharlamov-K3-76}), maximal real varieties have entered, and remain, in the center of attention in real algebraic geometry.
Many remarkable properties of these varieties were discovered, like Rokhlin's congruence theorem for even-dimensional maximal smooth projective real varieties: $\chi(X(\RR))\equiv \operatorname{sgn}(X)  \mod 16$; see for example \cite[Theorem 3.4.2]{MangolteBook}.
However, constructing examples of maximal real varieties in dimension $>2$ still remains challenging; see \cite[\S 3]{BrugalleSchaffhauser} and \cite{FuMaximalReal} for a recent summary and see \cite{MangolteBook} for the case of curves and surfaces.
Even for hypersurfaces in projective spaces, despite the powerful Viro's method of patchworking (\cite{Viro-Conf}, \cite{Viro-Gluing}, \cite{ItenbergViro}), it is still unknown whether maximal hypersurfaces exist for any degree in any dimension.
Apart from hypersurfaces and complete intersections in projective spaces, another important class of varieties are various moduli spaces. Remarkably, they
all have a natural real structure and their possible maximality is an important source of applications.
That is the reason why in the last decades maximality of moduli spaces became a subject of research. 
Let us mention here the recent contributions by  Etingof--Henriques--Kamnitzer--Rains \cite{Eingof-Henriques-Kamnitzer-Rains}, Brugall\'e--Schaffhauser \cite{BrugalleSchaffhauser} and Fu \cite{FuMaximalReal}. The present paper is a natural continuation of \cite{FuMaximalReal}, with a more focused investigation on Hilbert schemes of points on surfaces and hyper-K\"ahler manifolds.

As is shown in the seminal works of G\"ottsche \cite{Goettsche} and Nakajima \cite{Nakajima}, Hilbert schemes\footnote{It should rather be called Douady space in the complex analytic category.} of points on surfaces play distinctive roles among moduli spaces, in that this construction provides strong relations between the geometry of the surface and the geometry of an infinite series of varieties of arbitrarily high dimensions. Our initial motivation stems from the desire to determine when the Hilbert schemes of a real surface are maximal. More precisely, let us formulate the question in a boarder context as follows. Recall that the Hilbert scheme of points on a smooth surface is always smooth \cite{Fogarty}, and a holomorphic or anti-holomorphic involution on the surface naturally induces an involution on the Hilbert scheme of points by base-change (cf.~\cite[5.3.1]{CattaneoFu} for the anti-holomorphic case).
\begin{question}
\label{question:Hilb}
    Given a smooth complex surface $S$ equipped with a holomorphic or anti-holomorphic involution, when is the naturally induced involution on the Hilbert scheme of $n$ points $S^{[n]}$ maximal?
\end{question}
Partial results towards this question have been recently obtained in Fu \cite{FuMaximalReal} and in Kharlamov--R\u asdeaconu \cite{Kharlamov-Rasdeaconu-HilbertSquare}. In this paper, we give a \textit{complete} answer to it for projective surfaces satisfying $H^1(S, \F2)=0$; see \Cref{subsec:Intro-HilbertSchemes}.


\subsection{Motivation from hyper-K\"ahler geometry: brane involutions}
Given a K\"ahler manifold $X$ with complex structure $I$ and the associated symplectic form $\omega_I$, following Kapustin--Witten \cite{Kapustin-Witten_GeometricLanglands}, an \textit{$A$-brane} refers to a Lagrangian submanifold with respect to $\omega_I$, and a \textit{$B$-brane} stands for a complex submanifold with respect to $I$.

For a hyper-K\"ahler manifold $(X, g)$ with three complex structures $I, J, K$ satisfying the quaternion relations, let $\omega_I, \omega_J, \omega_K$ be the associated K\"ahler forms. It is extremely interesting to study submanifolds of $X$ that are Lagrangian or complex with respect to \textit{each} of the three K\"ahler structures. Such a submanifold is called an (AAB), (ABA), (BAA) or (BBB) brane, depending on its compatibility types with respect to the three K\"ahler structures. As is pointed out in \cite{BaragliaSchaposnik} (see also \cite[\S 2.3]{FrancoJardimMenet}), a natural and rich source of such branes in hyper-K\"ahler manifolds is provided by the fixed loci of so-called \textit{brane involutions} --- those are involutions preserving the hyper-K\"ahler metric such that each of the three complex structures is either preserved (B-type)  or reversed (A-type).

In concrete terms, on a hyper-K\"ahler manifold $X$, a real structure, or equivalently, an anti-holomorphic involution $\sigma$, gives rise to an (ABA) or (AAB) brane involution\footnote{The distinction between (ABA) and (AAB) depends on the action of $\sigma$ on 
a choice of a (unique up to scalar) holomorphic symplectic form $\eta$: it is called (ABA) if  $\sigma^*(\eta)=\overline{\eta}$, and called (AAB) if $\sigma^*(\eta)=-\overline{\eta}$.}, and its fixed locus, which is precisely the real locus $X(\RR)$, represents an (ABA) or (AAB) brane in $X$. 
Up to hyper-K\"ahler rotation of the complex structure, an anti-holomorphic involution can become a holomorphic anti-symplectic involution, i.e.~a (BAA)-brane involution, and the fixed locus, which is a holomorphic Lagrangian submanifold, is a (BAA) brane. Similarly, a holomorphic symplectic involution on a hyper-K\"ahler manifold is referred to as a (BBB)-involution and its fixed locus represents a (BBB)-brane, also known as trianalytic submanifold introduced by Verbitsky \cite{Verbitsky-Trianalytic}. 

Examples of non-compact hyper-K\"ahler manifolds with maximal (ABA) or (AAB) brane involutions are constructed via moduli spaces of Higgs bundles by Fu in \cite[Theorem 6.3]{FuMaximalReal}. Another example of maximal (ABA) or (AAB) brane involution in a non-compact hyper-K\"ahler manifold is provided by the natural real structure of the cotangent bundle of a maximal $\mathbb{R}$-variety. 

However, the situation is more intriguing for \textit{compact} hyper-K\"ahler manifolds. We refer to \cite{Beauville} and \cite{HuyInventiones} for generalities of such manifolds; see also \Cref{sec:HK-LLV} for a quick summary of their special properties. Let us just mention here the most studied examples of compact hyper-K\"ahler manifolds: deformations of Hilbert schemes of points on K3 surfaces; such hyper-K\"ahler manifolds are called of \textit{$\K3^{[n]}$-type}.
On the one hand, in (complex) dimension 2, maximal real K3 surfaces and abelian surfaces exist and have been thoroughly studied; see \cite{Kharlamov-K3-76}, \cite{nikulin1979integral}  
\cite[Chapters IV, VIII]{Silhol-Surface-LNM}. On the other hand, Kharlamov and R{\u{a}}sdeaconu \cite{Kharlamov-Rasdeaconu-HilbertSquare} made the surprising discovery that the Hilbert square of maximal real K3 surfaces (see also \cite{Kharlamov-Radsdeaconu-Deficiency} for related results) and Fano varieties of lines of maximal real cubic fourfolds are \textit{never} maximal. These works lead us to the second motivation of this paper, summarized in the following question, raised by Fu in \cite{FuMaximalReal}: 
\begin{question}
    Can compact hyper-K\"ahler manifolds of dimension $\geq 4$ admit maximal (ABA), (AAB), (BAA) and (BBB) brane involutions?
\end{question}
In what follows, we give a decisive but rather unexpected answer to it for hyper-K\"ahler manifolds of $\K3^{[n]}$-type.

\subsection{Main results I: non-existence of maximal brane involutions on $\K3^{[n]}$-type manifolds}
Although there are K3 surfaces and abelian surfaces admitting maximal brane involutions, our first main result proves the non-existence of maximal (ABA), (AAB) or (BAA) brane involutions in the compact hyper-K\"ahler manifolds of $\K3^{[n]}$-type.

\begin{theorem}[Absence of maximal (BAA)-brane involutions]
	\label{thm:main:NonMaxHKnOdd}
	Let $n\geq 2$ be an integer. Let $X$ be a hyper-K\"ahler manifold of $\K3^{[n]}$-type.
	Then $X$ does not admit maximal holomorphic anti-symplectic involutions.
\end{theorem}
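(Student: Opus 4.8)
The plan is to obtain an upper bound on $b_*(X(\RR), \F2)$ which is strictly smaller than $b_*(X, \F2)$, exploiting the constraints that Smith theory together with the special cohomological structure of hyper-K\"ahler manifolds of $\K3^{[n]}$-type impose on a holomorphic anti-symplectic involution $\sigma$. The starting point is the Smith–Thom exact sequence (or the Smith inequality with its refinements): beyond the mere inequality \eqref{eqn:SmithThomInequality-real}, maximality forces $b_i(X(\RR),\F2)$ to be controlled degree by degree, and in particular it forces an equality of Euler characteristics modulo powers of $2$ and, crucially, it forces the involution $\sigma^*$ acting on $H^*(X,\F2)$ to have a very large $(+1)$-eigenspace — more precisely, maximality is equivalent to $\sigma^*$ acting as the identity on $H^*(X,\F2)$ together with all the relevant $\ZZ/2$ Smith sequences degenerating. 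So the first step is to recall this equivalence (maximality $\Rightarrow$ $\sigma^*=\id$ on $H^*(-,\F2)$, which in turn, via the comparison with rational cohomology and torsion-freeness in many degrees, pins down $\sigma^*$ on $H^*(X,\QQ)$).

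The second step is to feed in what a holomorphic \emph{anti-symplectic} involution does to the Hodge structure. By definition $\sigma^*\eta = -\eta$ on the holomorphic symplectic form, hence $\sigma^*$ acts as $-\id$ on $H^{2,0}(X)$ and on $H^{0,2}(X)$, and it acts on the Beauville–Bogomolov form preserving it; on $H^2(X,\QQ)$ it therefore acts as $-\id$ on the transcendental part (which has rank $\geq 2$, containing $H^{2,0}\oplus H^{0,2}$) and with some eigenvalues on the algebraic part. Now push this to $H^{2n}(X,\QQ)$ — top-degree is where the obstruction really bites: via the Verbitsky/LLV description, or more elementarily via the sub-Hodge-structure $\Sym^n H^2(X) \hookrightarrow H^{2n}(X)$ (up to the Beauville–Fujiki relations), the class $\eta^n$ spans a copy of $H^{2n,0}$, and $\sigma^*$ acts on $H^{2n,0}\oplus H^{0,2n}$ by $(-1)^n$. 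More to the point, $\sigma^*$ cannot act as the identity on all of $H^{2n}(X,\QQ)$: already on the sub-Hodge-structure generated by transcendental classes in $H^2$, the induced action on $\Sym^{\bullet}$-type pieces produces a nontrivial $(-1)$-eigenspace in even total degree $2n$ (for instance the class $(\text{transcendental } (1,1)) \cdot \eta^{n-1}$, or $\eta^n$ itself when $n$ is odd). The upshot of this step is: for a holomorphic anti-symplectic involution on a $\K3^{[n]}$-type manifold, $\sigma^*$ is provably \emph{not} the identity on $H^*(X,\QQ)$, indeed has a nonzero $(-1)$-eigenspace in some even degree.

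The third step is to upgrade "$\sigma^* \neq \id$ on $H^*(X,\QQ)$" to "$\sigma^* \neq \id$ on $H^*(X,\F2)$", which is what contradicts maximality by step one. This needs the integral/$\F2$ cohomology of $\K3^{[n]}$-type manifolds: for these the cohomology is torsion-free in the relevant degrees (indeed, much is known about $H^*(\K3^{[n]},\ZZ)$ — it is torsion-free up to degree $2n$ and the torsion, where it appears, is well understood), so reduction mod $2$ is injective enough that a $(-1)$-eigenvector over $\QQ$ survives to give a nontrivial class in $H^*(X,\F2)$ on which $\sigma^*$ is not the identity. Combining: maximality would force $\sigma^* = \id$ on $H^*(X,\F2)$, contradicting the existence of this class. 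Hence no maximal holomorphic anti-symplectic involution exists.

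I expect the main obstacle to be step two made rigorous and \emph{deformation-invariant}: the statement is about all hyper-K\"ahler manifolds of $\K3^{[n]}$-type, not just Hilbert schemes of K3 surfaces, so one cannot simply compute on $S^{[n]}$ and must argue via the LLV (Looijenga–Lunts–Verbitsky) $\mathfrak{so}(4,20)$-module structure, or via the fact that the sub-Hodge-structure $\Sym^n H^2_{\mathrm{tr}}(X)$ inside $H^{2n}(X,\QQ)$ is a genuine deformation-invariant summand on which the monodromy-compatible action of $\sigma^*$ is forced to be $(-1)$ on the transcendental generators. Controlling precisely in which degree the $(-1)$-eigenspace is guaranteed to be nonzero (it should appear in degree $2n$, and also — reflecting the (AAB)/(ABA) dichotomy and the sign $(-1)^n$ — this is exactly where the parity of $n$ and the distinction between anti-symplectic vs.\ symplectic will enter the companion theorems) and making sure torsion does not interfere with the mod-$2$ reduction are the two technical points that need care; everything else is a packaging of standard Smith theory.
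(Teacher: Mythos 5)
There is a fatal gap at the core of your outline. Maximality is \emph{not} obstructed by a $(-1)$-eigenvector of $\sigma^*$ on $H^*(X,\QQ)$: by \Cref{prop:MaximalityViaCohomology} the relevant condition is that $\sigma^*$ act trivially on $H^*(X,\F2)$ (together with degeneration of the Leray--Serre spectral sequence), and an integral class $v$ with $\sigma^*(v)=-v$ reduces mod $2$ to a \emph{fixed} class, since $-\bar v=\bar v$ with $\F2$-coefficients. So your steps two and three prove nothing: an involution acting as $-\id$ on arbitrarily large direct summands of $H^*(X,\ZZ)$ is perfectly compatible with maximality, and "mod-$2$ reduction is injective enough" is beside the point. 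A sanity check makes this vivid: your argument nowhere uses $n\geq 2$ in an essential way (for $n=1$ the symplectic form $\eta$ is already a $(-1)$-eigenvector in $H^2$), yet maximal anti-symplectic involutions and maximal real structures on K3 surfaces do exist, as the paper recalls. The genuine obstruction is integral and more delicate: one must show that $H^*(X,\ZZ)$ does \emph{not} split as a direct sum of $(+1)$- and $(-1)$-eigen-sublattices, i.e.\ that the Comessatti characteristic (\Cref{lemma:EquivalentConditionsForSplitting}) is nonzero in some degree.

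The paper locates this non-split piece in $H^4$. Assuming maximality, $H^2(X,\ZZ)$ splits orthogonally into eigen-lattices; these are classified (\Cref{lemma:LatticeClassification}), and in the main cases Eichler's criterion plus Markman's computation of the monodromy group produce a parallel transport operator to a Hilbert scheme $S^{[n]}$ carrying a suitable eigenvector to the half-exceptional class $\delta$. One then picks a primitive $\alpha$ on which the transported involution $\iota$ acts by $-1$ (after accounting for the discriminant character) and uses the Li--Qin--Wang integral basis: the classes $v=\1_{-(n-2)}\p_{-2}(\alpha)|0\rangle$ and $w=\1_{-(n-2)}\m_{1,1}(\alpha)|0\rangle=\tfrac12\bigl(\1_{-(n-2)}\p_{-1}(\alpha)^2|0\rangle-v\bigr)$ satisfy $\iota(v)=-v$ and $\iota(w)=w+v$, so the mod-$2$ action sends $w\mapsto w+v\neq w$. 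It is precisely the integrality of the divided class $\m_{1,1}(\alpha)$ (available only for $n\geq 2$) that creates the obstruction, and the equivariance of the Nakajima operators under the Zariski closure of the monodromy group that makes the computation deformation-invariant; the remaining cases ($n\geq 4$ even, neither eigen-lattice unimodular) are handled through Markman's lattice $Q(X,\ZZ)=H^4(X,\ZZ)/\Sym^2H^2(X,\ZZ)$ and the class $\overline{c}_2(X)$. None of these ingredients is present in, or replaceable by the rational-cohomology considerations of, your outline.
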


By hyper-K\"ahler rotation, we immediately get the following application in real algebraic geometry, answering the aforementioned question raised by the second author in \cite{FuMaximalReal} for these most studied deformation families of compact hyper-K\"ahler manifolds. 
\begin{cor}[Absence of maximal (ABA)/(AAB)-brane involutions]
	\label{cor:main:NoMaxRealStructureHKnOdd}
	Let $n\geq 2$ be an integer. There is no maximal real structure on a hyper-K\"ahler manifold of $\K3^{[n]}$-type.
\end{cor}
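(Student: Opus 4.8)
The plan is to reduce \Cref{cor:main:NoMaxRealStructureHKnOdd} to the holomorphic statement \Cref{thm:main:NonMaxHKnOdd} by means of a hyper-K\"ahler rotation, after which the corollary becomes essentially formal. So let $\sigma$ be a real structure on a hyper-K\"ahler manifold $(X,I)$ of $\K3^{[n]}$-type with $n\geq 2$; the goal is to show that equality fails in \eqref{eqn:SmithThomInequality-real}, i.e.\ that $b_*(X^\sigma,\F2)<b_*(X,\F2)$.

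First I would arrange that $\sigma$ is an isometry of a hyper-K\"ahler metric. Since $\sigma$ is anti-holomorphic, $\sigma^*$ preserves $H^{1,1}(X,\RR)$ and carries the K\"ahler cone onto its negative, so for any K\"ahler class $\alpha$ the class $\tfrac12(\alpha-\sigma^*\alpha)$ is again K\"ahler and $\sigma^*$-anti-invariant; by the uniqueness of the Ricci-flat K\"ahler metric in a prescribed class, the Calabi--Yau metric $g$ in that class satisfies $\sigma^*g=g$. Then I would complete $(g,I)$ to a hyper-K\"ahler triple $(g,I,J,K)$, with associated K\"ahler forms $\omega_I,\omega_J,\omega_K$ and holomorphic symplectic form $\eta=\omega_J+i\omega_K$ on $(X,I)$.

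Next I would carry out the rotation. Being an isometry with $\sigma^*I=-I$, the map $\sigma$ acts on the $3$-dimensional space of parallel $2$-forms $\langle\omega_I,\omega_J,\omega_K\rangle$ by an orthogonal involution with $\sigma^*\omega_I=-\omega_I$; and since $\sigma^*$ sends $H^{2,0}(X)$ onto $H^{0,2}(X)=\CC\bar\eta$, one has $\sigma^*\eta=c\bar\eta$ with $|c|=1$, which in particular rules out the possibility that $\sigma^*$ acts by $-\id$ on $\langle\omega_J,\omega_K\rangle$. A short computation then identifies $\sigma^*$ with the rotation by $\pi$ about the axis spanned by $\omega_{J'}$, where $J'=\cos\theta\,J+\sin\theta\,K$ and $c=e^{2i\theta}$: thus $\sigma^*\omega_{J'}=\omega_{J'}$, so $\sigma$ is holomorphic for $J'$, while $\sigma^*$ acts by $-\id$ on the orthogonal plane, so $\sigma$ is anti-symplectic for $J'$. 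Since $(X,I)$ and $(X,J')$ sit in a common twistor family, $(X,J')$ is again of $\K3^{[n]}$-type, hence \Cref{thm:main:NonMaxHKnOdd} applies and gives $b_*(X^\sigma,\F2)<b_*(X,\F2)$. As both sides depend only on the underlying topological spaces $X$ and $X^\sigma=X(\RR)$, which are untouched by the change of complex structure, this is exactly the desired conclusion.

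I do not expect a genuine obstacle here: the substance of the corollary lies entirely in \Cref{thm:main:NonMaxHKnOdd}, and the only delicate points in the reduction are producing a $\sigma$-invariant hyper-K\"ahler metric and keeping track of signs so that the rotated involution comes out anti-symplectic rather than symplectic --- which is precisely where the Hodge-theoretic identity $\sigma^*H^{2,0}=H^{0,2}$ is decisive.
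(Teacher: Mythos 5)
Your argument is correct and is essentially the paper's own proof: reduce to \Cref{thm:main:NonMaxHKnOdd} by a hyper-K\"ahler rotation making $\sigma$ holomorphic anti-symplectic (this is exactly \Cref{prop:HKRotation}), then invoke the topological invariance of both sides of the Smith inequality. The only differences are cosmetic: you justify the existence of the $\sigma$-invariant hyper-K\"ahler metric via Calabi--Yau uniqueness (which the paper takes for granted) and you locate the fixed axis on the twistor sphere via the identity $\sigma^*H^{2,0}=H^{0,2}$ rather than via the quaternion anticommutation relations, but the rotation and the conclusion are the same.
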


We also prove the parallel result for (BBB)-brane involutions.
\begin{theorem}[Absence of maximal (BBB)-brane involutions]
\label{thm:main:NonMaxHKnSymp}
	Let $X$ be a hyper-K\"ahler manifold of $\K3^{[n]}$-type.
	Then $X$ does not admit non-trivial maximal holomorphic symplectic involutions. 
\end{theorem}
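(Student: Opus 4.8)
The plan is to reduce the statement to a computation on the Beauville--Bogomolov--Fujiki lattice $H^{2}(X,\ZZ)$, via the following standard fact from Smith theory: if $(X,\sigma)$ is a maximal pair, then $\sigma^{*}$ acts as the identity on $H^{*}(X,\F2)$. One deduces this from the refined Smith inequality $b_{*}(X^{\sigma},\F2)\le b_{*}(X,\F2)-2r$, where $r=\dim_{\F2}\operatorname{Im}\!\bigl(1+\sigma^{*}\colon H^{*}(X,\F2)\to H^{*}(X,\F2)\bigr)$ is the number of free $\F2[\ZZ/2]$-summands of the $\F2[\ZZ/2]$-module $H^{*}(X,\F2)$; this in turn follows from the Serre spectral sequence of the Borel fibration together with the localization theorem. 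Hence it suffices to exhibit a single cohomology class, in fact a class in $H^{2}(X,\F2)$, that $\sigma^{*}$ does not fix.

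Next I would write down the lattice attached to a non-trivial symplectic involution $\sigma$ on $X$ of $\K3^{[n]}$-type. Since $\sigma\neq\id$, the isometry $\sigma^{*}$ of $H^{2}(X,\ZZ)$ is non-trivial, by the injectivity of $\Aut(X)\to\OO\bigl(H^{2}(X,\ZZ)\bigr)$ for this deformation type (recalled in \Cref{sec:HK-LLV}). Being symplectic, $\sigma^{*}$ fixes $\operatorname{Re}\eta$, $\operatorname{Im}\eta$ and, after averaging, an ample class, so the invariant lattice $T_{\sigma}:=H^{2}(X,\ZZ)^{\sigma^{*}}$ has signature $(3,\ast)$ and its orthogonal complement $S_{\sigma}:=T_{\sigma}^{\perp}$ is a non-zero, even, negative definite lattice of rank at most $20$. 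Decomposing $x=x_{T}+x_{S}$ into $\sigma^{*}$-eigenvectors shows $\sigma^{*}x-x=-2x_{S}$ for $x\in H^{2}(X,\ZZ)$, so $\sigma^{*}$ acts trivially on $H^{2}(X,\F2)=H^{2}(X,\ZZ)/2$ if and only if the orthogonal projection of $H^{2}(X,\ZZ)$ onto $S_{\sigma}\otimes\QQ$ is contained in $S_{\sigma}$; since that projection equals $S_{\sigma}^{\vee}$, this holds if and only if $S_{\sigma}$ is unimodular.

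To conclude, I would rule out that $S_{\sigma}$ is unimodular. An even, negative definite, unimodular lattice of rank $\le 20$ is one of $0,\ E_{8}(-1),\ E_{8}(-1)^{\oplus 2},\ D_{16}^{+}(-1)$, and every non-zero one contains a vector of square $-2$. On the other hand, the coinvariant lattice of a symplectic automorphism of a hyper-K\"ahler manifold of $\K3^{[n]}$-type contains no class of square $-2$, by a theorem of Mongardi extending Nikulin's analysis of symplectic automorphisms of K3 surfaces. Therefore $S_{\sigma}$ is non-zero and not unimodular, $\sigma^{*}$ acts non-trivially already on $H^{2}(X,\F2)$, and $(X,\sigma)$ is not maximal; in fact $b_{*}(X^{\sigma},\F2)\le b_{*}(X,\F2)-2$.

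I expect the main work to be in pinning down the two imported ingredients: the Smith-theoretic criterion that maximality forces a trivial action on $\F2$-cohomology, with its hypotheses checked (automatic here, since a holomorphic involution is smooth), and the root-freeness of coinvariant lattices of symplectic automorphisms in the generality of all $n\ge 1$; the remainder is elementary lattice arithmetic. Note that the argument is specific to the symplectic (BBB) case: for anti-holomorphic (ABA)/(AAB) or holomorphic anti-symplectic (BAA) involutions the coinvariant lattice is indefinite and $\sigma^{*}$ may act trivially on $H^{2}(X,\F2)$, which is why \Cref{thm:main:NonMaxHKnOdd} calls for a genuinely different argument.
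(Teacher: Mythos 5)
Your overall strategy is genuinely different from the paper's: you try to detect non-maximality directly on $H^2(X,\F2)$ via the invariant/coinvariant lattice decomposition and root-freeness of the coinvariant lattice, whereas the paper invokes Kamenova--Mongardi--Oblomkov's deformation result (every symplectic involution on a $\K3^{[n]}$-type manifold is deformation equivalent to a natural involution $\sigma^{[n]}$ on $S^{[n]}$) and then applies its Hilbert-scheme criterion \Cref{cor:NaturalHilbertK3}. Your first step (maximality forces a trivial action on $H^*(X,\F2)$) is exactly \Cref{prop:MaximalityViaCohomology}, and your conclusion is correct; but there is a concrete gap in the lattice step.

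The gap is the assertion that the orthogonal projection of $H^2(X,\ZZ)$ onto $S_\sigma\otimes\QQ$ equals $S_\sigma^\vee$, and hence that $\sigma^*$ is trivial on $H^2(X,\F2)$ if and only if $S_\sigma$ is unimodular. The identity $p_{S}(L)=S^\vee$ holds when the ambient lattice $L$ is unimodular (as for a K3 surface), but $H^2(X,\ZZ)\cong U^{\oplus 3}\oplus E_8(-1)^{\oplus 2}\oplus\langle -2n+2\rangle$ is \emph{not} unimodular for $n\ge 2$; in general one only has $S_\sigma\subseteq p_{S}(L)\subseteq S_\sigma^\vee$ with $p_{S}(L)/S_\sigma\cong L/(T_\sigma\oplus S_\sigma)$. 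The correct criterion (this is \Cref{lemma:EquivalentConditionsForSplitting}) is that $\sigma^*$ is trivial mod $2$ if and only if $L=T_\sigma\oplus S_\sigma$, which forces $A_{T_\sigma}\oplus A_{S_\sigma}\cong A_L\cong\ZZ/(2n-2)\ZZ$ but does \emph{not} force $A_{S_\sigma}=0$. For instance $S_\sigma=\langle -2n+2\rangle$ split off as a direct summand satisfies $L=T\oplus S$ with $S$ far from unimodular, so root-freeness of unimodular definite lattices does not by itself close the argument; you would have to exclude all even negative definite root-free $S_\sigma$ with cyclic discriminant group glued trivially into $L$, which needs the full wall-divisor version of Mongardi's theorem plus a case analysis (e.g.\ the rank-one case above is killed only because a primitive class of square $2-2n$ and divisibility $2n-2$ is a wall divisor, not because it has square $-2$).

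The cleanest repair is to import the known computation that the coinvariant lattice of a symplectic involution on a $\K3^{[n]}$-type manifold is $S_\sigma\cong E_8(-2)$ (Mongardi for $n=2$, Kamenova--Mongardi--Oblomkov in general): then $A_{S_\sigma}\cong(\ZZ/2\ZZ)^{\oplus 8}$ cannot be a direct summand of the cyclic group $\ZZ/(2n-2)\ZZ$, so $L\neq T_\sigma\oplus S_\sigma$ and $\sigma^*$ is non-trivial on $H^2(X,\F2)$, whence non-maximality. Note, however, that this repaired argument consumes essentially the same external input as the paper's one-paragraph proof (the classification of symplectic involutions from [Kam\_Mon\_Obl]); what your route buys in exchange is a purely lattice-theoretic, deformation-free conclusion together with the quantitative refinement $b_*(X^\sigma,\F2)\le b_*(X,\F2)-2\,\lambda$ with $\lambda=8$, which the paper's proof does not make explicit.
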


\subsection{Main results II: criteria for maximality of natural involutions on  Hilbert schemes}
\label{subsec:Intro-HilbertSchemes}
Before giving our second main result, we extract here the following special case of \Cref{thm:main:NonMaxHKnOdd}, \Cref{cor:main:NoMaxRealStructureHKnOdd} and \Cref{thm:main:NonMaxHKnSymp}.
\begin{cor}[=\Cref{cor:NaturalHilbertK3}]
	\label{cor:main:NaturalK3}
	Let  $\sigma$ be a (non-trivial) holomorphic or anti-holomorphic involution on a K3 surface $S$. Then for any $n\geq 2$, the induced involution on its Hilbert scheme of $n$ points $S^{[n]}$ is not maximal.    
\end{cor}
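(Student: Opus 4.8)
The plan is to deduce the statement directly from the three main theorems of the paper, after recording the elementary compatibility of the natural involution on the Hilbert scheme with the holomorphic symplectic structure. First I would set up notation: for $n\geq 2$ and any K3 surface $S$, the Hilbert scheme $S^{[n]}$ is a hyper-K\"ahler manifold of $\K3^{[n]}$-type (Beauville). An involution $\sigma$ of $S$ induces an involution $\sigma^{[n]}$ of $S^{[n]}$ sending a length-$n$ subscheme $Z\subset S$ to $\sigma(Z)$; it is holomorphic (resp.\ anti-holomorphic) precisely when $\sigma$ is, and it is non-trivial whenever $\sigma\neq \id_S$ (take a reduced subscheme one of whose points lies outside the fixed locus of $\sigma$ and the remaining points generic).

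Next I would treat the holomorphic case. On a K3 surface a non-trivial holomorphic involution $\sigma$ acts on the one-dimensional space $H^{2,0}(S)$ either by $+1$ (the symplectic case) or by $-1$ (the anti-symplectic case). Using the Hilbert--Chow morphism $S^{[n]}\to S^{(n)}$ and the standard description of the holomorphic $2$-form on $S^{[n]}$ as the form descending from $\sum_i \pr_i^*\eta_S$ on the open locus of distinct points of $S^n$, one checks that $\sigma^{[n]}$ acts on $H^{2,0}(S^{[n]})$ by the same sign. Hence: if $\sigma$ is symplectic, $\sigma^{[n]}$ is a non-trivial holomorphic symplectic involution on a hyper-K\"ahler manifold of $\K3^{[n]}$-type, and is not maximal by \Cref{thm:main:NonMaxHKnSymp}; if $\sigma$ is anti-symplectic, $\sigma^{[n]}$ is a holomorphic anti-symplectic involution on such a manifold, and is not maximal by \Cref{thm:main:NonMaxHKnOdd}.

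For the anti-holomorphic case, $\sigma^{[n]}$ is an anti-holomorphic involution, i.e.\ a real structure, on the hyper-K\"ahler manifold $S^{[n]}$ of $\K3^{[n]}$-type, so it is not maximal by \Cref{cor:main:NoMaxRealStructureHKnOdd}. (I would remark that this in particular recovers the non-maximality of the Hilbert square of a maximal real K3 surface due to Kharlamov--R\u{a}sdeaconu, and drops any hypothesis on $\sigma$ beyond non-triviality.)

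As for the main obstacle: there is essentially none here, since all the substance is carried by \Cref{thm:main:NonMaxHKnOdd}, \Cref{thm:main:NonMaxHKnSymp} and \Cref{cor:main:NoMaxRealStructureHKnOdd}. The only point that needs care is the sign bookkeeping in the holomorphic case — equivalently, identifying which brane type the induced involution $\sigma^{[n]}$ belongs to — which is routine but should be stated precisely so that the correct theorem is applied in each case.
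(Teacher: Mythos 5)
Your reduction works for two of the three subcases, but the holomorphic \emph{symplectic} subcase is circular in the logical structure of this paper. The paper's proof of \Cref{thm:main:NonMaxHKnSymp} is itself a deduction \emph{from} \Cref{cor:NaturalHilbertK3}: by \cite[Theorem 3.2]{Kam_Mon_Obl}, any symplectic involution on a $\K3^{[n]}$-type manifold is deformation equivalent to a natural involution $\sigma^{[n]}$ on a Hilbert scheme induced by a symplectic involution of a K3 surface, and the non-maximality of that natural involution is then quoted from \Cref{cor:NaturalHilbertK3}. So you cannot use \Cref{thm:main:NonMaxHKnSymp} to prove the symplectic case of the corollary without a vicious circle. (The introduction's phrasing, presenting the corollary as ``a special case'' of the three main theorems, is motivational; the actual proof must be, and is, independent for the symplectic case.) Your treatment of the other two subcases is sound: the proof of \Cref{thm:main:NonMaxHKnOdd} is a self-contained lattice-theoretic/monodromy argument, and \Cref{cor:main:NoMaxRealStructureHKnOdd} follows from it by hyper-K\"ahler rotation, so invoking them for the anti-symplectic holomorphic case and the anti-holomorphic case is legitimate, and your sign bookkeeping for the action on $H^{2,0}(S^{[n]})$ is correct.

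What the paper actually does instead is to deduce the corollary from the Hilbert-scheme criteria \Cref{thm:NaturalHoloInv} and \Cref{thm:NaturalAntiHoloInv}: after reducing to projective $S$ by an equivariant deformation argument (maximality being deformation invariant), maximality of $\sigma^{[n]}$ in the holomorphic case would force $\sigma$ to act trivially on $H^2(S,\ZZ)$, hence $\sigma=\id$ by the Torelli theorem; in the anti-holomorphic case it would force $\sigma^*=-\id$ on $H^2(S,\ZZ)$, which is excluded either because maximal real K3 surfaces have disconnected real locus or because, after hyper-K\"ahler rotation, an anti-symplectic holomorphic involution must preserve a K\"ahler class. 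To repair your argument for the symplectic case you would need to follow this route (or otherwise give an argument for natural symplectic involutions on $S^{[n]}$ that does not pass through \Cref{thm:main:NonMaxHKnSymp}).
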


Going beyond hyper-K\"ahler geometry, we regard \Cref{cor:main:NaturalK3} as an example towards the investigation of the more general \Cref{question:Hilb} whether the operations $-^{[n]}$ of taking Hilbert powers on a surface preserve the maximality. For projective surfaces $S$ with $H^1(S, \F2)=0$, our results below provide a complete answer to \Cref{question:Hilb} by giving a sufficient and necessary condition, solely in terms of the involution on $S$.

\begin{theorem}[=\Cref{thm:NaturalAntiHoloInv}]
	\label{thm:main:NaturalAntiHoloInv}
	Let $n\geq 2$. Let $S$ be a smooth projective $\RR$-surface. Assume that $H^1(S, \F2)=0$.
	Then the punctual Hilbert scheme $S^{[n]}$, equipped with the natural real structure, is maximal if and only if $S$ is maximal and with connected real locus.
\end{theorem}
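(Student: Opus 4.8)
The plan is to encode $b_*(S^{[n]},\F2)$ and $b_*\bigl((S^{[n]})^{\sigma},\F2\bigr)$ as coefficients of explicit generating series in a variable $q$, to compare these series coefficientwise, and to pin down exactly when the coefficient of $q^{n}$ agrees, for a fixed $n\ge 2$. I treat the real (anti-holomorphic) statement; the holomorphic analogue is parallel, with the real locus $S(\RR)$ replaced by the fixed surface. Write $\Sigma:=S(\RR)$ and $Q:=(S\setminus\Sigma)/\sigma$, the latter a manifold since $\sigma$ acts freely off $\Sigma$. For the ambient side I use G\"ottsche's formula, $\sum_{n\ge 0}b_*(S^{[n]},\F2)q^{n}=\prod_{m\ge 1}\bigl((1-q^{m})^{-b_{\mathrm{even}}}(1+q^{m})^{b_{\mathrm{odd}}}\bigr)$ in the $\F2$-Betti numbers of $S$ (the formula only reflects the Hilbert--Chow decomposition, hence holds over $\F2$); since $H^{1}(S,\F2)=0$ forces $b_{1}(S,\F2)=b_{3}(S,\F2)=0$, this collapses to $\prod_{m\ge 1}(1-q^{m})^{-b}$ with $b:=b_*(S,\F2)$.

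For the fixed locus I would work through the $\sigma$-equivariant Hilbert--Chow morphism $S^{[n]}\to S^{(n)}$. A $\sigma$-fixed length-$n$ subscheme has $\sigma$-invariant associated $0$-cycle, so its length is split between points of $\Sigma$ and conjugate pairs of non-real points (parametrized by $Q$), a conjugate configuration necessarily having even length. Stratifying by the combinatorial type of this splitting and inserting the local models --- the punctual \emph{real} Hilbert schemes $\Hilb^{\ell}_{0}(\CC^{2})^{\mathrm{conj}}$ over real points and the real locus of $\Hilb^{k}_{0}(\CC^{2})$ over conjugate pairs --- the target is a G\"ottsche-type product formula
\[ \sum_{n\ge 0}b_*\bigl((S^{[n]})^{\sigma},\F2\bigr)q^{n}\;=\;A_{\Sigma}(q)\cdot B_{Q}(q^{2}), \]
with $A_{\Sigma}$ assembled from the real part and governed by the $\F2$-cohomology of the real surface $\Sigma$, and $B_{Q}$ from the conjugate-pair part, governed by that of $Q$ and entering through $q^{2}$. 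For $n=2$ this reduces to the Mayer--Vietoris computation of Kharlamov--R\u{a}sdeaconu \cite{Kharlamov-Rasdeaconu-HilbertSquare}: $(S^{[2]})^{\sigma}$ is $\Sym^{2}\Sigma$ minus a tube about its diagonal, glued along the projectivized tangent bundle $\PP(T_{\Sigma})$ to the compact manifold $\overline{Q}$. For general $n$ one must prove that the Leray spectral sequence of the stratified Hilbert--Chow map degenerates mod $2$, so that the strata contribute additively, building on the case $n=2$ and on \cite{FuMaximalReal}.

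Two facts then feed the comparison. First, the Smith--Thom inequality (\Cref{thm:SmithThom}) for $\sigma$ on $S$: $b_*(\Sigma,\F2)\le b_*(S,\F2)$, with equality iff $S$ is maximal. Second, the Gysin sequence of the codimension-two inclusion $\Sigma\subset S/\sigma$, together with that of the unramified double cover $S\setminus\Sigma\to Q$, bounds $b_*(Q,\F2)$ from below in terms of $b_*(\Sigma,\F2)$ and $b_{0}(\Sigma,\F2)$, the deficit being controlled by the number of connected components of $\Sigma$ through the class of the double cover in $H^{1}(Q,\F2)$. Comparing the two series coefficientwise yields $A_{\Sigma}(q)B_{Q}(q^{2})\le\prod_{m}(1-q^{m})^{-b}$ with nonnegative-coefficient difference, recovering the Smith inequality for $S^{[n]}$. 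If $b_*(\Sigma,\F2)<b_*(S,\F2)$, then after dividing out the common factor $\prod_{m}(1-q^{m})^{-b_*(\Sigma,\F2)}$ the difference acquires a strictly positive coefficient in \emph{every} degree $\ge 1$, so maximality of $S^{[n]}$ for any single $n\ge 1$ already forces $S$ maximal. Granting that, the residual defect of the $q^{n}$-coefficient for $n\ge 2$ reduces to a nonnegative quantity proportional to $b_{0}(\Sigma,\F2)-1$, which vanishes exactly when $\Sigma$ is connected; conversely, when $S$ is maximal with $\Sigma$ connected the two series coincide identically, so $S^{[n]}$ is maximal for every $n$.

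The main obstacle is the product formula for the fixed locus: (i) computing $H^{*}\bigl(\Hilb^{\ell}_{0}(\CC^{2})^{\mathrm{conj}},\F2\bigr)$ and checking that the contribution of $\Sigma$ depends only on its $\F2$-Betti numbers --- a priori $w_{1}(\Sigma)$ and $w_{2}(\Sigma)$ intervene via $\PP(T_{\Sigma})$, so one must verify that these dependencies cancel in the total Betti number --- and (ii) the mod-$2$ degeneration of the Hilbert--Chow spectral sequence, with a careful accounting of the strata where the real part meets the conjugate-pair part along the real exceptional locus. Everything past that point is elementary power-series bookkeeping together with the two Smith-theoretic inequalities above.
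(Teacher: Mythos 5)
Your strategy --- compute $b_*\bigl((S^{[n]})^{\sigma},\F2\bigr)$ in closed form via a G\"ottsche-type product over the strata of the equivariant Hilbert--Chow morphism, then compare coefficientwise with G\"ottsche's formula for $b_*(S^{[n]},\F2)$ --- has a genuine gap at its core: the product formula $A_{\Sigma}(q)\cdot B_{Q}(q^{2})$ for the fixed locus is stated as a ``target'' and never established, and literally everything else in your argument is contingent on it. This is not a routine verification you can defer. It requires (a) the $\F2$-Betti numbers of the real loci of the punctual Hilbert schemes $\Hilb^{\ell}_{0}(\CC^{2})$ for all $\ell$, which are not in the literature beyond small $\ell$; (b) a mod-$2$ additivity/degeneration statement for the stratification of $(S^{[n]})^{\sigma}$, including the strata where real and conjugate-pair supports collide --- already for $n=2$ this is the entire technical content of Kharlamov--R\u asdeaconu's Mayer--Vietoris computation, and there is no reason to expect the general case to be easier than the theorem you are trying to prove; and (c) the cancellation of the dependence on $w_1(\Sigma)$, $w_2(\Sigma)$ and on the normal bundle data of $\Sigma\subset S$, which you flag but do not address. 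The subsequent power-series bookkeeping (the claim that a deficit in one coefficient propagates to all degrees, and that the residual defect is ``proportional to $b_0(\Sigma,\F2)-1$'') is plausible but is also only meaningful once the formula exists. As written, the proposal is a research program, not a proof.

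The paper's route is entirely different and sidesteps any computation of the fixed locus. It uses the cohomological characterization of maximality (\Cref{prop:MaximalityViaCohomology}): maximality of $S^{[n]}$ forces $\sigma$ to act trivially on $H^{*}(S^{[n]},\F2)$, equivalently the Comessatti characteristic of each $H^{k}(S^{[n]},\ZZ)$ vanishes (\Cref{lemma:EquivalentConditionsForSplitting}). Maximality of $S$ is then extracted from the equivariant embedding $\1_{-(n-1)}\colon H^2(S,\ZZ)\hookrightarrow H^2(S^{[n]},\ZZ)$ together with a Borel-spectral-sequence argument (after disposing of the fixed-point-free case separately in \Cref{sec:surfaces_with_free_invol}). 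For connectedness, the key is the integrality of the Li--Qin--Wang class $w=\1_{-(n-2)}\m_{1,1}(\alpha)|0\rangle=\tfrac12(u-v)$ from \Cref{rmk:KeyRelation}: if $S(\RR)$ is disconnected there is a primitive $\alpha\in H^2(S,\ZZ)^{\sigma}$, and then $v=\1_{-(n-2)}\p_{-2}(\alpha)|0\rangle$ is anti-invariant while $u=\1_{-(n-2)}\p_{-1}(\alpha)^2|0\rangle$ is invariant, so $\sigma(w)=w+v$ and the Comessatti characteristic of $H^4(S^{[n]},\ZZ)$ is positive --- contradiction. No Betti number of the fixed locus is ever computed. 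If you want to salvage your approach, the honest statement is that a proof of your product formula would be an interesting result in its own right (it would compute the fixed loci, not merely decide maximality), but it is a substantially harder problem than the one at hand.
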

\Cref{thm:main:NaturalAntiHoloInv} establishes the converse of \cite[Theorem 8.1]{FuMaximalReal} and generalizes \cite[Theorem 1.2]{Kharlamov-Rasdeaconu-HilbertSquare} to all $n\ge 2$.
In a similar fashion, the following theorem gives a clean characterization for the maximality of natural \textit{holomorphic} involution on punctual Hilbert schemes of surfaces. In \Cref{subsec:Examples} these theorems are applied to obtain examples of maximal involutions and non-maximal involutions, generalizing \cite[Corollaries 1.3 and 1.4]{Kharlamov-Rasdeaconu-HilbertSquare}.

\begin{theorem}[=\Cref{thm:NaturalHoloInv}]
	\label{thm:main:NaturalHoloInv}
	Let $n\geq 2$. Let $S$ be a smooth projective surface and $\sigma$ a holomorphic involution. Assume that $H^1(S, \F2)=0$. Then the induced involution on $S^{[n]}$ is maximal if and only if $\sigma$ is maximal and acts on $H^2(S, \ZZ)$ trivially.
\end{theorem}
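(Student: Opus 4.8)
The plan is to compare the total $\F2$-Betti numbers of $S^{[n]}$ and of the fixed locus $(S^{[n]})^{\sigma_n}$ of the induced involution $\sigma_n$, and to pin down exactly when the Smith inequality \eqref{eqn:SmithThomInequality} is an equality. On the ambient side this is routine: since $H^1(S,\F2)=0$, the integral cohomology of $S$ has no $2$-torsion in the relevant degrees, $H^\ast(S,\F2)$ is concentrated in even degrees, and the mod-$2$ Göttsche formula gives $\sum_{n\ge 0}b_\ast(S^{[n]},\F2)\,t^n=\prod_{m\ge 1}(1-t^{m})^{-b}$ with $b:=b_\ast(S,\F2)$; in particular $H^\ast(S^{[n]},\F2)$ is again concentrated in even degrees.

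The real work is on the fixed-locus side. Write $S^{\sigma}=C\sqcup\{p_1,\dots,p_r\}$, with $C$ a smooth (possibly empty or disconnected) curve with $c$ components and total genus $g$, and $p_1,\dots,p_r$ the isolated fixed points. The Hilbert--Chow morphism $S^{[n]}\to S^{(n)}$ is $\sigma$-equivariant, and the stratification of $S^{[n]}$ by the type of the subscheme restricts to a $\sigma_n$-invariant stratification of $(S^{[n]})^{\sigma_n}$: a $\sigma_n$-fixed subscheme decomposes into a part supported along $C$ (local $\ZZ/2$-model $(x,y)\mapsto(x,-y)$ on $\CC^2$), parts supported at the $p_i$ (local model $-\id$ on $\CC^2$), and a part supported on free $\sigma$-orbits, which amounts to a single subscheme of the smooth quotient surface $T:=(S\setminus S^{\sigma})/\sigma$, carried with doubled length. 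Since the punctual factors of all three local models are cohomologically cellular (affine-paved), one can assemble a product generating function for $\sum_n b_\ast((S^{[n]})^{\sigma_n},\F2)\,t^n$: a factor $\prod_{m\ge 1}(1-t^{2m})^{-\beta}$ coming from $T$, where $\beta$ is a suitable mod-$2$ Betti-type invariant of the quotient, together with factors encoding subschemes supported on $C$ and locally at each $p_i$.

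Granting such a formula, the comparison with Göttsche's product runs as follows. Matching the coefficient of $t^{1}$ --- equivalently, the identity $(S^{[1]})^{\sigma_1}=S^{\sigma}$ --- forces $b_\ast(S^{\sigma},\F2)=b$, i.e.\ maximality of $\sigma$ on $S$; so that condition is necessary. For the second condition, the topological Lefschetz fixed-point theorem gives $\chi(S^{\sigma})=2+\tr\!\big(\sigma^\ast\mid H^2(S,\QQ)\big)$, so triviality of $\sigma^\ast$ on $H^2(S,\ZZ)$ yields $\chi(S^{\sigma})=\chi(S)=b$; combined with maximality, which reads $b_\ast(S^{\sigma},\F2)=2c+2g+r=b$, and with $\chi(S^{\sigma})=2c-2g+r$, this forces $g=0$, i.e.\ $C$ is a disjoint union of copies of $\PP^1$, and it also makes $\beta$ attain its maximal value $b$, so that $T$ carries the full mod-$2$ Betti number. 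When both conditions hold, all the factors are of maximal size, every stratum of $(S^{[n]})^{\sigma_n}$ lies in even cohomological degrees, nothing cancels in reassembling them, and the product collapses to $\prod_{m}(1-t^{m})^{-b}$; hence $(S^{[n]})^{\sigma_n}$ is maximal for all $n\ge 2$. (The ``if'' direction can alternatively be extracted from the holomorphic analogue of \cite[Theorem~8.1]{FuMaximalReal}.) Conversely, if $\sigma$ is not maximal the $C$- and $p_i$-factors are already too small; and if $\sigma^\ast$ is non-trivial on $H^2(S,\ZZ)$ then either $\beta<b$, so the $T$-factor is too small, or $g>0$, in which case a positive-genus component of $C$ contributes odd cohomology classes that provably fail to propagate --- in either case the Smith inequality is strict, already at $n=2$.

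The main obstacle is precisely this structural step: making the $\sigma$-invariant Hilbert--Chow stratification of $(S^{[n]})^{\sigma_n}$ precise in the presence of the three local $\ZZ/2$-models, disentangling which subschemes supported on $S^{\sigma}$ arise as limits of free $\sigma$-orbits (and hence belong to the $T$-factor) from those that do not, dealing with the $A_1$-singularities and non-compactness of $T$, and --- the genuinely subtle point --- identifying the invariant $\beta$, since $H^\ast(S/\sigma,\F2)$ is not simply $H^\ast(S,\F2)^{\sigma}$. Once the product formula is established, the remaining comparison is a finite combinatorial verification; the anti-holomorphic statement \Cref{thm:main:NaturalAntiHoloInv} is handled by a parallel argument.
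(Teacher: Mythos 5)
Your proposal takes a genuinely different route from the paper -- a direct stratification of the fixed locus $(S^{[n]})^{\sigma_n}$ via Hilbert--Chow and local $\ZZ/2$-models, assembled into a G\"ottsche-type product to be compared coefficientwise with $\prod_m(1-t^m)^{-b}$ -- whereas the paper never touches the fixed locus at all: it works entirely on the cohomology of the ambient space, proving the ``if'' direction by lifting the Li--Qin integral generators of $H^*(S^{[n]},\ZZ)$ to equivariant cohomology (equivariant Chern classes of $\mathcal{O}_S^{[n]}$ and of $p_{1,!}p_2^*L_\alpha$, plus equivariant correspondences), and the ``only if'' direction by exhibiting a $2$-dimensional $G$-submodule of $H^4(S^{[n]},\ZZ)$ with non-zero Comessatti characteristic. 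However, as written your argument has a genuine gap: the entire difficulty of the theorem is concentrated in the product formula for $\sum_n b_*((S^{[n]})^{\sigma_n},\F2)\,t^n$ and in the claims that ``nothing cancels'', that $\beta$ can be identified, and that positive-genus components of $S^\sigma$ ``provably fail to propagate'' -- none of which is established. You yourself flag this as ``the main obstacle'', but it is not a technical afterthought; it is the theorem. In particular, the stratification by support type is not a disjoint union of products (subschemes on free orbits degenerate onto $S^\sigma$), the quotient $T$ is non-compact with $H^*(S/\sigma,\F2)\neq H^*(S,\F2)^\sigma$, and the mod-$2$ Betti numbers of the punctual strata over $C$ and over the isolated fixed points would each need a separate cell-decomposition argument before any coefficient comparison can begin.

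Two further points deserve attention. First, the condition in the theorem is that $\sigma$ acts trivially on $H^2(S,\ZZ)$ \emph{integrally}, which is strictly stronger than triviality on $H^2(S,\F2)$ (e.g.\ $-\id$ on a rank-one summand is invisible mod $2$); maximality of $S$ only forces the mod-$2$ action to be trivial, so the whole content of the ``only if'' direction is ruling out a non-zero integral $(-1)$-eigenspace. Your Lefschetz computation correctly translates this into $g>0$ for the fixed curve, but the step ``positive genus kills maximality of $S^{[n]}$'' is exactly the assertion to be proved; the paper detects it through the half-integral class $\m_{1,1}(\alpha)=\tfrac12\bigl(\p_{-1}(\alpha)^2-\p_{-2}(\alpha)\bigr)$ in $H^4(S^{[n]},\ZZ)$, whose mod-$2$ behaviour under $\sigma$ is controlled by the integral, not mod-$2$, action on $\alpha$. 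Second, in the ``only if'' direction you deduce maximality of $S$ by ``matching the coefficient of $t^1$'', but the hypothesis is maximality of $S^{[n]}$ for a single $n\geq 2$, which does not directly constrain the $t^1$ coefficient; making this work requires a stratum-by-stratum Smith inequality showing every factor of your product is termwise dominated by the corresponding G\"ottsche factor, again contingent on the unestablished formula. The outline is plausible and, if completed, would give finer information (the actual topology of the fixed locus) than the paper's obstruction-theoretic proof, but as it stands the proposal asserts rather than proves the theorem.
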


\begin{rmk}
	As a somewhat surprising consequence of \Cref{thm:main:NaturalAntiHoloInv} and \Cref{thm:main:NaturalHoloInv}, given a (holomorphic or anti-holomorphic) involution on a smooth projective surface $S$ with $H^1(S, \F2)=0$, the maximality of the naturally induced involution on $S^{[n]}$ for one $n\ge 2$ implies the maximality for all $n\ge 2$.
\end{rmk}

\begin{rmk}
The projectivity assumptions in \Cref{thm:main:NaturalAntiHoloInv} and \Cref{thm:main:NaturalHoloInv} can probably be dropped. They are inherited from \Cref{thm:LiQinWang-IntegralBasis} and \Cref{thm:LiQinWang-Generators} that we use in the proof. 
\end{rmk}

\subsection{Outline of the article}

The structure of the article is the following. \Cref{sec:prelim_on_maximality} contains preliminaries about maximal involutions, including sufficient and necessary conditions for maximality that are exploited throughout the paper. In \Cref{sec:integral_coh_Hilbert_schemes} we recollect facts about the basis given by Nakajima and Li--Qin--Wang for the integral cohomology of the Hilbert scheme of points on a smooth projective surface with vanishing $H^1(-, \F2)$. \Cref{sec:HK-LLV} is an introduction to hyper-Kähler manifolds with a particular focus on manifolds of K3\(^{[n]}\)-type, their monodromy representation on the cohomology of degree 4, and the monodromy representation in terms of the integral basis for Hilbert schemes of points on K3 surfaces introduced in \Cref{sec:integral_coh_Hilbert_schemes}. \Cref{sec:surfaces_with_free_invol} is dedicated to a proof of the fact that for a surface with \(H^1(S,\mathbb{F}_2)=0\) and a \textit{free} holomorphic or anti-holomorphic involution, the induced involution on the Hilbert scheme of points is not maximal. The proof relies on tools of algebraic topology, such as the Smith--Gysin exact sequence and the Kalinin spectral sequence, together with the classification of complex surfaces. The 
 content of \Cref{sec:Natural} is the proofs of \Cref{thm:main:NaturalAntiHoloInv} and \Cref{thm:main:NaturalHoloInv}. These results are achieved using the necessary and sufficient conditions of \Cref{sec:prelim_on_maximality} together with the facts about the integral basis for the cohomology of Hilbert schemes of points of \Cref{sec:integral_coh_Hilbert_schemes}. \Cref{sec:non_existence_maximal_K3[n]} contains the proof of the non-existence of maximal brane involutions for hyper-Kähler manifolds of K3\(^{[n]}\)-type, namely
 the proof of \Cref{thm:main:NonMaxHKnOdd}, \Cref{cor:main:NoMaxRealStructureHKnOdd} and \Cref{thm:main:NonMaxHKnSymp}. The main ingredients of the proofs are the necessary and sufficient conditions for maximality from \Cref{sec:prelim_on_maximality} and the monodromy representation on the degree 4 cohomology from \Cref{sec:prelim_on_HK}, together with some lattice theory.
Some final comments and questions are collected in \Cref{sec:final_comments}.

\subsection*{Convention}
Throughout the paper, we denote by $G$ the cyclic group of order 2 with generator $\sigma$. All topological spaces with involution are considered as $G$-spaces and are assumed to be $G$-homotopy equivalent to a finite $G$-$\mathrm{CW}$-complex (which is the case, for example, of smooth involutions on smooth manifolds having finitely generated cohomology with $\F2$-coefficients).
For a compact manifold, the Poincar\'e duality will be used implicitly to identify cohomology and homology, as well as the associated functorialities (e.g. Gysin maps). For a lattice \(\Lambda\), which is always assumed to be non-degenerate, we denote by \(A_\Lambda:= \Lambda^\vee/\Lambda\) its discriminant group. 

\subsection*{Acknowledgments} 
The $n=2$ case of \Cref{thm:main:NonMaxHKnOdd} was obtained earlier by the fourth author V. Kharlamov, who has benefited from discussions with Rares R\u asdeaconu. We all thank R\u asdeaconu for his contribution. 
We also thank Olivier Benoist for helpful conversations. 

\section{Preliminaries on maximality of involutions}\label{sec:prelim_on_maximality}

Given such a space $X$ with an involution $\sigma$, we let \(X_G:=(X\times EG)/G\) be the Borel construction, where \(G\) acts diagonally (hence freely), \(\mathsf BG\) is the classifying space of \(G\) and \(EG\) is its universal cover. The equivariant cohomology $H^*_G(X,-)$ of $X$ is defined by \(H^*_G(X,-)=H^*(X_G,-)\).

By construction, we have a fibration $X_G\to \mathsf BG$ with fiber $X$, the associated Leray--Serre spectral sequence is the following: 
\begin{equation}
\label{eqn:LeraySerreSS}
E_2^{p,q}=H^p(G, H^q(X, \F2))\Rightarrow H^{p+q}_G(X, \F2).
\end{equation}

The notion of maximality of $\sigma$ has several cohomological characterizations.

\begin{prop}[{\cite[Chapter III, Proposition 4.16]{TomDieck}}]
	\label{prop:MaximalityViaCohomology}
	The following conditions are equivalent:
	\begin{enumerate}
		\item The involution $\sigma$ is maximal, that is, the equality holds in \eqref{eqn:SmithThomInequality}.
		\item The natural morphism $H^*_G(X, \F2)\to H^*(X, \F2)$ is surjective.
		\item $G$ acts trivially on $H^*(X, \F2)$  and the Leray--Serre spectral sequence 
		\begin{equation}
			E_2^{p,q}=H^p(G, H^q(X, \F2))\Rightarrow H^{p+q}_G(X, \F2)
		\end{equation}
		degenerates at $E_2$.
	\end{enumerate}
\end{prop}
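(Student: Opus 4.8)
The plan is to read off all three conditions from the structure of the $\F2[t]$-module $M := H^*_G(X, \F2)$, where $t$ denotes the degree-$1$ generator of $H^*(\mathsf{B}G, \F2) = \F2[t]$. Since $X$ is (homotopy equivalent to) a finite $G$-CW-complex, $M$ is a finitely generated graded module over the PID $\F2[t]$, hence decomposes as a free part of rank $r$ plus a torsion part consisting of $\tau$ cyclic summands $\F2[t]/(t^{m_k})$. Two classical inputs feed in. First, the \emph{Borel localization theorem}: restriction to the fixed locus induces an isomorphism $t^{-1}M \cong t^{-1}H^*_G(X^\sigma, \F2)$, and since $G$ acts trivially on $X^\sigma$ one has $H^*_G(X^\sigma, \F2) = H^*(X^\sigma, \F2) \otimes_{\F2} \F2[t]$; comparing ranks over $\F2[t^{\pm 1}]$ gives $r = b_*(X^\sigma, \F2)$. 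Second, the \emph{Smith--Gysin exact sequence}, i.e.\ the Gysin sequence of the real line bundle on $X_G$ with $w_1 = t$ (equivalently, of the double cover $X \times EG \to X_G$):
\[
\cdots \to H^{n-1}_G(X, \F2) \xrightarrow{\;\cdot t\;} H^{n}_G(X, \F2) \xrightarrow{\;\rho\;} H^{n}(X, \F2) \xrightarrow{\;\mathrm{tr}\;} H^{n}_G(X, \F2) \xrightarrow{\;\cdot t\;} H^{n+1}_G(X, \F2) \to \cdots,
\]
in which $\rho$ is restriction to the fiber $X \hookrightarrow X_G$ (the map in condition (2)) and $\mathrm{tr}$ is the transfer.

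Next I would extract the inequality and its equality case from this sequence. Exactness at the left-hand $H^n_G$ shows that $\rho$ descends to an injection $\bar\rho\colon M/tM \hookrightarrow H^*(X, \F2)$; on the other hand the structure theorem over $\F2[t]$ gives $\dim_{\F2} M/tM = r + \tau = b_*(X^\sigma, \F2) + \tau$. Hence $b_*(X^\sigma, \F2) + \tau = \dim_{\F2} M/tM \le b_*(X, \F2)$, which is exactly \eqref{eqn:SmithThomInequality}, and equality (condition (1)) holds if and only if $\tau = 0$ \emph{and} $\bar\rho$ is onto, i.e.\ $\rho$ is surjective. To obtain $(1)\Leftrightarrow(2)$ it remains to see that condition (2) already forces $\tau = 0$: if $\rho$ is surjective, then by exactness at $H^n(X, \F2)$ the transfer vanishes, hence by exactness at the right-hand $H^n_G$ the map $\cdot t$ is injective on $M$; a finitely generated $\F2[t]$-module on which $t$ acts injectively is torsion-free, hence free, so $\tau = 0$. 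Conversely (1) gives $\tau = 0$ and $\rho$ surjective, i.e.\ (2).

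For $(2)\Leftrightarrow(3)$ the argument is formal and uses the Leray--Serre spectral sequence \eqref{eqn:LeraySerreSS} together with its module structure over $H^*(\mathsf{B}G,\F2) = \F2[t]$. Since $\rho$ always factors as $H^*_G(X, \F2) \twoheadrightarrow E_\infty^{0,*} \hookrightarrow E_2^{0,*} = H^*(X, \F2)^G$, condition (2) forces the $G$-action on $H^*(X, \F2)$ to be trivial; combined with $\tau = 0$ from the previous step, $M$ is then a free $\F2[t]$-module with $M/tM \cong H^*(X, \F2)$, so $M \cong \F2[t] \otimes_{\F2} H^*(X, \F2)$ as graded modules. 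Because $E_2^{p,q} = H^q(X, \F2)$ for all $p$ (group cohomology with trivial $\F2$-coefficients) while $E_\infty$ is the associated graded of $M$, the pages $E_2$ and $E_\infty$ have the same total $\F2$-dimension in every degree; since $E_\infty$ is a subquotient of $E_2$ this forces $E_\infty = E_2$, i.e.\ degeneration at $E_2$ — condition (3). Conversely, if $G$ acts trivially and the spectral sequence degenerates at $E_2$, then $E_\infty^{0,*} = E_2^{0,*} = H^*(X, \F2)$, and the edge homomorphism $H^*_G(X, \F2) \twoheadrightarrow E_\infty^{0,*}$ is precisely $\rho$, giving (2).

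I expect the genuine content to sit in two places: importing the localization isomorphism $t^{-1}H^*_G(X,\F2) \cong H^*(X^\sigma, \F2)[t^{\pm 1}]$ and the Smith--Gysin sequence (both standard, cf.\ Bredon), and the small but crucial observation that surjectivity of $\rho$ automatically upgrades $M$ to an $\F2[t]$-free module — this is what makes condition (2), stated with no freeness hypothesis, equivalent to maximality. The remaining ingredients (the structure theorem over $\F2[t]$, the rank bookkeeping, and the formal spectral-sequence comparison) are routine.
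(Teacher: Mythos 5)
Your proof is correct, and it is essentially the standard argument behind the cited result (the paper itself gives no proof, deferring to tom Dieck III.4.16, whose proof likewise rests on the localization theorem and the Gysin sequence of the double cover $X\times EG\to X_G$). All the key steps check out: the identification $r=b_*(X^{\sigma},\F2)$ via localization, the inequality $\dim_{\F2}M/tM\le b_*(X,\F2)$ via exactness, the observation that surjectivity of $\rho$ kills the transfer and hence forces $t$-torsion-freeness, and the dimension count comparing $E_2$ and $E_\infty$ under the trivial-action hypothesis.
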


The following classification of integral representations of involutions is going back to
Comessatti \cite{Comessatti-RealAVI,Comessatti-RealAVII}
(for a sketch of proof see, for example, \cite[Chapter I, 
Lemmas 3.5, 3.5.1]{Silhol-Surface-LNM}).

\begin{lemma}
\label{lemma:StructureInvolModules}
	Let $M$ be a free abelian group of finite rank equipped with an involution $\sigma$. Then 
	\begin{equation}\label{eqn:StrucutreInvolModules}
		M\cong M_1\oplus M_2\oplus B_1\oplus\cdots\oplus B_{\lambda}
	\end{equation}
	as a $G$-module, where $\sigma|_{M_1}=\id$, $\sigma|_{M_2}=-\id$, and $\sigma|_{B_i}$ has matrix $\begin{pmatrix}
	0 &1\\
	1& 0
	\end{pmatrix}$ for each $i$.
	
\end{lemma}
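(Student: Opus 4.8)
The plan is to reduce the classification of finitely generated $\ZZ[G]$-modules that are free as abelian groups to the indecomposable case, and then enumerate the indecomposables directly. Recall that $\ZZ[G] = \ZZ[\sigma]/(\sigma^2-1)$, so a $G$-module structure on a free abelian group $M$ is just a choice of involution $\sigma$ on $M$, and the three building blocks in the statement correspond to the trivial module $\ZZ$ (with $\sigma=\id$), the sign module $\ZZ^-$ (with $\sigma=-\id$), and the regular module $\ZZ[G]$ itself, which as a $G$-module is exactly $B_i$ with the matrix $\begin{pmatrix}0&1\\1&0\end{pmatrix}$ in the obvious basis. So the content of the lemma is the theorem of Diederichsen--Reiner that these three are (up to isomorphism) the only indecomposable $\ZZ[G]$-lattices.

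First I would set up the two natural sublattices. Let $M^+ = \Ker(\sigma - \id)$ and $M^- = \Ker(\sigma + \id)$; both are saturated (primitive) sublattices, and since $\charact \QQ = 0$ we have $M \otimes \QQ = (M^+\otimes\QQ) \oplus (M^-\otimes\QQ)$, with the two eigenspace projections given by $\tfrac12(1+\sigma)$ and $\tfrac12(1-\sigma)$. Set $N = M^+ \oplus M^-$, which sits inside $M$ as a finite-index sublattice, and consider the finite group $Q := M/N$. The map $v \mapsto \tfrac12(v + \sigma v, v - \sigma v)$ shows $N \subseteq M \subseteq \tfrac12 N$, so $2M \subseteq N$ and hence $Q$ is an elementary abelian $2$-group; moreover the composite $M \hookrightarrow \tfrac12 N \twoheadrightarrow \tfrac12 N / N \cong (N/2N)$ identifies $Q$ with a subgroup of $N \otimes \F2 = (M^+\otimes\F2)\oplus(M^-\otimes\F2)$, and one checks this subgroup projects injectively to each factor (an element of $Q$ coming from $v$ with $v + \sigma v \in 2M^+$ would force $v \in M^-$ after correcting, etc.). So $Q$ is the graph of an $\F2$-linear isomorphism between a direct summand $\bar U^+ \subseteq M^+\otimes\F2$ and a direct summand $\bar U^- \subseteq M^-\otimes\F2$ of equal $\F2$-dimension, say $\lambda$.

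Next I would use this to extract the $B_i$'s. Lift a basis realizing the above: choose $e_1,\dots,e_\lambda \in M^+$ and $f_1,\dots,f_\lambda \in M^-$ whose reductions span $\bar U^+$ and $\bar U^-$ and are matched by the graph, so that $b_i := \tfrac12(e_i + f_i) \in M$. Then $\sigma b_i = \tfrac12(e_i - f_i)$, and $\{b_i, \sigma b_i\}$ spans a copy of $\ZZ[G]$; the key point to verify is that $B := \bigoplus_i (\ZZ b_i \oplus \ZZ \sigma b_i)$ is a \emph{primitive} (saturated) $G$-sublattice of $M$ — equivalently that $M/B$ is torsion-free — which follows because $B$ accounts for all of $Q = M/N$ while $B \cap N = \bigoplus_i(\ZZ e_i \oplus \ZZ f_i)$ is primitive in $N$. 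Once $B$ is primitive, I would choose a $G$-equivariant splitting: since $B$ is a direct sum of copies of the free module $\ZZ[G]$, it is projective over $\ZZ[G]$ (indeed free), hence the surjection $M \twoheadrightarrow M/B$ of $\ZZ[G]$-modules splits, giving $M \cong B \oplus M'$ with $\sigma$ acting on $M' \subseteq M$ as an involution with $M'\otimes\QQ$ no longer "mixing" the eigenspaces — concretely $M' = M'^+ \oplus M'^-$ on the nose because we have exhausted the graph part. Setting $M_1 := M'^+$ and $M_2 := M'^-$ completes the decomposition \eqref{eqn:StrucutreInvolModules}.

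The main obstacle is the primitivity/splitting step: showing that the $B_i$ block can be chosen to be a direct $\ZZ[G]$-summand of $M$, not merely a finite-index or rationally-complementary piece. Everything rational is easy because $\QQ[G] \cong \QQ \times \QQ$ is semisimple; the integral subtlety is exactly that $\ZZ[G]$ is not a maximal order, so a priori there could be "glued" lattices. The resolution is the observation above that all the gluing is $2$-torsion and is captured precisely by the graph subgroup $Q$, together with the projectivity of $\ZZ[G]$ over itself which lets the relevant short exact sequence split once $B$ is saturated. I would present this carefully, perhaps isolating the claim "$M/N$ is the graph of an isomorphism between $\F2$-summands of $M^\pm\otimes\F2$" as the crux and deducing the rest formally; alternatively, if brevity is preferred, one can simply cite Diederichsen or Reiner's integral representation theory, as the excerpt's parenthetical pointer to \cite[Chapter I, Lemmas 3.5, 3.5.1]{Silhol-Surface-LNM} already does.
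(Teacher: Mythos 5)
The paper does not actually prove this lemma: it attributes the classification to Comessatti and refers to Silhol for a sketch, so there is no in-paper argument to compare against. Your write-up supplies a genuine proof, and it is the standard one. Its architecture is sound: setting $N=M^+\oplus M^-$, showing $2M\subseteq N$, identifying $Q=M/N$ inside $N\otimes\F2$ as the graph of an isomorphism between subspaces of $M^+\otimes\F2$ and $M^-\otimes\F2$ (the injectivity of each projection is exactly the computation you indicate: $v+\sigma v=2w$ with $w\in M^+$ forces $v-w\in M^-$, hence $v\in N$), lifting to elements $b_i=\tfrac12(e_i+f_i)\in M$ that generate copies of $\ZZ[G]$, and checking via $M=B+N$ and $B\cap N=\bigoplus_i(\ZZ e_i\oplus\ZZ f_i)$ that $B$ is saturated. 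This is a correct and essentially self-contained route where the paper only cites the literature.

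Two points should be tightened. First, for the saturation of $B$ you need the lifts $e_1,\dots,e_\lambda$ to span a \emph{saturated} sublattice of $M^+$ (equivalently, to extend to a $\ZZ$-basis), not merely to have $\F2$-independent reductions; this can be arranged using the surjectivity of $\GL_r(\ZZ)\to\GL_r(\F2)$, but it is not automatic for arbitrary lifts and should be stated. Second, your justification of the final splitting is misattributed: projectivity of the \emph{sub}module $B$ does not split $0\to B\to M\to M/B\to 0$ (projectivity splits sequences in which the module in question is the quotient, and $M/B\cong M_1\oplus M_2$ is not projective over $\ZZ[G]$, since the trivial module is not cohomologically trivial). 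What makes the sequence split is $\operatorname{Ext}^1_{\ZZ[G]}(M/B,\ZZ[G])=0$, which holds because $\ZZ[G]=\operatorname{Ind}_1^G\ZZ$ is an induced module and $M/B$ is a lattice; alternatively, and more elementarily, once $\bigoplus_i\ZZ e_i\subseteq M^+$ and $\bigoplus_i\ZZ f_i\subseteq M^-$ are saturated you can choose $\ZZ$-complements $M_1\subseteq M^+$ and $M_2\subseteq M^-$, which are automatically $\sigma$-stable, and verify directly that $M=B\oplus M_1\oplus M_2$ by comparing ranks and using $M=B+N$. With either repair the argument is complete.
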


\begin{rmk}
	\label{rmk:Comessatti0}
	The number $\lambda$ in the standard form (\ref{eqn:StrucutreInvolModules}) is called the \textit{Comessatti characteristic} of $(M,\sigma)$. By \Cref{lemma:StructureInvolModules}, it can be equivalently defined as 
	\begin{equation}
		\lambda(M, \sigma)= \dim_{\F2}\operatorname{Im}\left(1+\sigma\colon M\otimes_{\ZZ}\F2 \to M\otimes_{\ZZ}\F2\right).
	\end{equation}
\end{rmk}

Combined with \Cref{prop:MaximalityViaCohomology}, the following lemma provides the main obstruction that we will exploit towards the maximality of involutions.

\begin{lemma}
	\label{lemma:EquivalentConditionsForSplitting}
	Let $M$ be a free abelian group of finite rank equipped with an involution $\sigma$. Then the following conditions are equivalent.
	\begin{enumerate}
		\item  $\sigma$ acts on $M\otimes \F2$ trivially.
		\item The Comessatti characteristic vanishes: $\lambda(M, \sigma)=0$.
		\item $M=M^\sigma\oplus M^{\sigma-}$, where $M^{\sigma}:=\{x\in M~|~\sigma(x)=x\}$ and $M^{\sigma-}:=\{x\in M~|~\sigma(x)=-x\}$,
		\item For any element $x\in M$, $x+\sigma(x)$ is divisible by 2 in $M$.
      \end{enumerate}
\end{lemma}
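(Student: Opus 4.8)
The plan is to prove the equivalence of the four conditions in a cycle, say $(1)\Rightarrow(2)\Rightarrow(3)\Rightarrow(4)\Rightarrow(1)$, relying throughout on the normal form of \Cref{lemma:StructureInvolModules}. Writing $M\cong M_1\oplus M_2\oplus B_1\oplus\cdots\oplus B_\lambda$ as in \eqref{eqn:StrucutreInvolModules}, almost everything reduces to a direct inspection on each summand, the only genuinely "global'' point being that the decompositions in (3) must use the whole of $M$, not just a finite-index subgroup.

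For $(1)\Leftrightarrow(2)$ I would simply reconcile the two descriptions of the Comessatti characteristic in \Cref{rmk:Comessatti0}: reducing the normal form modulo $2$, the operator $1+\sigma$ on $M\otimes\F2$ kills $M_1\otimes\F2$ and $M_2\otimes\F2$ and has rank exactly $1$ on each $B_i\otimes\F2$ (since $1+\begin{pmatrix}0&1\\1&0\end{pmatrix}=\begin{pmatrix}1&1\\1&1\end{pmatrix}$ has rank $1$ over $\F2$), so $\lambda$ equals the number of hyperbolic blocks; meanwhile $\sigma$ acts trivially on $M\otimes\F2$ precisely when no hyperbolic block is present, because on each $B_i\otimes\F2$ the swap $\begin{pmatrix}0&1\\1&0\end{pmatrix}$ is nontrivial. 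For $(2)\Rightarrow(3)$: if $\lambda=0$ then $M\cong M_1\oplus M_2$ already exhibits the splitting with $M^\sigma=M_1$ and $M^{\sigma-}=M_2$. For $(3)\Rightarrow(4)$: if $M=M^\sigma\oplus M^{\sigma-}$ then writing $x=a+b$ with $\sigma(a)=a$, $\sigma(b)=-b$ gives $x+\sigma(x)=2a\in 2M$. For $(4)\Rightarrow(1)$, or equivalently for $\neg(1)\Rightarrow\neg(4)$: if $\sigma$ acts nontrivially on $M\otimes\F2$ then by \Cref{lemma:StructureInvolModules} some hyperbolic block $B_i$ appears; taking $x$ to be the first basis vector $e$ of that $B_i$, we get $x+\sigma(x)=e+f$, and $e+f$ is not divisible by $2$ in $B_i$ (hence not in $M$, as $B_i$ is a direct summand), so (4) fails.

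An alternative, slightly more self-contained route for $(1)\Rightarrow(3)$ that avoids invoking \Cref{lemma:StructureInvolModules} for this implication: assuming $\sigma\equiv\id$ on $M\otimes\F2$, for any $x\in M$ the element $x+\sigma(x)$ reduces to $2\bar x=0$ mod $2$, so $x+\sigma(x)=2y$ for some $y\in M$; set $x^+:=y$ and $x^-:=x-y$, check $\sigma(x^+)=x^+$ and $\sigma(x^-)=-x^-$ using $\sigma^2=\id$, and conclude $M=M^\sigma+M^{\sigma-}$; the sum is direct since $M^\sigma\cap M^{\sigma-}\subseteq M[2]=0$ as $M$ is torsion-free. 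This also gives $(1)\Rightarrow(4)$ for free.

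I do not anticipate a serious obstacle here; the statement is elementary once \Cref{lemma:StructureInvolModules} is in hand. The one place to be careful is the non-divisibility claim in $(4)\Rightarrow(1)$: one must use that $B_i$ is a \emph{direct summand} of $M$ (so that divisibility in $M$ restricts to divisibility in $B_i$), which is exactly what the normal form provides. A secondary point worth a line is that all four conditions are insensitive to tensoring $M$ with $\ZZ$ versus working integrally only because $M$ is torsion-free of finite rank, so that reduction mod $2$ behaves well and $M[2]=0$; this is why the hypothesis "free abelian group of finite rank'' is used.
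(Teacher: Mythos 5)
Your proof is correct, and it follows exactly the route the paper intends: the paper states this lemma without proof, treating it as an immediate consequence of the Comessatti normal form of \Cref{lemma:StructureInvolModules} and the description of $\lambda$ in \Cref{rmk:Comessatti0}, which is precisely what you verify block by block (including the one point that genuinely needs care, namely that $e+f$ is not $2$-divisible because $B_i$ is a direct summand). The only simplification worth noting is that $(1)\Leftrightarrow(2)$ needs no normal form at all: since $-1=1$ in $\mathbb{F}_2$, the involution acts trivially on $M\otimes\mathbb{F}_2$ if and only if $1+\sigma$ vanishes there, i.e.\ if and only if $\lambda(M,\sigma)=0$ by definition.
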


\begin{rmk}
	Let $M$ be a finite type torsion-free abelian group with an involution. If the Comessatti characteristic of $M$ is zero, then for any $n\geq 1$, $\Sym^n(M)$ equipped with the naturally induced involution, is again of Comessatti characteristic zero. 
\end{rmk}

\begin{lemma}
	\label{lemma:ComessattiPrimitiveSubModule}
	Let $M$ be a finite type free abelian group with involution $\sigma$
    and $M'\subset M$ a $\sigma$-invariant subgroup.
    Assume that  $M/M'$ is 2-torsion-free, 
    then  $\lambda(M')\leq \lambda(M)$.
\end{lemma}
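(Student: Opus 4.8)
The plan is to reduce everything to the description of the Comessatti characteristic given in \Cref{rmk:Comessatti0} as the $\F2$-rank of the operator $1+\sigma$ acting on the mod-$2$ reduction, and to use the hypothesis on $M/M'$ precisely to guarantee that $M'\otimes\F2$ embeds into $M\otimes\F2$.

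First I would record the consequence of the hypothesis: since $M/M'$ is $2$-torsion-free, multiplication by $2$ is injective on $M/M'$, so tensoring the short exact sequence $0\to\ZZ\xrightarrow{\;2\;}\ZZ\to\F2\to 0$ with $M/M'$ shows $\operatorname{Tor}_1^{\ZZ}(M/M',\F2)=0$. Therefore the short exact sequence $0\to M'\to M\to M/M'\to 0$ stays exact after applying $-\otimes_{\ZZ}\F2$, and in particular the natural $\F2$-linear map $\iota\colon M'\otimes\F2\to M\otimes\F2$ is injective. From now on I regard $M'\otimes\F2$ as an $\F2$-subspace of $M\otimes\F2$ via $\iota$.

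Next, since $M'$ is $\sigma$-stable, the operator $1+\sigma$ on $M\otimes\F2$ carries $M'\otimes\F2$ into itself and restricts there to the operator $1+\sigma$ of $M'\otimes\F2$; equivalently $\iota\circ(1+\sigma)=(1+\sigma)\circ\iota$. Hence $\iota$ sends $\operatorname{Im}\bigl(1+\sigma\colon M'\otimes\F2\to M'\otimes\F2\bigr)$ injectively into $\operatorname{Im}\bigl(1+\sigma\colon M\otimes\F2\to M\otimes\F2\bigr)$. Comparing $\F2$-dimensions of these two spaces and invoking \Cref{rmk:Comessatti0} yields $\lambda(M')\le\lambda(M)$.

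The argument is purely formal once the $\operatorname{Tor}$-vanishing is in place, so the only genuine point — and the sole place the hypothesis enters — is the injectivity of $\iota\colon M'\otimes\F2\to M\otimes\F2$. This cannot be dispensed with: for $M=\ZZ^2$ with $\sigma(x,y)=(x,-y)$ one has $\lambda(M)=0$, while the $\sigma$-invariant subgroup $M'=\{(x,y)\in M\mid x\equiv y\bmod 2\}$, for which $M/M'\cong\ZZ/2$ has $2$-torsion, satisfies $\lambda(M')=1$.
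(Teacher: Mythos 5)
Your proof is correct and follows essentially the same route as the paper: both use the vanishing of $\operatorname{Tor}_1^{\ZZ}(M/M',\F2)$ to identify $M'\otimes\F2$ with a $\sigma$-stable subspace of $M\otimes\F2$ and then compare ranks of $1+\sigma$ via \Cref{rmk:Comessatti0}. The counterexample showing the $2$-torsion-free hypothesis is necessary is a nice addition not present in the paper.
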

\begin{proof}
	The short exact sequence 
	\begin{equation}
		0\to M'\to M\to M/M'\to 0
	\end{equation}
	gives rise to an exact sequence
	\begin{equation}
		\operatorname{Tor}^{\ZZ}(M/M', \F2) \to M'\otimes \F2\to M\otimes \F2\to M/M'\otimes \F2\to 0.
	\end{equation}
	The first term vanishes by the assumption that $M/M'$ is 2-torsion-free. Hence $M'\otimes \F2$ is a subspace of $M\otimes \F2$, which is preserved by $\sigma$. Therefore 
	\begin{equation}
		\lambda(M')= \dim_{\F2}\operatorname{Im}\left(1+\sigma|_ {M'\otimes\F2 }\right)\leq  \dim_{\F2}\operatorname{Im}\left(1+\sigma|_ {M\otimes\F2 }\right)=\lambda(M).
	\end{equation}
\end{proof}

\begin{lemma}
    \label{lemma:ComessattiQuotient}
    Let $M, M"$ be free abelian groups of finite type equipped with involution. Let $M\to M''$ be an equivariant surjective homormoprhism. Then $\lambda(M)\geq \lambda(M")$.
\end{lemma}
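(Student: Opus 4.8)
The plan is to reduce everything to the description of the Comessatti characteristic recorded in \Cref{rmk:Comessatti0}, namely
\[
\lambda(M)=\dim_{\F2}\operatorname{Im}\big(1+\sigma\colon M\otimes_{\ZZ}\F2\to M\otimes_{\ZZ}\F2\big),
\]
and then to run a one-line diagram chase after tensoring with $\F2$.

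First I would tensor the given equivariant surjection $f\colon M\twoheadrightarrow M''$ with $\F2$. By right-exactness of $-\otimes_{\ZZ}\F2$, the induced map $\bar f\colon M\otimes\F2\to M''\otimes\F2$ is still surjective, and it is $G$-equivariant because $f$ is; in particular $\bar f\circ(1+\sigma)=(1+\sigma)\circ\bar f$ as $\F2$-linear maps. Combining surjectivity of $\bar f$ with this commutation relation gives
\[
\operatorname{Im}\big(1+\sigma|_{M''\otimes\F2}\big)=\operatorname{Im}\big((1+\sigma)\circ\bar f\big)=\operatorname{Im}\big(\bar f\circ(1+\sigma)\big)=\bar f\big(\operatorname{Im}(1+\sigma|_{M\otimes\F2})\big),
\]
where the first equality uses that $\bar f$ is onto. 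Since a linear image cannot have larger dimension than its source, $\dim_{\F2}\operatorname{Im}(1+\sigma|_{M''\otimes\F2})\leq \dim_{\F2}\operatorname{Im}(1+\sigma|_{M\otimes\F2})$, i.e.\ $\lambda(M'')\leq \lambda(M)$, as claimed.

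I do not anticipate any real obstacle: the argument is formal once $\lambda$ is read off the $\F2$-reduction. The only point requiring a moment of care is that one should work with the mod-$2$ reductions rather than with $M, M''$ themselves, so that surjectivity is preserved (no $\operatorname{Tor}$ term intervenes, in contrast with the sub-object situation of \Cref{lemma:ComessattiPrimitiveSubModule}); this is exactly why the reformulation in \Cref{rmk:Comessatti0} is the convenient starting point.
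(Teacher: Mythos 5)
Your argument is correct and is essentially identical to the paper's: both reduce to the description of $\lambda$ as $\dim_{\F2}\operatorname{Im}(1+\sigma)$ on the mod-$2$ reduction and then use the commutative square formed by the surjection $M\otimes\F2\twoheadrightarrow M''\otimes\F2$ and the two maps $1+\sigma$, concluding that the rank of the left vertical map bounds that of the right. You merely write out the diagram chase in equations where the paper draws the square; there is no substantive difference.
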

\begin{proof}
    We denote both involutions by $\sigma$. Consider the following commutative diagram:
    \begin{equation}
        \xymatrix{
        M\otimes \F2 \ar@{->>}[r] \ar[d]^{1+\sigma} & M"\otimes \F2 \ar[d]^{1+\sigma}\\
        M\otimes \F2 \ar@{->>}[r] & M"\otimes \F2 
        }
    \end{equation}
    Since the horizontal maps are surjective by assumption, the rank of the left vertical map is at least the rank of the right vertical map.
\end{proof}

\section{Integral cohomology of Hilbert schemes of points on surfaces}\label{sec:integral_coh_Hilbert_schemes}

In this section, we collect results on the cohomology of Hilbert schemes of points on surfaces that we will need in the sequel.

For a smooth projective complex surface $S$, we denote by $1_S\in H^0(S, \ZZ)$ its fundamental class (with the natural orientation), by $\pt\in H^4(S, \ZZ)$ the class of a point. Let $|0\rangle\in H^0(S^{[0]}, \ZZ)$ be the positive generator, which is the highest weight vector for Nakajima's representation of the Heisenberg Lie algebra on $\bigoplus_{n\geq 0} H^*(S^{[n]}, \QQ)$.

In the rest of the paper, cohomological correspondences will be frequently used. Let us recall the definition, which works for any choice of coefficients ({\it{cf.}} \cite[Section 8.1]{Nakajima}). Given two smooth projective varieties $X, Y$ and a cohomology class $\gamma\in H^*(X\times Y)$, we define
\begin{align*}
        \gamma_*\colon H^*(X)&\to H^*(Y)\\
        v&\mapsto p_{Y,*}(p_X^*(v)\smile \gamma),
\end{align*}
and 
\begin{align*}
        \gamma^*\colon H^*(Y)&\to H^*(X)\\
        v&\mapsto p_{X,*}(p_Y^*(v)\smile \gamma),
\end{align*}
where the push-forward is the Gysin map (using Poincar\'e duality).
The composition of two correspondences $\gamma\in H^*(X\times Y)$, $\zeta\in H^*(Y\times Z)$ is defined as $\zeta\circ \gamma:= p_{XZ,*}(p_{XY}^*(\gamma)\smile p_{YZ}^*(\zeta))$. We have associativity of compositions, and natural equalities $(\zeta\circ \gamma)_*=\zeta_*\circ \gamma_*$ and $(\zeta\circ \gamma)^*=\gamma^*\circ \zeta^*$.

\subsection{Nakajima operators and Li--Qin--Wang integral operators}
\label{subsec:Nakajima-Li-Qin-Wang-Operators}

Let $S$ be a smooth projective surface over $\CC$.
Following \cite[Definition 3.1]{QinWang05}, an operator in $\End\left(\bigoplus_n H^*(S^{[n]},\QQ)\right)$ is called \textit{integral}, if it respects the subgroup 
$\bigoplus_n H^*(S^{[n]},\ZZ)/\tors\subset \bigoplus_n H^*(S^{[n]},\QQ)$. This notion is slightly weaker than the usual one (elements in $\End(\bigoplus_n H^*(S^{[n]},\ZZ))$).

\begin{enumerate}
	\item Nakajima \cite{Nakajima} defined some cohomological operations using natural correspondences. Let us recall the definition.  
	For any $k\geq 0$ and any $\alpha\in H^*(S,\ZZ)$, the (creation) Nakajima operator $\p_{-k}(\alpha)$, for any $j\geq 0$, 
	\begin{equation}
		\p_{-k}(\alpha)\colon H^*(S^{[j]},\ZZ)\to  H^*(S^{[j+k]},\ZZ),
	\end{equation}
	sends an element $\beta\in H^*(S^{[j]},\ZZ)$ to the class $q_*(p^*(\beta)\smile r^*(\alpha)\smile [Q_{j+k, j}])\in H^*(S^{[j+k]},\ZZ)$, where $$Q_{j+k, j}:=\{(Z, x, Z')\in S^{[j]}\times S\times S^{[j+k]}~|~ \operatorname{supp}(I_Z/I_{Z'})=\{x\}\},$$ and $p, r, q$ are the projections from $S^{[j]}\times S\times S^{[j+k]}$ to $S^{[j]}$, $S$ and $S^{[j+k]}$ respectively.
	In particular, taking $j=0$, then $\p_{-k}(\alpha)|0\rangle$ is the image of $\alpha$ via the correspondence $[\Gamma_k]_*\colon H^*(S,\ZZ)\to H^*(S^{[k]},\ZZ)$, where		
	$\Gamma_k:=\{(x, Z)\in S\times S^{[k]}~|~ \operatorname{supp}(Z)=\{x\}\}$.
	In particular, $\p_{-1}(\alpha)|0\rangle=\alpha$.
	\item For any $k\geq 0$, Li--Qin--Wang \cite[Definition 4.1]{LiQinWang-Crelle03}, \cite[(2.7)]{LiQin08} defined the operator $\1_{-k}$ as $\1_{-k}:=\frac{1}{k!}\mathfrak{p}_{-1}(1_S)^k$. Despite of the apparent denominator, in \cite[Lemma 3.3]{QinWang05} it is proved that $\1_{-k}$ is an \textit{integral} operator.
Indeed, it can be equivalently defined as follows: for any $j\geq 0$, the operator $$\1_{-k}: H^*(S^{[j]}, \ZZ)\to H^*(S^{[j+k]}, \ZZ)$$ is the correspondence $[S^{[j, j+k]}]_*$, where $S^{[j, j+k]}:=\{ (Z, Z') \in S^{[j]}\times S^{[j+k]}~|~Z\subset Z'\}$ is the nested Hilbert scheme. 
	\item The operator $\mathfrak{m}$ is defined in \cite[\S 4.2]{QinWang05} and in \cite[Definition 3.3]{LiQin08}. More precisely, for $\lambda$ a partition of $n$, $\alpha\in H^2(S, \ZZ)/\tors$, we have an operator
    \begin{equation}
        \mathfrak{m}_{\lambda} (\alpha)\colon H^*(S^{[k]}, \QQ) \to H^*(S^{[k+n]},\QQ),
    \end{equation}
    which is proven to be an integral operator by Li--Qin in \cite[Theorem 3.6]{LiQin08}.
    The precise definition of $\mathfrak{m}$ is somewhat involved. Let us only mention here that when $\alpha=[C]$ for a smooth irreducible curve $C$ in $S$, then for any partition $\lambda=(\lambda_1\geq \cdots \geq \lambda_N)$ of $n$, we have $\mathfrak{m}_{\lambda} ([C])|0\rangle= [L^\lambda C]$, where $L^\lambda C$ is the closure in $S^{[n]}$ of $\{\lambda_1x_1+\cdots+\lambda_Nx_N ~|~ x_i \in C \text{ distinct}\}.$ 
\end{enumerate}

\subsection{Integral basis}
Thanks to the work of Li--Qin--Wang \cite[Theorem 1.1]{LiQin08} and \cite[Theorem 1.1]{QinWang05}, for a projective surface with vanishing odd Betti numbers, we have a concrete integral basis for the cohomology of its punctual Hilbert schemes.

\begin{theorem}[Li--Qin--Wang]
	\label{thm:LiQinWang-IntegralBasis}
	Let $S$ be a smooth projective surface with $b_1(S)=0$. Let $\alpha_1, \cdots, \alpha_k$ be an integral basis of $H^2(S, \ZZ)/\tors$. Then the following classes form an integral basis of $H^*(S^{[n]}, \ZZ)/\operatorname{tors}$:
	\begin{equation}
		\frac{1}{\z_\lambda} \p_{-\lambda}(1_S)\p_{-\mu}(\pt) \m_{\nu^1}(\alpha_1)\cdots \m_{\nu^k}(\alpha_k)|0\rangle  
	\end{equation}
	where $\lambda, \mu, \nu^1, \cdots, \nu^k$ run through all partitions satisfying $|\lambda|+|\mu|+ |\nu^1|+\cdots |\nu^k|=n$, and for a partition $\lambda=(\lambda_1\geq \lambda_2\geq\cdots\geq \lambda_l)=(1^{m_1}2^{m_2}\cdots r^{m_r})$, let $|\lambda|:=\sum im_i=\sum_i\lambda_i$,
	$\z_{\lambda}:=\prod_{i} (i^{m_i}m_i!)$, and \(\mathfrak{p}_{-\lambda}=\prod_i \mathfrak{p}_{-\lambda_i}\).
\end{theorem}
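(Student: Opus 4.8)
The plan is to obtain the statement from three ingredients: Nakajima's basis theorem at the rational level, the integrality of the operators $\tfrac{1}{\z_\lambda}\p_{-\lambda}(1_S)$, $\p_{-\mu}(\pt)$ and $\m_\nu(\alpha)$ recalled in \S\ref{subsec:Nakajima-Li-Qin-Wang-Operators}, and a unimodularity check at the very end. First, since $b_1(S)=0$ --- hence also $b_3(S)=0$ by Poincar\'e duality --- the cohomology $H^*(S,\QQ)=\QQ\,1_S\oplus H^2(S,\QQ)\oplus\QQ\,\pt$ is concentrated in even degrees, so the Nakajima creation operators commute among themselves and no Koszul signs intervene. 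By Nakajima's theorem \cite{Nakajima}, $\bigoplus_{m\ge0}H^*(S^{[m]},\QQ)$ is the Fock space of the Heisenberg algebra modelled on $H^*(S,\QQ)$, and the monomials $\p_{-\lambda}(1_S)\,\p_{-\mu}(\pt)\,\p_{-\rho^1}(\alpha_1)\cdots\p_{-\rho^k}(\alpha_k)|0\rangle$, with $\lambda,\mu,\rho^1,\dots,\rho^k$ running over all partition tuples of total size $n$, form a $\QQ$-basis of $H^*(S^{[n]},\QQ)$.

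Next I would pass from this ``pure Nakajima'' basis to the family in the statement by a triangular change of coordinates, and then verify integrality of its members. Using the definition of the $\m$-operators (\cite[\S 4.2]{QinWang05}, \cite[Definition 3.3]{LiQin08}) together with the observation that the subring of $H^*(S,\QQ)$ generated by $1_S$ and a single class $\alpha\in H^2(S,\QQ)$ is spanned by $1_S,\alpha,\pt$, one sees that $\m_\nu(\alpha)$ equals $c_\nu\,\p_{-\nu}(\alpha)$ with $c_\nu\in\QQ^\times$, plus a $\QQ$-linear combination of $\p$-monomials in $1_S,\alpha,\pt$ that are strictly smaller for a fixed partial order (refining the partition attached to $\alpha$, with weight allowed to migrate onto the $1_S$- and $\pt$-slots). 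Hence the transition matrix from the Nakajima basis to the family $\bigl\{\tfrac{1}{\z_\lambda}\p_{-\lambda}(1_S)\p_{-\mu}(\pt)\m_{\nu^1}(\alpha_1)\cdots\m_{\nu^k}(\alpha_k)|0\rangle\bigr\}$ is block-triangular with invertible diagonal blocks, so this family is again a $\QQ$-basis of $H^*(S^{[n]},\QQ)$, the scalars $\z_\lambda^{-1}$ being harmless at this level. Each of its members lies in $H^*(S^{[n]},\ZZ)/\tors$: one applies to $|0\rangle\in H^0(S^{[0]},\ZZ)$, successively, the integral operators $\m_{\nu^k}(\alpha_k),\dots,\m_{\nu^1}(\alpha_1)$ (integral by \cite[Theorem 3.6]{LiQin08}), then $\p_{-\mu}(\pt)$ (integral, being a composition of correspondences attached to the integral cycles $Q_\bullet$ and the integral class $\pt$), then $\tfrac{1}{\z_\lambda}\p_{-\lambda}(1_S)$ (integral by \cite{QinWang05}, the case $\lambda=(1^k)$ being the nested-Hilbert-scheme operator $\1_{-k}=[S^{[j,j+k]}]_*$ of \cite[Lemma 3.3]{QinWang05}). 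Thus the family is a $\ZZ$-basis of a finite-index sublattice $L_n\subseteq H^*(S^{[n]},\ZZ)/\tors$, which moreover is graded, being spanned by homogeneous classes.

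The heart of the matter is to prove $L_n=H^*(S^{[n]},\ZZ)/\tors$, i.e.\ that no denominators remain hidden. One route uses Poincar\'e duality on the smooth projective $2n$-fold $S^{[n]}$: the cup-product pairing $H^d\times H^{4n-d}\to\ZZ$ on the torsion-free quotients is perfect for each $d$, and since $L_n$ is a graded finite-index sublattice, a short argument shows that if, for every $d$, the matrix $P_d$ pairing the degree-$d$ against the degree-$(4n-d)$ members of our family has $\det P_d=\pm1$, then $L_n$ is all of $H^*(S^{[n]},\ZZ)/\tors$. The entries of $P_d$ are intersection numbers of the classes $\tfrac{1}{\z_\lambda}\p_{-\lambda}(1_S)\p_{-\mu}(\pt)\prod_j\m_{\nu^j}(\alpha_j)|0\rangle$, computed via the Heisenberg commutation relations ($[\p_m(\alpha),\p_{-m}(\beta)]=\pm m\,\langle\alpha,\beta\rangle_S\cdot\id$), the vanishing $\p_m(\cdot)|0\rangle=0$ for $m>0$, the duality $\langle 1_S,\pt\rangle_S=1$, the triangular expansion of $\m_\nu$ above, and the unimodularity of the intersection form on $H^2(S,\ZZ)/\tors$ (Poincar\'e duality on the $4$-manifold $S$); the normalization by $\z_\lambda^{-1}$ and the twist by the $\m$-operators are precisely what makes the combinatorial factors $\z_\lambda$ produced by the contractions cancel. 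Alternatively one can run the inductive argument of \cite{QinWang05,LiQin08}, reconstructing $H^*(S^{[n]},\ZZ)/\tors$ from the lower $H^*(S^{[m]},\ZZ)/\tors$ under a controlled list of integral operators while tracking the transition determinant along the induction. In either form, this last step is the main obstacle: the arguments for the rational basis, the triangular change of coordinates and the integrality of each member are bookkeeping on top of results that are classical or recalled above, whereas proving that the explicit family spans the \emph{full} lattice --- that $\z_\lambda^{-1}$ together with the combinatorics of the $\m_\nu$ leave no residual torsion in $(H^*(S^{[n]},\ZZ)/\tors)/L_n$ --- is where the genuine work of \cite{QinWang05,LiQin08} lies.
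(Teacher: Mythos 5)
This statement is not proved in the paper at all: it is imported verbatim as a result of Li--Qin--Wang, with the proof living entirely in \cite[Theorem 1.1]{QinWang05} and \cite[Theorem 1.1]{LiQin08}. So there is no in-paper argument to compare yours against; the only question is whether your proposal constitutes an independent proof. It does not. Your first two steps (Nakajima's rational basis, the triangular change of coordinates to the $\mathfrak{m}$-twisted monomials, and the integrality of each member via the integral operators $\frac{1}{\z_\lambda}\p_{-\lambda}(1_S)$, $\p_{-\mu}(\pt)$, $\m_\nu(\alpha)$) are correct and correctly sourced, and they do establish that your family is a $\ZZ$-basis of a graded finite-index sublattice $L_n\subseteq H^*(S^{[n]},\ZZ)/\tors$. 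But the theorem \emph{is} the statement that this index equals $1$, and that is exactly the step you do not carry out: you reduce it to the claim $\det P_d=\pm 1$ for the complementary-degree Gram matrices and then stop, and your ``alternative'' is to ``run the inductive argument of \cite{QinWang05,LiQin08}'', i.e.\ to invoke the proof of the theorem you are trying to prove. Computing those determinants is not bookkeeping: it requires the precise lower-order terms in the expansion of $\m_\nu(\alpha)$ (whose definition the paper itself describes as ``somewhat involved'') and a genuinely nontrivial cancellation of the combinatorial factors $\z_\lambda$ against the Heisenberg contractions, valid for an arbitrary surface with $b_1=0$ whose intersection form on $H^2(S,\ZZ)/\tors$ is unimodular but otherwise arbitrary.

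Two smaller remarks. First, your Poincar\'e-duality framework for the saturation step is sound in principle: for a full-rank graded sublattice $L_n$ of the unimodular (in complementary degrees) lattice $H^*(S^{[n]},\ZZ)/\tors$, one has $\det P_d=\pm[H^d:L_n^d]\cdot[H^{4n-d}:L_n^{4n-d}]$, so unimodularity of all the $P_d$ would indeed force $L_n$ to be everything; the gap is only that the hypothesis is never verified. Second, you should be explicit that the triangularity claim for $\m_\nu(\alpha)$ (leading term $\frac{1}{\z_\nu}\p_{-\nu}(\alpha)$ up to a unit, plus lower-order corrections as in \Cref{rmk:KeyRelation}) itself needs a citation or a proof; it is a property of the construction in \cite{LiQin08}, not a formal consequence of the definitions recalled in \Cref{subsec:Nakajima-Li-Qin-Wang-Operators}. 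As it stands, your text is an accurate annotated reduction of the theorem to the original references --- which is precisely how the paper uses it --- rather than a proof.
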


\begin{rmk}[Integral basis in $H^2$ and $H^4$]
	\label{rmk:IntegralBasisH2H4}
	What is particularly important for us is the collection of basis elements in degree 2 and degree 4, which we list below for the easy of later reference ($n\geq 2$):
	\begin{enumerate}
		\item Integral basis of $H^2(S^{[n]}, \ZZ)/\tors$:
		\begin{equation}
        \label{eqn:delta1}
			\1_{-(n-1)}\alpha_1,\quad \cdots, \1_{-(n-1)}\alpha_k, \quad \delta:=\frac{1}{2}\1_{-(n-2)}\p_{-2}(1_S)|0\rangle.
		\end{equation}
		This amounts to the well-known isomorphism 
		\begin{equation}
        \label{eqn:delta2}
			H^2(S^{[n]}, \ZZ)/\tors\cong H^2(S, \ZZ)/\tors \oplus \ZZ \delta,
		\end{equation}
		where $\delta$ is the half of the exceptional divisor class, and the isomorphism identifies a class $\alpha\in H^2(S, \ZZ)/\tors$ with $\1_{-(n-1)}\alpha=([S^{[1, n]}])_*(\alpha)\in H^2(S^{[n]}, \ZZ)/\tors$. 
		\item Integral basis of $H^4(S^{[n]}, \ZZ)/\tors$:
		\begin{align}
			&\1_{-(n-1)}\pt;&\\
			&\1_{-(n-2)}\p_{-2}(\alpha_i)|0\rangle,& \text{ with } 1\leq i\leq k ;\\
			&\1_{-(n-2)} \p_{-1}(\alpha_i)\p_{-1}(\alpha_j)|0\rangle, &\text{  with  } 1\leq i<j\leq k;\\
			&\1_{-(n-2)}\m_{1,1}(\alpha_i)|0\rangle, &\text{ with } 1\leq i\leq k ;\\
			&\frac{1}{3}\1_{-(n-3)}\p_{-3}(1_S)|0\rangle,& \text{ when $n \geq 3$};\\
			&\frac{1}{2}\1_{-(n-3)}\p_{-2}(1_S)\alpha_i, & \text{ with $1\leq i\leq k$,  when $n \geq 3$};\\
			&\frac{1}{8}\1_{-(n-4)}\p_{-2}(1_S)\p_{-2}(1_S)|0\rangle, & \text{ when $n \geq 4$}.
		\end{align}
	\end{enumerate}
\end{rmk}

\begin{rmk}[Key relation]
	\label{rmk:KeyRelation}
	The classes of the form $\1_{-(n-2)} \p_{-1}(\alpha)^2|0\rangle \in H^4(S^{[n]}, \ZZ)$ do not appear in the above list, since we have the following relation, which will play a crucial role in the proof of our main results.
	\begin{equation}
		\1_{-(n-2)}\m_{1,1}(\alpha)|0\rangle=\frac{1}{2}\left(\1_{-(n-2)} \p_{-1}(\alpha)^2|0\rangle - \1_{-(n-2)}\p_{-2}(\alpha)|0\rangle\right).
	\end{equation}
\end{rmk}

\subsection{Integral generators}

We will need the following set of integral generators for the cohomology of Hilbert schemes by Li and Qin \cite[Theorem 1.2]{LiQin08}.
\begin{theorem}[Li--Qin]
	\label{thm:LiQinWang-Generators}
	Let $S$ be a smooth projective complex surface with $b_1(S)=0$, and $n$ a positive integer. Then $H^*(S^{[n]}, \ZZ)/\tors$ is generated by the following three types of elements:
	\begin{enumerate}
		\item[(i)] For $1\leq j\leq n$, the Chern class $c_j(\mathcal{O}_S^{[n]})\in H^{2j}(S^{[n]}, \ZZ)$, where $\mathcal{O}_S^{[n]}$ is the tautological rank-$n$ bundle defined as $p_*(\mathcal{O}_{\mathcal{Z}_n})$ where $p\colon S^{[n]}\times S\to S^{[n]}$ is the projection and $\mathcal{Z}_n\subset S^{[n]}\times S$ is the universal codimension-2 subscheme given by $\{(\xi, x)\in S^{[n]}\times S~|~x\in \operatorname{supp}(\xi)\}$.
		\item[(ii)] For $1\leq j\leq n$, and $\alpha\in H^2(S, \ZZ)/\tors$,  the class $\mathds{1}_{-(n-j)}\mathfrak{m}_{(1^j)}( \alpha)|0\rangle$.
		\item[(iii)] For $1\leq j\leq n$,  the class $\mathds{1}_{-(n-j)}\mathfrak{p}_{-j}(\pt)|0\rangle$, where $\pt\in H^4(S, \ZZ)$ denotes the class of a point. 
	\end{enumerate}
\end{theorem}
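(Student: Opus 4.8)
The plan is to derive the statement from the explicit monomial basis of \Cref{thm:LiQinWang-IntegralBasis}. Fix an integral basis $\alpha_1,\dots,\alpha_k$ of $H^2(S,\ZZ)/\tors$, fix $n$, and let $R\subseteq H^*(S^{[n]},\ZZ)/\tors$ be the subring generated by the classes of types (i), (ii) and (iii). It suffices to show that every Li--Qin--Wang basis element $B(\lambda,\mu,\nu^\bullet):=\tfrac{1}{\z_\lambda}\,\p_{-\lambda}(1_S)\,\p_{-\mu}(\pt)\,\m_{\nu^1}(\alpha_1)\cdots\m_{\nu^k}(\alpha_k)|0\rangle$ lies in $R$. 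In Nakajima's Fock-space picture, since $b_1(S)=0$ the only ``colours'' of creation operators are $1_S\in H^0(S)$, $\pt\in H^4(S)$ and the $\alpha_i\in H^2(S)$, and $B(\lambda,\mu,\nu^\bullet)$ is a normalization of a product of three blocks, one per colour. So I would: (1) show that each block class, $\1$-stabilized to $S^{[n]}$, already lies in $R$; and (2) show that a suitable ordered cup product of the three block classes equals $B(\lambda,\mu,\nu^\bullet)$, up to sign, plus a $\ZZ$-linear combination of basis elements whose indexing data is strictly smaller for a natural partial order — refining the point-partition $\mu$, merging its parts, trading a part of $\lambda$ for $\pt$- or $\alpha$-content, collapsing distinct points onto diagonals — and then conclude by induction on that order. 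Note that for a basis element $|\lambda|+|\mu|+\sum_i|\nu^i|=n$, so the cup product of the three $\1$-stabilized blocks has no ``leftover'' generic points, which is exactly what makes its leading term $B(\lambda,\mu,\nu^\bullet)$.

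For the three blocks I would argue as follows. \emph{$\pt$-block:} the type-(iii) class $\1_{-(n-j)}\p_{-j}(\pt)|0\rangle$ creates a length-$j$ fat point at a prescribed location; cup-multiplying these for $j=\mu_1,\mu_2,\dots$ in $H^*(S^{[n]},\ZZ)$ gives $\1_{-(n-|\mu|)}\p_{-\mu}(\pt)|0\rangle$ (with unit leading coefficient, the fat points sitting at distinct prescribed points) plus corrections supported on smaller diagonals, so descending induction on $\ell(\mu)$ reaches all $\pt$-block classes. \emph{$\alpha$-block:} starting from $\1_{-(n-1)}\alpha_i$ (type (ii) with $j=1$) one forms cup products to obtain $\1_{-(n-2)}\p_{-1}(\alpha_i)\p_{-1}(\alpha_j)|0\rangle$ modulo lower terms, then uses the relation of \Cref{rmk:KeyRelation} and its generalizations — e.g.\ $\1_{-(n-2)}\p_{-2}(\alpha)|0\rangle=\1_{-(n-2)}\p_{-1}(\alpha)^2|0\rangle-2\,\1_{-(n-2)}\m_{1,1}(\alpha)|0\rangle$, with $\alpha^2\in H^4(S)$ an integer multiple of $\pt$ — to build inductively all $\1_{-(n-|\nu^i|)}\m_{\nu^i}(\alpha_i)|0\rangle$ and their products; cross terms in distinct $\alpha_i$ come from polarizing $\alpha\mapsto\sum_i\alpha_i$. \emph{$1_S$-block:} here the tautological Chern classes enter, via $c_1(\mathcal{O}_S^{[n]})=-\delta$ with $\delta$ as in \eqref{eqn:delta1}, so $\delta\in R$; Lehn's recursion then expresses the classes $\p_{-m}(1_S)|0\rangle$ (with rational coefficients) through cup products involving $\delta$, and feeding these in together with the higher $c_j(\mathcal{O}_S^{[n]})$ — whose Nakajima expansions are known — and with the integrality of the basis classes $\tfrac{1}{\z_\lambda}\p_{-\lambda}(1_S)|0\rangle$, an induction on $|\lambda|$ places each such class in $R$.

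For the recombination one uses that products of $\1$-stabilized Nakajima monomials are triangular: the top term of the cup product of a $\pt$-block, an $\alpha$-block and a $1_S$-block class (each already in $R$) is the configuration in which the three differently-coloured pieces sit on disjoint supports, namely $\pm B(\lambda,\mu,\nu^\bullet)$, while every other term has strictly smaller indexing data. This expresses $B(\lambda,\mu,\nu^\bullet)$ modulo the inductive hypothesis, closing the induction and giving $R=H^*(S^{[n]},\ZZ)/\tors$.

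The main obstacle is that the whole argument must be carried out integrally, not merely rationally. Rationally it is essentially formal — Nakajima's and Lehn's structure theory at once shows the three families generate $H^*(S^{[n]},\QQ)$ — but every identity in sight carries denominators (Lehn's recursion, the tautological Chern class expansions, the product formulas for stabilized Nakajima monomials, the $\tfrac{1}{\z_\lambda}$ normalizations). The real content is to verify that each Li--Qin--Wang basis monomial is an \emph{integral} polynomial in the generators — equivalently, that the triangular system relating ordered products of generators to basis monomials has unit leading coefficients, so that at each inductive step the error terms are genuinely integral and $B(\lambda,\mu,\nu^\bullet)$ itself (not just a multiple of it) is recovered. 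This is precisely why the generators have to be chosen so carefully, and it is where one invokes the integrality of the operators $\1_{-k}$ and $\m_\lambda(\alpha)$ (\cite[Lemma~3.3]{QinWang05}, \cite[Theorem~3.6]{LiQin08}) and of the basis of \Cref{thm:LiQinWang-IntegralBasis}; keeping track of these integrality constraints throughout the recombination is the technical heart of the proof.
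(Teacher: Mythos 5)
The paper does not prove this statement: it is quoted verbatim from Li--Qin \cite[Theorem 1.2]{LiQin08}, as the attribution in the theorem header indicates, so there is no in-paper argument to compare yours against. That said, your overall skeleton --- reduce to the monomial basis of \Cref{thm:LiQinWang-IntegralBasis} and show that ordered products of the generators are related to the basis monomials by a triangular, integrally invertible change of basis --- is indeed the strategy of the original proof, so the plan is pointed in the right direction.

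As a proof, however, the proposal has genuine gaps, and they sit exactly where you yourself locate ``the technical heart''. First, the central claim --- that the cup product of the three $\1$-stabilized blocks equals $\pm B(\lambda,\mu,\nu^\bullet)$ plus an \emph{integral} combination of strictly smaller basis elements with leading coefficient $\pm 1$ --- is the theorem in thin disguise; it is asserted, not established. When parts of $\lambda$ or $\mu$ repeat, the naive intersection-theoretic product acquires symmetry factors of the form $\prod_i m_i!$ that must be shown to cancel exactly against the $1/\z_\lambda$ normalization of the basis, and verifying this is precisely the content of Li--Qin's product computations; it is not a routine check. Second, the passage from the type-(ii) generators $\m_{(1^j)}(\alpha)$ to the general monomials $\m_{\nu}(\alpha)$ for arbitrary partitions $\nu$ is not addressed: \Cref{rmk:KeyRelation} covers only $\nu=(1,1)$, and the ``generalizations'' you invoke are nontrivial identities that constitute a substantial part of \cite{LiQin08}. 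Third, for the $1_S$-block, Lehn's recursion and the Nakajima expansions of the $c_j(\mathcal{O}_S^{[n]})$ are rational identities with genuine denominators, so the inductive claim that each $\frac{1}{\z_\lambda}\p_{-\lambda}(1_S)|0\rangle$ lies in the subring generated \emph{integrally} is unsubstantiated; this is where the integrality lemmas of Qin--Wang do real work. Since the present paper uses the theorem as a black box, citing \cite[Theorem 1.2]{LiQin08} is the appropriate ``proof''; a self-contained argument would require carrying out each of the three steps above with explicit integral product formulas.
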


\section{Hyper-K\"ahler manifolds and monodromy action}\label{sec:prelim_on_HK}
\label{sec:HK-LLV}
In this section, we recall some basic features of compact hyper-K\"ahler manifolds. We remind known facts about the monodromy group of hyper-K\"ahler manifolds of K3\(^{[n]}\)-type and analyze in detail the monodromy action on the cohomology of degree 4 when \(n\geq 4\). We also describe the monodromy representation in terms of the integral basis introduced in the previous section for Hilbert schemes of points on a K3 surface. 

\subsection{Basics on compact hyper-K\"ahler manifolds}
By definition, a compact K\"ahler manifold  $X$ is called \textit{hyper-K\"ahler} if it is simply-connected and $H^0(X, \Omega_X^2)$ is generated by a symplectic (i.e.~nowhere degenerate) holomorphic 2-form. In particular, $\dim(X)$ is even, and the canonical bundle of $X$ is trivial. From the viewpoint of differential geometry, such manifolds are characterized by the existence of the so-called hyper-K\"ahler metrics, that is, a Ricci-flat metric whose holonomy group is the compact symplectic group $\operatorname{Sp}(n)$, where $n=\frac{\dim(X)}{2}$. See \cite{Beauville} and \cite{HuyInventiones} for basic definitions.

Compact hyper-K\"ahler manifolds are generalizations of K3 surfaces and by the Beauville--Bogomolov decomposition theorem \cite{Beauville}, they form one of the three basic types of building blocs for compact K\"ahler manifolds with vanishing first Chern class. Higher-dimensional examples include Hilbert schemes of K3 surfaces, generalized Kummer varieties, O'Grady's 10-dimensional (resp.~6-dimensional) crepant resolutions of certain moduli spaces of semistable sheaves on K3 (resp.~abelian) surfaces, and deformations of these. By the Bogomolov--Tian--Todorov theorem, the deformation space of a compact hyper-K\"ahler manifold is smooth. 

An essential part of the geometry of a compact hyper-K\"ahler manifold is controlled by its second cohomology. More precisely, for a compact hyper-K\"ahler manifold $X$, $H^2(X, \ZZ)$  can be endowed with a natural quadratic form, the Beauville--Bogomolov--Fujiki (BBF) form \cite{Beauville}, that is compatible with the Hodge structure on $H^2(X,\ZZ)$ and depends only on the topology of \(X\). From these data, one can define the period domain, and the corresponding period map from the Kuranishi space to the period domain is \'etale (local Torelli theorem). A global Torelli theorem is proved by Verbitsky \cite{VerbitskyTorelli}, see also \cite{Markman-SurveyTorelli}, \cite{HuybrechtsTorelli}.

Given two compact hyper-K\"ahler manifolds $X, X'$ that are deformation equivalent, 
a ring isomorphism $$\phi\colon H^*(X, \ZZ) \to H^*(X', \ZZ)$$ is called a \textit{parallel transport operator}, if there exist a smooth and proper family 
$\pi\colon \mathscr{X}\to B$ of compact hyper-K\"ahler manifolds over an analytic base 
$B$, two points $b, b'\in B$ with isomorphisms $\psi\colon X\xrightarrow{\simeq} \mathscr{X}_{b}$ and  $\psi'\colon X'\xrightarrow{\simeq } \mathscr{X}_{b'}$, and a continuous path $\gamma$ from $b$ to $b'$ such that
$(\psi')^*\circ \gamma_*\circ (\psi^*)^{-1}=\phi$,
where $\gamma_*$ is the parallel transport in the local system $\mathsf{R}\pi_*\ZZ$; see \cite[Definition 1.1]{Markman-SurveyTorelli}.

\subsection{Hyper-K\"ahler rotation}

Let $X$ be a compact hyper-K\"ahler manifold equipped with a hyper-K\"ahler metric $g$. Then there is a space of compatible complex structures parametrized by a 2-dimensional sphere. More precisely, there are three complex structures $I, J , K$ on $X$ satisfying the quaternion relations :
	\begin{equation}
    \label{eqn:QuaternionRelation}
		I^2=J^2=K^2=-\id, \quad IJ=-JI=K, \quad JK=-KJ=I,\quad KI=-IK=J,
	\end{equation}
and for any $a, b,c\in \RR$ with $a^2+b^2+c^2=1$, $aI+bJ+cK$ is a complex structure compatible with $g$. This freedom of choices of complex structures, known as \textit{hyper-K\"ahler rotation}, allows us to switch between real structures and anti-symplectic holomorphic involutions on $X$.

\begin{prop}
\label{prop:HKRotation}
    Let $X$ be a compact hyper-K\"ahler manifold. Let $\sigma$ be a real structure, i.e.~an anti-holomorphic involution on $X$. Then there exists a new complex structure on $X$ with respect to which $\sigma$ is holomorphic anti-symplectic. 
\end{prop}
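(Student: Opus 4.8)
The plan is to exploit the twistor family of complex structures $\{aI + bJ + cK : a^2+b^2+c^2 = 1\}$ associated to a fixed hyper-Kähler metric $g$ on $X$, and to track how an anti-holomorphic involution interacts with these. The key observation is that an isometry $\sigma$ of $(X,g)$ which is anti-holomorphic for $I$ (i.e.\ $\sigma^*I = -I$) acts on the space of $g$-compatible complex structures, and by suitably rotating inside the twistor sphere we can arrange $\sigma$ to be holomorphic for the new complex structure. Concretely, I would first replace $\sigma$ by a genuine isometry: since $\sigma$ is an anti-holomorphic involution of the Kähler manifold $(X,I)$, after averaging the metric $g$ over the group $\langle\sigma\rangle$ we may assume $\sigma^*g = g$, and this averaged metric is still hyper-Kähler (the hyper-Kähler condition is a holonomy condition stable under such a finite averaging, or one invokes uniqueness of the hyper-Kähler metric in its Kähler class up to the twistor rotation). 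Thus $\sigma \in \mathrm{Isom}(X,g)$.

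Next I would analyze the action of $\sigma$ on the triple $(I,J,K)$. Pulling back the quaternion relations \eqref{eqn:QuaternionRelation} by an isometry $\sigma$, the operators $\sigma^*I, \sigma^*J, \sigma^*K$ again satisfy the same relations and are $g$-parallel, so $(\sigma^*I, \sigma^*J, \sigma^*K)$ is another hyper-Kähler triple subordinate to $g$. Hence there is an element $A \in \OO(3)$ with $\sigma^*(aI+bJ+cK) = (a',b',c')\cdot(I,J,K)$ where $(a',b',c') = A(a,b,c)$. Since $\sigma^*I = -I$ by hypothesis, the first column of $A$ is $(-1,0,0)^{\mathsf t}$; and since $\sigma^2 = \id$, $A^2 = \mathrm{id}$. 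These constraints force $A$ to be block-diagonal: $A = \mathrm{diag}(-1, A')$ with $A' \in \OO(2)$, $A'^2 = \mathrm{id}$. An involution in $\OO(2)$ is either $\pm\mathrm{id}$ or a reflection; in every case $A'$ has an eigenvalue $+1$ (reflections and $+\mathrm{id}$ do; and if $A' = -\mathrm{id}$ then $\det A = (-1)(+1) = -1$, and one checks using that $\sigma$ anti-holomorphic forces $\det A = +1$... — more carefully: $\sigma$ reverses the orientation on the $2$-sphere of complex structures exactly when it is anti-holomorphic, which pins down $\det A' $; in all cases a unit vector $(0,b_0,c_0)$ fixed by $A'$ exists, possibly after also allowing the reflection case which still has a fixed axis). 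Let $J' := b_0 J + c_0 K$ be the corresponding complex structure: then $\sigma^* J' = J'$, i.e.\ $\sigma$ is holomorphic for $J'$. Finally, $J'$ is anti-symplectic: the holomorphic symplectic form for $J'$ is (a scalar multiple of) $\omega_I + \sqrt{-1}\,\omega_K$ where $\omega_{\bullet}$ are the Kähler forms; since $\sigma^*\omega_I = -\omega_I$ (as $\sigma^*I=-I$ and $\sigma^*g=g$) one gets $\sigma^*$ acting as $-1$ up to conjugation on the holomorphic $2$-form, i.e.\ $\sigma$ is anti-symplectic for $J'$.

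The main obstacle I anticipate is the bookkeeping in the second paragraph: correctly determining which $\OO(3)$-matrices $A$ can occur, i.e.\ ruling out the cases that would leave no fixed complex structure other than $-I$ in the $2$-sphere. The clean way to handle it is to note that an anti-holomorphic involution reverses the orientation of the twistor sphere $S^2$ (since complex conjugation of the period / of the self-dual $2$-forms is orientation-reversing), so $\det A = -1$ on the full $3$-space but $A$ fixes the axis $I$ with eigenvalue $-1$, whence $A'\in\OO(2)$ has $\det A' = +1$ and $A'^2=\mathrm{id}$, forcing $A' = +\mathrm{id}$. Then \emph{every} complex structure in the $J$--$K$ circle is $\sigma$-fixed, and any one of them, say $J$ itself, does the job. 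I would also double-check the anti-symplectic claim via the explicit formula $\eta_J = \omega_K + \sqrt{-1}\,\omega_I$ together with $\sigma^*\omega_I = -\omega_I$, $\sigma^*\omega_K = \omega_K$ (the latter because $\sigma^*K = K$), giving $\sigma^*\eta_J = \overline{\eta_J}$ up to sign in the appropriate sense, hence $\sigma$ is anti-symplectic holomorphic as claimed.
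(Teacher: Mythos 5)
Your overall strategy is the same as the paper's: make $\sigma$ an isometry of a hyper-K\"ahler metric, view its action on the $2$-sphere of compatible complex structures as an involution $A=\mathrm{diag}(-1,A')$ of $\mathrm{span}(I,J,K)$, and rotate so that some complex structure in the $J$--$K$ circle is fixed. But the decisive step --- ruling out the case where no such fixed complex structure exists --- is resolved incorrectly in your ``clean'' final version. First, the sign is backwards: $\sigma^*$ is multiplicative on endomorphisms, hence an automorphism of the imaginary quaternions $\RR I\oplus\RR J\oplus\RR K$, and all such automorphisms lie in $\SO(3)$; so $\det A=+1$, not $-1$. Second, even granting your $\det A'=+1$, the conclusion ``$A'^2=\id$ forces $A'=+\id$'' is false: $-\id\in \SO(2)$ is also an involution. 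Third, your conclusion $A'=+\id$ is actually impossible: if $\sigma^*J=J$ and $\sigma^*K=K$ then $\sigma^*I=\sigma^*(JK)=JK=I$, contradicting $\sigma^*I=-I$. (The tell-tale sign is your last line $\sigma^*\eta_J=\overline{\eta_J}$: a $J$-holomorphic map preserves Hodge type, so it cannot send the $(2,0)$-form to a $(0,2)$-form.) The correct resolution is the one the paper computes directly: writing $\sigma^*J=aJ+bK$ (the $I$-component vanishes by anticommutation with $\sigma^*I=-I$), multiplicativity gives $\sigma^*K=\sigma^*(I)\sigma^*(J)=(-I)(aJ+bK)=bJ-aK$, so $A'$ is automatically a \emph{reflection} ($\det A'=-1$, consistent with $\det A=+1$). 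Its $+1$-eigenvector yields the new complex structure $K$ with $\sigma^*K=K$, while the orthogonal direction satisfies $\sigma^*J=-J$; together with $\sigma^*\omega_I=-\omega_I$ this gives $\sigma^*\eta_K=\sigma^*(\omega_I+\sqrt{-1}\,\omega_J)=-\eta_K$, i.e.\ anti-symplectic of the correct Hodge type.

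A secondary point: your justification for the existence of a $\sigma$-invariant hyper-K\"ahler metric is shaky. Averaging $g$ and $\sigma^*g$ does not in general produce a Ricci-flat (let alone hyper-K\"ahler) metric. The standard argument is to observe that $-\sigma^*\omega_I$ is again a K\"ahler form for $I$, so the class $[\omega_I]-\sigma^*[\omega_I]$ is a $(-\sigma^*)$-invariant K\"ahler class, and the uniqueness of the Ricci-flat K\"ahler metric in a given K\"ahler class (Calabi--Yau) then forces the corresponding hyper-K\"ahler metric to satisfy $\sigma^*g=g$. The paper takes the existence of such an invariant metric for granted, so this is a minor gap, but your ``averaging'' justification as stated would not survive scrutiny.
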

\begin{proof}
     Fix a $\sigma$-invariant hyper-K\"ahler metric on $X$. Let $I$ be the complex structure on $X$. As $\sigma$ is anti-holomorphic, we have $$\sigma^*(I)=-I.$$ 
    Let $\{I, J, K\}$ be a triple of complex structures on the real tangent bundle of $X$ satisfying the quaternion relations \eqref{eqn:QuaternionRelation}.
	Let $\omega_I$, $\omega_J$ and $\omega_K$ be the corresponding K\"ahler forms.
    
        Let $\sigma^*(J)=\lambda I+aJ+bK$, where $\lambda, a, b\in \RR$ such that $\lambda^2+a^2+b^2=1$. Since $IJ=-JI$, we have $\sigma^*(I)\sigma^*(J)=-\sigma^*(J)\sigma^*(I)$. Therefore the condition $\sigma^*(I)=-I$ implies that $\lambda=0$, hence
       $$\sigma^*(J)=a J+ bK$$ for some $a, b\in \RR$ with $a^2+b^2=1$.
        It yields that $\sigma^*(K)=\sigma^*(I)\sigma^*(J)=(-I)(aJ+bK)=bJ-aK$.
    
	In other words, the restriction of $\sigma^*$ to the plane $\RR J\oplus \RR K$ is a reflection. Up to changing $J, K$ by a rotation of this plane, one can assume that $\sigma^*(J)=-J$ and $\sigma^*(K)=K$. 
	
	Now we equip the manifold $X$ with the new complex structure $K$, then $\sigma$ is holomorphic and the holomorphic symplectic form  $$\eta_K=\omega_I+\sqrt{-1}\omega_J$$
	is clearly $\sigma$-anti-invariant. 
\end{proof}

\subsection{The monodromy group of manifolds of K3\(^{[n]}\)-type}
Let \(X\) be a compact hyper-K\"ahler manifold.
The monodromy group \(\Mon(X)\) is the subgroup of \(\prod_i\GL(H^i(X,\mathbb{Z}))\) consisting of parallel transport operators from \(X\) to itself. The group \(\Mon(X)\) acts on \(H^*(X,\mathbb{Z})\) by degree-preserving ring isomorphisms. We denote by \(\Mon^2(X)\) the subgroup of \(O(H^2(X,\mathbb{Z}))\) obtained by restricting parallel transport operators to \(H^2(X,\mathbb{Z})\); 
in other words, \(\Mon^2(X)\) is the image of \(\Mon(X)\) under the projection \(\prod_i\GL(H^i(X,\mathbb{Z}))\to \GL(H^2(X, \mathbb{Z})\). The isomorphism classes of the groups \(\Mon(X)\), \(\Mon^2(X)\) depend only on the deformation class of \(X\).

For a K3 surface \(S\), we have the following computation of its monodromy group; see for example \cite[Chapter 7, Proposition 5.5]{HuyK3}:
\[\Mon^2(S)=O^+(H^2(S,\mathbb{Z}))\subset O(H^2(S,\mathbb{C})),\]
where $O^+(H^2(S,\mathbb{Z}))$ stands for the isometries of the lattice $H^2(S,\mathbb{Z})$ with spinor norm 1 (i.e.~preserving orientation in 
positive definite 3-dimensional subspaces of $H^2(S,\mathbb{R})$). 
Note that $\Mon^2(S)$ is not an algebraic closed subgroup and its Zariski closure is the entire \(O(H^2(S,\mathbb{C}))\).

Let \(X\) be a K3$^{[n]}$-type hyper-K\"ahler manifold. Thanks to \cite[Lemma 2.1]{Markman-ExistenceUniversalFamily}, the natural morphism $\Mon(X)\to \Mon^2(X)$ is an isomorphism. From results of Markman \cite[Theorem 1.2 and Lemma 4.2]{markman2010integral} 
we know that
\[\Mon^2(X)=W(H^2(X,\mathbb{Z})),\]
where $W(H^2(X,\mathbb{Z}))$ stands for the subgroup of isometries in \(O^+(H^2(X,\mathbb{Z}))\) acting as \(\pm\id\) on the discriminant group of the lattice 
$H^2(X,\mathbb{Z})$. Here and in the sequel, $H^2(X, \ZZ)$ is always equipped with the Beauville--Bogomolov--Fujiki quadratic form.
We have therefore the \textit{discriminant character} 
\begin{equation}
    \tau\colon \Mon^2(X)\to \{\pm1\}
\end{equation} that records the action on the discriminant.

Markman proved in \cite[Lemma 4.11]{Markman_OnTheMonodromy} (combined with Lemma 4.10 in {\it loc.~cit.}) that the Zariski closure of the subgroup \(\Mon(X)\subset \GL(H^*(X,\mathbb{C}))\) is isomorphic to \(O(H^2(X,\mathbb{C}))\times \{\pm 1\}\) if \(X\) is of K3\(^{[n]}\)-type with \(n\geq 3\), and isomorphic to \(O(H^2(X,\mathbb{C}))\) if \(X\) is of K3\(^{[2]}\)-type\footnote{The K3$^{[2]}$-type case is not stated in \cite[Lemma 4.11]{Markman_OnTheMonodromy}, but it follows from the fact that in this case, the discriminant group \(A_{H^2(X, \mathbb{Z})}\) of $H^2(X, \mathbb{Z})$ is isomorphic to $\mathbb{Z}/2\mathbb{Z}$ equipped with a non-zero quadratic form, hence $O(A_{H^2(X, \mathbb{Z})})$ is trivial. In particular, the discriminant character $\tau$ is trivial.}.
By this result of Zariski closure, we obtain that for any \(n\geq 1\) and \(X\) of K3\(^{[n]}\)-type, a linear representation 
\begin{equation}\label{eq:monodromy_representation_from_H2}
    \rho\colon O(H^2(X,\mathbb{C}))\times \{\pm 1\}\to \GL(H^*(X,\mathbb{C}))
\end{equation}
acting by degree-preserving ring isomorphisms (the action of $\{\pm 1\}$ is set to be trivial if $n<3$).

\subsection{Monodromy representation on the degree-4 cohomology}
\label{sec:HK-Monodromy}
Let $X$ be a compact hyper-K\"ahler manifold of $\K3^{[n]}$-type with $n\geq 4$.
Following Markman \cite{markman2010integral}, we define\footnote{It is denoted by $Q^4(X, \ZZ)$ in \cite{markman2010integral}.}
\begin{equation}
    Q(X, \ZZ):=H^4(X, \ZZ)/\Sym^2H^2(X, \ZZ).
\end{equation}
Note that $\Sym^2H^2(X,\ZZ)$ is preserved by the monodromy action, 
hence we have a natural action of $\Mon(X)$ on $Q(X,\ZZ)$.
For any monodromy operator $\sigma\in \Mon(X)\subset \GL(H^*(X,\ZZ))$, we denote by 
\begin{equation}
    \sigma_2\in \Mon^2(X)\subset O(H^2(X, \ZZ)) \quad\quad \text{ and } \quad\quad \sigma_4\in \GL(H^4(X, \ZZ))\quad\quad \text{ and } \quad\quad \sigma_Q\in O(Q(X,\ZZ))
\end{equation}
the restricted/induced automorphisms on $H^2(X, \ZZ)$, $H^4(X, \ZZ)$ and $Q(X,\ZZ)$ respectively.

We need the following result of Markman \cite[Theorem 1.10]{markman2010integral} on $Q(X, \ZZ)$. Let \(\overline{c}_2(X)\) denote the image of the second Chern class $c_2(X)$ under the natural projection $H^4(X,\ZZ)\to Q(X,\ZZ)$.

\begin{theorem}[Markman]
\label{thm:Markman-Q}
Notation is as above. Let $X$ be a $\K3^{[n]}$-type hyper-K\"ahler manifold with $n\geq 4$. Then
\begin{enumerate}
    \item $Q(X,\ZZ)$ is a free abelian group equipped with a natural Hodge structure and a monodromy invariant quadratic form $q_Q$, such that the resulting lattice is the Mukai lattice $E_8(-1)^{\oplus 2}\oplus U^{\oplus 4}$.
    \item $\frac{1}{2}\overline{c}_2(X)$ is a non-zero primitive element in $Q(X, \ZZ)$ with square $q_Q(\frac{1}{2}\overline{c}_2(X))=2n-2$.
    \item There is a Hodge isometry \(e\colon H^2(X,\mathbb{Z})\xrightarrow{\cong} \overline{c}_2(X)^\perp\subset Q(X,\mathbb{Z})\) such that the transformation of $e$ under the monodromy group is given by the discriminant character $\tau\colon \Mon^2(X)\to \{\pm 1\}$. More precisely, the following diagram is commutative:
    \begin{equation}
    \label{diag:MonQisMon2Twisted}
        \xymatrix{
        \Mon(X) \ar[d] \ar[r]^-{\rho} & O(\overline{c}_2(X)^{\perp}, q_Q)\ar[d]\\
        \Mon^2(X) \ar[r]^-{\tau\cdot\rho}& O(H^2(X,\ZZ), q)
        }
    \end{equation}
    where the top arrow is the monodromy representation, the bottom arrow is the monodromy representation multiplied by the discriminant character,
    and the right vertical arrow sends any $F\in O(\overline{c}_2(X)^{\perp}, q_Q)$ to the composition $e^{-1}\circ F\circ e$. 
    In other words, for any $\alpha\in H^2(X, \ZZ)$ and $\sigma\in \Mon(X)$,
    \begin{equation}
    \label{eqn:MonQisMon2Twisted}
        \sigma_Q(e(\alpha))=\tau(\sigma)\cdot e(\sigma_2(\alpha)).
    \end{equation}
\end{enumerate}
\end{theorem}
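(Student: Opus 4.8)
This is a theorem of Markman; the plan is to reproduce his argument from \cite{markman2010integral}. All the data in the statement --- the group $Q(X,\ZZ)$, the class $\overline{c}_2(X)$, the quadratic form $q_Q$ (which Markman defines via a Fujiki-type identity on $H^4(X)$, equivalently through the Looijenga--Lunts--Verbitsky Lie algebra acting on $H^*(X,\QQ)$), the Hodge structures, and the $\Mon(X)$-action --- are invariant under deformation. Hence it suffices to establish all three assertions for one representative of the $\K3^{[n]}$ deformation class, and we take $X=S^{[n]}$ with $S$ a projective K3 surface, regarded as the moduli space $M_S(v)$ of sheaves on $S$ with Mukai vector $v=(1,0,1-n)\in\widetilde H(S,\ZZ)$, so that $v^2=2n-2$.

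For parts (1) and (2) I would make the identification $Q(X,\ZZ)\cong\widetilde H(S,\ZZ)$ explicit by an integral cohomology computation on $S^{[n]}$. Using the Nakajima operators on $H^*(S^{[n]},\QQ)$ \cite{Nakajima} together with the Li--Qin--Wang integral basis of \Cref{thm:LiQinWang-IntegralBasis} (in degree $4$, the list in \Cref{rmk:IntegralBasisH2H4}), one writes $\Sym^2H^2(S^{[n]},\ZZ)=\Sym^2\!\big(H^2(S,\ZZ)\oplus\ZZ\delta\big)$ inside $H^4$ as the image of cup product; the one point requiring care is the expression of $\1_{-(n-2)}\p_{-1}(\alpha)^2|0\rangle$ in terms of basis elements, which is precisely the relation of \Cref{rmk:KeyRelation}. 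For $n\geq 4$ the numerics force $\Sym^2H^2\hookrightarrow H^4$ and $\rk Q=24$, and the computation shows that the quotient is torsion-free, produces an explicit basis, and --- matching it with $\widetilde H(S,\ZZ)$ equipped with a suitably normalized Mukai pairing --- gives the isometry $Q(X,\ZZ)\cong E_8(-1)^{\oplus 2}\oplus U^{\oplus 4}$. The Hodge structure on $Q$ is the induced one, $\Sym^2H^2$ being a sub-Hodge structure of $H^4(X)(1)$. For (2) one computes $c_2(S^{[n]})$ --- for instance via the tautological bundle $\mathcal{O}_S^{[n]}$ of \Cref{thm:LiQinWang-Generators}, or directly with Nakajima operators --- and checks that under this identification $\frac{1}{2}\overline{c}_2(X)$ is sent to $v$, which is primitive of square $2n-2$.

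For part (3), under $Q(X,\ZZ)\cong\widetilde H(S,\ZZ)$ one has $\overline{c}_2(X)^{\perp}=v^{\perp}$, and I would take $e$ to be the inverse of the Mukai--O'Grady--Yoshioka isometry $\theta_v\colon v^{\perp}\xrightarrow{\cong}H^2(S^{[n]},\ZZ)$; that $e$ is a Hodge isometry follows from the compatibility of $\theta_v$ with the weight-$2$ Hodge structures (on the $(1,1)$ part, $\delta$ corresponds to a combination of $1_S$ and $\pt$). Since $c_2(X)$ is a characteristic class it is monodromy invariant, so every $\sigma_Q$ fixes $\overline{c}_2(X)$, preserves $v^{\perp}$, and restricts there to an element of $\Mon^2(X)$; to identify this restriction as $\tau(\sigma)\cdot\sigma_2$, i.e.\ to prove the commutativity of \eqref{diag:MonQisMon2Twisted} and the identity \eqref{eqn:MonQisMon2Twisted}, it suffices to check it on a generating set of $\Mon^2(X)=W(H^2(X,\ZZ))$. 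Monodromy operators induced by families of K3 surfaces act in a graded way on $\widetilde H(S)$, fix $v$, have $\tau=1$, and make $\theta_v$ manifestly equivariant; for operators with $\tau=-1$ --- e.g.\ those coming from spherical twists or Fourier--Mukai equivalences, which account for the rest of $W(H^2(X,\ZZ))$ by Markman's analysis in \cite{Markman_OnTheMonodromy} --- the naturality of $\theta_v$ carries an orientation ambiguity detected exactly by the discriminant character, producing the sign.

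I expect part (3), namely pinning down the twist by $\tau$, to be the main obstacle: the identification $Q\cong\widetilde H(S)$ is not literally $\Mon(X)$-equivariant, because deforming $X$ changes the attached (possibly twisted, or merely derived-equivalent) surface and changes the Mukai isomorphism by the sign measured by $\tau$, so one must track this ambiguity and exhibit enough monodromy with $\tau=-1$ to detect it. Parts (1) and (2) are, by contrast, a lengthy but essentially mechanical bookkeeping with the integral basis on $S^{[n]}$.
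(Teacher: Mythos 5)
The paper does not reprove this theorem: it is quoted from Markman \cite{markman2010integral}, and the only in-house argument is the short remark following the statement, which pins down the character $\tau$ in the diagram as the discriminant character. Your outline of parts (1) and (2) — deformation-invariance of all the data, reduction to $X=S^{[n]}$, identification of $Q(X,\ZZ)$ with $\widetilde H(S,\ZZ)$ via the integral basis, and locating $\tfrac12\overline{c}_2(X)$ at the Mukai vector $v$ — is a faithful sketch of Markman's route and raises no objection, including the rank count that needs $n\geq 4$.

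The real divergence, and the one genuine gap, is in part (3), which you yourself flag as ``the main obstacle'' and then leave open. Your plan is to detect the twist by exhibiting explicit monodromy operators with $\tau=-1$ (spherical twists, Fourier--Mukai equivalences) and checking equivariance of $\theta_v$ on a generating set of $W(H^2(X,\ZZ))$. That is workable but heavy, and it is not how the sign is nailed down here. The paper's argument is two lines of lattice theory: grant (from Markman) that the diagram commutes for \emph{some} character $\tau$; since $Q(X,\ZZ)$ is unimodular and every $\sigma_Q$ fixes $\overline{c}_2(X)$, $\sigma_Q$ acts trivially on the discriminant group of the primitive sublattice $\overline{c}_2(X)^{\perp}$; transporting by the isometry $e$, the operator $\tau(\sigma)\cdot\sigma_2$ therefore acts trivially on $A_{H^2(X,\ZZ)}$, and since $\sigma_2$ acts there as $\pm\id$, this forces $\tau$ to be the discriminant character. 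No generators, no explicit $\tau=-1$ monodromy needed. I would recommend replacing the last step of your plan with this observation: it closes the gap you identified at essentially no cost, whereas carrying out the generator-by-generator verification would require reproducing a substantial part of Markman's analysis of $\Mon^2$ in \cite{Markman_OnTheMonodromy}.
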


\begin{rmk}
    In the statement of \cite[Theorem 1.10]{markman2010integral}, it was only indicated that there is some non-trivial character $\tau\colon \Mon^2(X)\to \{\pm 1\}$ making the diagram \eqref{diag:MonQisMon2Twisted} commutative, without determining $\tau$ explicitly. However, it is easy to see that $\tau$ must be the discriminant character: since $Q(X, \ZZ)$ is a unimodular lattice and $\sigma_Q(\overline{c}_2(X))=\overline{c}_2(X)$, $\sigma_Q$ must act trivially on the discriminant of the sublattice $\overline{c}_2(X)^\perp$ in $Q(X, \ZZ)$. Therefore the transported action of $\sigma_Q$ to $H^2(X, \ZZ)$ by conjugating with the isomorphism $e$ must act trivially on the discriminant of $H^2(X,\ZZ)$. This implies that $\tau$ must be the discriminant character.
\end{rmk}

\subsection{Monodromy representation for Hilbert schemes of points on K3 surfaces}\label{section:monodromy representation}

We relate the action of monodromy operators of a K3 surface with the integral basis of the cohomology of the Hilbert schemes described in the previous sections. The results are based on Markman's results and are collected in Oberdieck's work \cite[Section 3.6]{oberdieck2024holomorphic}. We briefly recall them for the reader's convenience.

Let \(S\) be a K3 surface and consider the Hilbert scheme of \(n\) points \(S^{[n]}\). 
As remarked before, (\ref{eq:monodromy_representation_from_H2}) gives for any \(n\geq 1\), a linear representation 
\begin{equation}
\label{eqn:rho_n}
    \rho_n\colon O(H^2(S^{[n]},\mathbb{C}))\times \{\pm 1\}\to \GL(H^*(S^{[n]},\mathbb{C}))
\end{equation}
acting by degree-preserving ring isomorphisms. Notice that there are natural embeddings 
\begin{equation}
\label{eqn:Inclusions}
    O^+(H^2(S,\mathbb{Z}))\subset 
W(H^2(S^{[n]},\mathbb{Z}))\subset O(H^2(S^{[n]},\mathbb{Z}))\times \{\pm 1\},
\end{equation}
where the first is induced by the natural inclusion \(H^2(S,\mathbb{Z})\subset H^2(S^{[n]},\mathbb{Z})\) and the second one is given by \(g\mapsto (g,\tau(g))\).

\begin{rmk}
\label{rmk:Monodromy}
    Via the inclusions \eqref{eqn:Inclusions}, the restriction of the representation $\rho_n$ in \eqref{eqn:rho_n} to $O^+(H^2(S,\mathbb{Z}))=\Mon^2(S)$ is given geometrically by sending a monodromy operator of $S$ defined by a loop $\gamma$ in the base of a family of K3 surfaces $\mathscr{S}\to B$ to the monodromy operator defined by the same loop $\gamma$ for the associated family of Hilbert schemes $\Hilb^n_B\mathscr{S}\to B$.
\end{rmk}

As is recalled in \eqref{eqn:delta1} and \eqref{eqn:delta2}, we have a natural isomorphism \(H^2(S^{[n]},\mathbb{Z})\cong H^2(S,\mathbb{Z})\oplus \mathbb{Z}\cdot \delta\),
where $\delta$ is the half of the class of the exceptional divisor.
Via this isomorphism, we have a canonical injective morphism 
$$O(H^2(S, \CC))\hookrightarrow O(H^2(S^{[n]}, \CC))$$
by extending by the trivial action on $\delta$. In the sequel, we identify
$O(H^2(S, \CC))$ with its image in $O(H^2(S^{[n]}, \CC))$, which is nothing but $O(H^2(S^{[n]}, \CC))_{\delta}$, the stabilizer of $\delta$.

We moreover have the map \(O(H^2(S,\mathbb{C}))\to \GL(H^*(S,\mathbb{C}))\) 
given by \(g \mapsto \tilde{g}:=\id_{H^0(S,\mathbb{C})}\oplus g\oplus\id_{H^4(S,\mathbb{C})}\).

Nakajima operators are encoded in the following linear maps. For any $k\geq 0$,
\begin{equation}
\label{eqn:NakajimaP}
    \mathfrak{p}_{-k}\colon H^*(S, \CC)\to \Hom(H^*(S^{[n]},\mathbb{C}),H^*(S^{[n+k]},\mathbb{C}))
\end{equation}
is defined by \(\mathfrak{p}_{-k}(\alpha)(v)=(r^*\alpha)_* (v):=p_{S^{[n+k]}, *}(p_{S^{[n]}}^*(v)\smile r^*\alpha ),\)
where \(r\colon S^{[n,n+k]}_0\to S\) is the residue map, $p_{S^{[n]}}$ and $p_{S^{[n+k]}}$ are the natural projections from $S^{[n,n+k]}_0$  to $S^{[n]}$ and $S^{[n+k]}$ respectively, and
$S_0^{[n, n+k]}:=\{(Z, Z')\in S^{[n]}\times S^{[n+k]}~|~ Z\subset Z' \text{ and } \operatorname{supp}(\mathcal{I}_{Z}/\mathcal{I}_{Z'}) \text{ is one point}\}$, and \(\alpha\in H^*(S)\).

\begin{lemma}\label{lemma:Zariski_closure}
For any $n, k\in \mathbb{N}$, and any $\alpha \in H^*(S,\CC)$, the group 
	\[M_{n,k}(\alpha):=\left\{g\in O(H^2(S,\mathbb{C}))\mid 
    \mathfrak{p}_{-k}(\tilde{g}(\alpha))\circ \rho_n(g,1)=\rho_{n+k}(g,1)\circ \mathfrak{p}_{-k}(\alpha)\right\}\]
	is a closed algebraic subgroup of \(O(H^2(S,\mathbb{C}))\).
\end{lemma}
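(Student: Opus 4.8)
The plan is to present $M_{n,k}(\alpha)$ as the zero locus of a morphism of affine varieties — which makes Zariski-closedness automatic — and then to deduce the group structure from the defining relation together with a density argument.

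\emph{Zariski-closedness.} By \eqref{eq:monodromy_representation_from_H2} and \eqref{eqn:rho_n}, $\rho_n$ and $\rho_{n+k}$ are \emph{algebraic} representations of the groups $O(H^2(S^{[\bullet]},\CC))\times\{\pm 1\}$ extending the monodromy actions (this is precisely what Markman's description of $\Mon(X)$ and of its Zariski closure, recalled in \Cref{sec:HK-LLV}, provides), while the inclusion $O(H^2(S,\CC))\hookrightarrow O(H^2(S^{[n]},\CC))$ used in the definition is the closed embedding of the stabilizer $O(H^2(S^{[n]},\CC))_\delta$ of $\delta$. Hence $g\mapsto\rho_n(g,1)$ and $g\mapsto\rho_{n+k}(g,1)$ are morphisms of algebraic groups; since moreover $g\mapsto\tilde g$ is regular and $\mathfrak{p}_{-k}$ is $\CC$-linear (see \eqref{eqn:NakajimaP}), the assignment
\[\Phi_\alpha\colon\ g\ \longmapsto\ \mathfrak{p}_{-k}(\tilde g(\alpha))\circ\rho_n(g,1)-\rho_{n+k}(g,1)\circ\mathfrak{p}_{-k}(\alpha)\]
is a morphism from $O(H^2(S,\CC))$ to $\Hom(H^*(S^{[n]},\CC),H^*(S^{[n+k]},\CC))$, and $M_{n,k}(\alpha)=\Phi_\alpha^{-1}(0)$ is Zariski closed.

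\emph{Group structure.} The identity clearly satisfies the relation. If $\alpha$ is fixed by every $\tilde g$ (in particular if $\alpha\in H^0(S,\CC)\oplus H^4(S,\CC)$), the relation becomes $\mathfrak{p}_{-k}(\alpha)\circ\rho_n(g,1)=\rho_{n+k}(g,1)\circ\mathfrak{p}_{-k}(\alpha)$, which is manifestly stable under products and inverses since $g\mapsto\rho_n(g,1)$ and $g\mapsto\rho_{n+k}(g,1)$ are homomorphisms. For an arbitrary $\alpha$ the relation alone does not visibly give closure under multiplication, and I would instead argue as follows: $M_{n,k}(\alpha)$ contains $\Mon^2(S)=O^+(H^2(S,\ZZ))$, because for an actual monodromy operator $g$ of $S$ attached to a loop $\gamma$, both $\rho_n(g,1)$ and $\rho_{n+k}(g,1)$ are realised by parallel transport along $\gamma$ in the associated families of Hilbert schemes (\Cref{rmk:Monodromy}; here the twist in \eqref{eqn:Inclusions} is trivial because $H^2(S,\ZZ)$ is unimodular, so such a $g$ acts trivially on the discriminant of $H^2(S^{[\bullet]},\ZZ)$), and parallel transport commutes with the Nakajima correspondences since these deform in families. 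As $O^+(H^2(S,\ZZ))$ is Zariski dense in $O(H^2(S,\CC))$ and $M_{n,k}(\alpha)$ is Zariski closed, we conclude $M_{n,k}(\alpha)=O(H^2(S,\CC))$; in particular it is a closed algebraic subgroup.

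The step I expect to be the real obstacle is the input that $\rho_n$ and $\rho_{n+k}$ are \emph{algebraic} representations, not merely abstract homomorphisms of the monodromy groups: without this $\Phi_\alpha$ is not a morphism of varieties and the claim "$M_{n,k}(\alpha)$ is Zariski closed" has no content. This is not formal — it rests on Markman's computation of the monodromy group of K3$^{[n]}$-type hyper-K\"ahler manifolds and of its Zariski closure. The remaining ingredients (compatibility of the Nakajima correspondences with parallel transport, and Zariski density of $O^+(H^2(S,\ZZ))$ in $O(H^2(S,\CC))$) are standard given the material collected in \Cref{sec:HK-LLV}.
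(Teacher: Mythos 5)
Your argument for Zariski-closedness is the same as the paper's: both reduce to the observation that $g\mapsto\rho_n(g,1)$ and $g\mapsto\rho_{n+k}(g,1)$ are algebraic representations (the one non-formal input, coming from Markman's identification of the Zariski closure of the monodromy group), that $g\mapsto\tilde g(\alpha)$ is regular, that $\mathfrak{p}_{-k}$ is linear, and that composites of algebraic maps are algebraic, so that $M_{n,k}(\alpha)$ is the zero locus of a morphism into a vector space.

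Where you genuinely diverge is on the subgroup assertion. The paper dismisses it with ``it is straightforward to check that $M_{n,k}(\alpha)$ is a subgroup'', but your objection is well taken: for a \emph{fixed} $\alpha$, composing the relation for $g$ with the relation for $h$ does not yield the relation for $gh$ at $\alpha$ --- one would need $g\in M_{n,k}(\tilde h(\alpha))$ rather than $g\in M_{n,k}(\alpha)$, and likewise $g^{-1}$ a priori lands only in $M_{n,k}(\tilde g(\alpha))$ --- so stability under products and inverses is not formal. Your workaround (show $M_{n,k}(\alpha)\supseteq O^+(H^2(S,\ZZ))=\Mon^2(S)$ via the parallel-transport description of $\rho_n$ on genuine monodromy operators, then use closedness together with the Zariski density of $\Mon^2(S)$ in $O(H^2(S,\CC))$ to conclude $M_{n,k}(\alpha)=O(H^2(S,\CC))$, hence trivially a closed subgroup) is correct and not circular, since the containment of $\Mon^2(S)$ is a geometric fact independent of the lemma. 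It effectively absorbs the proposition that follows the lemma in the paper, whose proof is exactly this density argument; and since the paper only ever uses the closedness of $M_{n,k}(\alpha)$ (a closed set containing a dense set is everything), the subgroup claim is dispensable for the application, so your treatment reads as a repair of a loose statement rather than as a detour.
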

\begin{proof}
    It is straightforward to check that $M_{n,k}(\alpha)$ is a subgroup of $O(H^2(S, \CC))$. To show that the condition in the statement is a Zariski closed condition, it suffices to notice the following elementary facts:
    \begin{itemize}
        \item For any $n$, the homomorphism $\rho_n(-, 1)\colon O(H^2(S,\CC))\to \GL(H^*(S^{[n]}, \CC))$ is algebraic. Indeed this follows from the definition of $\rho_n$ as it is defined by extending to algebraic closure of the monodromy group. 
        \item The map sending $g$ to $\widetilde{g}$ is clearly a morphism of algebraic groups.
        \item The map $O(H^*(S,\CC))\to H^*(S,\CC)$ of evaluation at $\alpha$ is an algebraic map.
        \item The map $\p_{-k}$ as in \eqref{eqn:NakajimaP} is algebraic (actually, linear). 
        \item Composition of algebraic maps is algebraic.
    \end{itemize}
    Hence both sides of the condition in the statement are algebraic in $g$, this defines a closed algebraic subgroup.
\end{proof}

\begin{prop}
	For any \(\alpha \in H^*(S,\mathbb{C})\), \(g\in O(H^2(S,\mathbb{C}))\) and \(n,k\in\mathbb{N}\), we have 
    \begin{equation}
        \label{eqn:Nakajima-Monodromy-Commute}
        \mathfrak{p}_{-k}(\tilde{g}(\alpha))\circ \rho_n(g,1)=\rho_{n+k}(g,1)\circ \mathfrak{p}_{-k}(\alpha).
    \end{equation}
\end{prop}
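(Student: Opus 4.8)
The plan is to deduce this from \Cref{lemma:Zariski_closure} by a density argument: it suffices to prove that $M_{n,k}(\alpha)=O(H^2(S,\CC))$ for every fixed $\alpha$. Since $M_{n,k}(\alpha)$ is a closed algebraic subgroup by \Cref{lemma:Zariski_closure}, and since $\Mon^2(S)=O^+(H^2(S,\ZZ))$ is Zariski dense in $O(H^2(S,\CC))$ (recalled above), it is enough to check that every $g\in\Mon^2(S)$ lies in $M_{n,k}(\alpha)$, i.e.\ that \eqref{eqn:Nakajima-Monodromy-Commute} holds whenever $g$ is an honest monodromy operator of the K3 surface $S$. (One could further note that both sides of \eqref{eqn:Nakajima-Monodromy-Commute} are $\CC$-linear in $\alpha$ through the linear map $\tilde g$ and so reduce to $\alpha$ homogeneous, but this turns out not to be needed.)

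So fix $g\in\Mon^2(S)$. By definition of the monodromy group there exist a smooth proper family $\pi\colon\mathscr S\to B$ of K3 surfaces, a point $b_0\in B$ with $\mathscr S_{b_0}\cong S$, and a loop $\gamma$ at $b_0$ whose parallel transport on the local system $R^2\pi_*\ZZ$ is $g$. A smooth proper family of K3 surfaces has trivial parallel transport on $R^0\pi_*$ and on $R^4\pi_*$ (the fundamental class and the point/orientation class being globally defined), so the parallel transport along $\gamma$ on $\bigoplus_j R^j\pi_*\CC$ is exactly $\tilde g=\id_{H^0}\oplus g\oplus\id_{H^4}$. Forming the relative Hilbert schemes $\pi^{[n]}\colon\mathscr S^{[n]}\to B$ and $\pi^{[n+k]}\colon\mathscr S^{[n+k]}\to B$, \Cref{rmk:Monodromy} identifies the parallel transport along $\gamma$ on $\bigoplus_j R^j\pi^{[n]}_*\CC$ with $\rho_n(g,1)$, and likewise on $\bigoplus_j R^j\pi^{[n+k]}_*\CC$ with $\rho_{n+k}(g,1)$.

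Finally, write the Nakajima operator as a correspondence on a product: by the projection formula,
\[
\mathfrak{p}_{-k}(\alpha)(v)=\mathrm{pr}_{S^{[n+k]},*}\bigl(\mathrm{pr}_{S^{[n]}}^{*}v\smile\mathrm{pr}_{S}^{*}\alpha\smile[Q_{n+k,n}]\bigr),
\]
where the three maps are the projections from $S^{[n]}\times S\times S^{[n+k]}$ and $[Q_{n+k,n}]$ is the cohomology class of the incidence variety. The triple product spreads out to the smooth proper family $\mathscr S^{[n]}\times_B\mathscr S\times_B\mathscr S^{[n+k]}\to B$, whose fibrewise cohomology is a local system on $B$, and the incidence variety $Q_{n+k,n}$ is the fibre over $b_0$ of its relative analogue $\mathscr Q$. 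The key point is that the fibrewise class $b\mapsto[\mathscr Q_b]$ is a global, hence flat, section of that local system; granting this, the bilinear operation $(\alpha,v)\mapsto\mathfrak{p}_{-k}(\alpha)(v)$ is the fibre at $b_0$ of a morphism of local systems $\bigl(\bigoplus_j R^j\pi_*\CC\bigr)\otimes\bigl(\bigoplus_j R^j\pi^{[n]}_*\CC\bigr)\to\bigoplus_j R^j\pi^{[n+k]}_*\CC$ over $B$, since it is assembled from relative pull-backs, cup products, multiplication by the flat class $[\mathscr Q]$, and the relative Gysin push-forward along the smooth proper projection to $\mathscr S^{[n+k]}$, all of which are morphisms of local systems. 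Taking the monodromy of $\gamma$ on this morphism and substituting the three identifications of the previous paragraph yields precisely $\mathfrak{p}_{-k}(\tilde g(\alpha))\circ\rho_n(g,1)=\rho_{n+k}(g,1)\circ\mathfrak{p}_{-k}(\alpha)$, so $g\in M_{n,k}(\alpha)$; as $g\in\Mon^2(S)$ was arbitrary and $M_{n,k}(\alpha)$ is closed, we conclude $M_{n,k}(\alpha)=O(H^2(S,\CC))$. The one step requiring genuine input — and the place where the results of Markman and Oberdieck are used — is the flatness of $b\mapsto[\mathscr Q_b]$, equivalently the statement that Nakajima's correspondences spread over an arbitrary family of K3 surfaces in a monodromy-equivariant way; everything else is formal.
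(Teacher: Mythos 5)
Your proposal is correct and follows essentially the same route as the paper: verify \eqref{eqn:Nakajima-Monodromy-Commute} for honest monodromy operators $g\in\Mon^2(S)$ via the geometric description of \Cref{rmk:Monodromy} (the paper dispatches this step as ``straightforward to check,'' noting only that $\tau(g)=1$ because $g$ fixes $\delta$, which you use implicitly), and then extend to all of $O(H^2(S,\CC))$ by Zariski density using \Cref{lemma:Zariski_closure}. Your spelling-out of the flatness of the spread-out incidence correspondence is a reasonable filling-in of that ``straightforward'' step, though the genuinely nontrivial external input is really Markman's Zariski-closure theorem underlying the existence of $\rho_n$ rather than the flatness of $[\mathscr Q_b]$.
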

\begin{proof}
    Note that given a monodromy operator on the K3 surface
    \[g\in O^+(H^2(S,\mathbb{Z}))=\Mon^2(S),\]
    the induced monodromy operator on $S^{[n]}$, still denoted by $g\in \Mon^2(S^{[n]})$, has trivial discriminant character:
    $\tau(g)=1$, since it preserves the exceptional divisor, hence also the class $\delta$.
    By \Cref{rmk:Monodromy}, it is straightforward to check that the relation \eqref{eqn:Nakajima-Monodromy-Commute} holds for any $g\in O^+(H^2(S,\mathbb{Z}))=\Mon^2(S)$.
    By \Cref{lemma:Zariski_closure}, the relation also holds for any element in the Zariski closure of \(O^+(H^2(S,\mathbb{Z}))\subset O(H^2(S,\mathbb{C}))\). The Zariski closure is the entire \(O(H^2(S,\mathbb{C}))\), so that \eqref{eqn:Nakajima-Monodromy-Commute} holds for any isometry as in the statement.
\end{proof}

\begin{cor}[Property 3 of {\cite[Section 3.6]{oberdieck2024holomorphic}}]
\label{cor:property_3}
Let $n$ be a positive integer. For any \(g\in O(H^2(S, \CC))=O(H^2(S^{[n]},\mathbb{C}))_\delta\) , any  \(k_1,\dots,k_l\in \mathbb{N}\) with \(n=k_1+\dots+k_l\), and any \(\alpha_1,\dots,\alpha_l\in H^*(S,\mathbb{C})\), we have the following equality in $H^*(S^{[n]}, \CC)$:
\begin{equation}
\label{eqn:CommutationRelation}
    \rho_n(g,1)\mathfrak{p}_{-k_1}(\alpha_1)\dots \mathfrak{p}_{-k_l}(\alpha_l)|0\rangle=\mathfrak{p}_{-k_1}(\tilde{g}(\alpha_1))\dots \mathfrak{p}_{-k_l}(\tilde{g}(\alpha_l))|0\rangle,
\end{equation}
\end{cor}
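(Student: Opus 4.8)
The plan is to deduce Corollary~\ref{cor:property_3} from the commutation relation \eqref{eqn:Nakajima-Monodromy-Commute} by a straightforward induction on the number $l$ of Nakajima operators applied to the vacuum $|0\rangle$. The base case $l=1$ is almost immediate: applying \eqref{eqn:Nakajima-Monodromy-Commute} with $n$ replaced by $0$ and $k=k_1=n$ gives $\mathfrak{p}_{-n}(\tilde g(\alpha_1))\circ\rho_0(g,1)=\rho_n(g,1)\circ\mathfrak{p}_{-n}(\alpha_1)$ as operators $H^*(S^{[0]},\CC)\to H^*(S^{[n]},\CC)$; evaluating both sides at $|0\rangle\in H^0(S^{[0]},\CC)$ and using that $\rho_0(g,1)$ fixes $|0\rangle$ (it acts on the one-dimensional space $H^*(S^{[0]},\CC)=H^0(S^{[0]},\CC)$ by a degree-preserving ring isomorphism, hence trivially) yields the claim for $l=1$.

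\textbf{Inductive step.} Suppose \eqref{eqn:CommutationRelation} holds for all compositions of fewer than $l$ Nakajima operators. Write $n=k_1+\dots+k_l$, set $m:=k_2+\dots+k_l$, and let
\[
v:=\mathfrak{p}_{-k_2}(\alpha_2)\cdots\mathfrak{p}_{-k_l}(\alpha_l)\,|0\rangle\in H^*(S^{[m]},\CC).
\]
Applying \eqref{eqn:Nakajima-Monodromy-Commute} with the pair $(m,k_1)$ in place of $(n,k)$ and the class $\alpha_1$, evaluated at the vector $v$, gives
\[
\mathfrak{p}_{-k_1}(\tilde g(\alpha_1))\bigl(\rho_m(g,1)\,v\bigr)=\rho_n(g,1)\bigl(\mathfrak{p}_{-k_1}(\alpha_1)\,v\bigr)
=\rho_n(g,1)\,\mathfrak{p}_{-k_1}(\alpha_1)\cdots\mathfrak{p}_{-k_l}(\alpha_l)\,|0\rangle.
\]
By the induction hypothesis applied to the $l-1$ operators $\mathfrak{p}_{-k_2}(\alpha_2),\dots,\mathfrak{p}_{-k_l}(\alpha_l)$ (a composition summing to $m$), we have $\rho_m(g,1)\,v=\mathfrak{p}_{-k_2}(\tilde g(\alpha_2))\cdots\mathfrak{p}_{-k_l}(\tilde g(\alpha_l))\,|0\rangle$. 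Substituting this into the displayed equality and rewriting the left-hand side as $\mathfrak{p}_{-k_1}(\tilde g(\alpha_1))\cdots\mathfrak{p}_{-k_l}(\tilde g(\alpha_l))\,|0\rangle$ completes the induction and hence the proof.

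\textbf{On the obstacles.} There is essentially no obstacle here: the corollary is a purely formal consequence of the operator identity \eqref{eqn:Nakajima-Monodromy-Commute} that has already been established, together with the triviality of the $G$-action — really the $O(H^2)$-action — on $H^*(S^{[0]},\CC)$. The only point requiring a little care is bookkeeping the index $n$ in $\rho_n(g,1)$ correctly at each stage of the induction, since the Nakajima creation operators shift the number of points; one must consistently record that after applying the last $j$ operators one lands in $H^*(S^{[k_{l-j+1}+\dots+k_l]},\CC)$ and invoke \eqref{eqn:Nakajima-Monodromy-Commute} with the matching superscript. I would also note in passing that the hypothesis $g\in O(H^2(S,\CC))=O(H^2(S^{[n]},\CC))_\delta$ is used only to make sense of $\rho_n(g,1)$ via the identification preceding the statement; the classes $\alpha_i$ are allowed to live in any degree of $H^*(S,\CC)$, and the formula respects the grading because each $\rho_n(g,1)$ is a degree-preserving ring isomorphism.
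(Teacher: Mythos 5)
Your proof is correct and is exactly the paper's argument: the paper disposes of this corollary in one line ("an iterated application of \eqref{eqn:Nakajima-Monodromy-Commute} gives the commutation rule"), and your induction on $l$, with the base case handled by the triviality of $\rho_0(g,1)$ on $H^*(S^{[0]},\CC)$, is simply that iteration written out carefully.
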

\begin{proof}
    An iterated application of \eqref{eqn:Nakajima-Monodromy-Commute} gives the commutation rule \eqref{eqn:CommutationRelation}.
\end{proof}

\begin{rmk}
    As a special case of \Cref{cor:property_3}, for any monodromy operator \(g\in \Mon^2(S^{[n]})=\Mon(S^{[n]})\) such that \(g(\delta)=\delta\), we have
    $g(\mathfrak{p}_{-k_1}(\alpha_1)\dots \mathfrak{p}_{-k_l}(\alpha_l)|0\rangle)=\mathfrak{p}_{-k_1}(g(\alpha_1))\dots \mathfrak{p}_{-k_l}(g(\alpha_l))|0\rangle.$
    Indeed, if \(g(\delta)=\delta\) then \(\tau(g)=1\). Hence the monodromy action by $g$ is the same as by $\rho_n(g, 1)$.
\end{rmk}

The following result will be useful to understand the action of a monodromy operator that acts as $-\id$ on the discriminant group $A_{H^2(X,\ZZ)}$.
\begin{lemma}[Property 2 of {\cite[Section 3.6]{oberdieck2024holomorphic}}]\
\label{lemma:Property_2}
    Let $n\geq 3$ be an integer and $S^{[n]}$ be the $n$-th punctual Hilbert scheme of a K3 surface $S$. Then
    $$\rho_n(\id_{H^2(S^{[n]}, \CC)}, -1)=D\circ \rho_n(-\id_{H^2(S^{[n]}, \CC)}, 1),$$
    where $D$ is the degree operator which acts on $H^{2i}(S^{[n]}, \CC)$ by multiplication by $(-1)^i$.
\end{lemma}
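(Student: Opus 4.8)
The plan is to first reduce the stated identity to the single equation $\rho_n(-\id_{H^2(S^{[n]},\CC)},-1)=D$, and then to check this on a spanning set. Since $\rho_n$ is a group homomorphism and $(\id,-1)=(-\id,-1)\cdot(-\id,1)$ in $O(H^2(S^{[n]},\CC))\times\{\pm1\}$, we have $\rho_n(\id,-1)=\rho_n(-\id,-1)\circ\rho_n(-\id,1)$, so the desired equality $\rho_n(\id,-1)=D\circ\rho_n(-\id,1)$ holds if and only if $\rho_n(-\id,-1)=D$ (cancel the invertible operator $\rho_n(-\id,1)$ on the right). From now on $n\ge 3$, which is exactly when the $\{\pm1\}$-factor acts non-trivially.

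Both $\rho_n(-\id,-1)$ and $D$ are degree-preserving $\CC$-algebra automorphisms of $H^*(S^{[n]},\CC)$ — for the former this is part of the definition of $\rho_n$ — so it is enough to compare them on the classes $\mathfrak{p}_{-\lambda_1}(\gamma_1)\cdots\mathfrak{p}_{-\lambda_l}(\gamma_l)|0\rangle$, with $\gamma_i\in H^*(S,\CC)$ homogeneous and $\lambda_1+\cdots+\lambda_l=n$, which span $H^*(S^{[n]},\CC)$ (Nakajima \cite{Nakajima}). Such a monomial lies in degree $2d$ with $2d=\sum_i\deg\gamma_i+2(n-l)$, hence $D$ multiplies it by
\[
(-1)^d=\Big(\prod_{i=1}^{l}(-1)^{\deg\gamma_i/2}\Big)\cdot(-1)^{\,n-l}.
\]
The aim is to show $\rho_n(-\id,-1)$ multiplies the same monomial by the same scalar.

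To compute $\rho_n(-\id,-1)$ I would use the orthogonal splitting $H^2(S^{[n]},\CC)=H^2(S,\CC)\oplus\CC\delta$ to factor $-\id_{H^2(S^{[n]})}=g_0\circ R_\delta$, where $g_0:=(-\id_{H^2(S,\CC)})\oplus\id_{\CC\delta}$ lies in $O(H^2(S^{[n]},\CC))_\delta\cong O(H^2(S,\CC))$ and $R_\delta:=\id_{H^2(S,\CC)}\oplus(-\id_{\CC\delta})$ is the reflection in $\delta$. For $n\ge 3$ we have $R_\delta\in W(H^2(S^{[n]},\ZZ))=\Mon^2(S^{[n]})$, and $R_\delta$ acts as $-\id$ on the discriminant group $\ZZ/(2n-2)$, so $\tau(R_\delta)=-1$, whereas $g_0$ fixes $\delta$; hence $(-\id,-1)=(g_0,1)\cdot(R_\delta,-1)$ and
\[
\rho_n(-\id,-1)=\rho_n(g_0,1)\circ\rho_n(R_\delta,-1).
\]
The first factor is computed by \Cref{cor:property_3}: as $\widetilde{g_0}=\id_{H^0(S)}\oplus(-\id_{H^2(S)})\oplus\id_{H^4(S)}$ acts on $H^{2e}(S,\CC)$ by $(-1)^e$ and the Nakajima operators are linear in the cohomology argument, $\rho_n(g_0,1)$ multiplies $\mathfrak{p}_{-\lambda_1}(\gamma_1)\cdots\mathfrak{p}_{-\lambda_l}(\gamma_l)|0\rangle$ by $\prod_i(-1)^{\deg\gamma_i/2}$. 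Comparing with the displayed formula for $D$, the whole statement reduces to the claim that $\rho_n(R_\delta,-1)$ — which, since $R_\delta$ is a genuine monodromy operator with $\tau(R_\delta)=-1$, is the action on $H^*(S^{[n]},\CC)$ of the unique parallel transport operator of $S^{[n]}$ restricting to $R_\delta$ on $H^2$ — multiplies the length-$l$ monomial by $(-1)^{\,n-l}$.

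This last claim is the one genuinely non-formal ingredient, and it is the main obstacle. Unlike the operators covered by \Cref{cor:property_3}, the reflection $R_\delta$ is not induced by a deformation of the K3 surface $S$ — deformations of $S$ preserve $\delta$ — so its effect on the full cohomology ring is not visible from the geometry of families of Hilbert schemes, and rests instead on Markman's description of the monodromy group of K3$^{[n]}$-type manifolds; in the present exposition this is imported from \cite[Section 3.6]{oberdieck2024holomorphic}. Granting it, the two preceding paragraphs show that $\rho_n(-\id,-1)$ multiplies $\mathfrak{p}_{-\lambda_1}(\gamma_1)\cdots\mathfrak{p}_{-\lambda_l}(\gamma_l)|0\rangle$ by $\prod_i(-1)^{\deg\gamma_i/2}\cdot(-1)^{n-l}=(-1)^d$, i.e.\ $\rho_n(-\id,-1)=D$, as wanted. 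As a consistency check one may verify the bookkeeping in degree $4$ directly against Markman's description of $Q(X,\ZZ)=H^4/\Sym^2 H^2$ recalled in \Cref{thm:Markman-Q}.
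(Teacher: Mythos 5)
The paper contains no argument for this lemma: it is quoted, citation included, as Property~2 of \cite[Section 3.6]{oberdieck2024holomorphic}, so there is no internal proof to measure your attempt against. Your formal manipulations are all correct: the reduction of the statement to $\rho_n(-\id,-1)=D$, the factorization $-\id_{H^2(S^{[n]},\CC)}=g_0\circ R_\delta$ with $g_0\in O(H^2(S^{[n]},\CC))_\delta$ and $R_\delta$ the reflection in $\delta$ (which does lie in $W(H^2(S^{[n]},\ZZ))=\Mon^2(S^{[n]})$, with $\tau(R_\delta)=-1$), the evaluation of $\rho_n(g_0,1)$ on Nakajima monomials via \Cref{cor:property_3}, and the degree bookkeeping $(-1)^d=\prod_i(-1)^{\deg\gamma_i/2}\cdot(-1)^{n-l}$.

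The problem is that after these reductions nothing has actually been proved. Granting your first three steps, the residual assertion that $\rho_n(R_\delta,-1)$ multiplies a length-$l$ Nakajima monomial by $(-1)^{n-l}$ is logically \emph{equivalent} to the lemma: from $-\id=g_0\circ R_\delta$ and $g_0^2=\id$ one gets $\rho_n(R_\delta,-1)=\rho_n(g_0,1)\circ\rho_n(-\id,-1)$, so either identity follows from the other by composing with the already-computed operator $\rho_n(g_0,1)$. You then import this residual claim from the very reference the lemma is quoted from. So the proposal is an honest and correct \emph{reformulation} of the lemma followed by the same external citation the paper already makes, not an independent proof. Closing the gap genuinely requires input beyond the formal properties of $\rho_n$ recalled in \Cref{section:monodromy representation}: one must determine how the monodromy operator lifting $R_\delta$ --- which, as you note, does not come from any family of K3 surfaces, but from Markman's construction of monodromy operators via Fourier--Mukai / moduli-theoretic equivalences --- acts on the full cohomology ring, e.g.\ through Markman's Mukai-lattice model of $H^*(S^{[n]},\ZZ)$ or the LLV algebra. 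That computation is the entire content of the lemma, and it is absent from both the proposal and the paper.
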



\section{Surfaces with a free involution and their Hilbert schemes}\label{sec:surfaces_with_free_invol}

Among real varieties, or more generally, in the study of the geometry of involutions, those without real (resp.~fixed) points often play a distinguished role and sometimes present extra difficulties. It is indeed the case in the proofs of \Cref{thm:main:NaturalAntiHoloInv} and \Cref{thm:main:NaturalHoloInv}. The goal of this section is to prove these theorems in the fixed-point-free case. More precisely, the main result of the section is the following:

\begin{theorem}
\label{thm:NonMaxHilbSurfWithoutFixPoint}
    Let $S$ be  a compact complex surface with $H^1(S, \F2)=0$. Let $\sigma$ be a holomorphic or anti-holomorphic involution of $S$ without fixed point. Then for any $n\geq 1$, the naturally induced involution on the Hilbert scheme $S^{[n]}$ is not maximal. 
\end{theorem}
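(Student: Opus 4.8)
The plan is to reduce the maximality question to a numerical statement about $\F2$-Betti numbers via the characterization of maximality from \Cref{prop:MaximalityViaCohomology}, and then to rule it out by a counting argument using the structure of complex surfaces admitting a free involution. Since $\sigma$ acts freely on $S$, the quotient $T := S/\sigma$ is again a compact complex surface (or a surface in the anti-holomorphic case, viewing $S$ as a real manifold), and $S \to T$ is an unramified double cover classified by a $2$-torsion class; the hypothesis $H^1(S,\F2)=0$ forces strong restrictions on $T$ (for instance $b_1(T) \le 1$, and the transfer/Gysin sequence relates $b_*(S,\F2)$ and $b_*(T,\F2)$). The first step is to record these constraints, invoking the Enriques--Kodaira classification to pin down which surfaces $S$ can occur: the condition $H^1(S,\F2)=0$ together with the existence of a free involution is quite rigid (e.g.\ it excludes abelian and bielliptic surfaces, and for K3, Enriques, rational, and general type surfaces one gets a short explicit list of possible Betti numbers).

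Next I would transport the free involution to $S^{[n]}$. The induced involution $\sigma^{[n]}$ on $S^{[n]}$ need \emph{not} be free (tuples of points permuted in pairs can have fixed schemes when the supporting points coincide pairwise), so I cannot directly argue ``free $\Rightarrow$ fixed locus empty $\Rightarrow$ not maximal.'' Instead, the key step is to compute, or at least bound, $b_*\big((S^{[n]})^{\sigma^{[n]}},\F2\big)$ and compare it with $b_*(S^{[n]},\F2)$. The total $\F2$-Betti number of $S^{[n]}$ is governed by Göttsche's formula; and the fixed locus of $\sigma^{[n]}$ decomposes according to how the involution permutes the points of a length-$n$ subscheme: it is a union of strata built from $S^{[a]}$-type pieces over the $\sigma$-fixed ``diagonal'' part — but since $\sigma$ is free on $S$, there is \emph{no} genuine fixed locus on $S$ itself, so the fixed locus of $\sigma^{[n]}$ is supported on configurations invariant only up to the permutation action, and in fact is isomorphic to a Hilbert-scheme-type space over the quotient $T$. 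Concretely I expect $(S^{[n]})^{\sigma^{[n]}}$ to be (closely related to) something like a disjoint union indexed by partitions of $n$ into pairs and singletons, where the singleton contributions vanish; one then uses the Smith--Gysin exact sequence and the Kalinin spectral sequence (as announced in the outline) to control its $\F2$-cohomology in terms of that of $T^{[m]}$ for $m \le n/2$.

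The decisive inequality should then be: $b_*(T^{[m]},\F2)$-type quantities, summed over the relevant strata, are \emph{strictly} smaller than $b_*(S^{[n]},\F2)$, because passing from $S$ to the quotient $T$ roughly halves the Euler-characteristic input to Göttsche's formula while the strata of the fixed locus only see ``half'' the points. I would make this precise by writing both sides as coefficients of explicit generating series (Göttsche for $b_*(S^{[n]},\F2)$ via $\prod_{m\ge 1}(1-t^m)^{-b_*(S,\F2)}$ up to parity corrections, and a parallel product for the fixed-locus strata with exponent governed by $b_*(T,\F2)$ and the pairing structure), and then checking the strict inequality of coefficients using the surface-by-surface bounds from the classification established in step one — this is where the hypothesis $H^1(S,\F2)=0$ is essential, as it is exactly what prevents $b_*(T,\F2)$ from being large enough to close the gap.

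\textbf{Main obstacle.} The hard part will be giving a clean and correct description of the fixed locus $(S^{[n]})^{\sigma^{[n]}}$ and its $\F2$-cohomology: the interaction between the Hilbert--Chow stratification and the (non-free) permutation action is delicate, and one must handle punctual components carefully — a naive ``it's a Hilbert scheme over $T$'' is not literally true. Getting the Smith--Gysin and Kalinin spectral sequence bookkeeping to yield a \emph{strict} rather than merely weak inequality, uniformly in $n$, is the crux; I expect the free hypothesis on $S$ to be precisely what makes the singleton strata drop out and forces the strict inequality.
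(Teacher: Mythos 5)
Your plan is a genuinely different route from the one taken in the paper, and in its present form it has real gaps. The paper never computes the fixed locus of $\sigma^{[n]}$ at all: it shows instead that $\sigma^{[n]}$ acts non-trivially on $H^2(S^{[n]},\F2)$ (or, in the residual cases, that a Kalinin differential is non-zero), which already kills maximality by \Cref{prop:MaximalityViaCohomology}. Concretely, the Smith--Gysin sequence for the free action on $S$ forces $\dim H_2(S,\F2)^{\sigma}=\frac{1}{2}b_2(S)+1$, so the action on $H_2(S,\F2)$ can only be trivial when $b_2(S)=2$ (\Cref{thm:FixedPointTheorem}); the non-triviality then transports to $H^2(S^{[n]},\F2)$ through the equivariant splitting $H^2(S^{[n]},\ZZ)\cong H^2(S,\ZZ)\oplus\ZZ\delta$ (\Cref{cor:FixedPointFree-NonMaximal}). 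The remaining anti-holomorphic cases with $b_2(S)=2$ (quadrics, Hirzebruch surfaces, fake quadrics) are settled by exhibiting explicit cycles on $S^{[n]}$ on which the differential $d_3$ of the Kalinin spectral sequence is non-zero. Your approach --- describe $(S^{[n]})^{\sigma^{[n]}}$ and compare total $\F2$-Betti numbers directly --- would in principle also prove the statement, but it is strictly harder, and the points you yourself flag as ``the hard part'' are exactly where it breaks down as written.

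Specifically: (1) your stratified description of the fixed locus is off. For a \emph{holomorphic} free involution with quotient $T=S/\sigma$, the fixed locus is empty for $n$ odd and is literally $T^{[n/2]}$ for $n$ even (pull back ideal sheaves along the \'etale double cover $S\to T$); there is no partition-indexed stratification. For an \emph{anti-holomorphic} free involution, however, $T$ carries no complex structure, the fixed locus is a totally real $2n$-manifold that is not a Hilbert scheme of anything, and you give no method to compute its $\F2$-cohomology. (2) Even in the holomorphic case the generating-series comparison is not available as stated: since the double cover is non-trivial, $H^1(T,\F2)\neq 0$, so $H^*(T^{[m]},\ZZ)$ has $2$-torsion and the $\F2$-Betti numbers of $T^{[m]}$ are \emph{not} given by the naive G\"ottsche product; the torsion-freeness input needed for Totaro's theorem fails for $T$. (3) The strict inequality $b_*\bigl((S^{[n]})^{\sigma^{[n]}},\F2\bigr)<b_*(S^{[n]},\F2)$ is asserted, not proved; what is missing is precisely an upper bound on the left-hand side, and the Smith inequality itself only gives the non-strict bound. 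Unless you can supply (1)--(3), the argument does not close; the paper's detour through the action on $H^2$, plus the Kalinin computation for the $b_2=2$ cases, is what avoids all three problems.
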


\Cref{thm:NonMaxHilbSurfWithoutFixPoint} is proved in \Cref{sec:proof_of_NonMaxHilbSurfWithoutFixPoint} and it is obtained by combining \Cref{thm:FixedPointTheorem}, \Cref{cor:FixedPointFree-NonMaximal},  \Cref{rmk:Remaining cases}, \Cref{gen-theorem}, \Cref{cor:HirzebruchHilb}, \Cref{thm:FakeQuadrics}, \Cref{fake-quadric-theorem}.

\subsection{Topological constraints on free involutions of surfaces}
As a first step, we provide a strong restriction satisfied by a fixed-point-free (holomorphic or anti-holomorphic) involution on a compact complex surface.
We start with a lemma from algebraic topology of involutions on manifolds, where we use the following notation.
Let $M$ be a topological space endowed with a continuous involution $\sigma: M\to M$ without fixed point:
\begin{equation}
    M^{\sigma}=\emptyset.
\end{equation}
Assume that $M$ has a CW-complex structure such that $\sigma$ is cellular. Consider the associated Smith--Gysin long exact sequence in the following form (see \cite[Theorem 1.2.1]{RealEnriques} for example):
\begin{equation}
 \label{eqn:SmithSeq}
		\cdots\to H_{r+1}(M/\sigma, \F2)\xrightarrow{\gamma_r} H_r(M/\sigma, \F2)\xrightarrow{\alpha_r} H_r(M, \F2)\xrightarrow{\beta_r} H_r(M/\sigma, \F2)\xrightarrow{\gamma_{r-1}} H_{r-1}(M/\sigma, \F2) \to\cdots   
\end{equation}
and put $I_r=H_r(M,\F2)^{\sigma}$ the subspace of invariant elements. Recall that $\gamma_r=\cap\omega$ where $\omega\in H^1(M/\sigma,\F2)$ is the characteristic class of the double covering $\pi : M\to M/\sigma$, while $\alpha_r=\pi^*$ and $\beta_r=\pi_*$ are the transfer and projection homomorphisms, respectively.
\begin{lemma}\label{InvClasses}
Let $M$ be a connected compact oriented 4-manifold with $H^1(M,\F2)=0$ and $\sigma: M\to M$ an orientation-preserving involution with empty fixed locus $M^\sigma=\emptyset$. Then:
\begin{align}
    & \dim I_2=\dim \Im \alpha_2+1=b_2(M/\sigma, \F2),\label{eqn:I2Dim} \\
    & \dim \Im \alpha_2= \frac12 b_2(M). \label{eqn:RankAlpha2}
\end{align}
\end{lemma}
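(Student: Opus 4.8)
The plan is to extract the two identities directly from the Smith--Gysin sequence \eqref{eqn:SmithSeq}, using the fixed-point-free hypothesis and Poincar\'e duality on $M$ and on the quotient $M/\sigma$. First I would record the general exactness consequences: from \eqref{eqn:SmithSeq} one has $\Im\alpha_r=\Ker\beta_r$ and $\Ker\gamma_{r-1}=\Im\beta_r$, and the composite $\alpha_r\circ\beta_r$ (as well as $\beta_r\circ\alpha_r$) is cup/cap product with $\omega$, reflecting the fact that $\pi^*\pi_*$ is multiplication by $1+\sigma$. Since $M^\sigma=\emptyset$, the quotient $M/\sigma$ is itself a closed $4$-manifold, and it is $\mathbb{F}_2$-orientable (every manifold is), so Poincar\'e duality holds for it with $\mathbb{F}_2$-coefficients; moreover $\pi\colon M\to M/\sigma$ is a genuine $2$-sheeted covering, so $\chi(M/\sigma)=\tfrac12\chi(M)$. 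The hypothesis $H^1(M,\mathbb{F}_2)=0$ also forces $H^1(M/\sigma,\mathbb{F}_2)$ to be small: the transfer argument (or the five-term sequence for $\pi$) shows $\dim H^1(M/\sigma,\mathbb{F}_2)\le 1$, and in fact it equals $1$, generated by $\omega$, precisely because the covering is nontrivial ($\omega\neq 0$) while its image in $H^1(M,\mathbb{F}_2)=0$ vanishes.

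Next I would pin down $H_*(M/\sigma,\mathbb{F}_2)$ in low degrees. Connectedness gives $\dim H_0=1$; Poincar\'e duality then gives $\dim H_4=1$ and $\dim H_3=\dim H_1=1$. So the only unknown Betti number of $M/\sigma$ is $b_2(M/\sigma,\mathbb{F}_2)$, which by the Euler characteristic identity satisfies $b_2(M/\sigma)=\chi(M/\sigma)-2+2\cdot 1=\tfrac12\chi(M)$. Since $H^1(M,\mathbb{F}_2)=0$ and $M$ is a connected oriented closed $4$-manifold, $\chi(M)=2+b_2(M)$, hence $b_2(M/\sigma,\mathbb{F}_2)=\tfrac12 b_2(M)+\,$(something I must track carefully)$\,$; the bookkeeping here is where I expect to have to be most careful, reconciling $b_2(M)$ even (which follows because $\sigma$ is orientation-preserving and fixed-point-free, forcing $\chi(M)$ even) with the stated formula $\dim\Im\alpha_2=\tfrac12 b_2(M)$.

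Now for \eqref{eqn:RankAlpha2}: in degree $2$ the sequence reads $H_3(M/\sigma)\xrightarrow{\gamma_2}H_2(M/\sigma)\xrightarrow{\alpha_2}H_2(M)\xrightarrow{\beta_2}H_2(M/\sigma)\xrightarrow{\gamma_1}H_1(M/\sigma)$. Using $\dim H_3(M/\sigma)=\dim H_1(M/\sigma)=1$ and the known $b_2(M/\sigma)$, a rank count gives $\dim\Im\alpha_2=\dim H_2(M/\sigma)-\dim\Im\gamma_2$ and $\dim\Im\beta_2=\dim H_2(M/\sigma)-\dim\Im\gamma_1$; combining with $\dim H_2(M)=\dim\Im\alpha_2+\dim\Im\beta_2$ and the boundary maps $\gamma_2,\gamma_1$ (whose ranks are $0$ or $1$) yields $\dim\Im\alpha_2=\tfrac12 b_2(M)$ once one checks that $\gamma_1$ and $\gamma_2$ have the expected (maximal, i.e.\ rank $1$) behaviour; this in turn follows from $\omega\ne 0$ together with Poincar\'e duality on $M/\sigma$, since cap product with a nonzero degree-$1$ class is injective out of $H_1$ and surjective onto $H_3$ when the relevant groups are $1$-dimensional. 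For \eqref{eqn:I2Dim}: the invariants satisfy $I_2=H_2(M,\mathbb{F}_2)^\sigma=\Ker(1+\sigma\colon H_2(M)\to H_2(M))$, and since $1+\sigma=\alpha_2\circ\beta_2$ on $H_2(M)$, one gets $\dim I_2=\dim H_2(M)-\mathrm{rk}(\alpha_2\beta_2)$; unwinding the exact sequence (in particular $\Im\beta_2\cap\Ker\alpha_2=\Im\gamma_2$) gives $\mathrm{rk}(\alpha_2\beta_2)=\dim\Im\beta_2-\dim\Im\gamma_2=\dim\Im\alpha_2-1$, where the final $-1$ comes from $\dim\Im\gamma_2=1$ (equivalently $\dim\Ker\beta_3$, using $H_4(M/\sigma)=\mathbb{F}_2$ and $H_4(M)=\mathbb{F}_2$ with $\beta_4$ an isomorphism hence $\gamma_3=0$, forcing $\gamma_2$ to have rank one by exactness at $H_3(M/\sigma)=\mathbb{F}_2$). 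Hence $\dim I_2=\dim H_2(M)-(\dim\Im\alpha_2-1)=\dim\Im\alpha_2+1$ (using $\dim H_2(M)=2\dim\Im\alpha_2$ from \eqref{eqn:RankAlpha2}), and this common value is $b_2(M/\sigma,\mathbb{F}_2)$ by the rank computation above. The main obstacle, as indicated, is the careful tracking of the ranks of the connecting maps $\gamma_1,\gamma_2,\gamma_3$ at the two ends of the relevant stretch of the exact sequence and confirming they take the extremal values dictated by $\omega\neq0$ and Poincar\'e duality; everything else is linear algebra in the exact sequence.
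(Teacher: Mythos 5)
Your strategy---pure bookkeeping in the Smith--Gysin sequence together with Poincar\'e duality and an Euler-characteristic count on $M/\sigma$---is essentially the paper's, and the first half works: $b_2(M/\sigma,\mathbb{F}_2)=\tfrac12 b_2(M)+1$, and the count $\rk\gamma_1+\rk\gamma_2=2b_2(M/\sigma,\mathbb{F}_2)-b_2(M)=2$ forces both connecting maps to have rank one, giving $\dim \Im\alpha_2=\dim\Im\beta_2=\tfrac12 b_2(M)$ without any further input. The genuine gap is at the one step where the lemma has real content, namely the ``$+1$'' in $\dim I_2=\dim\Im\alpha_2+1$. You compute $\rk(\alpha_2\circ\beta_2)$ from the identity $\Im\beta_2\cap\Ker\alpha_2=\Im\gamma_2$, presented as ``unwinding the exact sequence''. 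Exactness only gives $\Ker\alpha_2=\Im\gamma_2$; the inclusion $\Im\gamma_2\subseteq\Im\beta_2$ is an extra claim, equivalent (since $\Im\gamma_2$ is spanned by $D\omega^2$ and $\Im\beta_2=\Ker\gamma_1=\Ker(\,\cdot\cap\omega)$) to $\omega^3=0$ in $H^*(M/\sigma,\mathbb{F}_2)$. If that inclusion fails, the same rank count yields $\dim I_2=\dim\Im\alpha_2$ rather than $\dim\Im\alpha_2+1$, so linear algebra alone does not resolve the dichotomy. The paper's proof spends its effort precisely here: $M/\sigma$ is a closed orientable $4$-manifold (as $\sigma$ preserves orientation), so $\omega^4=0$; since $\gamma_0=\,\cdot\cap\omega\colon H_1(M/\sigma,\mathbb{F}_2)\to H_0(M/\sigma,\mathbb{F}_2)$ is an isomorphism, $\omega^4=0$ forces $\omega^3=0$; exactness then produces $\xi\in H_2(M,\mathbb{F}_2)$ with $\beta_2(\xi)=D\omega^2\neq 0$, and one checks $\xi\in I_2$. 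You must supply this argument (or an equivalent one) to justify the ``$+1$''.

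Separately, a smaller but real slip: you assert that $\beta_4$ is an isomorphism and hence $\gamma_3=0$. In fact $\beta_4=\pi_*$ kills the fundamental class mod $2$ (because $\pi_*\pi^*$ is multiplication by $2$), it is $\alpha_4=\pi^*$ that is an isomorphism, and consequently $\gamma_3$ is an isomorphism rather than zero; indeed $\gamma_3=0$ would contradict your own computation $H_3(M/\sigma,\mathbb{F}_2)\cong\mathbb{F}_2$, since exactness would then force $H_3(M/\sigma,\mathbb{F}_2)$ to inject into $H_3(M,\mathbb{F}_2)=0$. The conclusion you want there, $\rk\gamma_2=1$, is still correct, but it follows from $\beta_3=0$ (because $H_3(M,\mathbb{F}_2)=0$), not from the statement you wrote.
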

\begin{proof}
Let the notation be as before. In the Smith--Gysin sequence \eqref{eqn:SmithSeq}:
\begin{itemize}
    \item clearly $\beta_0\colon H_0(M, \F2)\to H_0(M/\sigma, \F2)$ and    $\alpha_4\colon H_4(M/\sigma, \F2)\xrightarrow{\pi^*} H_4(M, \F2)$ are isomorphisms;
    \item $H_3(M,\F2)=H_1(M,\F2)=0$ by the assumption that $H^1(M,\F2)=0$.
\end{itemize}
Therefore the exactness of \eqref{eqn:SmithSeq} implies the following:
\begin{itemize}
    \item $\gamma_3\colon H_4(M/\sigma, \F2)\xrightarrow{\cap \omega}H_3(M/\sigma, \F2)$ is an isomorphism, hence $H_3(M/\sigma, \F2)$ is 1-dimensional and generated by the Poincar\'e dual of $\omega$. 
    \item $\gamma_0\colon H_1(M/\sigma, \F2)\xrightarrow{\cap \omega}H_0(M/\sigma, \F2)$ is an isomorphism. Recall that for any closed orientable 4-manifold $V$ and any $v\in H^1(V,\F2)$, we have $v^4=0$. Thus, $\omega^4=0$, and as $\gamma_0$ is an isomorphism, 
    \begin{equation}
    \label{eqn:omega3=0}
        \omega^3=0.  
    \end{equation}
    \item $\gamma_2: H_3(M/\sigma, \F2)\xrightarrow{\cap \omega} H_2(M/\sigma, \F2)$
    is a monomorphism, hence
    \begin{equation}
    \omega^2\ne 0.
    \end{equation}
    \item We have an exact sequence:
    {\footnotesize
    \begin{equation} \label{eqn:MiddleExactSeq}
        0\to H_3(M/\sigma, \F2)\xrightarrow{\gamma_2=\cap \omega} H_2(M/\sigma, \F2)\xrightarrow{\alpha_2=\pi^*} H_2(M, \F2) \xrightarrow{\beta_2=\pi_*}  H_2(M/\sigma, \F2)\xrightarrow{\gamma_1=\cap \omega}H_1(M/\sigma, \F2)\to 0.
    \end{equation}
    }
\end{itemize}
In its turn, from the exactness of (\ref{eqn:MiddleExactSeq}) it follows that
$1+\rk \alpha_2=b_2(M/\sigma,\F2)=\rk\beta_2+1$ and $\rk\alpha_2+\rk\beta_2=b_2(M,\F2)$.
Note that by assumption $H^*(M, \ZZ)$ is 2-torsion-free, hence $b_2(M, \F2)$ equals to the usual $b_2(M)$. The relation \eqref{eqn:RankAlpha2}, and the second equality in 
\eqref{eqn:I2Dim}, are proven.


Next, since $(\alpha_2\circ\beta_2)I_2=(1+\sigma_*)I_2=0$, we get
$\beta_2(I_2)\subset \ker(\alpha_2)=\Im \gamma_2$ and $\dim\beta_2(I_2)\le \dim\Im\gamma_2=1$. Thus, due to $\ker \beta_2=\Im \alpha_2\subset I_2$,
to prove $\dim I_2=\dim \Im \alpha_2+1$ it is sufficient to check that 
$\dim \beta_2 (I_2)\ge 1$.


Now, from $\omega^3=0$ in \eqref{eqn:omega3=0} and the exactness of \eqref{eqn:MiddleExactSeq}, it follows that there exists $\xi\in H_2(M,\F2)$ with $\beta_2(\xi)= D\omega^2$
(where $D$ stands for the Poincar\'e duality). 
We have $\xi\in I_2$, since $(1+\sigma_*)\xi=
(\alpha_2\circ\beta_2)(\xi)=
\alpha_2(D\omega^2)=
\alpha_2(\omega\cap D\omega)=(\alpha_2\circ \gamma_2)(D\omega)=0$ by exactness of the Smith--Gysin sequence.
As $\omega^2\neq 0$, $\dim \beta_2(I_2)\ge 1$, which concludes the proof of $\dim I_2=\dim \Im \alpha_2+1$.
\end{proof}

\begin{rmk}
\label{rmk:RankOf+-lattices}
By the Lefschetz trace formula for fixed points, we have
\begin{equation}
    \rk H^2(M, \ZZ)^{\sigma-}- \rk H^2(M, \ZZ)^{\sigma}=2.
\end{equation}
Hence $b_2(M)$ is an even number and
\begin{equation}\label{A9}
    \rk H^2(M, \ZZ)^{\sigma-}=\frac{1}{2} b_2(M)+1; \quad\quad 
        \rk H^2(M, \ZZ)^{\sigma}=\frac{1}{2} b_2(M)-1.
\end{equation}
By Lemma \ref{InvClasses} it follows that
\begin{equation}
\dim I_2=\rk H^2(M, \ZZ)^{\sigma-}.    
\end{equation} 
This implies that the pull-back homomorphism
establishes a lattice isomorphism: 
\begin{equation}
   \pi^*\colon  H^2(M/\sigma , \ZZ)(2) \xrightarrow{\cong } H^2(M, \ZZ)^{\sigma}.
\end{equation}
\end{rmk}
\begin{theorem}
\label{thm:FixedPointTheorem}
    Let $S$ be a compact complex surface with $H^1(S, \F2)=0$. Let $\sigma$ be an involution of $S$ without fixed point that satisfies one of the following conditions
    \begin{enumerate}
        \item[(i)]  $\sigma$ is a holomorphic involution, or
        \item[(ii)]  $\sigma$ is anti-holomorphic and $b_2(S)\neq 2$.
    \end{enumerate}
    Then $\sigma$ acts on $H_2(S, \F2)$ non-trivially.
\end{theorem}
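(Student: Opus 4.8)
The plan is to argue by contradiction: assume $\sigma$ acts trivially on $H_2(S,\F2)$, deduce from \Cref{InvClasses} that $b_2(S)=2$ (which immediately settles case~(ii)), and then eliminate case~(i) by passing to the free quotient and invoking Noether's formula. We may and do assume $S$ connected.

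First I would record the reductions coming from $H^1(S,\F2)=0$: the group $H_1(S,\ZZ)$ is then finite of odd order, so $b_1(S)=0$; and by Poincar\'e duality together with the universal coefficient theorem $H_2(S,\ZZ)$ has no $2$-torsion, so $\dim_{\F2}H_2(S,\F2)=b_2(S)$. Moreover, since holomorphic maps preserve orientation and complex conjugation on $\CC^2\cong\RR^4$ has Jacobian determinant $+1$, any holomorphic or anti-holomorphic self-map of a complex surface is orientation-preserving; hence $\sigma$ is an orientation-preserving, fixed-point-free involution of the connected compact oriented $4$-manifold $S$ with $H^1(S,\F2)=0$, so \Cref{InvClasses} applies. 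Triviality of $\sigma$ on $H_2(S,\F2)$ means $I_2=H_2(S,\F2)$, and \eqref{eqn:I2Dim} together with \eqref{eqn:RankAlpha2} gives $b_2(S)=\dim I_2=\dim\Im\alpha_2+1=\tfrac12 b_2(S)+1$, hence $b_2(S)=2$. In case~(ii) this contradicts the hypothesis $b_2(S)\neq 2$, and we are done.

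For case~(i), a \emph{holomorphic} fixed-point-free involution with $b_2(S)=2$, I would pass to the quotient $X:=S/\sigma$, a compact connected complex surface with $\pi\colon S\to X$ an \'etale double cover. The transfer yields an injection $\pi^*\colon H^1(X,\QQ)\hookrightarrow H^1(S,\QQ)=0$, so $b_1(X)=0$; and by multiplicativity of the topological Euler characteristic in finite covers, $\chi_{\mathrm{top}}(X)=\tfrac12\chi_{\mathrm{top}}(S)=\tfrac12(1-0+2-0+1)=2$, so $b_2(X)=\chi_{\mathrm{top}}(X)-2=0$. Now $b_2(X)=0$ forces $K_X$ to be torsion in $H^2(X,\ZZ)$, hence $K_X^2=0$ in $H^4(X,\ZZ)\cong\ZZ$; and $b_1(X)=0$ forces $h^{1,0}(X)=h^{0,1}(X)=0$, so $q(X)=0$ and $\chi(\mathcal O_X)=1+p_g(X)\geq 1$. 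But Noether's formula for the compact complex surface $X$ reads $12\,\chi(\mathcal O_X)=K_X^2+c_2(X)=0+\chi_{\mathrm{top}}(X)=2$, which is impossible, completing the proof.

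The only non-formal step, and the place where care is needed, is the elimination of the value $b_2(S)=2$: it uses that the free quotient $S/\sigma$ is again a complex surface, which is available only in the holomorphic case. In the anti-holomorphic case $b_2(S)=2$ genuinely occurs --- e.g.\ $\PP^1\times\PP^1$ equipped with the product of the two antipodal (fixed-point-free) anti-holomorphic involutions of $\PP^1$, which acts as $-\id$, hence trivially modulo $2$, on $H_2$ --- which is precisely why the hypothesis $b_2(S)\neq 2$ is imposed in~(ii), and why the remaining $b_2=2$ surfaces are dealt with separately elsewhere in this section.
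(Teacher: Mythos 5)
Your proof is correct. The first half coincides with the paper's: assuming the action on $H_2(S,\F2)$ is trivial, \Cref{InvClasses} forces $b_2(S)=2$, which settles case (ii) at once. Where you genuinely diverge is in ruling out the holomorphic case with $b_2(S)=2$. The paper stays on $S$: by the Lefschetz trace formula (see \Cref{rmk:RankOf+-lattices}) the involution acts as $-\id$ on $H^2(S,\QQ)$, so the invariant class $K_S$ is torsion, $S$ is minimal of Kodaira dimension $0$, and the Enriques--Kodaira classification contains no such surface with $b_1=0$ and $b_2=2$. You instead descend to the free quotient $X=S/\sigma$, show $b_1(X)=0$ and $\chi_{\mathrm{top}}(X)=2$, hence $b_2(X)=0$ and $K_X^2=0$, and derive the contradiction $12\,\chi(\mathcal O_X)=2$ from Noether's formula. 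Both arguments are sound; yours trades the appeal to the classification of surfaces for Hirzebruch--Riemann--Roch (which does hold for arbitrary compact complex surfaces), at the cost of invoking the standard facts that a free holomorphic involution has a complex manifold as quotient and that $b_1=0$ forces $h^{0,1}=0$ even for possibly non-K\"ahler surfaces. Your closing observation about $\PP^1\times\PP^1$ with the product of antipodal involutions correctly explains why $b_2=2$ must be excluded in the anti-holomorphic case, exactly as the paper does in \Cref{rmk:Remaining cases}.
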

\begin{proof} 
Assume for contradiction that $\sigma$ acts on $H_2(S, \F2)$ trivially, that is, $$\dim I_2=b_2(S).$$ 
As a holomorphic or anti-holomorphic involution preserves the natural orientation of a complex surface, we can apply Lemma \ref{InvClasses}, and obtain that 
\begin{equation}
    \dim I_2=\frac{1}{2}b_2(S)+1.
\end{equation}
Thus the only possibility is when $b_2(S)=2$. Hence, case (ii) is proven. 

For case (i), i.e.~$\sigma$ is holomorphic, by the Lefschetz trace formula, $\sigma$ acts on $H^2(S, \QQ)$ by $-\id$; see (\ref{A9}) in \Cref{rmk:RankOf+-lattices}. As the canonical class is preserved by $\sigma$, this implies that $K_S$ is torsion. In particular, $S$ is a minimal surface of Kodaira dimension 0. By looking at the Enriques--Kodaira classification, there is no such type of surfaces with $b_1(S)=0$ and $b_2(S)=2$. Case (i) is proven. 
\end{proof}

\begin{cor}
\label{cor:FixedPointFree-NonMaximal}
Let the notations and assumptions be as in \Cref{thm:FixedPointTheorem}. For a given positive integer $n$, let $S^{[n]}$ be the $n$th Hilbert scheme  of points on $S$, and let $\sigma^{[n]}$ be the naturally induced (holomorphic or anti-holomorphic) involution on $S^{[n]}$. Then $\sigma^{[n]}$ acts non-trivially on $H^2(S^{[n]}, \F2)$. In particular, $\sigma^{[n]}$ is not a maximal involution.
\end{cor}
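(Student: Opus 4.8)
The plan is to deduce \Cref{cor:FixedPointFree-NonMaximal} from \Cref{thm:FixedPointTheorem} by transporting the non-triviality of the $\sigma$-action from $H^2(S,\F2)$ up to $H^2(S^{[n]},\F2)$, and then quoting \Cref{prop:MaximalityViaCohomology}(3). First I would unwind the torsion situation on $S$: from $H^1(S,\F2)=0$ the universal coefficient theorem gives $H^1(S,\ZZ)=0$ and no $2$-torsion in $H^2(S,\ZZ)$, and then Poincar\'e duality forces $H^3(S,\ZZ)\cong H_1(S,\ZZ)$ to be $2$-torsion-free as well; consequently there is a $\sigma$-equivariant identification $H^2(S,\F2)\cong(H^2(S,\ZZ)/\tors)\otimes\F2$. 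Since \Cref{thm:FixedPointTheorem} says $\sigma$ acts non-trivially on $H_2(S,\F2)$, duality over the field $\F2$ makes it act non-trivially on $H^2(S,\F2)$, hence on $(H^2(S,\ZZ)/\tors)\otimes\F2$.

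Next I would invoke the description \eqref{eqn:delta2}: the isomorphism $H^2(S^{[n]},\ZZ)/\tors\cong H^2(S,\ZZ)/\tors\oplus\ZZ\delta$ is $\sigma$-equivariant, with $\sigma^{[n]}$ acting as $\sigma^*\oplus\id_{\ZZ\delta}$. Indeed $\sigma^{[n]}$ fixes $\delta=\tfrac12[E]$ because the exceptional locus $E$ of the Hilbert--Chow morphism --- the non-reduced subschemes --- is preserved by any involution of $S^{[n]}$ induced from a holomorphic or anti-holomorphic involution of $S$; and the embedding $\alpha\mapsto\1_{-(n-1)}\alpha=[S^{[1,n]}]_*(\alpha)$ intertwines $\sigma^*$ with $\sigma^{[n]*}$ since the cycle $S^{[1,n]}\subset S\times S^{[n]}$ is $(\sigma\times\sigma^{[n]})$-invariant (and both involutions preserve orientations, $\dim_\CC S^{[n]}$ being even). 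Reducing mod $2$, $\sigma^{[n]}$ acts non-trivially on $(H^2(S^{[n]},\ZZ)/\tors)\otimes\F2$, which sits $\sigma$-equivariantly inside $H^2(S^{[n]},\F2)$ by the universal coefficient theorem; hence $\sigma^{[n]}$ acts non-trivially on $H^2(S^{[n]},\F2)$. Then \Cref{prop:MaximalityViaCohomology}(3) --- a maximal involution acts trivially on the full $\F2$-cohomology --- immediately yields non-maximality.

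The substantive content is entirely \Cref{thm:FixedPointTheorem}; everything else is bookkeeping, and the only point that will need genuine care is checking that all of the identifications above are honestly $\sigma$-equivariant --- in particular that $\sigma^{[n]}$ fixes the class $\delta$ even when $\sigma$ is anti-holomorphic, where one has to argue with the topological exceptional divisor rather than an algebraically defined divisor class --- and that passing to $\F2$-coefficients does not collapse the non-triviality, which is precisely why the $2$-torsion-freeness of $H^2(S,\ZZ)$ is used.
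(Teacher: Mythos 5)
Your argument is correct and essentially identical to the paper's proof: both transport the non-trivial action on $H^2(S,\F2)$ from \Cref{thm:FixedPointTheorem} through the equivariant embedding $\1_{-(n-1)}=[S^{[1,n]}]_*$ into $H^2(S^{[n]},\F2)$, using the $2$-torsion-freeness of $H^*(S^{[n]},\ZZ)$ (which the paper derives from Totaro's theorem) to reduce the integral decomposition mod $2$, and then conclude by \Cref{prop:MaximalityViaCohomology}. The one small slip is your claim that $\sigma^{[n]}$ fixes $\delta=\tfrac12[E]$ in the anti-holomorphic case: since $E$ has odd complex dimension the induced real structure reverses its orientation, so integrally $\delta$ is anti-invariant rather than invariant; this is invisible mod $2$ and harmless anyway, because the non-triviality already lives on the equivariantly embedded summand $H^2(S,\F2)$.
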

\begin{proof}
The case where $n=1$ is exactly \Cref{thm:FixedPointTheorem}. Assume $n\geq 2$ in the sequel. 
The assumption $H^1(S, \F2)=0$ implies that $H^1(S, \ZZ)=0$ and $H^2(S, \ZZ)$ is 2-torsion-free.
We have the following isomorphism (see for example \cite[P.768]{Beauville}), where $(-)_{\operatorname{tf}}$ stands for $(-)/\operatorname{tors}$.
\begin{equation}
\label{eqn:IsomH2}
    H^2(S^{[n]}, \ZZ)_{\operatorname{tf}} \cong H^2(S, \ZZ)_{\operatorname{tf}} \oplus \ZZ\cdot \delta,
\end{equation}
where $\delta$ is half of the class of the exceptional divisor in $S^{[n]}$. In the isomorphism \eqref{eqn:IsomH2}, the injection $i\colon H^2(S, \ZZ)_{\operatorname{tf}} \to H^2(S^{[n]}, \ZZ)_{\operatorname{tf}}$ is induced by the incidence subscheme $S^{[1, n]}:=\{(x, \xi)\in S\times S^{[n]}~|~ x\in \operatorname{supp}(\xi)\}$. Therefore $i$ is equivariant with respect to the action of $\sigma$ and $\sigma^{[n]}$.

Since $H^1(S, \F2)=0$, the cohomology $H^*(S, \ZZ)$ is 2-torsion-free.  By \cite[Theorem 3.1 and the remark that follows]{TotaroHilbn}, $H^*(S^{[n]}, \ZZ)$ is also 2-torsion-free. We obtain from \eqref{eqn:IsomH2} a $(\sigma^{[n]}, \sigma)$-equivariant isomorphism:
\begin{equation}
        H^2(S^{[n]}, \F2) \cong H^2(S, \F2)\oplus \F2\cdot \delta.
\end{equation}
Since the action of $\sigma$ on $H^2(S, \F2)$ is non-trivial, the action of $\sigma^{[n]}$ on $H^2(S^{[n]}, \F2)$  is non-trivial.
The non-maximality follows from \Cref{prop:MaximalityViaCohomology}.
\end{proof}

\begin{rmk}
    Let us give an alternative geometric proof for the non-triviality of the action of $\sigma^{[n]}$ on $H^2(S^{[n]}, \F2)$ without using \eqref{eqn:IsomH2}. Choose an element $\alpha\in H^2(S, \F2)$ with $\sigma_*(\alpha)\neq \alpha$. By perfectness of the intersection pairing, there exists $\beta\in H^2(S, \F2)$ with $\alpha\cup\beta=0$ and $\sigma_*(\alpha)\cup\beta\neq 0$. 
Let $S^{[1, n]}:=\{(x, \xi)\in S\times S^{[n]}~|~ x\in \operatorname{supp}(\xi)\}$ as before, and let 
$S_0^{[1, n]}:=\{(x, \xi)\in S\times S^{[n]}~|~ \{x\}\subset\xi \text{ and } \operatorname{supp}(\mathcal{O}_\xi/\mathcal{O}_x) \text{ is one point}\}$. There is a natural residual-point morphism $r\colon S_0^{[1,n]}\to S$.
Consider
\begin{align*}
\tilde{\alpha}&:= \1_{-(n-1)}(\alpha)=(S^{[1,n]})_*(\alpha) \in H^2(S^{[n]}, \F2);\\
\quad\quad \tilde\beta&:=\p_{-1}(\pt)^{n-1}(\alpha)= (r^*(\pt))_*(\beta)\in H^{4n-2}(S^{[n]}, \F2).    
\end{align*}
Then the intersection pairings $(\tilde\alpha\cdot \tilde{\beta})=(\alpha\cdot \beta)=0$ and $(\sigma^{[n]}_*\tilde\alpha\cdot \tilde{\beta})=(\sigma_*\alpha\cdot \beta)\neq0$. In particular, $\tilde\alpha$ is not preserved by $\sigma^{[n]}$.
\end{rmk}

\begin{rmk}
\label{rmk:Remaining cases}
The only cases that are not covered by \Cref{thm:FixedPointTheorem} are fixed-point-free anti-holomorphic involutions on compact complex surfaces with $H^1(S,\F2)=0$ and $b_2(S)=2$. Thanks to the Enriques--Kodaira classification, such surfaces can only be (smooth) quadrics, fake quadrics\footnote{A fake quadric in this paper is always assumed to be of general type. See the precise definition in \Cref{sec:FakeQuadrics}.}, Hirzebruch surfaces, 
and blown-ups of fake projective planes at one point. The last case
can be easily excluded by noticing that any real structure on the blown-up of a fake projective plane must globally preserve the exceptional $(-1)$-curve $E$, hence $(E\cdot \sigma_*(E))$ is an odd number, but for a real structure $\sigma$ on a smooth projective surface without real points,  $(D\cdot \sigma_*(D))$ must be an even number for any divisor $D$. 
We will treat the remaining cases, namely, quadrics, fake quadrics and Hirzebruch surfaces in the rest of this section, by using the so-called Kalinin spectral sequence.
\end{rmk}

\subsection{Kalinin spectral sequence}

To treat the case of quadrics, Hirzebruch surfaces and fake quadrics, we apply the so-called {\it Kalinin spectral sequence}. This spectral sequence can be deduced from the exact Smith sequence or can be seen as a kind of the stable part of the Borel--Serre spectral sequence (see \cite{RealEnriques}, for example). Contrary to most traditional spectral sequences, Kalinin spectral sequence is $\ZZ$-graded on each page. More precisely, for a manifold $M$ equipped with an involution $\sigma$, the Kalinin spectral sequence is built as follows:
\begin{itemize}
    \item Page $E^0$ is the chain complex of $M$ with the usual boundary operator as the differential $d_0$.
    \item In page $E^1$, the terms are the usual homology groups $H_*(M, \F2)$ and the differential $d_1$ is the "averaging" operator:
    \begin{align*}
              d_1\colon   H_r(M,\F2) &\to  H_r(M,\F2).\\
 x_r &\mapsto  x_r+\sigma_* x_r
    \end{align*}
    \item In page $E^2$, the terms are $H_*(M,\F2)^\sigma/\Im(1+\sigma_*)$, and the differential  
    \begin{equation}
    d_2 : H_r(M,\F2)^\sigma/\Im(1+\sigma_*)\to H_{r+1}(M,\F2)^\sigma/\Im(1+\sigma_*)  
    \end{equation}
    is described as follows.
Starting from $x_r\in H_r(M,\F2)^\sigma/\Im(1+\sigma_*)$, we select
an $r$-dimensional cycle $\eta_r$ representing $x_r$.
Then one can choose a chain $\eta_{r+1}$ such that
$\partial \eta_{r+1}=\eta_r+\sigma_*(\eta_r)$. We define $d_2(x_r)$ to be the class of 
$\eta_{r+1}+\sigma_*\eta_{r+1}$. It is straightforward to check that $d_2$ is well-defined.
\item For the differential $d_3$ on the $E^3$-page, we only give its description under the extra assumption that $H_{\mathrm{odd}}(M,\F2)=0$ (which is satisfied in the applications). In such a situation we have trivially $d_2=0$. For any even integer $r$,
\begin{equation}
d_3\colon H_r(M,\F2)^{\sigma}/\Im(1+\sigma_*)\to H_{r+2}(M,\F2)^{\sigma}/\Im(1+\sigma_*)
\end{equation} 
has the following chain description (see \cite[P.9]{RealEnriques}):
for $x_r \in H_r(M,\F2)^\sigma/\Im(1+\sigma_*)$, we choose an $r$-dimensional cycle $\eta_r$ representing $x_r$. By assumption we can find an $(r+1)$-dimensional chain $\eta_{r+1}$ and an $(r+2)$-dimensional chain $\eta_{r+2}$, such that $\partial \eta_{r+2}=\eta_{r+1}+\sigma_*\eta_{r+1}$ and $\partial \eta_{r+1}= \eta_{r}+\sigma_*\eta_{r}$, then we
define $d_3(x_r)$ as the class of $\eta_{r+2}+\sigma_*\eta_{r+2}$. One can check that $d_3$ is well-defined in this case.
\end{itemize}
As a straightforward application, one gets the following
obstruction to maximality that we will exploit in this section: 
\begin{center}
    {\it If one of the differentials $d_r$, $r\ge 1$, is non-zero in the Kalinin spectral sequence, then $\sigma$ is not maximal.}
\end{center}

To illustrate the use of this obstruction as well as the computation of differentials in Kalinin spectral sequence, we give two examples.
\begin{ex}[Quadrics]
\label{example:Quadrics}
Recall that up to conjugation, $\PP^1$ admits only two real structures (equivalently real forms): the standard one $z\mapsto \overline{z}$ giving rise to $\PP^1_{\RR}$ as real form, and the \textit{antipode} $z\mapsto -\frac{1}{\overline{z}}$ giving rise to the conic without real points as real form.
Up to isomorphism, there are only two real structures on $\PP^1\times \PP^1$ with empty real loci (\cite{Comessatti-RationalSurfaces}). Namely, 
\begin{equation}
   (z_1, z_2) \mapsto (\overline{z_1}, -\frac{1}{\overline{z_2}}) \text{ and } (z_1, z_2)\mapsto (-\frac{1}{\overline{z_1}}, -\frac{1}{\overline{z_2}}).
\end{equation}

In either case, the anti-holomorphic involution acts trivially on $H_2(\PP^1\times \PP^1,\F2)$, 
while the "line generator" $\PP^1\times \{\pt\}$ is not invariant.
This leads to the conclusion that, in both cases, 
the class $\PP^1\times [\pt]\in H_2(\PP^1\times \PP^1,\F2)$ 
of the latter line generator
is sent by the differential in the $E^3$-page of the Kalinin spectral sequence 
to the fundamental class of the surface: $$d_3(\PP^1\times [\pt])=[\PP^1\times \PP^1].$$
Indeed, following the chain construction recalled above, we find\\
-\quad $\eta_2=\PP^1\times\{z\}$, where $z$ is an arbitrary point in $\PP^1$,\\
-\quad $\eta_3= \PP^1\times\operatorname{arc}$ where $\operatorname{arc}$ is a semi-big-circle connecting $z$ with its antipode $-\frac{1}{\overline{z}}$,\\
-\quad $\eta_3+\sigma_*\eta_3=\PP^1\times\operatorname{circle}$ where $\operatorname{circle}$ is a big-circle through  $z$ and $-\frac{1}{\overline{z}}$, and finally\\
-\quad $\eta_4=\PP^1\times \operatorname{hemisphere}$, hence $d_3(\PP^1\times [\pt])=[\eta_4+\sigma_*\eta_4]=[\PP^1\times \PP^1]$.
\end{ex}

\Cref{example:Quadrics} can be generalized to Hirzebruch surfaces as follows:

\begin{ex}[Hirzebruch surfaces]
\label{example:Hirzebruch}
For each integer $e>0$, the $e$-th Hirzebruch surface is a complex surface $S$ isomorphic to the ruled surface obtained as the projectivization of the rank-2 vector bundle 
$\mathcal{O}\oplus \mathcal{O}(-e)$ over a base curve $C\cong \PP^1$. Let $\pi\colon S\to C$ be the $\PP^1$-bundle projection. $S$ has a unique irreducible curve $\Sigma$ with self-intersection $-e$, which is 
the section of $\pi$ defined by the first summand of $\mathcal{O}\oplus \mathcal{O}(-e)$. Denote by $F$ the fiber class of $\pi$. Then we have $(F^2)=0$, $(F\cdot \Sigma)=1$, and $(\Sigma^2)=-e$.

Let $\sigma$ be a real structure on $S$ without real points. Since $\sigma$ preserves $\Sigma$ (reversing its orientation) and the intersection form, $\sigma$ acts on $H^2(S, \ZZ)$ as $-\id$. In particular, $\sigma_*(F)=-F$.  Therefore $\sigma$ preserves the $\PP^1$-bundle structure, and induces a real structure $\tau$ on the base curve $C$. As $\pi$ induces an equivariant isomorphism between $\Sigma$ and $C$, $\tau$ has no fixed point, i.e.~it is the antipode on $C$. Moreover, as $\sigma$ has no fixed points, the intersection form must be even, hence $e$ must be even. In fact, when $e>0$ is even, by Comessatti \cite{Comessatti-RationalSurfaces} and Iskovskih \cite{Iskovskih-MinimalModelRationalSurface},
there is a unique real structure on $S$ without real points, up to isomorphism.

The image of the fiber class $F$ under the differential $d_3$ in the Kalinin spectral sequence is the fundamental class of $S$:
\begin{equation}
    d_3(F)=[S]
\end{equation}
Indeed, for any $z\in C$, let $F_z\cong \PP^1$ denote the fiber of $\pi$ over $z$. As in \Cref{example:Quadrics}, let us choose an arbitrary $z\in C$ and represent $F$ by a fiber $\eta_2:=F_z$. 
Then $\eta_3:=\pi^{-1}(\operatorname{arc})=\bigcup_{z\in \operatorname{arc}}F_z$ satisfies  $\partial \eta_3=\eta_2+\sigma_*\eta_2$, where $\operatorname{arc}$ is a semi-big-circle connecting $z$ with its antipode $\tau(z)$. Hence $\eta_3+\sigma_*\eta_3=\pi^{-1}({\operatorname{circle}})$, where $\operatorname{circle}$ is a big-circle through  $z$ and $\tau(z)$. 
Finally, $\eta_4=\pi^{-1}(\operatorname{hemisphere})$ satisfies $\partial \eta_4=\eta_3+\sigma_*\eta_3$ and we have $d_3(F)=[\eta_4+\sigma_*\eta_4]=[S]$.

\end{ex}
To summarize, we have proved the following statement.

\begin{prop}\label{gen-prop} 
Let $S$ be  $\PP^1\times \PP^1$ or a Hirzebruch surface. For any real structure on $S$ without real points, let $F \in H_2(S,\F2)$ be the fiber-class of a ruling over the conic without real point if $S=\PP^1\times \PP^1$, and 
that of the unique ruling on $S$, if $S$ is a Hirzebruch surface. Then the image of $F$ under the differential $d_3$ of the third page of the Kalinin spectral sequence is the fundamental class of $S$. In particular, $d_3\neq 0$.
\end{prop}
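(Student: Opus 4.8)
The plan is to observe that the statement is simply the common content of the two chain-level computations already carried out in \Cref{example:Quadrics} and \Cref{example:Hirzebruch}, so I would first isolate the uniform geometric mechanism underlying both. I would begin by pinning down the relevant ruling using the classification of fixed-point-free real structures (Comessatti \cite{Comessatti-RationalSurfaces}, Iskovskih \cite{Iskovskih-MinimalModelRationalSurface}): on $\PP^1\times\PP^1$ at least one of the two projections realizes the base $\PP^1$ as the conic without real points, and on a Hirzebruch surface $S$ of invariant $e>0$ the $\sigma$-invariance of the negative section $\Sigma$ forces $\sigma$ to act as $-\id$ on $H^2(S,\ZZ)$, hence to preserve the $\PP^1$-bundle structure $\pi\colon S\to C\cong\PP^1$ and to induce on $C$ a real structure $\tau$ with $C(\RR)\cong\Sigma(\RR)=\emptyset$; the emptiness of $S(\RR)$ also forces the intersection form to be even, so $e$ is even. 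In all cases $\sigma$ acts trivially on $H_2(S,\F2)$ and $H_{\mathrm{odd}}(S,\F2)=0$, so $d_2=0$ and $F$ defines a class on the $E^3$-page of the Kalinin spectral sequence.

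Next I would run the chain description of $d_3$ recalled in this section. I would represent $F$ by a single fiber $\eta_2=\pi^{-1}(z)$; since $\sigma$ covers the antipode $\tau$ on the base $S^2=\PP^1(\CC)$, one has $\sigma_*\eta_2=\pi^{-1}(\tau(z))$. Choosing a semicircular arc $\gamma$ in $S^2$ from $z$ to $\tau(z)$, the $3$-chain $\eta_3:=\pi^{-1}(\gamma)$ satisfies $\partial\eta_3=\eta_2+\sigma_*\eta_2$, so $\eta_3+\sigma_*\eta_3=\pi^{-1}(\gamma\cup\tau(\gamma))$ is the preimage of a great circle; this great circle bounds a hemisphere $H$, and $\eta_4:=\pi^{-1}(H)$ satisfies $\partial\eta_4=\eta_3+\sigma_*\eta_3$. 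By definition of $d_3$ one then gets $d_3(F)=[\eta_4+\sigma_*\eta_4]=[\pi^{-1}(H\cup\tau(H))]=[\pi^{-1}(S^2)]=[S]$, which is nonzero in $H_4(S,\F2)$. Hence $d_3\neq 0$, and by the maximality obstruction recalled above, $\sigma$ is not maximal.

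The only point requiring care — and the main, if mild, obstacle — is checking that $\sigma$ genuinely acts on the base curve as an antipode, equivalently that $F$, although $\sigma_*$-invariant in homology, is not representable by a $\sigma$-invariant $2$-cycle. This is exactly where the hypothesis $S(\RR)=\emptyset$ is used: it makes $\tau$ fixed-point-free on $S^2$, hence the antipode, and forces $e$ even in the Hirzebruch case. Once this is granted, the closing identity $\pi^{-1}(H)+\sigma_*\pi^{-1}(H)=\pi^{-1}(S^2)$ holds over $\F2$ with no orientation bookkeeping, since $H$ and $\tau(H)$ cover $S^2$ and overlap only along the great circle.
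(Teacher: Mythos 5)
Your argument is correct and is essentially identical to the paper's: the paper proves this proposition exactly by the two chain-level computations in \Cref{example:Quadrics} and \Cref{example:Hirzebruch} (fiber $\to$ preimage of a semicircular arc $\to$ preimage of a hemisphere, with the fixed-point-freeness of $S(\RR)$ forcing the induced real structure on the base to be the antipode), and then states the proposition as their common summary. Nothing further is needed.
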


\subsection{Quadrics and Hirzebruch surfaces}
Following \Cref{example:Quadrics} and \Cref{example:Hirzebruch}, we prove the following results.
\begin{theorem}\label{gen-theorem}
Let $S$ be the complex surface $\PP^1\times \PP^1$ equipped with a real structure $\sigma$ without real points. Then for any positive integer $n$ the $n$th Hilbert scheme $S^{[n]}$, equipped with the natural real structure, is not maximal. 
\end{theorem}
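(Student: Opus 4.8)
The plan is to exhibit a non-zero differential $d_3$ in the Kalinin spectral sequence of $(S^{[n]},\sigma^{[n]})$ and then invoke the non-degeneracy obstruction to maximality recalled above. I would first note that $\sigma^{[n]}$ acts trivially on $H^*(S^{[n]},\F2)$: by Comessatti's classification \cite{Comessatti-RationalSurfaces} the involution $\sigma$ preserves each of the two rulings of $\PP^1\times\PP^1$, hence acts trivially on $H^*(S,\F2)$, and since the Nakajima, $\1$- and $\m$-correspondences are $\sigma$-equivariant, \Cref{thm:LiQinWang-IntegralBasis} propagates this triviality to $H^*(S^{[n]},\F2)$. As $H^1(S,\F2)=0$, the group $H^*(S^{[n]},\ZZ)$ is $2$-torsion-free with vanishing odd part (\cite{TotaroHilbn}), so the Kalinin $d_3$ is defined and $E^2=E^3=H_*(S^{[n]},\F2)$; thus the real content is that this spectral sequence need not degenerate.

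Next I would build a ruled subvariety of $S^{[n]}$ reducing the computation to \Cref{example:Quadrics} and \Cref{example:Hirzebruch}. Since $\sigma$ is fixed-point-free, every $\sigma$-invariant finite subscheme of $S$ has even length; write $n=2k+\epsilon$ with $\epsilon\in\{1,2\}$ and fix a general $\sigma$-invariant reduced subscheme $\xi_0\subset S$ of length $2k$ (a union of $k$ conjugate pairs). Let $p\colon S=\PP^1\times\PP^1\to\PP^1$ be the ruling over the conic without real points, so $\sigma$ covers the antipode on the base. Let $R\subset S^{[n]}$ be the closure of the locus of subschemes $\xi_0\sqcup Z$, where $Z$ ranges over $S$ if $\epsilon=1$, and over the fibrewise symmetric square $\bigcup_{z\in\PP^1}\Sym^2\big(p^{-1}(z)\big)$ if $\epsilon=2$. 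Then $R$ is irreducible of dimension $\epsilon+1$; it is $\sigma^{[n]}$-invariant because $\xi_0$ is; and it admits a map $\pi\colon R\to\PP^1$ (a morphism over a dense open subset) recording the $p$-fibre supporting $Z$, with generic fibre $\PP^1$ (if $\epsilon=1$) or $\PP^2$ (if $\epsilon=2$), over which $\sigma^{[n]}|_R$ covers the antipode and is therefore fixed-point-free. Running the chain-level computation of \Cref{example:Quadrics} and \Cref{example:Hirzebruch} for $\pi$ --- pulling back an arc and a hemisphere in the base chosen to avoid the finitely many special fibres --- would show that the general-fibre class $[\pi^{-1}(z)]$ survives to the $E^3$-page of the Kalinin spectral sequence of $R$ and that $d_3\big([\pi^{-1}(z)]\big)=[R]$; this is exactly \Cref{gen-prop} when $\epsilon=1$ and its evident analogue when $\epsilon=2$.

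Then I would transfer this along the $\sigma$-equivariant inclusion $\iota\colon R\hookrightarrow S^{[n]}$, which induces a morphism of Kalinin spectral sequences. Since $\pi^{-1}(z)$ is homologous in $R$ to its $\sigma$-conjugate, $\iota_*[\pi^{-1}(z)]$ is $\sigma^{[n]}$-invariant and survives to $E^3$, so by naturality $d_3\big(\iota_*[\pi^{-1}(z)]\big)=\iota_*[R]$. It then remains to show $\iota_*[R]\neq 0$ in $H_*(S^{[n]},\F2)$. Writing $f=[p^{-1}(z)]\in H^2(S,\ZZ)$, the Nakajima description of geometric cycles gives $[R]=\p_{-1}(1_S)\,\p_{-1}(\pt)^{\,n-1}|0\rangle$ when $\epsilon=1$ and $[R]=\p_{-1}(\pt)^{\,n-2}\big(\1_{-1}(f)-\delta\big)$ when $\epsilon=2$; comparing with the integral basis of \Cref{thm:LiQinWang-IntegralBasis} (and \Cref{rmk:IntegralBasisH2H4}) would show that, modulo $2$, $[R]$ is a single basis element in the first case and a sum of two distinct basis elements in the second, so $\iota_*[R]\neq 0$. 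Hence $d_3\neq 0$ in the Kalinin spectral sequence of $(S^{[n]},\sigma^{[n]})$, and $\sigma^{[n]}$ is not maximal.

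The hardest part is this last non-vanishing. Fundamental classes of complex subvarieties of $S^{[n]}$ can vanish modulo $2$, and the most obvious candidate for $R$ --- the full $p$-relative Hilbert scheme of $n$ points --- has fundamental class divisible by $n$ (its image in $\Sym^n S$ is the preimage of a degree-$n$ rational normal curve in $\Sym^n\PP^1$) and so is $0$ in $H_*(S^{[n]},\F2)$ for even $n$. Freezing $n-\epsilon$ of the points at general positions and letting only an $\epsilon$-point piece move in the fibres of $p$ is precisely the device that turns $[R]$ into a Li--Qin--Wang basis element up to a controlled correction, which makes the mod-$2$ non-vanishing transparent.
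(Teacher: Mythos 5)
Your proof is correct in its main line, but it takes a genuinely different route from the paper's, and the comparison is instructive. The paper applies $d_3$ to the middle-dimensional class $x=[\Sym^n(\PP^1\times\{t\})]$, identifies $d_3(x)$ with the class of $M=\bigcup_{t}\Sym^n(\PP^1\times\{t\})$ (the full relative Hilbert scheme of the ruling), and proves $[M]\neq 0$ in $H_*(S^{[n]},\F2)$ by exhibiting an explicit complementary $(2n-2)$-cycle $N$ (one frozen point plus $n-1$ points moving along fibres of the \emph{other} ruling) meeting $M$ transversally in a single reduced point, so $([N]\cdot[M])=1$. You instead freeze $n-\epsilon$ points in a $\sigma$-invariant configuration and let an $\epsilon$-point piece move in the fibres of $p$, then detect the non-vanishing of $[R]$ by expanding it in the Li--Qin--Wang integral basis: for $\epsilon=1$ your $[R]=\p_{-1}(1_S)\p_{-1}(\pt)^{n-1}|0\rangle$ is itself a basis element, and for $\epsilon=2$ your formula is right (the divisor $\bigcup_z\Sym^2(p^{-1}(z))\subset S^{[2]}$ does have class $\1_{-1}f-\delta$), giving a difference of the two distinct basis elements $\p_{-1}(1_S)\p_{-1}(f)\p_{-1}(\pt)^{n-2}|0\rangle$ and $\tfrac12\p_{-2}(1_S)\p_{-1}(\pt)^{n-2}|0\rangle$. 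Both detections work; the paper's is more elementary and uniform in $n$ (no parity split, no appeal to \Cref{thm:LiQinWang-IntegralBasis}), and its intersection-theoretic form is what transports directly to the fake-quadric case in \Cref{fake-quadric-theorem}, whereas your version localizes the chain computation on a small ruled subvariety at the cost of the basis machinery and a case division.

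The one genuine error is the claim in your final paragraph that motivates this detour: the class of the full relative Hilbert scheme $M$ is \emph{not} divisible by $n$ and does \emph{not} vanish mod $2$ for even $n$. The paper's cycle $N$ satisfies $([N]\cdot[M])=1$ for every $n$, and already for $n=2$ one computes $[M]=\1_{-1}f-\delta$ in $H^2(S^{[2]},\ZZ)$, a primitive class. Your heuristic conflates the fundamental class of the preimage $\phi^{-1}(\Gamma)$ of the degree-$n$ rational normal curve $\Gamma\subset\Sym^n\PP^1$ with the cohomological pullback $\phi^*(\mathrm{PD}[\Gamma])=n\,(\phi^*h)^{n-1}$; these differ because $\phi\colon S^{[n]}\to\Sym^n\PP^1$ is not transverse to $\Gamma$ in the required sense (for $n=2$: $2\,\1_{-1}f\neq \1_{-1}f-\delta$). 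So the difficulty you single out as "the hardest part" is not there: the obvious candidate works, and it is exactly the one the paper uses.
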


\begin{proof}
We prove that at least one of the differentials $d_1, d_2, d_3$ in Kalinin's spectral sequence for the natural involution on $S^{[n]}$ is not zero. For that, we assume that $d_1$ and $d_2$ are zero and check that $d_3\ne 0$. 

Since $d_1$ and $d_2$ are zero, $d_3$ is a linear map from $H_*(S^{[n]},\F2)$ to $H_{*+2}(S^{[n]},\F2)$.
We follow the notation in \Cref{example:Quadrics}. We can assume the real structure on $S=\PP^1\times \PP^1$ is given by $(\tau', \tau)$ with $\tau$ the antipode real structure on $\PP^1$. Choose a point $t\in \PP^1$, and denote by $x$ the class in $H_{2n}(S^{[n]}, \F2)$ represented by the cycle $\Sym^n(\PP^1\times \{t\})$. Our goal is to check that $d_3(x)\ne 0$.

Using the chain construction of $d_3$ and proceeding as in the proof of \Cref{gen-prop}, we observe that $d_3(x)$ is represented by the following cycle of dimension $2n+2$:
\begin{equation}
    M:=\bigcup_{t\in \PP^1} \Sym^n(\PP^1\times \{t\}).
\end{equation}
In order to show that $[M]\neq 0$, we construct a complementary cycle of dimension $2n-2$. Pick
$n-1$ distinct points $p_1,\dots, p_{n-1}$ in $\PP^1$ and a point
$q=(q_1,q_2)\in \PP^1\times \PP^1$ with $q_1\ne p_1,\dots,p_{n-1}$. We let $y\in H_{2n-2}(S^{[n]}, \F2)$
be the class represented by the cycle $N$ formed by $w\in S^{[n]}$ with
$\operatorname{supp}(w)$ consisting of a fixed point $q$ and variable points $(p_1,t_1), \dots (p_{n-1},t_{n-1})$ with $t_1, \dots, t_{n-1}\in \PP^1$. Clearly, $N\cap M$ consists of the unique length-$n$ reduced subscheme
with support in $(q_1,q_2), (p_1,q_2), \dots (p_{n-1}, q_2)$ and their intersection is transversal. Hence, $([N]\cdot [M])=1\in\F2$,
which implies $d_3(x)=[M]\ne 0$.
\end{proof}

\begin{cor}
\label{cor:HirzebruchHilb}
    Let $S$ be a Hirzebruch surface. Let $\sigma$ be a real structure on $S$ without real points. Then for any $n\geq 1$, the Hilbert scheme $S^{[n]}$ equipped with the natural real structure is not maximal.
\end{cor}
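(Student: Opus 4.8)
The plan is to transcribe the argument of \Cref{gen-theorem} verbatim, using the $\PP^1$-bundle structure of the Hirzebruch surface in place of the second projection $\PP^1\times\PP^1\to\PP^1$. Write $S=\mathbb{F}_e$ and let $\pi\colon S\to C$ be the ruling, with $C\cong\PP^1$. By \Cref{example:Hirzebruch} (compare \Cref{gen-prop}), the real structure $\sigma$ preserves $\pi$, the integer $e$ is even, and the induced real structure $\tau$ on $C$ is the antipode, so $C$ is the conic without real points. The case $n=1$ is \Cref{gen-prop}, so assume $n\geq 2$. As in \Cref{gen-theorem}, it is enough to find a nonzero differential in the Kalinin spectral sequence for the natural involution on $S^{[n]}$ (which applies since $S^{[n]}$ is a smooth projective variety with an anti-holomorphic involution); since $H_{\mathrm{odd}}(S^{[n]},\F2)=0$ we have $d_2=0$, so we may assume $d_1=0$, and then $d_3$ is a well-defined linear map $H_*(S^{[n]},\F2)\to H_{*+2}(S^{[n]},\F2)$.

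First I would compute $d_3$ on a fibre class. Fix $z_0\in C$, set $F_{z_0}=\pi^{-1}(z_0)\cong\PP^1$, and let $x\in H_{2n}(S^{[n]},\F2)$ be the class of $\Sym^n(F_{z_0})\subset S^{[n]}$ (length-$n$ divisors on the fibre). Running the chain construction of $d_3$ exactly as in \Cref{example:Hirzebruch} --- with $\eta_{2n}=\Sym^n(F_{z_0})$, $\eta_{2n+1}=\bigcup_{z\in\operatorname{arc}}\Sym^n(F_z)$ for an arc from $z_0$ to $\tau(z_0)$, and $\eta_{2n+2}=\bigcup_{z\in\operatorname{hemisphere}}\Sym^n(F_z)$, using that $\sigma$ carries $\Sym^n(F_z)$ onto $\Sym^n(F_{\tau(z)})$ and that $C$ is the union of a hemisphere and its antipode --- yields
\[
  d_3(x)=[M],\qquad M:=\bigcup_{z\in C}\Sym^n(F_z)\subset S^{[n]},
\]
a cycle of real dimension $2n+2$.

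It remains to show $[M]\neq 0$, and this is the only step where the Hirzebruch case is not a literal copy of \Cref{gen-theorem}. For $\PP^1\times\PP^1$ one uses the $n-1$ pairwise disjoint fibres $\{p_i\}\times\PP^1$ of the \emph{other} ruling as a complementary cycle; when $e>0$ the ruling is unique, so instead I would take $n-1$ distinct sections $\Sigma_1,\dots,\Sigma_{n-1}$ of $\pi$ (for instance general members of a fixed base-point-free linear system of sections), which meet every fibre of $\pi$ transversally in one point but are no longer mutually disjoint: $\bigcup_{i<j}\pi(\Sigma_i\cap\Sigma_j)$ is a finite subset of $C$. Choose $z_1\in C$ outside this finite set and $q\in F_{z_1}$ avoiding the finitely many points $\Sigma_i\cap F_{z_1}$, and let $N$ be the $(2n-2)$-dimensional cycle obtained as the closure of the image of $\Sigma_1\times\cdots\times\Sigma_{n-1}$ under $(w_1,\dots,w_{n-1})\mapsto[\{q,w_1,\dots,w_{n-1}\}]$. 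Since $q\notin\Sigma_i$, every $w\in N$ has $q\in\operatorname{supp}(w)$ and $\operatorname{supp}(w)\subseteq\{q\}\cup\bigcup_i\Sigma_i$; hence for $w\in M\cap N$ the single fibre containing $\operatorname{supp}(w)$ must be $F_{z_1}$, so $\operatorname{supp}(w)\subseteq\{q\}\cup\{\Sigma_i\cap F_{z_1}\}_i$, and since $z_1\notin\pi(\Sigma_i\cap\Sigma_j)$ the points $b_i:=\Sigma_i\cap F_{z_1}$ are distinct and distinct from $q$. One checks that no non-reduced limit survives, so $M\cap N=\{w_0\}$ with $w_0$ the reduced subscheme supported on $\{q,b_1,\dots,b_{n-1}\}$. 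Near $w_0$ the scheme $S^{[n]}$ is locally a product of copies of $S$, and there $TN\cap TM=0$ (the tangent space of $N$ lies in the fibre directions of the factors other than the $q$-factor, while lying in $TM$ forces these fibre-direction components to share a common image in $T_{z_1}C$, necessarily $0$); hence the intersection is transverse, $[N]\cdot[M]=1\in\F2$, so $[M]\neq 0$. Therefore $d_3\neq 0$ and $\sigma^{[n]}$ is not maximal.

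The main --- essentially the only --- obstacle is the construction of the complementary cycle $N$: lacking a second ruling, one must use sections of $\pi$, which for $e>0$ are not mutually disjoint, and verify that the genericity of the auxiliary fibre $F_{z_1}$ (together with $q\notin\bigcup_i\Sigma_i$) keeps $M\cap N$ a single transverse \emph{reduced} point. Once this is in place, the entire argument is a word-for-word repetition of \Cref{gen-theorem} and \Cref{example:Hirzebruch}.
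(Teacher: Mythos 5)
Your argument is correct, but it is not the route the paper takes for this corollary. The paper's proof of \Cref{cor:HirzebruchHilb} does no new spectral-sequence computation at all: it invokes Comessatti's uniqueness of the fixed-point-free real structure on a Hirzebruch surface and the quasi-simplicity of real rational surfaces (Degtyarev--Kharlamov, \cite{Degtyarev-Kharlamov-Crelle}) to conclude that $(S,\sigma)$ is \emph{real deformation equivalent} to $(\PP^1\times\PP^1,\tau\times\tau)$; hence $(S^{[n]},\sigma^{[n]})$ is real deformation equivalent to $((\PP^1\times\PP^1)^{[n]},(\tau\times\tau)^{[n]})$, and non-maximality follows from \Cref{gen-theorem} because maximality is a real deformation invariant. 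You instead rerun the Kalinin $d_3$-computation directly on $\mathbb{F}_e$, replacing the missing second ruling by $n-1$ general sections of $\pi$ to build the complementary $(2n-2)$-cycle $N$, and your verification that $M\cap N$ is a single reduced transverse point (genericity of the fibre $F_{z_1}$, $q$ off the sections, $T\Sigma_i\cap TF_{z_1}=0$) is sound. This is essentially the same device the paper deploys for fake quadrics in \Cref{fake-quadric-theorem}, where a second ruling is likewise unavailable and two pencils with odd mutual intersection play the role of your sections; so your proof is closer in spirit to that argument than to the paper's actual Hirzebruch proof. What the two approaches buy: the paper's deformation argument is shorter but imports nontrivial classification results on real rational surfaces; yours is self-contained, avoids that input, and would extend to other ruled surfaces over a base without real points admitting enough sections. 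Both are valid.
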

\begin{proof}
    We use the notation in \Cref{example:Hirzebruch}. Let $-e<0$ be the self-intersection number of the exceptional section, which must be an even number since $\sigma$ has no fixed points. By \cite{Comessatti-RationalSurfaces} (see also \cite{Silhol-Surface-LNM} and \cite[2.5.2]{Degtyarev-Kharlamov-Crelle}), the real structure is actually unique up to isomorphism. By Degtyarev--Kharlamov \cite{Degtyarev-Kharlamov-Crelle}, real rational surfaces are quasi-simple, hence the $\RR$-surface $(S, \sigma)$ is \textit{real deformation equivalent} to the $\RR$-surface $(\PP^1\times \PP^1, \tau\times \tau)$ where $\tau$ is the antipode real strucutre on $\PP^1$; see \cite[Section 4.2, Case 3]{Degtyarev-Kharlamov-Crelle} for the explicit construction of the deformation. Therefore, the Hilbert scheme $(S^{[n]}, \sigma^{[n]})$ is real deformation equivalent to $((\PP^1\times \PP^1)^{[n]}, (\tau\times \tau)^{[n]})$. Since the latter is not maximal by \Cref{gen-theorem}, while maximality being clearly a real deformation invariant property, we can conclude the non-maximality of $\sigma^{[n]}$.
\end{proof}

\subsection{Fake quadrics}\label{sec:FakeQuadrics}
Let us understand by a {\it fake quadric} a minimal smooth projective surface $S$ of general type such that $q(S)=p_g(S)=0$, $b_2(S)=2$ and $K_S^2=8$. In particular, each fake quadric has the same Betti and Hodge numbers as smooth quadrics in $\PP^3$.
For a fake quadric $S$, its N\'eron--Severi lattice $\mathrm{N}^1(S):=H^2(S,\ZZ)_{\operatorname{tf}}$ is a unimodular indefinite lattice of rank 2, hence is isometric to either $U$ or $\langle1\rangle+\langle-1\rangle$. A fake quadric $S$ is called the {\it odd type} (resp. {\it even type})
if $H^2(S,\ZZ)_{\operatorname{tf}}$ as a lattice is odd (resp. even), i.e.~isometric to  $\langle1\rangle+\langle-1\rangle$ (resp. to $U$).

\begin{theorem}
\label{thm:FakeQuadrics}
Fake quadrics of odd type with $H^1(S, \F2)=0$ admit neither
holomorphic or anti-holomorphic involution without fixed points.
Fake quadrics of even type with $H^1(S, \F2)=0$ do not admit any holomorphic involution without fixed points.
\end{theorem}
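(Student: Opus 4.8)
The plan is to observe first that, in \emph{every} case under consideration, the involution $\sigma$ acts on the Néron--Severi lattice of $S$ as $-\operatorname{id}$, and then to derive a contradiction from this fact in two different ways according to whether $\sigma$ is holomorphic or anti-holomorphic.

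\emph{Step 1: $\sigma^*=-\operatorname{id}$ on $\mathrm{N}^1(S)$.} A holomorphic or anti-holomorphic involution of a complex surface preserves the canonical orientation, and $S^\sigma=\emptyset$ by hypothesis; hence the Lefschetz fixed point formula gives $1+\operatorname{tr}\!\bigl(\sigma^*\mid H^2(S,\QQ)\bigr)+1=\chi(S^\sigma)=0$, using $b_1(S)=0$. Thus $\operatorname{tr}(\sigma^*\mid H^2(S,\QQ))=-2$, and since $b_2(S)=2$ this forces $\sigma^*=-\operatorname{id}$ on $H^2(S,\QQ)$, hence on the lattice $\mathrm{N}^1(S)=H^2(S,\ZZ)_{\operatorname{tf}}$. (In the anti-holomorphic case one may also invoke \Cref{rmk:RankOf+-lattices}, which uses $H^1(S,\F2)=0$.)

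\emph{Step 2a: the holomorphic case (both types).} A holomorphic automorphism of a surface fixes the canonical class, so by Step 1 we get $K_S=\sigma^*K_S=-K_S$ in $\mathrm{N}^1(S)$; as $\mathrm{N}^1(S)$ is torsion-free this means $K_S=0$, contradicting $K_S^2=8$. (Alternatively, $Y:=S/\sigma$ would be a smooth projective surface with $S\to Y$ an étale double cover, forcing $\chi(\mathcal{O}_S)=2\chi(\mathcal{O}_Y)$; but $\chi(\mathcal{O}_S)=1$ since $q(S)=p_g(S)=0$.) This rules out fixed-point-free holomorphic involutions on any fake quadric, of either type.

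\emph{Step 2b: the anti-holomorphic case, odd type.} Here $\sigma^*K_S=-K_S$ is consistent with Step 1, so a different argument is needed. Recall from \Cref{rmk:Remaining cases} that for a real structure with empty real locus, $(D\cdot\sigma_*D)$ is even for every $D\in\mathrm{N}^1(S)$: for an effective $D$ meeting its conjugate $\sigma(D)$ properly, the intersection points form free $\sigma$-orbits of size two, and the general case follows by writing any class as a difference of very ample classes (the two cross-terms then coincide, hence cancel mod $2$). By Step 1, the class of $\sigma(D)$ equals that of $D$ in $\mathrm{N}^1(S)$ (the pushforward of a divisor under the anti-holomorphic involution is $-\sigma^*$ of its class, by the orientation reversal of $\sigma$ on a complex curve), so $(D\cdot\sigma_*D)=D^2$. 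Therefore $D^2$ is even for all $D$, i.e.\ $\mathrm{N}^1(S)$ is an even unimodular lattice; but $S$ of odd type means $\mathrm{N}^1(S)\cong\langle1\rangle+\langle-1\rangle$, a contradiction.

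The only delicate point is the orientation bookkeeping in Step 2b identifying the cycle-theoretic pushforward $\sigma_*D$ with $-\sigma^*[D]$ (so that $\sigma^*=-\operatorname{id}$ yields $\sigma_*D\equiv D$), together with the extension of the parity statement from very ample classes to all of $\mathrm{N}^1(S)$; both are routine, and whichever sign convention one adopts one still gets $(D\cdot\sigma_*D)=\pm D^2$, so the conclusion is unaffected. Note that the even-type anti-holomorphic case is intentionally left out here: there $\mathrm{N}^1(S)\cong U$ is already even, Step 2b produces no contradiction, and that case is dealt with separately (see \Cref{fake-quadric-theorem}).
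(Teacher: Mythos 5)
Your proof is correct and follows essentially the same route as the paper's: the Lefschetz fixed point formula forces $\sigma^*=-\id$ on $H^2(S,\ZZ)$, the evenness of $(D\cdot\sigma_*D)$ for a fixed-point-free involution forces the intersection lattice to be even (excluding the odd type), and the preservation of $K_S$ combined with $\sigma^*=-\id$ rules out the holomorphic case. The only cosmetic difference is that you dispatch the odd-type holomorphic case directly via $K_S=-K_S\Rightarrow K_S=0$ rather than through the even-type argument, and you spell out the parity claim and the sign bookkeeping that the paper leaves implicit.
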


\begin{proof} 
Let $\sigma$ be an involution without fixed points. From the Lefschetz trace formula, it follows that $\sigma$ acts on $H^2(S,\ZZ)$ by $-\id$. On the other hand, $\sigma$ has no fixed point implies that $(D\cdot \sigma_*D)=0\mod 2$ for every $D\in H^2(S,\ZZ)$. Both together show that $S$ is of even type. The second statement follows from the observation that holomorphic involutions preserve the class $\frac{1}{2} K_S
\in H^2(X,\ZZ)$ and that its reduction $\mod 2$ is not zero.
\end{proof}

Thanks to \Cref{thm:FakeQuadrics}, we can concentrate in real structures on fake quadrics of even type. Recall that in this case, $H^2(S,\ZZ)_{\operatorname{tf}}\cong U$. Since $K_S$, as well as the first Chern class $c_1(S)$, is mapped by the coefficient homomorphism to the Wu class, we have that 
$(D\cdot K_S)\equiv (D^2)\mod 2$ 
for any $D\in H^2(S,\ZZ)$. 
Hence, $K_S$ has even intersection number with any 
$D\in H^2(S,\ZZ)$.
Thus, $\frac{1}{2}K_S$ is an element $H^2(S,\ZZ)_{\operatorname{tf}}$, whose self-intersection number is 2 (as $K_S^2=8$).
Therefore there exists a basis
$\{H,F\}$ of $H^2(S,\ZZ)$ such that $(H^2)=(F^2)=0, (F\cdot H)=1$ and $K_S=2H+2F$.
\begin{lemma}\label{fake-lemma} 
Let $S$ be a fake quadric of even type and let $\sigma:S\to S$ be an anti-holomorphic involution without fixed point. Then, there exist on $S$ two linear pencils of curves, $\{A_t\}_{t\in \PP^1}$ and $\{B_t\}_{t\in \PP^1}$, satisfying the following properties: 
their divisor classes have the same$\mod 2$-reduction as $H$ and $F$, in particular, $(A_u\cdot B_v)=1\mod 2$ for every $u,v\in \PP^1$;
both pencils are $\sigma$-invariant, and for at least one of them, say for $\{A_t\}$, it holds that $\sigma(A_t)=A_{\tau(t)}$ where $\tau : \PP^1\to \PP^1$ is a real structure without real points.
\end{lemma}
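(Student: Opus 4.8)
The plan is to produce the two pencils by taking the movable parts of suitable effective divisors in the classes $H$ and $F$, and then to exploit the $\sigma$-action together with the absence of fixed points to force the base-curve structure. First I would observe that since $S$ is of general type with $q(S)=p_g(S)=0$, one has $h^0(S,\mathcal{O}_S(D))$ computable by Riemann–Roch: $\chi(\mathcal{O}_S(D))=\chi(\mathcal{O}_S)+\tfrac12 D\cdot(D-K_S)=1+\tfrac12 D\cdot(D-K_S)$. Taking $D=H$ and $D=F$ with $K_S=2H+2F$, $(H^2)=(F^2)=0$, $(H\cdot F)=1$, gives $\chi(\mathcal{O}_S(H))=\chi(\mathcal{O}_S(F))=2$, and by Kawamata–Viehweg vanishing (since $H$, $F$ are nef and $H-K_S$, $F-K_S$ have the right positivity after a small argument, or more simply since $h^2(S,\mathcal{O}_S(H))=h^0(S,\mathcal{O}_S(K_S-H))=0$ as $K_S-H=H+2F$ is effective-but-$H$ is not $\le$ it... ) one gets $h^0\ge 2$; I would need to check $h^1=0$ or simply that the linear system $|H|$ (resp. $|F|$) has dimension $\ge 1$. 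This gives a pencil in each class; I would then take $\{A_t\}$ and $\{B_t\}$ to be the movable parts (after removing the fixed component), so their classes are $H-E_A$ and $F-E_B$ for effective $E_A,E_B\ge 0$. The key point is that $(H-E_A)\cdot(F-E_B)$ must remain $\equiv 1 \pmod 2$; I would argue using that any effective curve class on a fake quadric of even type lies in the nef cone generated by $H$ and $F$, so $E_A=aH+bF$, $E_B=cH+dF$ with $a,b,c,d\ge 0$, and then track the parities, using $K_S=2H+2F$ and adjunction to bound the possibilities.

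Next I would bring in $\sigma$. Since $\sigma$ is anti-holomorphic without fixed points, it acts on $H^2(S,\ZZ)$ as $-\id$ (Lefschetz trace formula, exactly as in the proof of \Cref{thm:FakeQuadrics}), hence it permutes the classes $\{H \bmod 2, F\bmod 2\}$ — in fact fixes each of them mod $2$. Therefore $\sigma^*|H|$ and $\sigma^*|F|$ are again pencils with the same mod-$2$ class, and by uniqueness (once I know each such pencil is unique, which follows from $h^0\le 2$ in these classes, proved again by Riemann–Roch and vanishing) the pencils are $\sigma$-invariant as families: $\sigma$ induces involutions $\tau_A,\tau_B$ on the two copies of $\PP^1$ parametrizing $\{A_t\}$ and $\{B_t\}$. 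Then I would show at least one of $\tau_A,\tau_B$ has no real point: if both $\tau_A$ and $\tau_B$ had real fixed points $t_0$, $s_0$, then $A_{t_0}$ and $B_{s_0}$ would be $\sigma$-invariant real curves with $(A_{t_0}\cdot B_{s_0})$ odd, so by a parity/intersection argument (a $\sigma$-invariant real curve on a surface without real points has even self-intersection, and two such curves meet in an even number of points counted with the involution — this is the standard fact $(D\cdot\sigma_*D)\equiv 0\pmod 2$ applied to $D=A_{t_0}+B_{s_0}$ or directly) one would get a contradiction with oddness of $(A_{t_0}\cdot B_{s_0})$. Hence at least one, say $\tau:=\tau_A$, is the antipodal involution on $\PP^1$, i.e.\ a real structure without real points, and $\sigma(A_t)=A_{\tau(t)}$.

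The main obstacle I anticipate is the effectivity and dimension count: showing that $|H|$ and $|F|$ (or at least one effective divisor in a class congruent to each of them mod $2$) actually move in a pencil, and controlling the fixed parts so that the final intersection number stays odd. On a fake quadric one cannot invoke any explicit geometry, so everything must go through Riemann–Roch, the structure of the nef cone $\RR_{\ge 0}H+\RR_{\ge 0}F$, adjunction $2p_a(C)-2=C\cdot(C+K_S)$, and the Bogomolov–Miyaoka–Yau-type numerics $K_S^2=8$, $\chi(\mathcal{O}_S)=1$. I would expect to need a short case analysis ruling out, e.g., $|H|$ being composed with a pencil or having a large fixed part, possibly invoking that $S$ is minimal of general type so $-K_S$ is not effective and $K_S$ is nef and big. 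Once the numerics pin down the two pencils, the $\sigma$-equivariance and the fixed-point-free conclusion are comparatively formal, mirroring \Cref{example:Quadrics} and \Cref{example:Hirzebruch}.
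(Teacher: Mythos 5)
There is a genuine gap in the first half of your argument: the construction of the pencils. Your Riemann--Roch computation is incorrect: with $K_S=2H+2F$, $(H^2)=0$ and $(H\cdot F)=1$ one gets $H\cdot(H-K_S)=-2$, so $\chi(\mathcal{O}_S(H))=1-1=0$ (and likewise for $F$), not $2$. On a minimal surface of general type with $p_g=q=0$ there is no reason for $|H|$ or $|F|$ to contain even a single effective divisor, let alone move in a pencil, and you correctly flag this effectivity problem as your ``main obstacle'' without resolving it; the subsequent discussion of fixed parts and the nef cone would not rescue it. The paper's proof sidesteps the issue entirely by \emph{not} working with $H$ and $F$: it picks ample divisors $A'=xH+yF$ ($x$ odd, $y$ even) and $B'=uH+vF$ ($u$ even, $v$ odd) and replaces them by $A=mA'$, $B=mB'$ for a large \emph{odd} $m$ so that both are very ample. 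Their mod-$2$ reductions are still $H$ and $F$ and $(A\cdot B)=m^2(xv+yu)$ is still odd, which is all the lemma asserts; very ampleness then gives plenty of room to choose a real line (a pencil) inside each projective space of sections. This parity-preserving rescaling is the missing idea.

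The second half of your argument is essentially sound and close in spirit to the paper's. You obtain $\sigma$-invariance of the linear systems from uniqueness of the pencil, whereas the paper lifts $\sigma$ to an anti-automorphism of the line bundles (using $\Pic S=H^2(S,\ZZ)$ and the $-\id$ action) to get a real structure on the whole of $|A|$ and $|B|$ before cutting down to a real pencil. For the fixed-point-free conclusion, your observation that two $\sigma$-invariant curves on a fixed-point-free surface meet with even intersection number is correct (each invariant member is a pullback from $S/\sigma$, and $(\pi^*\bar A\cdot\pi^*\bar B)=2(\bar A\cdot\bar B)$) and yields the same contradiction with oddness of $(A\cdot B)$ that the paper derives from $\dim\Im\bigl(\pi^*\colon H_2(S/\sigma,\F2)\to H_2(S,\F2)\bigr)=1$ via \Cref{InvClasses}; your specific formulation via $(D\cdot\sigma_*D)$ applied to $D=A_{t_0}+B_{s_0}$ does not directly give a contradiction (that number is $2(A\cdot B)$ plus even terms, hence even regardless), so you should use the quotient formulation instead.
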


\begin{proof} 
We pick inside the interior of the ample cone of $S$ two ample divisors, 
$A'=xH+yF$ with 
odd $x$ and even $y$, and $B'=uH+vF$ with even $u$
and odd $v$. Then, the divisors $A=mA'$ and $B=mB'$ are very ample for each $m$
sufficiently big, which we take odd to preserve the property that $(A\cdot B)=m^2(A'\cdot B')=m^2(xv+yu)$ is odd. 
Since $\Pic S=H_2(S,\ZZ)$ and the action of $\sigma$ on it is a multiplication by $-1$, we can lift $\sigma: S\to S$ up to anti-automorphisms $c_A$, $c_B$
of the line bundles $L_A, L_B$ defined by the divisor classes $A, B$. Then, the transformation $f\mapsto c_A\circ f\circ \sigma$ defines a real structure in the projectivization, $\vert A\vert$, of the spaces of sections of $L_A$; similarly for $\vert B\vert$. Due to Lemma \ref{InvClasses}, at least in one of these spaces, the real structure constructed is without real points.
Indeed, if $|A|$ has a real point, then $A$ is linearly equivalent to a divisor that is preserved by $\sigma$, hence $A\in \Im(\pi^*)$; similarly for $|B|$. However, Lemma \ref{InvClasses} implies that $\dim \Im(\pi^*:H_2(S/\sigma, \F2)\to H_2(S, \F2))=\dim H_2(S, \F2)^{\sigma}-1=1$. Since the classes of $A$ and $B$ are $\F2$-linearly independent, it is impossible that $|A|$ and $|B|$ both admit  real points. Finally, it remains to pick a real line in each of these two projective spaces of sections.
\end{proof}

\begin{theorem}\label{fake-quadric-theorem}
If $\sigma: S\to S $ is an anti-holomorphic involution without fixed point on a fake quadric $S$ of even type, then for any positive integer $n$ the $n$-th Hilbert scheme $S^{[n]}$ is not maximal with respect to the induced involution $\sigma^{[n]}$.
\end{theorem}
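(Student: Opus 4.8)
The plan is to mimic the proof of \Cref{gen-theorem} (equivalently, of \Cref{example:Hirzebruch}), using the pencil $\{A_t\}_{t\in\PP^1}$ furnished by \Cref{fake-lemma} in place of the ruling of $\PP^1\times\PP^1$. Since a non-vanishing differential in the Kalinin spectral sequence obstructs maximality, it suffices to show that one of $d_1,d_2,d_3$ is non-zero for the natural involution $\sigma^{[n]}$ on $S^{[n]}$. We may therefore assume $d_1=d_2=0$, so that $d_3$ is a well-defined linear map $H_*(S^{[n]},\F2)\to H_{*+2}(S^{[n]},\F2)$; moreover $H^1(S,\F2)=0$ forces $H_{\mathrm{odd}}(S,\F2)=0$, hence $H_{\mathrm{odd}}(S^{[n]},\F2)=0$ (by \cite{TotaroHilbn}, as in the proof of \Cref{cor:FixedPointFree-NonMaximal}), so the chain-level description of $d_3$ recalled above applies.

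By \Cref{fake-lemma} there are two linear pencils $\{A_t\}_{t\in\PP^1}$ and $\{B_t\}_{t\in\PP^1}$ on $S$ with $(A_u\cdot B_v)\equiv 1\pmod 2$ for all $u,v\in\PP^1$, and with $\sigma(A_t)=A_{\tau(t)}$ where $\tau\colon\PP^1\to\PP^1$ is a real structure without real points. Fix a general $t_0\in\PP^1$ so that $A_{t_0}$ is smooth, and let $x\in H_{2n}(S^{[n]},\F2)$ be the class of the cycle $\Sym^n(A_{t_0})$, viewed inside $S^{[n]}$ via the closed immersion $A_{t_0}^{[n]}=\Sym^n(A_{t_0})\hookrightarrow S^{[n]}$. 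Running the chain construction of $d_3$ exactly as in \Cref{example:Hirzebruch} — with an arc in $\PP^1$ joining $t_0$ to $\tau(t_0)$, a great circle through these two points, and a hemisphere bounding it — one finds that $d_3(x)$ is represented by the cycle
\begin{equation*}
	M:=\bigcup_{t\in\PP^1}\Sym^n(A_t)\ \subset\ S^{[n]}
\end{equation*}
of complex dimension $n+1$. The point requiring care, in contrast with the $\PP^1$-bundle situations of \Cref{example:Quadrics} and \Cref{example:Hirzebruch}, is that the pencil $\{A_t\}$ has a non-empty base locus. This base locus is a finite $\sigma$-invariant set on which $\sigma$ acts freely, and the blow-up of $S$ along it carries an honest fibration over $\PP^1$ with a free involution over $\tau$; passing to its relative Hilbert scheme of $n$ points, which surjects onto $M$, the argument of \Cref{example:Hirzebruch} applies verbatim.

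It remains to show $[M]\neq 0$ in $H_{2n+2}(S^{[n]},\F2)$, which we do by pairing $M$ with a complementary cycle. Choose $n-1$ general members $B_{v_1},\dots,B_{v_{n-1}}$ of the second pencil and a general point $q\in S$ with $q\notin B_{v_i}$ for all $i$, and let $N$ be the cycle of those $w\in S^{[n]}$ with $\operatorname{supp}(w)=\{q\}\cup\{b_1,\dots,b_{n-1}\}$, where $b_i$ ranges over $B_{v_i}$; then $\dim N=2n-2$. A point of $N\cap M$ is a reduced length-$n$ subscheme lying on a single member $A_t$: the condition $q\in A_t$ determines $t$ (for general $q$, which is not a base point of $\{A_t\}$), and then each $b_i$ ranges over the $(A\cdot B)$ points of $A_t\cap B_{v_i}$. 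For generic choices these intersections are transverse and the resulting configurations consist of $n$ distinct points, so $([N]\cdot[M])\equiv (A\cdot B)^{\,n-1}\equiv 1\pmod 2$ by \Cref{fake-lemma}. Hence $[M]\neq 0$, so $d_3(x)\neq 0$, and by the Kalinin obstruction $\sigma^{[n]}$ is not maximal.

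The step I expect to be the main obstacle is the chain-level identification $d_3(x)=[M]$: unlike in \Cref{example:Hirzebruch}, the members of the pencil are not pairwise disjoint, so the naive unions $\bigcup_{t\in\mathrm{arc}}\Sym^n(A_t)$, etc., must be justified with care. The cleanest remedy, sketched above, is to carry out the \Cref{example:Hirzebruch}-type computation on the relative Hilbert scheme of $n$ points of the universal curve $\mathcal{A}\to\PP^1$ — a genuine fibre bundle over the antipodal $\PP^1$ away from its finitely many singular fibres, on which the lifted involution is still fixed-point-free — and then push forward to $S^{[n]}$; alternatively, one checks directly that the base locus contributes only in codimension $\geq 2$ and is invisible to the relevant $\F2$-chains. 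The transversality assertions used for $N\cap M$ are routine Bertini/Kleiman-type genericity arguments.
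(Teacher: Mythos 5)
Your proposal is correct and follows essentially the same route as the paper: apply \Cref{fake-lemma}, compute $d_3$ of the class of $\Sym^n A_{t_0}$ via the chain construction of \Cref{gen-prop} to get the cycle $M=\bigcup_t\Sym^n A_t$, and detect $[M]\neq 0$ by intersecting with a complementary $(2n-2)$-cycle built from $q$ and $n-1$ members of the second pencil, giving $(A\cdot B)^{n-1}\equiv 1\pmod 2$. Your extra care about the base locus of the pencil (working on the relative Hilbert scheme of the blown-up fibration) is a refinement of a point the paper passes over silently, not a divergence in method.
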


\begin{proof} The proof follows the same arguments as the proof of Theorem \ref{gen-theorem} given above.

Here, we apply Lemma \ref{fake-lemma} and consider the element $x_{2n}$ of $H_{2n}(S^{[n]}, \F2)$ represented by the cycle $\Sym^{n} A_{t}$ with a chosen point $t\in \PP^1$. Proceeding as in the proof of Proposition \ref{gen-prop} we observe that $d_3(x_{2n})$ is represented by the following cycle of dimension $2n+2$:
\begin{equation}
    M:=\bigcup_{t\in \PP^1} \Sym^n A_t.
\end{equation}
Our goal is to show $d_3(x_{2n})=[M]\neq 0$. 

To construct a complementary cycle $N$ of dimension $2n-2$, we pick a generic point $q\in A_{t}$ and choose $n-1$ distinct curves $B_{t_i}$, $t_i\in \PP^1$, $i=1,\dots, n-1$ not passing through $q$. Then we define $N$ to be the cycle formed
by $w\in S^{[n]}$ with $\operatorname{supp}(w)$ consisting from the chosen point $q$ and variable points $p'_1\in B_{t_1}, \dots p'_{n-1}\in B_{t_{n-1}}$. As it follows from Proposition \ref{gen-prop}, $N\cap M$ consists of an odd number of points (equal to $(A_u\cdot B_v)^{n-1}$) and their intersection is transversal. Hence, $([N]\cdot [M])=1\in\F2$,
which implies $[M]\ne 0$.
    
\end{proof}

\subsection{Proof of {\Cref{thm:NonMaxHilbSurfWithoutFixPoint}}}\label{sec:proof_of_NonMaxHilbSurfWithoutFixPoint}
We are now ready to prove the main theorem of this section, which is obtained as a recollection of the previous results.
\begin{proof}[Proof of \Cref{thm:NonMaxHilbSurfWithoutFixPoint}]
Assume \(S\) is a compact complex surface with \(H^1(S,\mathbb{F}_2)=0\) and let \(\sigma\) be a holomorphic or anti-holomorphic involution on \(S\) without fixed points. By \Cref{thm:FixedPointTheorem} we know that if \(\sigma\) is holomorphic, or \(\sigma\) is anti-holomorphic and \(b_2(S)\not=2\), then \(\sigma \) acts non-trivially on \(H_2(S,\mathbb{F}_2)\). In particular, by \Cref{cor:FixedPointFree-NonMaximal} the induced involution on its Hilbert scheme of points is not maximal. The case where \(\sigma\) is anti-holomorphic and \(b_2(S)=2\) remains to be treated. From \Cref{rmk:Remaining cases}, namely the Enriques--Kodaira classification with the fact that the blow-up at a point of a fake projective plane does not admit fixed-point-free real structures, it follows that we are left to consider only smooth quadrics, fake quadrics and Hirzebruch surfaces. An anti-holomorphic involution without fixed points (i.e.~real structure without real points) on a quadric induces a non-maximal involution on the Hilbert schemes of points by virtue of \Cref{gen-theorem}, which, by real deformation, leads to the same result for Hirzebruch surfaces as stated in \Cref{cor:HirzebruchHilb}. It remains to treat the case of fake quadrics. Fake quadrics 
of odd type do not admit fixed-point-free anti-holomorphic involutions by \Cref{thm:FakeQuadrics}. Fake quadrics of even type might admit anti-holomorphic involutions of without fixed points, but they induce involutions that are not maximal on the Hilbert schemes of points by \Cref{fake-quadric-theorem}.
\end{proof}

\section{Natural involutions on Hilbert schemes of points on surfaces}
\label{sec:Natural}
In this section, $S$ is a smooth projective complex surface. 
We assume that 
\begin{equation}
	\label{eqn:H1F2=0}
	H^1(S, \F2)=0
\end{equation}

\begin{rmk}
	\label{rmk:TorsionFree}
	Note that the condition (\ref{eqn:H1F2=0}) is equivalent to requiring that $b_1(S)=0$ and $H^*(S,\ZZ)$ is 2-torsion-free. By G\"ottsche \cite{Goettsche} and 
    Totaro \cite{TotaroHilbn}, this also implies that $S^{[n]}$ has vanishing odd Betti numbers and 2-torsion-free integral cohomology. \textit{In the sequel, since all cohomology groups are 2-torsion-free and torsion elements of odd order are irrelevant, we ignore the torsion in cohomology groups, and, when it does not lead to a confusion, we drop $\,/\tors $ from the notation and denote $H^*(-, \ZZ)/\tors$ by
    $H^*(-, \ZZ)$.}
\end{rmk}
A holomorphic (resp.~anti-holomorphic) involution $\sigma$ on $S$ naturally induces a holomorphic (resp.~anti-holomorphic) involution on $S^{[n]}$.
The goal of this section is to relate the maximality of this induced involution on $S^{[n]}$ to conditions on the pair $(S, \sigma)$.

\subsection{Natural anti-holomorphic involutions on Hilbert schemes}
The following theorem completes Fu \cite[Theorem 8.1]{FuMaximalReal} to an \textit{if-and-only-if} result and generalizes Kharlamov--R\u asdeaconu \cite[Theorem 1.1, Theorem 1.2]{Kharlamov-Rasdeaconu-HilbertSquare} to all dimensions. Even for $n=2$, our proof is different (\cite{Kharlamov-Rasdeaconu-HilbertSquare} studies in much more details the fixed locus in the $n=2$ case).
\begin{theorem}
	\label{thm:NaturalAntiHoloInv}
	Let $n\geq 2$. Let $S$ be a smooth projective $\RR$-surface. Assume that $H^1(S, \F2)=0$.
	Then the punctual Hilbert scheme $S^{[n]}$ equipped with the natural real structure is maximal if and only if $S$ is maximal with connected real locus, or equivalently, maximal with the real structure 
    acting as $-\id$ on $H^2(S, \ZZ)$.
\end{theorem}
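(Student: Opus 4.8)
The plan is to combine the cohomological characterisation of maximality (\Cref{prop:MaximalityViaCohomology}), the explicit integral basis of $H^*(S^{[n]},\ZZ)$ in low degrees (\Cref{rmk:IntegralBasisH2H4}) together with the key relation (\Cref{rmk:KeyRelation}), the equivariance of the Nakajima correspondences, and the non‑maximality in the fixed‑point‑free case (\Cref{thm:NonMaxHilbSurfWithoutFixPoint}).

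First I would dispose of the equivalence of the two forms of the right‑hand side. For a smooth projective $\RR$‑surface $S$ with $H^1(S,\F2)=0$, maximality of $S$ forces (by \Cref{prop:MaximalityViaCohomology} and \Cref{lemma:EquivalentConditionsForSplitting}) the Comessatti characteristic $\lambda$ of $H^2(S,\ZZ)$ to vanish, so $\sigma^*$ is $\operatorname{diag}(\pm1)$ over $\ZZ$. Using the elementary identity $b_*(\Sigma,\F2)+\chi(\Sigma)=4$ for every closed (orientable or not) surface $\Sigma$, one gets $b_*(S(\RR))+\chi(S(\RR))=4\,b_0(S(\RR))$; feeding in $b_*(S(\RR))=b_*(S)=2+b_2(S)$ (maximality), the Lefschetz formula $\chi(S(\RR))=2+\operatorname{tr}(\sigma^*|H^2)$, and $\operatorname{tr}(\sigma^*|H^2)=2\operatorname{rk}H^2(S,\ZZ)^\sigma-b_2(S)$, one obtains $b_0(S(\RR))=1+\tfrac12\operatorname{rk}H^2(S,\ZZ)^\sigma$. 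Hence for a maximal $S$, "$S(\RR)$ connected" is equivalent to "$\sigma^*|_{H^2(S,\ZZ)}=-\id$".

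For the implication "$S$ maximal with $\sigma^*|_{H^2(S,\ZZ)}=-\id$ $\Rightarrow$ $S^{[n]}$ maximal" I would invoke \cite[Theorem 8.1]{FuMaximalReal} (it can also be recovered from the analysis below: the Li--Qin--Wang generators of $H^*(S^{[n]},\ZZ)$ are obtained from $H^*(S,\ZZ)$ by $\sigma$‑equivariant correspondences, so their $\F2$‑reductions lift to equivariant cohomology as soon as those of $H^*(S,\F2)$ do). For the converse, assume $S^{[n]}$ maximal. If $S(\RR)=\emptyset$ then $S^{[n]}$ is not maximal by \Cref{thm:NonMaxHilbSurfWithoutFixPoint}, a contradiction (and a maximal $S$ has non‑empty real locus), so $S(\RR)\neq\emptyset$. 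By \Cref{prop:MaximalityViaCohomology} the involution $\sigma^{[n]}$ acts trivially on $H^*(S^{[n]},\F2)$; via the $\sigma$‑equivariant splitting $H^2(S^{[n]},\F2)\cong H^2(S,\F2)\oplus\F2\delta$ this gives triviality of $\sigma$ on $H^2(S,\F2)$, hence on all of $H^*(S,\F2)$, and $\lambda=0$. The crucial point is degree $4$: since $\sigma$ is anti‑holomorphic, $\sigma^{[n]}$ transforms Nakajima monomials with an extra sign, each operator $\p_{-k}$ contributing $(-1)^{k+1}$ because the incidence variety $Q_{j+k,j}$ has odd complex dimension exactly when $k$ is even; concretely
\[
(\sigma^{[n]})^*\bigl(\p_{-k_1}(\gamma_1)\cdots\p_{-k_l}(\gamma_l)|0\rangle\bigr)=(-1)^{\,n+l}\,\p_{-k_1}(\sigma^*\gamma_1)\cdots\p_{-k_l}(\sigma^*\gamma_l)|0\rangle .
\]
Fix an integral basis $\alpha_1,\dots,\alpha_k$ of $H^2(S,\ZZ)$, put $\xi_i:=\1_{-(n-2)}\m_{1,1}(\alpha_i)|0\rangle$ (a basis element of $H^4$) and $\beta_i:=\tfrac12(\alpha_i+\sigma^*\alpha_i)\in H^2(S,\ZZ)$, which is integral since $\lambda=0$. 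Rewriting $\m_{1,1}$ via the key relation of \Cref{rmk:KeyRelation} and using the sign above, a short computation yields $(\sigma^{[n]})^*\xi_i\equiv \xi_i+\1_{-(n-2)}\p_{-2}(\beta_i)|0\rangle \pmod 2$. Since $\1_{-(n-2)}\p_{-2}(\alpha_j)|0\rangle$ are basis elements of $H^4(S^{[n]},\ZZ)$, triviality mod $2$ forces $\beta_i\in 2H^2(S,\ZZ)$ for all $i$; as $\lambda=0$ makes $\tfrac12\operatorname{Im}(1+\sigma^*)=H^2(S,\ZZ)^\sigma$ a primitive sublattice, this is possible only if $H^2(S,\ZZ)^\sigma=0$, i.e.\ $\sigma^*|_{H^2(S,\ZZ)}=-\id$.

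It remains to see that $S$ itself is maximal: $\sigma$ acts trivially on $H^*(S,\F2)$, and degeneration at $E_2$ of the Leray--Serre spectral sequence for $(S^{[n]})_G\to\mathsf BG$ transfers to the one for $S_G\to\mathsf BG$ through the $\sigma$‑equivariant correspondence $\1_{-(n-1)}$ and its transpose; equivalently, by the count $b_0(S(\RR))=1+\tfrac12\operatorname{rk}H^2(S,\ZZ)^\sigma$ just established, maximality of $S$ now amounts to connectedness of $S(\RR)$, which one reads off from the structure of the real points of $S^{[n]}$. This closes the circle. The main obstacle is the degree‑$4$ computation: one must correctly track the sign forced by the anti‑holomorphicity of $\sigma^{[n]}$ on the odd‑dimensional incidence loci, and one must use the key relation (equivalently, that $\1_{-(n-2)}\p_{-1}(\alpha)^2|0\rangle$ is \emph{not} a basis element) to see that the genuine obstruction survives mod $2$ on $\1_{-(n-2)}\m_{1,1}(\alpha_i)|0\rangle$; this is exactly what singles out $-\id$ (rather than $+\id$) on $H^2(S,\ZZ)$.
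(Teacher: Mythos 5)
Your core argument is essentially the paper's: reduce to $S(\RR)\neq\emptyset$ via \Cref{thm:NonMaxHilbSurfWithoutFixPoint}, use the equivariant splitting $H^2(S^{[n]},\F2)\cong H^2(S,\F2)\oplus\F2\delta$ to get trivial action mod $2$ and vanishing Comessatti characteristic on $H^2(S,\ZZ)$, and then exploit the key relation $\m_{1,1}(\alpha)=\tfrac12(\p_{-1}(\alpha)^2-\p_{-2}(\alpha))$ together with the sign $(-1)^{k+1}$ carried by $\p_{-k}$ under an anti-holomorphic involution. Your sign formula is correct, and your computation $(\sigma^{[n]})^*\xi_i\equiv\xi_i+\1_{-(n-2)}\p_{-2}(\beta_i)|0\rangle\pmod 2$ with $\beta_i=\tfrac12(\alpha_i+\sigma^*\alpha_i)$ is a legitimate repackaging of the paper's choice of a single primitive invariant class $\alpha$ with $v=\1_{-(n-2)}\p_{-2}(\alpha)|0\rangle$ anti-invariant and $\sigma(w)=w+v$; both force $H^2(S,\ZZ)^\sigma=0$. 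Your first paragraph simply re-derives \cite[Proposition 4.4]{FuMaximalReal} (which the paper cites), and that derivation is correct.

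The one genuine gap is the step ``$S$ itself is maximal.'' Neither of your two suggested routes works as written. The transfer of $E_2$-degeneration ``through $\1_{-(n-1)}$ and its transpose'' fails for degree reasons: $[S^{[1,n]}]\in H^4(S\times S^{[n]},\ZZ)$ acts degree-preservingly in one direction, but its transpose shifts degree by $4-4n$, so it does not retract $H^2(S^{[n]},\F2)$ onto $H^2(S,\F2)$ and gives no splitting of equivariant cohomology in the relevant degrees. The second route is circular in part: the identity $b_0(S(\RR))=1+\tfrac12\rk H^2(S,\ZZ)^\sigma$ was derived \emph{assuming} $S$ maximal, and although once you know $\sigma^*|_{H^2}=-\id$ the maximality of $S$ does become equivalent to connectedness of $S(\RR)$, you give no argument for reading off that connectedness from $S^{[n]}$ (the real locus of $S^{[n]}$ contains strata of conjugate pairs, so this is not immediate). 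The paper closes this step directly: with $G$ acting trivially on $H^*(S,\F2)$ (which you have) and $H^1(S,\F2)=H^3(S,\F2)=0$, surjectivity of $H^i_G(S,\F2)\to H^i(S,\F2)$ is automatic except for $i=2$, where one uses the exact sequence $H^2_G(S,\F2)\to H^2(S,\F2)^G\to H^3(G,\F2)\to H^3_G(S,\F2)$ coming from the Leray--Serre spectral sequence: a real point gives a section of $S_G\to\mathsf BG$, so the last map is split injective and the first map is surjective. You have all the inputs for this argument ($S(\RR)\neq\emptyset$ and trivial action mod $2$), but the argument itself is missing and the mechanisms you propose in its place do not substitute for it.
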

\begin{proof}
First, by \cite[Proposition 4.4]{FuMaximalReal},
for a maximal smooth projective surface $S$ with $H^1(S, \F2)=0$, the connectedness of $S(\RR)$ is equivalent to $H^2(S,\ZZ)^{\sigma}=0$, which in its turn is equivalent to the condition that $\sigma$ acts as $-\id$ on $H^2(S,\ZZ)$. 
	The "if" part of the theorem is proven in Fu \cite[Theorem 8.1]{FuMaximalReal}. For the "only if" part, let $\sigma$ denote both the associated anti-holomorphic involutions on $S$ and on $S^{[n]}$.  Let $G$ be the cyclic group of order 2 generated by $\sigma$.
	
	Suppose that the involution is maximal on $S^{[n]}$.  Let us start by showing that $S$ is maximal. Thanks to \Cref{thm:NonMaxHilbSurfWithoutFixPoint}, we may assume that $S(\RR)\neq \emptyset$. By \Cref{prop:MaximalityViaCohomology}, in order to prove the maximality of $(S, \sigma)$, 
    it is sufficient to show that for any $i$, the natural map $H^i_G(S,\F2)\to H^i(S, \F2)$ is surjective. For $i=0,1, 3$, it is clear from the assumption;  for $i=4$, it follows from the existence of fixed/real points. For $i=2$, note that, in accord with \Cref{rmk:IntegralBasisH2H4}, the universal subscheme $S^{[1,n]}\subset S\times S^{[n]}$ induces a $G$-equivariant embedding with 2-torsion-free quotient:
	\begin{equation}
		\1_{-(n-1)}\colon H^2(S, \ZZ)\hookrightarrow H^2(S^{[n]}, \ZZ).
	\end{equation}
	The maximality of $S^{[n]}$ implies that the Comessatti characteristic of $H^2(S^{[n]}, \ZZ)$ is zero. By \Cref{lemma:ComessattiPrimitiveSubModule}, the Comessatti characteristic of $H^2(S, \ZZ)$ is also zero. By \Cref{lemma:EquivalentConditionsForSplitting}, $G$ acts trivially on $H^2(S, \F2)$. Now the Leray--Serre spectral sequence \eqref{eqn:LeraySerreSS}, together with the vanishing of $H^1(S, \F2)$, yields an exact sequence
	\begin{equation}
		H^2_G(S, \F2)\to H^2(S, \F2)^G\to H^3(G, \F2)\to H^3_G(S, \F2)
	\end{equation}
	The last map is split injective since a real point on $S$  provides a section of $S_G\to \mathsf BG$. Hence $H^2_G(S, \F2)\to H^2(S, \F2)^G=H^2(S, \F2)$ is surjective. We finished proving that $H^*_G(S, \F2)\to H^*(S, \F2)$ is surjective, hence $S$ is maximal.

	For the connectedness of the real locus, let us assume that $S(\RR)$ is not connected, and show that $S^{[n]}$ is not maximal.  By \cite[Proposition 4.4]{FuMaximalReal}, the non-connectedness of $S(\RR)$ implies that $H^2(S, \ZZ)^{\sigma}\neq 0$. 
	Choose a primitive element $\alpha\in H^2(S, \ZZ)^{\sigma}$, and  put
	\begin{equation}
		v:= \1_{-(n-2)}\p_{-2}(\alpha)|0\rangle.
	\end{equation}
	By definition of the Nakajima and Li--Qin--Wang operators (see \Cref{subsec:Nakajima-Li-Qin-Wang-Operators}), $v$ is the image of $\alpha$ via the following composition of correspondences:
	\begin{equation}
		H^2(S,\ZZ)\xrightarrow{[E]_*} H^4(S^{[2]}, \ZZ) \xrightarrow{[S^{[2, n]}]_*} H^4(S^{[n]}, \ZZ),
	\end{equation}
	where $E$ is the exceptional divisor in $S^{[2]}$ with $E\to S$ the natural $\PP^1$-bundle map, $S^{[2, n]}$ is the incidence subscheme parameterizing $(\xi, \xi')\in S^{[2]}\times S^{[n]}$ such that $\xi\subset \xi'$. All the varieties appearing here have a natural real structure inherited from that of $S$. 
	Since $E$ is of odd dimension, the natural real structure is orientation reversing, hence the class $[E]$ is $\sigma$-anti-invariant; since $S^{[2, n]}$ is of even dimension, the class $[S^{[2, n]}]$ is $\sigma$-invariant. Therefore, $v$ is $\sigma$-anti-invariant. 
	
	Similarly,  consider 
	\begin{equation}
		u:=\1_{-(n-2)}\p_{-1}(\alpha)\p_{-1}(\alpha)|0\rangle,
	\end{equation}
	which is the image of $\alpha\otimes \alpha $ by the following composition of correspondences:
	\begin{equation}
		H^2(S,\ZZ)\otimes H^2(S,\ZZ) \xrightarrow{\times}  H^4(S\times S,\ZZ) \xrightarrow{[\Bl_{\Delta}(S\times S)]_*} H^4(S^{[2]}, \ZZ) \xrightarrow{[S^{[2, n]}]_*} H^4(S^{[n]}, \ZZ),
	\end{equation}
	where the first map is the exterior product, the second map is the correspondence by $\Bl_{\Delta}(S\times S)$, whose map to $S^{[2]}$ is the  quotient by the involution swapping two factors. Since all varieties appearing here have natural real structures and are of even dimensions, we see that $u$ is $\sigma$-invariant. 
	
	As a result, the following element also belonging to $H^4(S^{[n]}, \ZZ)$ (see \Cref{rmk:KeyRelation})
	\begin{equation}
		w:=\1_{-(n-2)} \m_{1,1}(\alpha)|0\rangle = \frac{1}{2}(u-v)
	\end{equation}
	satisfies that $\sigma(w)=\frac{1}{2}(u+v)=w+v$.
	
	By \Cref{thm:LiQinWang-IntegralBasis}, or more explicitly \Cref{rmk:IntegralBasisH2H4}, by extending  $\alpha$ into a basis of $H^2(S, \ZZ)$, we see that $v$ and $w$ are part of an integral basis of $H^4(S^{[n]}, \ZZ)$. The $G$-action on the submodule $\ZZ w\oplus \ZZ v$ has matrix $\begin{bmatrix}
	1 & 1 \\
	0 & -1
	\end{bmatrix}$
	hence has Comessatti characteristic 1. By \Cref{lemma:ComessattiPrimitiveSubModule}, 
	\begin{equation}
		\lambda(H^4(S^{[n]}, \ZZ))\geq \lambda(\ZZ v\oplus \ZZ w)=1.
	\end{equation}
	In particular, the natural real structure on $S^{[n]}$ is not maximal.
\end{proof}

\subsection{Natural holomorphic involutions on Hilbert schemes}
We achieve an analogous criterion for the maximality of the naturally induced holomorphic involutions on Hilbert schemes of points of regular surfaces.

\begin{theorem}
	\label{thm:NaturalHoloInv}
	Let $n\geq 2$. Let $S$ be a smooth projective surface and $\sigma$ a holomorphic involution. Assume that $H^1(S, \F2)=0$. Then  the induced involution on $S^{[n]}$ is maximal if and only if  $\sigma$ is maximal and acts on $H^2(S, \ZZ)$ trivially.
\end{theorem}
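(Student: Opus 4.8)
The plan is to derive both implications from the cohomological criterion for maximality (\Cref{prop:MaximalityViaCohomology}), passing information between $S$ and $S^{[n]}$ through the Li--Qin--Wang integral basis (\Cref{thm:LiQinWang-IntegralBasis}, \Cref{rmk:IntegralBasisH2H4}) and the Li--Qin generating set (\Cref{thm:LiQinWang-Generators}), and to model the ``only if'' part on the proof of \Cref{thm:NaturalAntiHoloInv}, with the key relation \Cref{rmk:KeyRelation} again furnishing the obstruction. A point used throughout: as $\sigma$ is holomorphic, every incidence correspondence defining the operators $\p_{-k}(\alpha)$, $\1_{-k}$ and $\m_\lambda(\alpha)$ is a $G$-invariant subvariety, so these operators intertwine the induced involutions, e.g.\ $\sigma^{[k+|\lambda|]}\circ\m_\lambda(\alpha)=\m_\lambda(\sigma^*\alpha)\circ\sigma^{[k]}$, and similarly for $\p_{-k}$ and $\1_{-k}$; verifying this for $\m_\lambda$ from its definition is routine but necessary.

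For the ``only if'' direction, suppose $\sigma^{[n]}$ is maximal. By \Cref{thm:NonMaxHilbSurfWithoutFixPoint} we may assume $S^\sigma\neq\emptyset$. Maximality forces $\sigma^{[n]}$ to act trivially on $H^*(S^{[n]},\F2)$; through the $G$-equivariant splitting $H^2(S^{[n]},\F2)\cong H^2(S,\F2)\oplus\F2\,\delta$ (with $\delta$ fixed) given by $\1_{-(n-1)}$, this makes $\sigma$ trivial on $H^2(S,\F2)$, hence $H^2(S,\ZZ)=H^2(S,\ZZ)^\sigma\oplus H^2(S,\ZZ)^{\sigma-}$ by \Cref{lemma:EquivalentConditionsForSplitting}. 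If the anti-invariant part were non-zero, pick a primitive $\alpha$ in it (also primitive in $H^2(S,\ZZ)$), complete it to an integral basis of $H^2(S,\ZZ)$, and set $v:=\1_{-(n-2)}\p_{-2}(\alpha)|0\rangle$ and $w:=\1_{-(n-2)}\m_{1,1}(\alpha)|0\rangle$, which are elements of the integral basis of $H^4(S^{[n]},\ZZ)$ of \Cref{rmk:IntegralBasisH2H4}. From $\sigma^*\alpha=-\alpha$ and the equivariance of the operators one gets $\sigma^{[n]}(v)=-v$ and $\sigma^{[n]}(\1_{-(n-2)}\p_{-1}(\alpha)^2|0\rangle)=\1_{-(n-2)}\p_{-1}(\alpha)^2|0\rangle$, whence $\sigma^{[n]}(w)=w+v$ by \Cref{rmk:KeyRelation}. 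Thus $\ZZ v\oplus\ZZ w$ is $\sigma$-invariant with torsion-free quotient and Comessatti characteristic $1$, so $\lambda(H^4(S^{[n]},\ZZ))\geq 1$ by \Cref{lemma:ComessattiPrimitiveSubModule}, contradicting triviality on $H^4(S^{[n]},\F2)$. Hence $\sigma^*=\id$ on $H^2(S,\ZZ)$. That $S$ is itself maximal is then shown exactly as in the proof of \Cref{thm:NaturalAntiHoloInv}: $H^i_G(S,\F2)\to H^i(S,\F2)$ is onto for $i=0,1,3,4$ (using $H^1(S,\F2)=0$ and a fixed point in degree $4$), and for $i=2$ one combines the $G$-equivariant embedding $\1_{-(n-1)}\colon H^2(S,\ZZ)\hookrightarrow H^2(S^{[n]},\ZZ)$, \Cref{lemma:EquivalentConditionsForSplitting}, the Leray--Serre spectral sequence of $S_G\to\mathsf BG$, and the section of $S_G\to\mathsf BG$ coming from a fixed point.

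For the ``if'' direction, suppose $\sigma$ is maximal on $S$ and acts as the identity on $H^2(S,\ZZ)$; then $\sigma^*=\id$ on all of $H^*(S,\ZZ)/\tors$, the degrees $0,1,3,4$ being automatic as $\sigma$ is a holomorphic involution of the connected surface $S$ with $H^1(S,\F2)=0$. By \Cref{prop:MaximalityViaCohomology}, maximality of $S$ means $H^*_G(S,\F2)\to H^*(S,\F2)$ is onto, and one must prove the analogue for $S^{[n]}$; as this map is a ring homomorphism, it suffices to lift the generators of \Cref{thm:LiQinWang-Generators}. The Chern classes $c_j(\mathcal{O}_S^{[n]})$ lift via the equivariant Chern classes of the $G$-equivariant tautological bundle; the classes $\1_{-(n-j)}\p_{-j}(\pt)|0\rangle$ (and the $j=1$ classes $\1_{-(n-1)}\alpha$) lift by applying the $G$-equivariant defining correspondences to equivariant lifts of $\pt\in H^4(S,\F2)$, resp.\ of $\alpha\in H^2(S,\F2)$, which exist since $S$ is maximal. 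The hard case is $\1_{-(n-j)}\m_{(1^j)}(\alpha)|0\rangle$ for $j\geq 2$: since $\m_{(1^j)}$ carries a denominator divisible by $2$ once expressed through Nakajima operators, the above formal device only produces $j!\cdot\m_{(1^j)}(\alpha)|0\rangle$, i.e.\ zero modulo $2$. Here I would exploit the hypothesis $\sigma^*\alpha=\alpha$ together with $q(S)=0$: then $\alpha$ is represented by a $\sigma$-invariant divisor, and for a $\sigma$-invariant smooth member $C$ the subvariety $L^{(1^j)}C\cong\Sym^j C\subset S^{[j]}$ is a smooth $\sigma$-invariant submanifold whose $G$-equivariant fundamental class, pushed forward by the equivariant correspondence $\1_{-(n-j)}$, lifts the required class. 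With all generators lifted, $H^*_G(S^{[n]},\F2)\to H^*(S^{[n]},\F2)$ is onto and $\sigma^{[n]}$ is maximal by \Cref{prop:MaximalityViaCohomology}.

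I expect the main obstacle to be precisely this equivariant lifting of the $\m$-type generators --- the genuinely new ingredient relative to the anti-holomorphic case, whose ``if'' part is quoted from \cite{FuMaximalReal}. The delicate sub-points are: producing $\sigma$-invariant and sufficiently smooth representatives for \emph{every} relevant class $\alpha\in H^2(S,\ZZ)$ (handling reducible, non-reduced, or --- if present --- transcendental classes, perhaps after an equivariant resolution, and reconciling a representative in the class $m\alpha$ with the non-additivity of $\m_{(1^j)}$); and, as noted above, pinning down the $G$-equivariance of the $\m_\lambda$ operators directly from their definitions. Everything else is a transcription of methods already available in the paper.
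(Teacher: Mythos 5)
Your ``only if'' direction, and the lifting of the generators of types (i) and (iii) in the ``if'' direction, coincide with the paper's argument. The genuine gap sits exactly where you suspected it: the equivariant lifting of the classes $\1_{-(n-j)}\m_{(1^j)}(\alpha)|0\rangle$ for $j\geq 2$. Your proposed fix --- represent $\alpha$ by a $\sigma$-invariant smooth curve $C$ and lift the equivariant fundamental class of $L^{(1^j)}C\cong\Sym^j C$ --- cannot cover all the generators required by \Cref{thm:LiQinWang-Generators}, because these range over \emph{all} of $H^2(S,\ZZ)/\tors$, including transcendental classes whenever $p_g(S)>0$ (the vanishing $q(S)=0$ only identifies $\Pic(S)$ with $H^{1,1}(S)\cap H^2(S,\ZZ)$, not with all of $H^2$); and even for algebraic classes a smooth irreducible $\sigma$-invariant representative need not exist, while the non-additivity of $\m_{(1^j)}$ blocks the usual reduction to very ample classes. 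You correctly diagnose that the naive transcription through Nakajima operators only produces $j!\cdot\m_{(1^j)}(\alpha)|0\rangle$, which is useless mod $2$, but the repair you sketch does not go through, so the ``if'' direction is incomplete as written.

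The paper closes this gap with a purely topological device. From the integral Leray--Serre spectral sequence of $S_G\to \mathsf{B}G$, the vanishing of $H^1(S,\ZZ)$ and the splitting coming from a fixed point, the map $H^2_G(S,\ZZ)\to H^2(S,\ZZ)^G=H^2(S,\ZZ)$ is surjective. Since $H^2_G(-,\ZZ)$ classifies $G$-equivariant $\mathcal{C}^{\infty}$ complex line bundles via $c_1^G$, every $\alpha\in H^2(S,\ZZ)$ --- transcendental or not --- equals $c_1^G(L_\alpha)$ for some $G$-equivariant smooth line bundle $L_\alpha$. The identity $c_j(p_{1,!}(p_2^*L_\alpha))=\m_{(1^j)}(\alpha)|0\rangle$ from \cite[Proof of Lemma 3.5]{LiQin08} then exhibits the type-(ii) generator on $S^{[j]}$ as a Chern class of a class in $G$-equivariant topological K-theory (the maps $p_1,p_2$ being $G$-equivariant), so its equivariant Chern class provides the lift, and pushing forward along the equivariant correspondence $[S^{[j,n]}]_*$ finishes. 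This is the one genuinely new ingredient relative to the anti-holomorphic case; everything else in your proposal is a faithful rendering of the paper's proof.
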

\begin{proof}
	We first prove the \textit{if} part. Assume that the G-action on $H^2(S, \ZZ)$ is trivial, we need to show the surjectivity of the natural map 
	\begin{equation}
		\label{eqn:RestrictionMap}
		H^*_G(S^{[n]}, \F2)\to    H^*(S^{[n]}, \F2).
	\end{equation} 
	By \Cref{rmk:TorsionFree}, $H^*(S^{[n]}, \ZZ)$ is 2-torsion-free and by \Cref{thm:LiQinWang-Generators}, we have the following set of generators of $H^*(S^{[n]}, \F2)$:
	\begin{enumerate}
		\item the Chern classes $c_j(\mathcal{O}_S^{[n]})\in H^{2j}(S^{[n]}, \F2)$, for $1\leq j\leq n$;
		\item the classes $\mathds{1}_{-(n-j)}\mathfrak{m}_{(1^j)}(\alpha)|0\rangle$, for $1\leq j\leq n$ and $\alpha\in H^2(S, \ZZ)$;
		\item the classes $\mathds{1}_{-(n-j)}\mathfrak{p}_{-j}(\pt)|0\rangle$, for $1\leq j\leq n$.
	\end{enumerate}
	Let us show that these generators are in the image of \eqref{eqn:RestrictionMap}:\\
	For (i), since $\mathcal{O}_S^{[n]}$ is a $G$-equivariant vector bundle, we can consider the G-equivariant Chern class $c^G_j(\mathcal{O}_S^{[n]})\in H^{2j}_G(S^{[n]}, \F2)$. Its image in $H^{2j}(S^{[n]}, \F2)$ is $c_j(\mathcal{O}_S^{[n]})$ by naturality.\\
	For (ii), since $H^1(S, \ZZ)=0$, the Leray--Serre spectral sequence gives rise to an exact sequence
	\begin{equation}
		\label{eqn:ExactSeq}
		 H^2_G(S, \ZZ)\to H^2(S, \ZZ)^G\to H^3(G, \ZZ)\to H^3_G(S, \ZZ).
	\end{equation}
	Since $\operatorname{Fix}(\sigma)\neq \emptyset$, the natural map $S_G\to \mathsf BG$ has a section. Therefore the last map in \eqref{eqn:ExactSeq} is split injective. It yields that the following map is surjective
	\begin{equation}
		H^2_G(S, \ZZ)\to H^2(S, \ZZ)^G=H^2(S, \ZZ)
	\end{equation}
	where the last equality is by our assumption. 
	Since $H^2_G(-, \ZZ)$ classifies $G$-equivariant $\mathcal{C}^{\infty}$  complex line bundles (via the equivariant first Chern class map), we conclude that for any $\alpha\in H^2(S, \ZZ)$, there exists a $G$-equivariant $\mathcal{C}^{\infty}$ complex line bundle $L_{\alpha}$ on $S$ with $c_1^G(L_\alpha)=\alpha$. By \cite[Proof of Lemma 3.5]{LiQin08},	we have :
	$$c_j(p_{1,!}(p_2^*(L_\alpha)))=\mathfrak{m}_{(1^j)}(\alpha)|0\rangle\in H^*(S^{[j]}, \F2),$$
	where $p_1$ and $p_2$ are natural maps from the universal subscheme $S^{[1,n]}\subset S^{[n]}\times S$ to $S^{[n]}$ and $S$ respectively. As $p_1$ and $p_2$ are $G$-equivariant maps, $p_{1,!}(p_2^*(L_\alpha))$ can be viewed as an element in the $G$-equivariant topological KU-theory of $S^{[n]}$. 
	Therefore, the equivariant Chern class $c^G_j(p_{1,!}(p_2^*(L_\alpha)))\in H_G^*(S^{[j]}, \F2)$ is mapped to $\mathfrak{m}_{(1^j)}(\alpha)|0\rangle$ via the left vertical arrow in the following commutativity diagram:
	\begin{equation}
		\xymatrix{
			H^*_G(S^{[j]}, \F2)\ar[r]^{[S^{[j,n]}]_*}\ar[d]& H^*_G(S^{[n]}, \F2)\ar[d]\\
			H^*(S^{[j]}, \F2)\ar[r]^{[S^{[j,n]}]_*}& H^*(S^{[n]}, \F2)
		}
	\end{equation}
	Therefore $\mathds{1}_{-(n-j)}\mathfrak{m}_{(1^j)}(\alpha)|0\rangle=[S^{[j,n]}]_*\mathfrak{m}_{(1^j)}(\alpha)|0\rangle$ is in the image of the right vertical arrow.\\
	For (iii), we have a commutative diagram
	\begin{equation}
		\xymatrix{
			H^*_G(S, \F2)\ar[r]^{[S^{[1,j]}]_*} \ar@{->>}[d]& H^*_G(S^{[j]}, \F2)\ar[r]^{[S^{[j,n]}]_*}\ar[d]& H^*_G(S^{[n]}, \F2)\ar[d]\\
			H^*(S, \F2)\ar[r]^{[S^{[1,j]}]_*} & H^*(S^{[j]}, \F2)\ar[r]^{[S^{[j,n]}]_*}& H^*(S^{[n]}, \F2)
		}
	\end{equation}
	where the left vertical arrow is surjective by maximality assumption. 
	Since $\mathds{1}_{-(n-j)}\mathfrak{p}_{-j}(\pt)|0\rangle$ is the image of $\pt\in H^4(S, \F2)$ under the composition of the bottom maps, and by the commutativity  of the diagram, it is in the image of the right vertical arrow. \\
	We proved that all the generators in (i), (ii) and (iii) are in the image of \eqref{eqn:RestrictionMap}, hence \eqref{eqn:RestrictionMap} is surjective. By \Cref{prop:MaximalityViaCohomology}, the involution on $S^{[n]}$ induced by $\sigma$ is maximal. \\

	We now show the \textit{only if} part.  The proof is similar to that of \Cref{thm:NaturalAntiHoloInv}, let us only emphasize the differences. 
    Assume the natural involution on $S^{[n]}$ induced by $\sigma$ is maximal. 
    By \Cref{thm:NonMaxHilbSurfWithoutFixPoint} (or rather \Cref{cor:FixedPointFree-NonMaximal}), $\sigma$ admits fixed points, {\it i.e.} $S^{\sigma}\neq \emptyset$. The maximality of $\sigma$ is proven by exactly the same argument as in \Cref{thm:NaturalAntiHoloInv}. In particular, 
	\begin{equation}
		H^2(S, \ZZ)= H^2(S, \ZZ)^{\sigma}\oplus H^2(S, \ZZ)^{\sigma-}.
	\end{equation}
	Assume that the action of $\sigma$ on $H^2(S, \ZZ)$ is not trivial. Choose a primitive anti-invariant element $\alpha\in H^2(S, \ZZ)^{\sigma-}$, and  extend it up to a basis of $H^2(S, \ZZ)$. Then consider the following element 
	\begin{equation}
		v:= \1_{-(n-2)}\p_{-2}(\alpha)|0\rangle.
	\end{equation}
	By definition of the Nakajima and Li--Qin--Wang operators (recalled in \Cref{subsec:Nakajima-Li-Qin-Wang-Operators}), $v$ is the image of $\alpha$ via the following composition of correspondences induced by $G$-invariant cycles
	\begin{equation}
		H^2(S,\ZZ)\xrightarrow{[E]_*} H^4(S^{[2]}, \ZZ) \xrightarrow{[S^{[2, n]}]_*} H^4(S^{[n]}, \ZZ),
	\end{equation}
	Therefore, $v$ is $\sigma$-anti-invariant. 
	
	Similarly,  consider 
	\begin{equation}
		u:=\1_{-(n-2)}\p_{-1}(\alpha)\p_{-1}(\alpha)|0\rangle,
	\end{equation}
	which is the image of $\alpha\otimes \alpha $ by the following composition of correspondences induced by $G$-invariant cycles
	\begin{equation}
		H^2(S,\ZZ)\otimes H^2(S,\ZZ) \xrightarrow{\times}  H^4(S\times S,\ZZ) \xrightarrow{[\Bl_{\Delta}(S\times S)]_*} H^4(S^{[2]}, \ZZ) \xrightarrow{[S^{[2, n]}]_*} H^4(S^{[n]}, \ZZ).
	\end{equation}
	We see that $u$ is $\sigma$-invariant. 
	
	As a result, the following element also belonging to $H^4(S^{[n]}, \ZZ)$ (see \Cref{rmk:KeyRelation})
	\begin{equation}
		w:=\1_{-(n-2)} \m_{1,1}(\alpha)|0\rangle = \frac{1}{2}(u-v)
	\end{equation}
	satisfies that $\sigma(w)=\frac{1}{2}(u+v)=w+v$.
	Then we conclude the non-maximality as in \Cref{thm:NaturalAntiHoloInv}.
\end{proof}

\subsection{Examples}
\label{subsec:Examples}
In this subsection, we apply \Cref{thm:NaturalAntiHoloInv} and \Cref{thm:NaturalHoloInv} to deduce the non-existence of maximal involutions on Hilbert schemes of certain surfaces, and also provide examples of maximal involutions on Hilbert schemes of some surfaces.

\begin{cor}[Hilbert schemes of projective plane]
	\label{cor:P2}
	Let $\sigma$ be a holomorphic or anti-holomorphic involution on $\PP^2$. If $\sigma$ is maximal, then the naturally induced involution on $(\PP^2)^{[n]}$ is maximal for any integer $n\geq 1$.
\end{cor}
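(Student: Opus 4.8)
The plan is to deduce the corollary from \Cref{thm:NaturalAntiHoloInv} and \Cref{thm:NaturalHoloInv}. These apply to $S=\PP^2$, which is smooth projective with $H^1(\PP^2,\F2)=0$ (indeed $H^*(\PP^2,\ZZ)$ is torsion-free). The case $n=1$ is immediate, since $(\PP^2)^{[1]}=\PP^2$ carries the involution $\sigma$ itself, maximal by hypothesis; so I will assume $n\geq 2$, and the whole question then reduces, via the two cited theorems, to determining how $\sigma$ acts on $H^2(\PP^2,\ZZ)\cong\ZZ$.

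If $\sigma$ is holomorphic, I would argue that $\sigma$ acts trivially on $H^2(\PP^2,\ZZ)$: indeed $\sigma\in\Aut(\PP^2)=\mathrm{PGL}_3(\CC)$, which is connected, so $\sigma$ is homotopic to $\id_{\PP^2}$ and hence acts trivially on $H^*(\PP^2,\ZZ)$. Since $\sigma$ is moreover maximal by hypothesis, \Cref{thm:NaturalHoloInv} then gives that the induced involution on $(\PP^2)^{[n]}$ is maximal.

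If $\sigma$ is anti-holomorphic, I would first reduce to the standard complex conjugation $c$ on $\PP^2$: the composite $\tau:=\sigma\circ c$ is a holomorphic automorphism of $\PP^2$ and $\sigma=\tau\circ c$, so by the previous paragraph $\sigma^{*}=c^{*}\circ\tau^{*}=c^{*}$ on $H^2(\PP^2,\ZZ)$. Then I would compute that $c$ acts as $-\id$ on $H^2(\PP^2,\ZZ)=\ZZ\cdot h$ (with $h$ the hyperplane class): the restriction of $c$ to a line $\ell\cong\PP^1\subset\PP^2$ is an anti-holomorphic, hence orientation-reversing, diffeomorphism onto another line, whence $c_{*}[\ell]=-[\ell]$ in $H_2(\PP^2,\ZZ)$ and therefore $c^{*}h=-h$ (equivalently, $c^{*}\mathcal{O}_{\PP^2}(1)\cong\overline{\mathcal{O}_{\PP^2}(1)}$ as smooth complex line bundles, with first Chern class $-h$). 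Hence $\sigma$ acts as $-\id$ on $H^2(\PP^2,\ZZ)$, and since $\sigma$ is maximal, \Cref{thm:NaturalAntiHoloInv}---in the form ``$S^{[n]}$ is maximal iff $S$ is maximal with real structure acting as $-\id$ on $H^2(S,\ZZ)$''---yields that $(\PP^2)^{[n]}$, with its natural real structure, is maximal.

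There is no genuine obstacle in this argument; the only point needing a little care is the last sign computation for the action of complex conjugation on $H^2(\PP^2,\ZZ)$, and it is reassuring that this sign is precisely the one demanded by the criterion in \Cref{thm:NaturalAntiHoloInv}.
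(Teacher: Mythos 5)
Your proof is correct and follows essentially the same route as the paper: determine the action of $\sigma$ on $H^2(\PP^2,\ZZ)\cong\ZZ$ (trivial in the holomorphic case, $-\id$ in the anti-holomorphic case) and then invoke \Cref{thm:NaturalHoloInv} and \Cref{thm:NaturalAntiHoloInv}. The paper gets the sign directly from the fact that a (anti-)holomorphic involution sends a line to a line preserving (reversing) the complex orientation, whereas you use connectedness of $\mathrm{PGL}_3(\CC)$ and reduction to the standard conjugation, but this is only a cosmetic difference.
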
     
\begin{proof}
    Let $\sigma$ be an involution of $\PP^2$. 
    If $\sigma$ is holomorphic, then it sends a projective line to a projective line preserving the complex orientation, while if it is anti-holomorphic, it sends a projective line to a projective line but with the complex orientation reversed. Since the class of a projective line is a generator of $H^2(\PP^2, \ZZ)$, the result follows from  \Cref{thm:NaturalHoloInv} and  \Cref{thm:NaturalAntiHoloInv}, respectively.
\end{proof}
\begin{rmk}
\Cref{cor:P2} can be applied to produce examples of maximal holomorphic involutions and anti-holomorphic involutions on $(\PP^2)^{[n]}$.
	\begin{enumerate}
	    \item[(i)]  The holomorphic involution $[T_0:T_1:T_2]\mapsto [-T_0:T_1:T_2]$ on $\PP^2$ is maximal. Indeed, the fixed locus is $\{[1:0:0]\}\cup (T_0=0)$, the union of a point and a projective line. By \Cref{cor:P2}, we get a maximal holomorphic involution on $(\PP^2)^{[n]}$.
		\item[(ii)] The natural real structure $[T_0:T_1:T_2]\mapsto [\overline{T_0}:\overline{T_1}:\overline{T_2}]$ on $\PP^2$ is maximal. Indeed, the real locus is $\RR\PP^2$. By \Cref{cor:P2}, the natural real structure on $(\PP^2)^{[n]}$ is maximal. 
	\end{enumerate}
    In fact, it is easy to see, and well known, that those are the \textit{only} examples of (anti-)holomorphic involutions on $\PP^2$ up to equivalence: any holomorphic involution of $\PP^2$ is projectively equivalent to the reflection shown in (i), and any real structure on $\PP^2$ is conjugate to the natural one shown in (ii).
\end{rmk}

Let us turn to some constructions of non-maximal involutions. 
We first point out the following application to hyper-K\"ahler geometry. It will be generalized in \Cref{sec:non_existence_maximal_K3[n]} (see \Cref{thm:main:NonMaxHKnOdd}, \Cref{cor:main:NoMaxRealStructureHKnOdd} and \Cref{thm:main:NonMaxHKnSymp}).

\begin{cor}[Hilbert schemes of K3]
	\label{cor:NaturalHilbertK3}
	Let  $\sigma$ be a (non-trivial) holomorphic or anti-holomorphic involution on a K3 surface $S$. Then for any $n\geq 2$, the induced involution on $S^{[n]}$ is not maximal.
\end{cor}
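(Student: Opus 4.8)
The plan is to read off the corollary from the two criteria already established in this section: \Cref{thm:NaturalHoloInv} in the holomorphic case and \Cref{thm:NaturalAntiHoloInv} in the anti-holomorphic case. Both apply because a K3 surface $S$ has $b_1(S)=0$ and torsion-free integral cohomology, so that $H^1(S,\F2)=0$; if one does not wish to restrict to projective $S$, the statement is in any event a special case of \Cref{thm:main:NonMaxHKnOdd}, \Cref{cor:main:NoMaxRealStructureHKnOdd} and \Cref{thm:main:NonMaxHKnSymp}, since $S^{[n]}$ is of $\K3^{[n]}$-type. Granting those criteria, it remains to record two classical facts about K3 surfaces: a non-trivial holomorphic involution of $S$ acts non-trivially on $H^2(S,\ZZ)$, and no anti-holomorphic involution of $S$ acts as $-\id$ on $H^2(S,\ZZ)$.

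The first fact is immediate from the injectivity of $\Aut(S)\to\OO(H^2(S,\ZZ))$, a form of the Torelli theorem for K3 surfaces; alternatively, a holomorphic involution $\sigma$ is symplectic or anti-symplectic on a holomorphic $2$-form $\eta$, and in the anti-symplectic case $\sigma^*=-\id$ on the plane $\langle\eta,\overline{\eta}\rangle\subset H^2(S,\CC)$, while in the symplectic (Nikulin) case the invariant sublattice $H^2(S,\ZZ)^{\sigma}$ has rank $14<22$, so in either case $\sigma^*\neq\id$. For the second fact, suppose $\sigma$ is anti-holomorphic; then $\sigma^*$ carries the line $H^{2,0}(S)$ onto $H^{0,2}(S)$, a distinct line of $H^2(S,\CC)$, so $\sigma^*$ does not preserve $H^{2,0}(S)$ and hence cannot act as $-\id$ on $H^2(S,\CC)$, in particular not on $H^2(S,\ZZ)$. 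Combining these two facts with \Cref{thm:NaturalHoloInv} and with \Cref{thm:NaturalAntiHoloInv} (for the latter, in the form ``$S^{[n]}$ is maximal if and only if $S$ is maximal and $\sigma$ acts on $H^2(S,\ZZ)$ as $-\id$'') yields that the induced involution on $S^{[n]}$ is not maximal for every $n\geq 2$.

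There is essentially no hard part here once \Cref{thm:NaturalHoloInv} and \Cref{thm:NaturalAntiHoloInv} are available; the argument is pure bookkeeping. The only point worth flagging is that one never needs to decide whether $\sigma$ itself is maximal: the purely cohomological half of each criterion (``trivial action'', resp.\ ``action by $-\id$'', on $H^2(S,\ZZ)$) already fails for K3 surfaces, and this alone defeats maximality of $S^{[n]}$. If anything here deserves the name of an input, it is merely the two structural facts about the action of involutions on $H^2$ of a K3 surface recalled above.
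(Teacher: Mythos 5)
Your overall strategy is the one the paper follows: feed two classical facts about involutions on $H^2$ of a K3 surface into \Cref{thm:NaturalHoloInv} and \Cref{thm:NaturalAntiHoloInv}. Both of your arguments for those facts are correct --- injectivity of $\Aut(S)\to O(H^2(S,\ZZ))$ in the holomorphic case, and $\sigma^*(H^{2,0})=H^{0,2}\neq H^{2,0}$ in the anti-holomorphic case (the latter is exactly the argument the paper uses for \Cref{cor:NaturalHilbPg>0}, and is a legitimate substitute for the paper's hyper-K\"ahler-rotation or disconnected-real-locus arguments). The gap lies in how you dispose of projectivity. \Cref{thm:NaturalHoloInv} and \Cref{thm:NaturalAntiHoloInv} are stated, and proved via the Li--Qin--Wang integral basis, only for \emph{projective} surfaces, whereas a K3 surface carrying a holomorphic symplectic or an anti-holomorphic involution need not be projective (for a symplectic involution the N\'eron--Severi group generically equals $E_8(-2)$, which contains no positive vector). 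So your main route does not apply directly in those cases. Your fallback --- that the statement is ``in any event a special case'' of the main theorems --- is harmless for anti-symplectic and anti-holomorphic involutions, since \Cref{thm:main:NonMaxHKnOdd} and \Cref{cor:main:NoMaxRealStructureHKnOdd} are proved independently in \Cref{sec:non_existence_maximal_K3[n]}; but it is \emph{circular} for symplectic involutions, because the paper's proof of \Cref{thm:main:NonMaxHKnSymp} consists precisely of reducing, via the deformation result of \cite{Kam_Mon_Obl}, to the statement of \Cref{cor:NaturalHilbertK3}.

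What is missing is the reduction to the projective case that the paper carries out at the start of its proof: a holomorphic anti-symplectic involution forces $S$ to be projective (Nikulin); a pair $(S,\sigma)$ with $\sigma$ holomorphic symplectic deforms to a projective such pair by connectedness of the relevant moduli space; and a real K3 surface deforms to a projective one because its real deformation class is determined by the lattice with involution and every such class is realized by a projective example. Since maximality is invariant under (real) deformation, this legitimizes the application of the two criteria. Adding this step --- or restricting the statement to projective $S$ --- closes the gap; the remainder of your argument is correct.
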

\begin{proof}
Assume for contradiction that the induced involution on $S^{[n]}$ is maximal.

We first reduce the problem to the projective case.
If $\sigma$ is holomorphic anti-symplectic, then \(S\) is projective (see \cite[Theorem 0.1]{nikulin1979finite}).
If $\sigma$ is holomorphic symplectic or anti-holomorphic, then $S$ might not be projective but it is known that it can be deformed together with the involution to a projective K3 surface. 
For holomorphic symplectic involutions, it follows from the connectedness of their moduli space (obtained by Nikulin in {\it loc.~cit.} and existence of such involutions on, say, the Fermat quartic in $\PP^3$).
For anti-holomorphic involutions, their moduli space is disconnected, but a similar approach can be applied. Namely, by \cite[Theorem 13.8.1]{RealEnriques}, the deformation type of a real K3 surface depends only on the isomorphism class of the lattice $H^2(S, \ZZ)$ with an involution. Thus, it is sufficient to notice that the list of these isomorphism classes (summarized, for example, in \cite[Theorem 8.4.2]{RealEnriques}) coincides with Nikulin's list
\cite{nikulin1979integral} for those classes that are realized by real quartic K3 surfaces in $\PP^3$ (or to notice that, as it follows from \cite{rokhlin1978complex}, the number of classes realized by real K3 surfaces obtained as a double plane branched in a real curve of degree 6 is not smaller).
Thus, as maximality is a deformation-invariant notion, we can assume $S$ to be projective in all the cases, and can apply \Cref{thm:NaturalHoloInv} and \Cref{thm:NaturalAntiHoloInv}.

If $\sigma$ is holomorphic, \Cref{thm:NaturalHoloInv} implies that 
    $\sigma$ must act trivially on $H^2(S, \ZZ)$. But then, by \cite{Sapiro-Shafarevich-K3Torelli}, $\sigma$ is the identity.

    If \(\sigma\) is anti-holomorphic, then by \Cref{thm:NaturalAntiHoloInv}, the assumption that $S^{[n]}$ is maximal implies that $S$ is maximal with connected real locus, or equivalently, by \cite[Proposition 4.4]{FuMaximalReal}, the whole $H^2(S, \ZZ)$ is $\sigma$-anti-invariant. Now we have two ways to conclude:
    either use the fact that any maximal K3 surface has disconnected real locus by \cite[Theorem 8.4.1]{RealEnriques} (see \cite[Lemma 5.5]{Kharlamov-Rasdeaconu-HilbertSquare} for a generalization), or argue that by \Cref{prop:HKRotation}, up to changing the complex structure on $S$ by a hyper-K\"ahler rotation, $\sigma$ becomes a holomorphic anti-symplectic involution, hence it must preserve the K\"ahler cone and cannot be $-\id$ on $H^2(S, \RR)$.
\end{proof}

In fact, the statement for anti-holomorphic involutions in \Cref{cor:NaturalHilbertK3} holds more generally for surfaces with non-vanishing geometric genus. See the following generalization of \cite[Corollary 1.3]{Kharlamov-Rasdeaconu-HilbertSquare}. 

\begin{cor}
\label{cor:NaturalHilbPg>0}
    Let $S$ be a smooth projective $\RR$-surface with $H^{2,0}(S)\neq 0$ and $H^1(S, \F2)=0$. Then for any $n\geq 2$, the $n$-th Hilbert scheme $S^{[n]}$, equipped with the naturally induced real structure, is not maximal.
\end{cor}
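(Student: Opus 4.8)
The plan is to deduce this immediately from \Cref{thm:NaturalAntiHoloInv} together with an elementary Hodge-theoretic obstruction, with no serious computation involved. Denote by $\sigma$ the real structure on $S$ and, by the usual abuse, also the induced real structure on $S^{[n]}$. Arguing by contradiction, suppose that $S^{[n]}$ equipped with its natural real structure is maximal for some $n\ge 2$. Since $S$ is a smooth projective $\RR$-surface with $H^1(S,\F2)=0$, \Cref{thm:NaturalAntiHoloInv} applies and forces $S$ to be maximal \emph{and} $\sigma$ to act as $-\id$ on $H^2(S,\ZZ)$; extending scalars (and using the convention of \Cref{rmk:TorsionFree} to disregard torsion), this means $\sigma^*=-\id$ on $H^2(S,\CC)$.

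The next step is to invoke the fact that an anti-holomorphic involution exchanges the Hodge types $(2,0)$ and $(0,2)$. Concretely, $\sigma$ is a holomorphic isomorphism from $S$ onto the conjugate complex manifold $\overline{S}$, whose Hodge decomposition satisfies $H^{p,q}(\overline{S})=H^{q,p}(S)$ as subspaces of $H^2(S,\CC)$; since pullback along a holomorphic map preserves Hodge type, $\sigma^*$ is $\CC$-linear and $\sigma^*\bigl(H^{2,0}(S)\bigr)=H^{0,2}(S)$. Combining this with $\sigma^*=-\id$ on $H^2(S,\CC)$ gives $H^{2,0}(S)=\sigma^*\bigl(H^{2,0}(S)\bigr)=H^{0,2}(S)$, hence $H^{2,0}(S)\subseteq H^{2,0}(S)\cap H^{0,2}(S)=0$, contradicting the hypothesis $H^{2,0}(S)\ne 0$ (equivalently $p_g(S)>0$). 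Therefore $S^{[n]}$ cannot be maximal, which proves the corollary.

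I do not expect any real obstacle: the argument is short and the only points requiring care are (i) correctly extracting from \Cref{thm:NaturalAntiHoloInv} that ``$\sigma$ acts as $-\id$ on $H^2(S,\ZZ)$'' is a \emph{necessary} condition for the maximality of $S^{[n]}$ (using, as in the theorem's proof, \Cref{thm:NonMaxHilbSurfWithoutFixPoint} to dispose of the case $S(\RR)=\emptyset$), and (ii) the standard but easily mis-stated fact that an anti-holomorphic involution acts on $H^2(S,\CC)$ by a $\CC$-linear map swapping $H^{2,0}$ and $H^{0,2}$. For $n=2$ this recovers \cite[Corollary 1.3]{Kharlamov-Rasdeaconu-HilbertSquare}. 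As an alternative, one could instead derive the final contradiction via \cite[Proposition 4.4]{FuMaximalReal}, relating the connectedness of $S(\RR)$ to the vanishing of $H^2(S,\ZZ)^\sigma$, combined with the classical impossibility of a real structure acting as $-\id$ on $H^2$ of a surface with $p_g>0$; but the Hodge-theoretic route above is the most self-contained.
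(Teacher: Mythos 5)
Your argument is correct and is essentially identical to the paper's own proof: both deduce from \Cref{thm:NaturalAntiHoloInv} that maximality of $S^{[n]}$ would force $\sigma^*=-\id$ on $H^2(S,\ZZ)$, and then derive a contradiction from the fact that an anti-holomorphic involution sends $H^{2,0}(S)$ to $H^{0,2}(S)$, which is incompatible with $-\id$ when $H^{2,0}(S)\neq 0$. Your write-up merely spells out in more detail the $\CC$-linearity of $\sigma^*$ and the Hodge-type swap, which the paper states in one line.
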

\begin{proof}
    Since $H^{2,0}(S)$ is mapped to $H^{0,2}(S)$ by any anti-holomorphic involution, the induced action on $H^2(S, \CC)$ cannot be $-\id$.
    Applying \Cref{thm:NaturalAntiHoloInv}, we see that the induced anti-holomorphic involution on the $n$-th Hilbert scheme is not maximal for any $n\geq 2$.
\end{proof}

To get more examples, we remark that the conditions in \Cref{thm:NaturalAntiHoloInv} and \Cref{thm:NaturalHoloInv}  behave well under birational transformations:

\begin{prop}[Blow-ups]
	\label{prop:BlowUp}
	Let $S$ be a smooth projective surface equipped with a holomorphic (resp.~anti-holomorphic) involution $\sigma$. Let $P\in S$ be a fixed point of $\sigma$. Let $\tilde{\sigma}$ be the holomoprhic (resp.~anti-holomorphic) involution on $\Bl_PS$ lifting $\sigma$. Then 
	\begin{enumerate}
		\item $\sigma$ is maximal if and only if $\tilde{\sigma}$ is maximal;
		\item $\sigma$ acts as $\id$ (resp.~$-\id$) on $H^2(S, \ZZ)$ if and only if the same holds for $\tilde{\sigma}$.
	\end{enumerate}
\end{prop}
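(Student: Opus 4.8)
The plan is to combine the topology of a one-point blow-up with the cohomological characterisation of maximality in \Cref{prop:MaximalityViaCohomology}. Since $P$ is fixed by $\sigma$, the exceptional curve $E\cong\PP^1$ is preserved by $\tilde\sigma$ and the blow-down $\pi\colon\Bl_P S\to S$ is $G$-equivariant. With coefficients $A=\ZZ$ or $\F2$ one has the classical decomposition $H^*(\Bl_P S,A)=\pi^*H^*(S,A)\oplus A\cdot[E]$, with $[E]$ sitting in degree $2$; this decomposition is $G$-equivariant, its first summand being isomorphic as a $G$-module to $H^*(S,A)$ via $\pi^*$ (because $\pi\circ\tilde\sigma=\sigma\circ\pi$, and $\pi^*$ is injective, split by the Gysin push-forward $\pi_*$ since $\pi$ has degree $1$). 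Finally, as $E$ has odd complex dimension $1$ inside the even-dimensional $\Bl_P S$, one gets $\tilde\sigma^*[E]=[E]$ when $\sigma$ is holomorphic and $\tilde\sigma^*[E]=-[E]$ when $\sigma$ is anti-holomorphic, by the same orientation argument used for the exceptional divisor of $S^{[2]}$ in the proof of \Cref{thm:NaturalAntiHoloInv}.

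Part (2) then follows at once: under $\pi^*$ the action of $\tilde\sigma^*$ on $\pi^*H^2(S,\ZZ)$ corresponds to $\sigma^*$ on $H^2(S,\ZZ)$, while on the line $\ZZ\cdot[E]$ it is $+\id$ in the holomorphic case and $-\id$ in the anti-holomorphic case, regardless of $\sigma$. Hence $\tilde\sigma^*$ equals $\id$ (resp.\ $-\id$) on $H^2(\Bl_P S,\ZZ)$ exactly when $\sigma^*$ equals $\id$ (resp.\ $-\id$) on $H^2(S,\ZZ)$.

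For part (1), by \Cref{prop:MaximalityViaCohomology} it suffices to show that $H^*_G(\Bl_P S,\F2)\to H^*(\Bl_P S,\F2)$ is surjective if and only if $H^*_G(S,\F2)\to H^*(S,\F2)$ is. Assume first $\sigma$ is maximal. Every class of $H^*(\Bl_P S,\F2)$ is a sum of a class of $\pi^*H^*(S,\F2)$ and a multiple of $[E]$: a class of $\pi^*H^*(S,\F2)$ is hit by applying the equivariant $\pi^*\colon H^*_G(S,\F2)\to H^*_G(\Bl_P S,\F2)$ to an equivariant lift on $S$, and $[E]$ is hit by the equivariant Gysin class $[E]_G\in H^2_G(\Bl_P S,\F2)$, the image of $1$ under the equivariant Gysin map attached to the $G$-invariant closed submanifold $E$ (which exists with $\F2$-coefficients with no orientation hypothesis). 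Hence $\tilde\sigma$ is maximal. Conversely, if $\tilde\sigma$ is maximal, pick $\alpha\in H^*(S,\F2)$, lift $\pi^*\alpha$ to some $\hat\beta\in H^*_G(\Bl_P S,\F2)$, and push it forward: $(\pi_G)_*\hat\beta\in H^*_G(S,\F2)$ restricts to $\pi_*\pi^*\alpha=\alpha$, using that the equivariant Gysin push-forward $(\pi_G)_*$ commutes with the forgetful maps $H^*_G\to H^*$ by base change along the Cartesian square relating the Borel constructions of $\Bl_P S$ and $S$; so $\sigma$ is maximal.

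The only genuine work is in part (1): one must set up the equivariant Gysin map for $E$ and, above all, the equivariant projection formula $\pi_*\pi^*=\id$ together with its compatibility with restriction to the fibre — both standard once one replaces $BG$, $EG$ by compact finite-dimensional approximations, which turns $\pi_G$ into an honest proper degree-one map of closed manifolds, but this bookkeeping is where the care goes. An alternative route avoiding equivariant push-forwards is a direct Smith-defect computation: the local model of $\tilde\sigma$ near $E$ (with $\mathrm{d}\sigma_P$ of type $(1,-1)$ or $(-1,-1)$ in the holomorphic case, and complex conjugation in the anti-holomorphic case) shows in every case $b_*((\Bl_P S)^{\tilde\sigma},\F2)=b_*(S^\sigma,\F2)+1$, while $b_*(\Bl_P S,\F2)=b_*(S,\F2)+1$, so $\tilde\sigma$ is maximal if and only if $\sigma$ is; here the point requiring care is the description of how the fixed locus changes under blow-up.
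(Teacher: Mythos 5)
Your proposal is correct. Part (2) is proved exactly as in the paper: the equivariant splitting $H^2(\Bl_PS,\ZZ)=\pi^*H^2(S,\ZZ)\oplus\ZZ[E]$ together with the observation that $\tilde\sigma$ preserves (resp.\ reverses) the orientation of $E$ in the holomorphic (resp.\ anti-holomorphic) case. For part (1), however, your primary argument is genuinely different from the paper's. The paper works directly with the Smith defect: it runs through the local models of $\sigma$ at $P$ (fixed curve through $P$ versus isolated fixed point in the holomorphic case; real point of the real locus in the anti-holomorphic case), shows that $b_*(\mathrm{Fix}(\tilde\sigma),\F2)=b_*(\mathrm{Fix}(\sigma),\F2)+1$ in every case, and compares with $b_*(\Bl_PS,\F2)=b_*(S,\F2)+1$ — this is precisely the ``alternative route'' you sketch at the end, and it has the merit of being elementary and of describing the fixed locus explicitly. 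Your main route instead transfers the surjectivity criterion of \Cref{prop:MaximalityViaCohomology} across the blow-down, using the equivariant pull-back, the equivariant $\F2$-Gysin class of the invariant submanifold $E$ for the forward implication, and the equivariant Gysin push-forward $(\pi_G)_*$ with $\pi_*\pi^*=\id$ for the converse. This is sound (the finite-dimensional approximation of $\mathsf{B}G$ that you invoke makes the Gysin formalism and its base-change compatibility unproblematic with $\F2$-coefficients), and it buys uniformity: no case analysis on $\mathrm{d}\sigma_P$ is needed, and the same argument would apply to any equivariant degree-one map with equivariantly split cohomology. The cost is the extra equivariant-topology bookkeeping, which the paper's two-line Betti-number count avoids.
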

\begin{proof}
	(i). If $\sigma$ is holomorphic and $P$ belongs to a fixed curve $C$, then the fixed locus of $\tilde{\sigma}$ around the exceptional divisor $E$ is the union of the strict transform of $C$ (which is isomorphic to $C$) and another point on $E$. Therefore, the total $\F2$-Betti numbers of both the surface and the fixed locus increase by 1. Hence the maximality of $\sigma$ is equivalent to that of $\tilde{\sigma}$.
	
	If $\sigma$ is holomorphic and $P$ is an isolated fixed point, then the fixed locus of $\tilde{\sigma}$ around $E$ is $E$ itself.  Therefore, the total $\F2$-Betti numbers of both the surface and the fixed locus increase by 1, and we have again the equivalence between the maximalities of $\sigma$ and $\tilde{\sigma}$.
	
	If $\sigma$ is anti-holomorphic, then the fixed locus of $\sigma$ is a 2-dimensional manifold $M$, while the fixed locus of $\tilde{\sigma}$ is $M\# \RR\PP^2$, where $\#$ denotes the connected sum. Therefore, the total $\F2$-Betti number of the fixed locus increases by 1, and the maximalities of $\sigma$ and $\tilde{\sigma}$ are equivalent. 
	
	For (ii), it is enough to notice that the lifted involution preserves (resp.~reverses) the orientation of the exceptional divisor in the holomorphic (resp.~anti-holomorphic) case.
\end{proof}

\begin{rmk}
Thanks to \Cref{prop:BlowUp}, starting from a smooth projective surface $S$ with $H^1(S, \F2)=0$ together with a maximal holomorphic or anti-holomorphic involution $\sigma$, after a successive blow-ups along fixed points of (lifted) involutions, we get a new surface $S'$ equipped with involution $\sigma'$, then the induced involution on the $n$-th Hilbert schemes of points on $S'$ is maximal for $n\geq 2$ if and only if the same holds for the Hilbert schemes of $S$. In this way, from
\Cref{cor:P2}, \Cref{cor:NaturalHilbertK3} and \Cref{cor:NaturalHilbPg>0}, we can produce many maximal and non-maximal involutions on Hilbert schemes of surfaces. This generalizes \cite[Corollary 1.4]{Kharlamov-Rasdeaconu-HilbertSquare} to arbitrarily higher dimensions.
\end{rmk}

\section{Nonexistence of maximal brane involutions on hyper-K\"ahler manifolds of $\K3^{[n]}$-type}\label{sec:non_existence_maximal_K3[n]}
In this section, we focus on the existence problem of maximal (holomorphic as well as anti-holomorphic) involutions on compact hyper-K\"ahler manifolds. In contrast to \Cref{cor:NaturalHilbertK3}, we deal more generally with hyper-K\"ahler manifolds that are only deformation equivalent to Hilbert schemes of K3 surfaces, and involutions beyond the naturally induced ones. The main results have been stated 
in Introduction as \Cref{thm:main:NonMaxHKnOdd}, \Cref{cor:main:NoMaxRealStructureHKnOdd} and \Cref{thm:main:NonMaxHKnSymp}.

\subsection{Holomorphic symplectic involutions}
Here we treat (BBB)-brane involutions and deduce \Cref{thm:main:NonMaxHKnSymp} from results obtained in \Cref{sec:Natural}. 

\begin{proof}[Proof of \Cref{thm:main:NonMaxHKnSymp}]
Thanks to \cite[Theorem 1.0.3 and Theorem 2.0.6]{Kam_Mon_Obl-SymplecticAutomorphisms}\footnote{One can also directly use \cite[Corollary 3.3]{Kam_Mon_Obl}, whose proof relies on \cite[Theorem 3.2]{Kam_Mon_Obl}. Although the statement of \cite[Theorem 3.2]{Kam_Mon_Obl} for Kummer type hyper-K\"ahler varieties does not hold, it is correct in the case of K3$^{[n]}$-type.}, for any symplectic involution $\tau$ on a hyper-K\"ahler manifold $X$ of K3\(^{[n]}\) type, the pair \((X,\tau)\) is deformation equivalent to a pair \((S^{[n]},\sigma^{[n]})\) for a K3 surface \(S\) and natural involution $\sigma^{[n]}$ induced by a symplectic involution $\sigma$ on \(S\).
As a consequence, the fixed locus of $\tau$ is diffeomorphic to the fixed locus of $\sigma^{[n]}$, which is never maximal by \Cref{cor:NaturalHilbertK3}, 
unless the involution is trivial. 
\end{proof}
\begin{rmk}
In \Cref{subsec:BBB-not-optimal}, we give an alternative proof of this result which also covers the case of hyper-Kähler manifolds of OG6 type; see \Cref{cor:NoMaxBBB}.
\end{rmk}
\subsection{Anti-symplectic involutions}
The goal of this section is to prove \Cref{thm:main:NonMaxHKnOdd}.

Let $X$ be a compact hyper-K\"ahler manifold of $\K3^{[n]}$-type with $n \geq 2$. \textit{Assume for contradiction that $\sigma$ is a maximal holomorphic anti-symplectic involution of $X$}.
Denote by $H^*(X, \ZZ)^\sigma$ the subgroup of $\sigma$-invariant elements and by $H^*(X, \ZZ)^{\sigma-}$ the subgroup of $\sigma$-anti-invariant elements.
By \Cref{prop:MaximalityViaCohomology} and \Cref{lemma:EquivalentConditionsForSplitting}, we have a direct sum decomposition of cohomology group for each degree $k$:
\begin{equation}
	H^k(X,\ZZ)= H^k(X, \ZZ)^{\sigma}\oplus H^k(X, \ZZ)^{\sigma-}.
\end{equation}
In particular, equipping the second cohomology with the 
Beauville--Bogomolov--Fujiki quadratic form \cite{Beauville}, 
we have an \textit{orthogonal} direct sum decomposition of 
integral lattices:
\begin{equation}
	\label{eqn:SplittingH2}
	H^2(X,\ZZ)= H^2(X, \ZZ)^{\sigma}\oplus H^2(X, \ZZ)^{\sigma-}.
\end{equation}
\textit{In the rest of this section, we always assume \eqref{eqn:SplittingH2}.}

Denote by $L:=H^2(X, \ZZ)$, which is isometric to $U^{\oplus 3}\oplus E_8(-1)^{\oplus 2}\oplus \langle-2n+2\rangle$. 
We have the following classification of its eigen-sub-lattices:
\begin{lemma}
	\label{lemma:LatticeClassification} 
	Under the assumption \eqref{eqn:SplittingH2}, there are only four cases:
    
	\begin{enumerate}
		\item[Case 1.]  $L^{\sigma}$ is unimodular and $A_{L^{\sigma-}}\cong \ZZ/(2n-2)\ZZ$. In this case, we have the following classification:
		\begin{equation*}
			\begin{cases}
				L^{\sigma}\cong U \\
				L^{\sigma-}\cong U^{\oplus 2}\oplus E_8(-1)^{\oplus 2} \oplus \langle-2n+2\rangle
			\end{cases}
			\text{or }
			\begin{cases}
				L^{\sigma}\cong U \oplus E_8(-1)\\
				L^{\sigma-}\cong U^{\oplus 2}\oplus E_8(-1) \oplus \langle-2n+2\rangle
			\end{cases}
			\text{or }
			\begin{cases}
				L^{\sigma}\cong U \oplus E_8(-1)^{\oplus 2}\\
				L^{\sigma-}\cong U^{\oplus 2} \oplus \langle-2n+2\rangle.
			\end{cases}
		\end{equation*}
		
        \item[Case 2.] $L^{\sigma-}$ is unimodular and $A_{L^{\sigma}}\cong \ZZ/(2n-2)\ZZ$. In this case, we have the following classification: 
		\begin{equation*}
			\begin{cases}
				L^{\sigma}\cong U\oplus \langle-2n+2\rangle\\
				L^{\sigma-}\cong U ^{\oplus 2}\oplus E_8(-1)^{\oplus 2} 
			\end{cases}
			\text{ or }
			\quad
			\begin{cases}
				L^{\sigma}\cong U \oplus E_8(-1)\oplus \langle-2n+2\rangle\\
				L^{\sigma-}\cong U^{\oplus 2}\oplus E_8(-1)
			\end{cases}
			\text{ or }
			\quad
			\begin{cases}
				L^{\sigma}\cong U \oplus E_8(-1)^{\oplus 2}\oplus \langle-2n+2\rangle\\
				L^{\sigma-}\cong U^{\oplus 2}. 
			\end{cases}
		\end{equation*}
        \item[Case 3.] $A_{L^{\sigma}}\cong\ZZ/2\ZZ$ and $A_{L^{\sigma-}}\cong\ZZ/(n-1)\ZZ$. This case can only happen when $n\geq 4$ is an even integer.
        \item[Case 4.] $A_{L^{\sigma}}\cong\ZZ/(n-1)\ZZ$ and $A_{L^{\sigma-}}\cong\ZZ/2\ZZ$. This case can only happen when $n\geq 4$ is an even integer.
	\end{enumerate}
\end{lemma}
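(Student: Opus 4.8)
The plan is to extract the statement from three independent inputs: a Hodge-theoretic constraint on the signatures of $L^\sigma$ and $L^{\sigma-}$, the additivity of discriminant forms under an orthogonal direct sum, and Markman's description $\Mon^2(X)=W(L)$. First I would fix the signatures. Since $\sigma$ is holomorphic and anti-symplectic, $\sigma^*$ acts by $-\id$ on $H^{2,0}(X)\oplus H^{0,2}(X)$, so the BBF-positive-definite $2$-plane $P:=(H^{2,0}(X)\oplus H^{0,2}(X))\cap H^2(X,\RR)$ is contained in $L^{\sigma-}\otimes\RR$, forcing $L^\sigma\otimes\RR\subseteq P^\perp$, which has signature $(1,20)$. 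Moreover, for any Kähler class $\omega$ the class $\omega+\sigma^*\omega$ is a $\sigma$-invariant Kähler class, so $L^\sigma$ contains a vector of positive square. Hence $L^\sigma$ has signature $(1,\rk L^\sigma-1)$ and $L^{\sigma-}$ has signature $(2,\rk L^{\sigma-}-2)$, with $\rk L^\sigma+\rk L^{\sigma-}=23$; both are even, being orthogonal summands of the even lattice $L$.

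Next, the orthogonal decomposition \eqref{eqn:SplittingH2} yields an isometry of finite quadratic forms $A_L\cong A_{L^\sigma}\oplus A_{L^{\sigma-}}$; since $A_L\cong\ZZ/(2n-2)\ZZ$ is cyclic, $A_{L^\sigma}$ and $A_{L^{\sigma-}}$ are cyclic of coprime orders $d_1,d_2$ with $d_1 d_2=2n-2$. The crucial point is the monodromy constraint: $\sigma$ is an automorphism of $X$, so its action on $H^2(X,\ZZ)$ lies in $\Mon^2(X)=W(L)$ and hence acts on $A_L$ as $\pm\id$; but by construction $\sigma$ acts on $A_{L^\sigma}$ as $+\id$ and on $A_{L^{\sigma-}}$ as $-\id$. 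Therefore one of the two summands is killed by $2$, i.e.\ is $0$ or $\ZZ/2\ZZ$. Combined with the coprimality of $d_1,d_2$, this leaves exactly four possibilities for $(A_{L^\sigma},A_{L^{\sigma-}})$, namely $(0,\ZZ/(2n-2)\ZZ)$, $(\ZZ/(2n-2)\ZZ,0)$, $(\ZZ/2\ZZ,\ZZ/(n-1)\ZZ)$ and $(\ZZ/(n-1)\ZZ,\ZZ/2\ZZ)$; the last two require $\gcd(2,n-1)=1$ and $n-1>1$, that is, $n\ge 4$ even. This already establishes Cases 3 and 4.

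To make Cases 1 and 2 explicit, I would proceed as follows. In Case 1, $L^\sigma$ is even unimodular of signature $(1,\ast)$, so by the classification of indefinite even unimodular lattices $L^\sigma\cong U\oplus E_8(-1)^{\oplus k}$, and $\rk L^\sigma=2+8k\le 23$ forces $k\in\{0,1,2\}$. Its orthogonal complement $L^{\sigma-}=(L^\sigma)^\perp\subset L$ is then an even lattice of signature $(2,\ast)$ and rank $21$, $13$ or $5$, with discriminant form the cyclic form $A_L$; in each case the rank exceeds $\ell(A_L)+2$, so by Nikulin's theory it is unique in its genus, and since $L=L^\sigma\oplus L^{\sigma-}$ one reads off that $L^{\sigma-}\cong U^{\oplus 2}\oplus E_8(-1)^{\oplus 2-k}\oplus\langle-2n+2\rangle$. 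This yields the three pairs of Case 1. Case 2 is symmetric, exchanging the signature-$(1,\ast)$ and signature-$(2,\ast)$ summands: there $L^{\sigma-}$ is even unimodular of signature $(2,\ast)$, so $L^{\sigma-}\cong U^{\oplus 2}\oplus E_8(-1)^{\oplus k}$ with $k\in\{0,1,2\}$, and $L^\sigma=(L^{\sigma-})^\perp\cong U\oplus E_8(-1)^{\oplus 2-k}\oplus\langle-2n+2\rangle$.

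The one genuinely delicate step is the monodromy constraint in the second paragraph: it is what cuts the a priori long list of coprime factorizations of $2n-2$ down to the stated four cases, and it is the only place where something more than linear algebra and lattice bookkeeping enters. Everything else is signature counting plus the standard classification and uniqueness theorems for indefinite lattices; along the way one should just check that the hypotheses of Nikulin's uniqueness theorem (indefiniteness and rank $\ge\ell(A)+2$) hold in each sub-case of Cases 1 and 2, which they do since the ranks that occur are $\ge 3$ while $A_L$ is cyclic.
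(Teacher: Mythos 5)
Your proof is correct and follows the same overall skeleton as the paper's: fix the signatures $\sgn(L^{\sigma})=(1,-)$ and $\sgn(L^{\sigma-})=(2,-)$ by Hodge theory, use the orthogonal splitting to get $A_L\cong A_{L^{\sigma}}\oplus A_{L^{\sigma-}}$, invoke Markman's $\Mon^2(X)=W(L)$, and finish Cases 1 and 2 with the classification of indefinite even unimodular lattices together with Nikulin's uniqueness criterion. The one step where you genuinely diverge is the reduction to the four cases. The paper obtains the constraint that one of $A_{L^{\sigma}}$, $A_{L^{\sigma-}}$ is $2$-elementary by extending $\sigma$ (or $-\sigma$) to an involution of the even unimodular overlattice $U^{\oplus 4}\oplus E_8(-1)^{\oplus 2}$ acting trivially on $L^{\perp}$, and then using that the anti-invariant part of an involution of an even unimodular lattice has $2$-elementary discriminant. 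You instead observe directly that $\bar{\sigma}$ acts as $(+\id,-\id)$ on $A_{L^{\sigma}}\oplus A_{L^{\sigma-}}$, while Markman's lemma forces $\bar{\sigma}=\pm\id$ on all of $A_L$, so one summand is killed by $2$; combined with the coprimality of the orders forced by the cyclicity of $A_L\cong\ZZ/(2n-2)\ZZ$, this yields exactly the four cases and makes transparent why the parity of $n$ enters (for $n$ odd the $2$-part of $A_L$ is cyclic of order $\geq 4$ and cannot split off a $\ZZ/2\ZZ$ with coprime complement). Your route is more economical, avoiding the gluing/embedding construction entirely, and rests on the same external input, namely $\bar{\sigma}=\pm\id$. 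The remaining bookkeeping in Cases 1 and 2 (signature divisible by $8$ for the unimodular summand, ranks $2+8k$ resp. $4+8k$ with $k\in\{0,1,2\}$, and the hypotheses of indefiniteness and $\rk\geq \ell(A_L)+2=3$ needed for Nikulin's uniqueness) is checked correctly and coincides with the paper's argument.
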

\begin{proof}
	Most of the statements can be deduced from \cite[Proposition 2.8]{CamereCattaneoCattaneo}. As our case is simplified thanks to the assumption \eqref{eqn:SplittingH2} (which is amount to the condition $a=0$ in {\it loc.~cit.}), we give a direct proof for convenience of the reader. 
	
	Since $\sigma$ is holomorphic, it must preserve some K\"ahler class, hence the signature of $L^{\sigma}$ is $(\geq 1, -)$. Since $\sigma$ is anti-symplectic, by the Hodge--Riemann bilinear relations applied to holomorphic 2-forms, the signature of $L^{\sigma-}$ is $(\geq 2, -)$. As a result, 
	\begin{equation}
		\sgn(L^{\sigma})=(1, -) \quad\text{and }\quad  \sgn(L^{\sigma-})=(2, -).
	\end{equation}
	We denote by $\overline{\sigma}$ the action of $\sigma$ on the discriminant group $A_L\cong \ZZ/(2n-2)\ZZ$.
	By Markman \cite[Lemma 9.2]{Markman-SurveyTorelli}, $$\overline{\sigma}=\pm \id.$$
	Hence $\sigma$ or $-\sigma\in \widetilde{O}(L):=\{\phi\in O(L)~|~ \bar\phi=\id \in O(A_L)\}$. 
    
    We embed $(L,\sigma)$ if $\sigma\in \tilde O(L)$, resp.  $(L,-\sigma)$ if $-\sigma\in \tilde O(L)$, into $(\tilde{L}:=U^{\oplus 4}\oplus E_8(-1)^{\oplus 2}$, $\tilde\sigma)$ with $\tilde\sigma=\id$ on $L^\perp\subset \tilde L$ in such a way that $\tilde L^{\tilde\sigma -}=L^{\sigma -}$ if $\sigma\in \tilde O(L)$ and $\tilde L^{\tilde\sigma -}=L^{\sigma}$ otherwise. This is achieved by the gluing of the lattices $L$ and $\langle 2n-2\rangle$ up to an even unimodular lattice ({\it cf.} \cite[Theorem 1.6.1, Corollary 1.5.2]{nikulin1979integral}). The result of the gluing is isomorphic to $U^{\oplus 4}\oplus E_8(-1)^{\oplus 2}$ as it follows from the classification of indefinite unimodular lattices by rank, signature and parity (see, {\it e.g.}, \cite[Chapter II]{MilnorHusemoller-LatticeBook}). Since $\tilde L$ is unimodular and even, we conclude that $A_{\tilde L^{\tilde \sigma -}}$,
    and hence $A_{L^{\sigma-}}$ if $\sigma\in \tilde O(L)$ or $A_{L^{\sigma}}$  otherwise, is isomorphic to $(\ZZ/2\ZZ)^{\oplus m}$ for some $m\in\ZZ_{\ge 0}$. In addition, by assumption (\ref{eqn:SplittingH2}) we have \(L=L^{\sigma}\oplus L^{\sigma -}\), hence \(A_{L^{\sigma}}\oplus A_{L^{\sigma -}}\cong A_L\), where \(A_L\cong \mathbb{Z}/(2n-2)\mathbb{Z}\). All these implies the following:

    \begin{itemize}
        \item If \(n=2\) or \(n\geq 3\) is odd,  taking into account that $\mathbb{Z}/(2n-2)\mathbb{Z}$ does not admit a non-trivial direct sum decomposition with a summand of the form $(\ZZ/2\ZZ)^{\oplus m}$,  we obtain that $A_{L^{\sigma}}$ or $A_{L^{\sigma -}}$ is trivial, i.e.~$L^{\sigma}$ or $L^{\sigma-}$ is unimodular. So we are in Case 1 or Case 2 for such $n$.
        \item If \(n\geq 4\) is even, then the only non-trivial direct sum decomposition of $\mathbb{Z}/(2n-2)\mathbb{Z}$ with one summand being 2-elementary is the decomposition $\mathbb{Z}/(2n-2)\mathbb{Z}\cong \mathbb{Z}/(n-1)\mathbb{Z}\oplus \mathbb{Z}/2\mathbb{Z}$. Hence, for such $n$, if we are not in Case 1 or Case 2 , then we are in Case 3 or Case 4.
    \end{itemize}
    It remains to classify the lattices in Case 1 and Case 2:	
	\begin{enumerate}
		\item[Case 1.] If $L^{\sigma}$ is unimodular, since it is even and has index $(1,-)$, by classification of such lattices (see \cite[Chapter II]{MilnorHusemoller-LatticeBook}), it is determined by its rank and signature, and the signature is divisible by 8. Hence $$L^{\sigma}\cong U\oplus E_8(-1)^{\oplus i}$$ with $i=0, 1$ or 2.
		Consequently, $L^{\sigma-}$ is even, of index $(2, -)$, with length $\ell(A_{L^{\sigma-}})=\ell(\ZZ/(2n-2)\ZZ)=1$ and rank $5, 13$ or 21. By Nikulin \cite[Corollary 1.13.3]{nikulin1979integral}, $L^{\sigma-}$ is determined by its rank, index and discriminant form. Then the classification follows. 
		\item[Case 2.] Similarly, if $L^{\sigma-}$ is unimodular, again 
        since it is even,  its signature is divisible by 8. The only possibilities are $\sgn(L^{\sigma-})=(2,2), (2,10) \text{ or } (2, 18)$; in particular it is indefinite. Therefore 
		$$L^{\sigma-}\cong U^{\oplus 2}\oplus E_8(-1)^{\oplus i}$$
		with $i=0, 1$ or 2. Hence $L^{\sigma}$ is even, indefinite, with length 1, and with rank at least 3. Applying Nikulin \cite[Corollary 1.13.3]{nikulin1979integral}, we conclude the classification.
	\end{enumerate}
\end{proof}

Before proceeding further with the proof, we recall the following generalization of Eichler's criterion (see \cite[\S 10]{Eichler} for the classical version). For a vector \(v\) in a lattice \(\Lambda\), we denote by \(v^*\) the class \(\frac{v}{\div(v)}\) in the discriminant group $A_\Lambda$, where $\div(v)$ is the divisibility of $v$.  
\begin{theorem}[{\cite[Proposition 3.3]{gritsenko2009abelianisation}}]
\label{thm:Eichler}
    Let $\Lambda$ be an even lattice containing at least two orthogonal copies of hyperbolic planes. Let $\widetilde{O}^{+}(\Lambda)$ be the group of isometries of $\Lambda$ of trivial spinor norm that act trivially on the discriminant group $A_\Lambda$.
    Then the $\widetilde{O}^{+}(\Lambda)$-orbit of a primitive vector $v\in \Lambda$ is determined by the integer $v^2$ and the class $v^*$ in the discriminant group $A_\Lambda$.
\end{theorem}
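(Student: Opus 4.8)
This is the classical Eichler criterion, so the plan is to reconstruct its proof via \emph{Eichler transvections}. One direction is immediate: an element of $\widetilde{O}^+(\Lambda)$ is an isometry and acts trivially on $A_\Lambda$, so it preserves both $v^2$ and the class $v^*$. For the converse I would introduce, for each primitive isotropic $e\in\Lambda$ and each $a\in\Lambda$ with $(a,e)=0$, the transvection
\[ t(e,a)\colon x\longmapsto x-(a,x)\,e+(e,x)\,a-\tfrac{1}{2}(a,a)\,(e,x)\,e, \]
which makes sense and preserves $\Lambda$ since $\Lambda$ is even. A direct check shows that $t(e,a)$ is an isometry, that $t(e,a)(x)\equiv x\pmod{\Lambda}$ for all $x\in\Lambda^\vee$ — hence it acts trivially on $A_\Lambda$ — and that it has trivial spinor norm (for instance by exhibiting it, in the rational plane spanned by $e$ and a hyperbolic partner, as a product of two reflections in vectors of equal square). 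Thus the subgroup $E(\Lambda)$ generated by all $t(e,a)$ is contained in $\widetilde{O}^+(\Lambda)$, and it suffices to prove that two primitive vectors with the same $(v^2,v^*)$ lie in a common $E(\Lambda)$-orbit.

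\textbf{Normal form.} Write $\Lambda=U_1\oplus U_2\oplus\Lambda_0$ with $U_i=\ZZ e_i\oplus\ZZ f_i$ hyperbolic planes. The core of the argument is Eichler's reduction lemma: by applying suitable transvections $t(e_i,a)$, $t(f_i,a)$ (with $a$ in the relevant orthogonal complements) and compositions of them, every primitive $v$ can be carried into a normal form supported on $U_1\oplus U_2$ together with one auxiliary vector in $\Lambda_0$ that represents the class $v^*$; in this form $\div(v)$, $v^2$ and $v^*$ are all manifest. The manipulations amount to a $\gcd$-type induction on the coefficients of $v$, carried out as in \cite[\S10]{Eichler}. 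One also records that $\div(v)$ equals the order of $v^*$ in $A_\Lambda$ (if $m v^*\in\Lambda$ then $\div(v)\mid m$, using primitivity of $v$), so the pair $(v^2,v^*)$ already determines $\div(v)$. It then remains to verify that two normal-form vectors with the same $(v^2,v^*)$ differ by an element of $E(\Lambda)$, which is a short explicit calculation inside $U_1\oplus U_2$: the discrepancy between the two hyperbolic parts is absorbed by transvections, and the $\Lambda_0$-parts agree by hypothesis.

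\textbf{Main obstacle.} The subtle point is the reduction to normal form. Eichler transvections form a rather rigid family of isometries, and one must check that they are nevertheless flexible enough to normalize the divisibility and the discriminant class \emph{simultaneously} while staying inside $\widetilde{O}^+(\Lambda)$. This is precisely Eichler's lemma; the proof is an intricate but entirely elementary sequence of integral manipulations, and it is exactly here that the hypothesis of \emph{two} orthogonal copies of $U$ is used — a single hyperbolic plane does not give enough room to carry out both normalizations at once.
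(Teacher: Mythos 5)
The paper does not prove this statement at all — it is quoted verbatim from \cite[Proposition 3.3]{gritsenko2009abelianisation} — and your reconstruction via Eichler transvections $t(e,a)$, their containment in $\widetilde{O}^{+}(\Lambda)$, and the normal-form reduction inside $U_1\oplus U_2$ is exactly the argument of that cited source (going back to \cite[\S 10]{Eichler}), so it is essentially the same approach. The sketch is correct, including the useful observation that $\div(v)$ equals the order of $v^*$ in $A_\Lambda$ and is therefore already determined by the stated invariants.
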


\begin{lemma}\label{cor:deltaEigen}
     Assume $L^{\sigma}$ or $L^{\sigma-}$ to be unimodular (which is always the case when $n\geq 3$ is an odd integer or $n=2$), and assume that the class \(v^*\) generates the discriminant group \(A_{L}\) for a primitive vector \(v\in L\) with \(v^2=2-2n\) and \(\div(v)=2n-2\). Then there exists a primitive element \(\epsilon\in L^{\sigma-}\), or \(\epsilon\in L^{\sigma}\) respectively, with \(\epsilon^2= v^2\) and \(\div(\epsilon)=\div(v)\) such that \(\epsilon^*=v^*\) in \(A_L\).
\end{lemma}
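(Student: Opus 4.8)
The plan is to reduce first to the case where $L^{\sigma}$ is unimodular; the case where $L^{\sigma-}$ is unimodular is completely symmetric, obtained by exchanging the roles of $L^{\sigma}$ and $L^{\sigma-}$ throughout (and landing in Case 2 rather than Case 1 of \Cref{lemma:LatticeClassification}). So assume $L^{\sigma}$ is unimodular. Since $L = L^{\sigma}\oplus L^{\sigma-}$ is an orthogonal direct sum and $L^{\sigma}$ is unimodular, one has a canonical identification of quadratic forms $(A_{L^{\sigma-}}, q)\cong (A_L, q)$ (indeed $A_L\cong A_{L^{\sigma}}\oplus A_{L^{\sigma-}}$ with $A_{L^{\sigma}}=0$), and moreover $\div_L(w)=\div_{L^{\sigma-}}(w)$ for every $w\in L^{\sigma-}$, since pairing with such a $w$ annihilates $L^{\sigma}$. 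Consequently it suffices to exhibit a primitive vector $\epsilon\in L^{\sigma-}$ with $\epsilon^2 = 2-2n$, $\div_{L^{\sigma-}}(\epsilon)=2n-2$, and $\epsilon^* = v^*$, the last equality being read inside $A_{L^{\sigma-}}=A_L$.

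By \Cref{lemma:LatticeClassification}, Case 1, we may write $L^{\sigma-}\cong U^{\oplus 2}\oplus E_8(-1)^{\oplus i}\oplus \langle -2n+2\rangle$ for some $i\in\{0,1,2\}$. Let $g$ be a generator of the $\langle-2n+2\rangle$-summand. Then $g$ is primitive, $g^2 = 2-2n$, $\div_{L^{\sigma-}}(g) = 2n-2$, the class $g^*$ generates $A_{L^{\sigma-}}\cong \ZZ/(2n-2)\ZZ$, and $q(g^*)\equiv -\tfrac{1}{2n-2}\pmod{2\ZZ}$. On the other hand, $v^*$ also generates $A_L\cong\ZZ/(2n-2)\ZZ$ by hypothesis, and $q(v^*)\equiv v^2/\div(v)^2 = -\tfrac{1}{2n-2}\pmod{2\ZZ}$. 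Writing $v^* = a\, g^*$ with $a$ an integer coprime to $2n-2$, the equality $q(v^*)=q(g^*)$ translates into the numerical congruence $a^2\equiv 1\pmod{2(2n-2)}$, and this congruence is precisely what is needed for multiplication by $a$ to be an isometry $\psi$ of the whole discriminant form $(A_{L^{\sigma-}},q)$, with $\psi(g^*)=v^*$.

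Finally, since $L^{\sigma-}$ is even, indefinite, and satisfies $\rk L^{\sigma-}\geq \ell(A_{L^{\sigma-}})+2$, the natural homomorphism $O(L^{\sigma-})\to O(A_{L^{\sigma-}})$ is surjective by Nikulin \cite{nikulin1979integral}; choose $\phi\in O(L^{\sigma-})$ lifting $\psi$. Then $\epsilon := \phi(g)$ is primitive, $\epsilon^2 = g^2 = 2-2n$, and $\div_{L^{\sigma-}}(\epsilon)=\div_{L^{\sigma-}}(g)=2n-2$, so that $\div_L(\epsilon)=2n-2=\div(v)$, while $\epsilon^* = \psi(g^*)=v^*$ inside $A_{L^{\sigma-}}=A_L$. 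This $\epsilon$ is the required element.

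The individual computations are routine; the only point deserving care is the back-and-forth between the equality $q(v^*)=q(g^*)$ of discriminant-form values and the congruence $a^2\equiv 1 \bmod 2(2n-2)$, together with checking that this congruence really promotes multiplication by $a$ to an isometry of the entire form (not merely a group automorphism fixing the value at $g^*$). If one prefers not to invoke Nikulin's surjectivity theorem, an alternative in Case 1 is to apply Eichler's criterion (\Cref{thm:Eichler}, valid since $L^{\sigma-}$ contains $U^{\oplus 2}$) once a primitive vector of square $2-2n$ and discriminant class $v^*$ has been produced by hand inside $U^{\oplus 2}\oplus\langle-2n+2\rangle$; however, $L^{\sigma}$ in Case 2 contains only a single hyperbolic plane, so the surjectivity statement is the uniform and most economical route.
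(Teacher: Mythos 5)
Your proof is correct, but it takes a genuinely different route from the paper's. The paper constructs $\epsilon$ explicitly by hand: it takes the generator $x$ of the $\langle-2n+2\rangle$-summand of the non-unimodular eigenlattice $L'$, writes $v^*=kx^*$, derives the congruence $\tfrac{k^2-1}{2n-2}\in 2\ZZ$ from comparing $q$-values exactly as you do, and then sets $\epsilon:=kx+(2n-2)w$ where $w$ is a vector of square $\tfrac{k^2-1}{2n-2}$ taken in a hyperbolic plane inside $x^{\perp_{L'}}$ (using that $U$ represents every even integer); a direct computation then gives $\epsilon^2=2-2n$, $\div(\epsilon)=2n-2$ and $\epsilon^*=kx^*=v^*$. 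You instead transport the standard generator $g$ by an isometry of $L'$: the same congruence $a^2\equiv 1\pmod{2(2n-2)}$ shows that multiplication by $a$ is an isometry of the cyclic discriminant form, and you lift it through the surjection $O(L')\twoheadrightarrow O(q_{L'})$ guaranteed by Nikulin's Theorem 1.14.2 (applicable since $L'$ is even, indefinite, and $\rk L'\geq \ell(A_{L'})+2=3$ in all cases of \Cref{lemma:LatticeClassification}). Both arguments pivot on the identical numerical congruence; yours is shorter and more conceptual but imports a substantially heavier (though standard) theorem, whereas the paper's is self-contained, needing only that the hyperbolic plane represents all even integers. Your closing remark is also apt: Eichler's criterion (\Cref{thm:Eichler}) cannot be applied directly inside $L^{\sigma}$ in Case 2, which has only one hyperbolic plane, so Nikulin's surjectivity (or the paper's explicit construction, which only needs one copy of $U$ orthogonal to $x$) is indeed the uniform tool. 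One small presentational caveat: it is worth making explicit, as you implicitly do, that $\div_{L'}(\phi(g))=\div_{L'}(g)$ because isometries preserve divisibility, and that the induced map on $A_{L'}$ satisfies $\overline{\phi}(g^*)=\phi(g)^*$; both are immediate but are the places where the identification $A_L\cong A_{L'}$ is actually used.
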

   
\begin{proof}
    Let us denote by \(L'\) the sublattice \(L^{\sigma}\) or \(L^{\sigma-}\), depending on \(L^{\sigma-}\) or \(L^{\sigma}\) is unimodular. From the classification in \Cref{lemma:LatticeClassification}, it is clear that there exists a primitive element \(x\in L'\) with \(x^2=2-2n\) and \(\div(x)=2n-2\) such that 
    \(A_L\) is generated by $x^*:=\frac{x}{\div(x)}$. On the other hand, by hypothesis \(A_L\) is also generated by $v^*$, hence there exists an integer $k$ that is coprime to $2n-2$, such that 
     \(v^*=kx^*\) in \(A_L\). Therefore, we have the following equality in $\mathbb{Q}/2\mathbb{Z}$, since the lattice \(L\) is even:
     \[(v^*)^2=\frac{-1}{2n-2}=\frac{-k^2}{2n-2}=(k x^*)^2.\]
     This readily implies that \(\frac{k^2-1}{2n-2}\in 2\mathbb{Z}\). 
     Again by the classification in \Cref{lemma:LatticeClassification}, there is always a copy of the hyperbolic plane $U$ in \(x^{\perp_{L'}}\) as a direct summand. As the hyperbolic plane quadratic form represents all even integers, there exists \(w\in L'\) such that $w\perp x$ and \(w^2=\frac{k^2-1}{2n-2}\). 
     
     We claim that the following element in $L'$ satisfies the desired properties:
     \[\epsilon:=kx+(2n-2)w.\]
     Indeed, since  
     $L'=\ZZ\cdot x\oplus x^{\perp_{L'}}$
     and $k$ is coprime to $2n-2$ and $x$ is primitive, $\epsilon$ is a primitive vector with     
     \(\div(\epsilon)=2n-2\). It is also straightforward to compute that \(\epsilon^2=2-2n\). Finally, we observe that \(\epsilon^*=kx^*+w=kx^*=v^*\) in $A_L$.
\end{proof}

\begin{lemma}\label{lemma:MarkingParallelTransport}
    Notation is as above, in particular \(L=H^2(X,\mathbb{Z})\). If $L^{\sigma}$ or $L^{\sigma-}$ is unimodular,
 then there exists a graded ring isomorphism 
	\begin{equation}
		\phi\colon H^*(X, \ZZ)  \xrightarrow{\cong} H^*(S^{[n]}, \ZZ),
	\end{equation}
	such that $S$ is a (any) K3 surface, and there exists a primitive element \(\epsilon\in H^2(X,\mathbb{Z})^{\sigma-}\), or \(\epsilon\in H^2(X,\mathbb{Z})^{\sigma}\) respectively, with \(\epsilon^2=2-2n\) and divisibility \(2n-2\) which is sent to $\delta:=\frac{1}{2}[E]$, half of the  class of the exceptional divisor in $S^{[n]}$. Moreover, $\phi$ can be chosen to be a parallel transport operator.
\end{lemma}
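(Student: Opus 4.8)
The plan is to realize $X$ concretely next to a Hilbert scheme and then slide the class $\delta$ into place by a monodromy operator. First I would fix a K3 surface $S$; since $X$ is of $\K3^{[n]}$-type and all Hilbert schemes $S^{[n]}$ (for varying $S$) lie in one deformation class, there is a parallel transport operator $f\colon H^*(X,\ZZ)\xrightarrow{\cong}H^*(S^{[n]},\ZZ)$, which in particular is a graded ring isomorphism. Recall that $\delta=\tfrac12[E]\in H^2(S^{[n]},\ZZ)$ is primitive with $\delta^2=2-2n$ and $\div(\delta)=2n-2$ for the BBF form, and, since $H^2(S^{[n]},\ZZ)=H^2(S,\ZZ)\oplus\ZZ\delta$ is an orthogonal sum with $H^2(S,\ZZ)$ unimodular, the class $\delta^*=\delta/(2n-2)$ generates $A_{H^2(S^{[n]},\ZZ)}\cong\ZZ/(2n-2)\ZZ$. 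Setting $\delta_X:=f^{-1}(\delta)\in L:=H^2(X,\ZZ)$, the vector $\delta_X$ is again primitive with $\delta_X^2=2-2n$, $\div(\delta_X)=2n-2$, and $\delta_X^*$ a generator of $A_L$.

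Next I would apply \Cref{cor:deltaEigen} with the specific vector $v=\delta_X$: its hypotheses hold because we are in Case 1 or Case 2 of \Cref{lemma:LatticeClassification} by assumption, and $\delta_X$ is a primitive vector of square $2-2n$ and divisibility $2n-2$ whose image $\delta_X^*$ generates $A_L$. This produces a primitive element $\epsilon$ in the non-unimodular eigensummand of $L$ — that is, $\epsilon\in L^{\sigma-}$ when $L^\sigma$ is unimodular and $\epsilon\in L^\sigma$ when $L^{\sigma-}$ is unimodular — with $\epsilon^2=2-2n$, $\div(\epsilon)=2n-2$, and, crucially, $\epsilon^*=\delta_X^*$ in $A_L$. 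Now $\epsilon$ and $\delta_X$ are two primitive vectors of $L\supseteq U^{\oplus 3}$ sharing the same square and the same class in the discriminant group, so by the generalized Eichler criterion \Cref{thm:Eichler} there is $\psi\in\widetilde{O}^+(L)$ with $\psi(\epsilon)=\delta_X$. Since $\widetilde{O}^+(L)\subseteq W(L)=\Mon^2(X)$ and $\Mon(X)\to\Mon^2(X)$ is an isomorphism, $\psi$ is the restriction of a unique self–parallel-transport operator $\widetilde\psi\colon H^*(X,\ZZ)\to H^*(X,\ZZ)$. Then $\phi:=f\circ\widetilde\psi$ is a composition of parallel transport operators, hence again a parallel transport operator and a graded ring isomorphism, and $\phi(\epsilon)=f(\psi(\epsilon))=f(\delta_X)=\delta$, which is the assertion.

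The point where care is needed, and which I expect to be the only real obstacle, is precisely the choice $v=\delta_X$ in the invocation of \Cref{cor:deltaEigen}: one must use the parallel transport of the \emph{geometric} class $\delta$, not merely ``some'' primitive vector of square $2-2n$. This is because Eichler's criterion is sensitive to the exact class in $A_L$, while the isometry group $O(q_{A_L})$ of the discriminant form can be strictly larger than $\{\pm1\}$ (for instance when $2n-2=24$), and the monodromy group $W(L)=\Mon^2(X)$ realizes only the $\pm1$ part of $O(q_{A_L})$; pinning down $\epsilon^*=\delta_X^*$ on the nose is what guarantees that the conjugating isometry $\psi$ already lies in $\widetilde{O}^+(L)$, hence in $\Mon^2(X)$. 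Everything else — torsion-freeness of $H^*(X,\ZZ)$, the fact that parallel transport operators are graded ring isomorphisms, and the structural facts $\Mon^2(X)=W(L)$ and $\Mon(X)\cong\Mon^2(X)$ — is either built into the conventions or already recalled above.
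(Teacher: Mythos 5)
Your proposal is correct and follows essentially the same route as the paper's proof: fix a parallel transport isomorphism $f$ to $H^*(S^{[n]},\ZZ)$, apply \Cref{cor:deltaEigen} to $v=f^{-1}(\delta)$ to produce $\epsilon$ in the appropriate eigensummand with matching square, divisibility and discriminant class, use \Cref{thm:Eichler} to conjugate $\epsilon$ onto $f^{-1}(\delta)$ by an element of $\widetilde{O}^+(L)\subseteq W(L)=\Mon^2(X)\cong\Mon(X)$, and compose. Your added remark on why the discriminant class of $\epsilon$ must match that of $f^{-1}(\delta)$ on the nose is a correct and worthwhile clarification of a point the paper leaves implicit.
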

\begin{proof}
    For any K3 surface \(S\), we have that \(X\) is deformation equivalent to \(S^{[n]}\) and hence there is a parallel transport operator inducing an isometry \(f\colon H^2(X,\mathbb{Z})\cong H^2(S^{[n]},\mathbb{Z})\). Consider the class \(\delta\in H^2(S^{[n]},\mathbb{Z})\) corresponding to \(\frac{1}{2}[E]\) with \(E\) the exceptional divisor of \(S^{[n]}\).
    The element \(f^{-1}(\delta)^*\) generates the discriminant group of \(L=H^2(X,\mathbb{Z})\), hence by \Cref{cor:deltaEigen} there exists an element \(\epsilon\in L^{\sigma-}\), or \(\epsilon\in L^{\sigma}\) respectively, with the same square and divisibility as $f^{-1}(\delta)$ and such that \(\epsilon^*=f^{-1}(\delta)^*\) in $A_{H^2(X, \ZZ)}$.

Therefore, by \Cref{thm:Eichler}, we can find an orientation-preserving isometry acting trivially on the discriminant group \(g\in\widetilde{O}^+(L)\) such that \(g(\epsilon)=f^{-1}(\delta)\). Markman proved that \(\Mon^2(X)\cong W(H^2(X,\mathbb{Z})):=\{h\in O^+(H^2(X, \ZZ))~|~ \bar h=\pm\id \in O(A_{H^2(X, \ZZ)})\}\) if \(X\) is of K3\(^{[n]}\)-type, see for example \cite{Markman-SurveyTorelli}. This means that the isometry \(g\) is induced by parallel transport and hence it is also the case of the composition \(\phi:=f\circ g\colon H^2(X,\mathbb{Z})\to H^2(S^{[n]},\mathbb{Z})\), which is then the restriction of a ring isomorphism that we call again \(\phi\colon H^*(X, \ZZ)  \xrightarrow{\cong} H^*(S^{[n]}, \ZZ)\).
\end{proof}

\begin{proof}[Proof of \Cref{thm:main:NonMaxHKnOdd}]
        Keep the notation as before. We separate the proof according to the four cases in \Cref{lemma:LatticeClassification}, firstly in Case 1 or 2, then in Case 3 or 4. 
        
	\textbf{In Case 1 or Case 2} of \Cref{lemma:LatticeClassification}: then $L^{\sigma}$ or $L^{\sigma-}$ is unimodular. By \Cref{cor:deltaEigen} and \Cref{lemma:MarkingParallelTransport}, we can find a graded ring isomorphism \(\phi\colon H^*(X,\mathbb{Z})\cong H^*(S^{[n]},\mathbb{Z})\) sending $\epsilon$ to $\delta$. We set 
	\[\iota:= \phi\circ \sigma \circ \phi^{-1} \in \Aut(H^*(S^{[n]},\mathbb{Z}))\]
	We will still denote by \(\sigma\) and $\iota$ their restrictions to \(H^2(- ,\mathbb{Z})\) when it does not lead to confusion. 
	Notice that
    $\iota\colon H^2(S^{[n]}, \ZZ) \to H^2(S^{[n]}, \ZZ)$ is an orientation-preserving isometry acting by $\pm \id$ on the discriminant group, i.e. \(\iota \in W(H^2(S^{[n]},\mathbb{Z}))\cong \Mon^2(S^{[n]})\cong \Mon(S^{[n]})\). 
	
    The action of \(\sigma\) on \(H^*(X,\mathbb{F}_2)\) is trivial if and only if the action of \(\iota\) on \(H^*(S^{[n]},\mathbb{F}_2)\) is. Hence we are reduced to the study of the monodromy involution \(\iota\) on the cohomology of \(S^{[n]}\). The goal is to show that Comessatti characteristic of $\iota$ on $H^4(S^{[n]}, \ZZ)$ is at least 1. 
	
	From the classification in Case 1 and Case 2 in \Cref{lemma:LatticeClassification}, we observe that \(\epsilon^{\perp}\cap H^2(X, \ZZ)^{\sigma}\not= 0\) or respectively \(\epsilon^{\perp}\cap H^2(X, \ZZ)^{\sigma-}\not= 0\), so that we are able to pick a non-zero primitive element \(\alpha\in \delta^{\perp}\cap H^2(S^{[n]},\mathbb{Z})^{\iota-}\) when $\iota(\delta)=\delta$ (Case 2), or $\alpha\in \delta^{\perp}\cap H^2(S^{[n]},\mathbb{Z})^{\iota}$ when $\iota(\delta)=-\delta$ (Case 1). Extend $\alpha$ to a basis of \(H^2(S,\mathbb{Z})\).
	Consider the basis element 
	\[v:= \1_{-(n-2)}\p_{-2}(\alpha)|0\rangle,\]
	from \Cref{thm:LiQinWang-IntegralBasis} and the element 
	\[u:=\1_{-(n-2)}\p_{-1}(\alpha)\p_{-1}(\alpha)|0\rangle.\]
	Since $\iota$ is monodromy operator, one can apply \Cref{cor:property_3} and \Cref{lemma:Property_2} to perform the following computation:
    \begin{itemize}
        \item if $\iota(\delta)=\delta$, then denoting again by $\iota$ its restriction to $H^2(S,\ZZ)$, we have 
    \begin{align*}
        \iota(u)&=\rho_n(\iota, 1)(\1_{-(n-2)}\p_{-1}(\alpha)\p_{-1}(\alpha)|0\rangle)\\
                &=\1_{-(n-2)}\p_{-1}(\iota(\alpha))\p_{-1}(\iota(\alpha))|0\rangle\\
                &=\1_{-(n-2)}\p_{-1}(-\alpha)\p_{-1}(-\alpha)|0\rangle\\
                &=u,
    \end{align*}
    and
    \begin{align*}
        \iota(v)&=\rho_n(\iota, 1)(\1_{-(n-2)}\p_{-2}(\alpha)|0\rangle)\\
                &=\1_{-(n-2)}\p_{-2}(\iota(\alpha))|0\rangle\\
                &=\1_{-(n-2)}\p_{-2}(-\alpha)|0\rangle\\
                &=-v.
    \end{align*}
    \item if $\iota(\delta)=-\delta$, then $\tau(\iota)=-1$. Denoting again by $\iota$ its restriction to $H^2(S,\ZZ)$, we have (note that the degree operator $D$ is $\id$ since we are using its action on degree-4 cohomology):
    \begin{align*}
        \iota(u)&=\rho_n(\iota, -1)(\1_{-(n-2)}\p_{-1}(\alpha)\p_{-1}(\alpha)|0\rangle)\\
                &=\rho_n(-\iota, 1)(\1_{-(n-2)}\p_{-1}(\alpha)\p_{-1}(\alpha)|0\rangle)\\
                &=\1_{-(n-2)}\p_{-1}(-\iota(\alpha))\p_{-1}(-\iota(\alpha))|0\rangle\\
                &=\1_{-(n-2)}\p_{-1}(-\alpha)\p_{-1}(-\alpha)|0\rangle\\
                &=u,
    \end{align*}
    and
    \begin{align*}
        \iota(v)&=\rho_n(\iota, -1)(\1_{-(n-2)}\p_{-2}(\alpha)|0\rangle)\\
                &=\rho_n(-\iota, 1)(\1_{-(n-2)}\p_{-2}(\alpha)|0\rangle)\\
                &=\1_{-(n-2)}\p_{-2}(-\iota(\alpha))|0\rangle\\
                &=\1_{-(n-2)}\p_{-2}(-\alpha)|0\rangle\\
                &=-v.
    \end{align*}
    \end{itemize}
   We see that in any case, we have
    \(\iota(u)=u\) and  \(\iota(v)=-v\).
	
	As a result, the following element (see \Cref{rmk:KeyRelation})
	\begin{equation}
		w:=\1_{-(n-2)} \m_{1,1}(\alpha)|0\rangle = \frac{1}{2}(u-v).
	\end{equation}
	satisfies that $\iota(w)=\frac{1}{2}(u+v)=w+v$.
	
	By \Cref{thm:LiQinWang-IntegralBasis}, or more explicitly \Cref{rmk:IntegralBasisH2H4}, $v$ and $w$ are part of an integral basis of $H^4(S^{[n]}, \ZZ)$. The $G$-action on $\ZZ w\oplus \ZZ v$ has matrix $\begin{bmatrix}
	1 & 1 \\
	0 & -1
	\end{bmatrix}$,
	hence has Comessatti characteristic 1. By \Cref{lemma:ComessattiPrimitiveSubModule}, 
	\begin{equation}
		\lambda(H^4(S^{[n]}, \ZZ), \iota)\geq \lambda(\ZZ v\oplus \ZZ w, \iota)=1.
	\end{equation}
	The ring isomorphism \(\phi\colon H^*(X,\mathbb{Z})\cong H^*(S^{[n]},\mathbb{Z})\) is equivariant by construction. Therefore \(\lambda(H^4(X, \ZZ), \sigma)\geq1\). In particular, the involution \(\sigma\) is not maximal, which is a contradiction. The theorem is proved in Case 1 or 2.\\
    
    \textbf{In Case 3} of \Cref{lemma:LatticeClassification} (this can only happen when $n\geq 4$ is an even integer): then $A_{L^{\sigma}}\cong \ZZ/2\ZZ$ and $A_{L^{\sigma-}}\cong \ZZ/{(n-1)\ZZ}$, hence $\tau(\sigma)=-1$.
    We will use notations as in \Cref{sec:HK-Monodromy}. In particular, $$Q(X, \ZZ):=H^4(X, \ZZ)/\Sym^2H^2(X, \ZZ),$$ whose most important properties are summarized in \Cref{thm:Markman-Q}. We set $L=H^2(X, \ZZ)$ and
    \begin{equation}
        \gamma:=\frac{1}{2}\overline{c}_2(X)\in Q(X,\ZZ) \quad\quad \text{ and  } \quad\quad L':=\gamma^{\perp}=\overline{c}_2(X)^{\perp}\subset Q(X,\ZZ).
    \end{equation}
    $L$ and $L'$ are equipped with induced involutions, both denoted by $\sigma$ (to be precise, $\sigma_2$ on $L$ and the restriction of $\sigma_Q$ on $L'$).
    By \Cref{thm:Markman-Q}, there is a Hodge isometry $e\colon L\xrightarrow{\cong} L'$ such that for any $\alpha\in L$,
    \begin{equation}
    \label{eqn:eAndsigma-1}
    \sigma(e(\alpha))=-e(\sigma(\alpha)).    
    \end{equation}
     Let us choose one $U$-summand in \(Q(X,\ZZ)\cong E_8(-1)^{\oplus 2}\oplus U^{\oplus 4}\) and pick in it two orthogonal primitive elements $u,v$ such that $u^2=2n-2$,  $v^2=2-2n$ and $u+v$ is divisible by $2n-2$. According to \cite[Theorem 6]{wall1962orthogonal}, there exists an isometry
    $\phi : Q(X,\ZZ) \to Q(X,\ZZ) $ that sends $u$ to $\gamma$.
    We put $\delta'=\phi(v)$ and define $w:=\frac{\delta'+\gamma}{2n-2}\in Q(X,\ZZ)$. In particular,  \(\delta'^2=2-2n\), \(\delta'\in L'\) and the divisibility of \(\delta'\) in \(L'\) is \(2n-2\).

Let $\delta:=e^{-1}(\delta')\in L$, which has the same square and 
divisibility as $\delta'$. By the hypothesis (assumed for contradiction) $L=L^{\sigma}\oplus L^{\sigma-}$, we  can write 
    \begin{equation}
        \delta=\delta_++\delta_-
     \end{equation}
with $\delta_+\in L^\sigma$ and $\delta_-\in L^{\sigma-}$.
Combining the hypothesis $A_{L^{\sigma}}\cong \ZZ/2\ZZ$ with the fact that the pairing of $\frac{\delta_+}{2n-2}$ with any element in $L^{\sigma}$ is integral (as $\operatorname{div}(\delta)=2n-2$ and $\delta_{-}\perp L^{\sigma}$),  
we see that $x:=\frac{\delta_+}{n-1}\in L^{\sigma}$ since $A_{L^{\sigma}}\cong \ZZ/2\ZZ$. Similarly $y:=\frac{\delta_-}{2}\in L^{\sigma-}$. That is,
\begin{equation}
    \delta=(n-1)x+2y.
\end{equation}
As $\delta$ is primitive, $x$ is not divisible by 2 and $y$ is not divisible by $n-1$.

Consider the element \(w:=\frac{\delta'+\gamma}{2n-2}\) and then we have the following equality in $Q(X, \ZZ)$.
\begin{align*}
    &w-\sigma_Q(w)\\
    =&\frac{e(\delta)+\gamma-\sigma_Q(e(\delta)+\gamma)}{2n-2}\\
        =&\frac{e(\delta)+\gamma-\sigma(e(\delta))-\gamma}{2n-2}\\
    =&\frac{e(\delta)+e(\sigma(\delta))}{2n-2}\\
    =&\frac{e(\delta_+)}{n-1}\\
    =&e(x)
\end{align*}
where the second equality uses that $\overline{c}_2(X)$ is preserved by $\sigma_Q$, the third equality uses
\eqref{eqn:eAndsigma-1}, and the fourth equality uses $\sigma(\delta)=\delta_+-\delta_-$. Since $x$ is not divisible by 2, neither is $w-\sigma(w)$. Therefore the Comessatti characteristic of the involution $\sigma_Q$ on $Q(X,\ZZ)$ is at least 1. By \Cref{lemma:ComessattiQuotient}, the Comessatti characteristic of the involution $\sigma$ on $H^4(X,\ZZ)$ is at least 1. This contradicts the maximality of $\sigma$.\\

 \textbf{For Case 4} of \Cref{lemma:LatticeClassification}, the proof is the same as in  Case 3. The only difference is that now $A_{L^{\sigma}}\cong \ZZ/(n-1)\ZZ$ and $A_{L^{\sigma-}}\cong \ZZ/{2\ZZ}$, so we have $\tau(\sigma)=1$. Again by \Cref{thm:Markman-Q}, we have the following (instead of \eqref{eqn:eAndsigma-1}):
  \begin{equation}
    \sigma(e(\alpha))=e(\sigma(\alpha)).    
\end{equation}
We make the same choices of $\delta\in L$ and $\delta'\in L'$. By the same argument as in Case 3, we see that $$\delta=2x+(n-1)y$$ with $x\in L^\sigma$ and $y\in L^{\sigma-}$ such that $x$ is not divisible by $n-1$ and $y$ is not divisible by 2. 

We consider again the element \(w:=\frac{\delta'+\gamma}{2n-2}\), then a similar computation as in Case 3 yields that 
\begin{equation}
    w-\sigma_Q(w)=e(y)
\end{equation}
which is not divisible by 2. Hence the Comessatti characteristic of $Q(X,\ZZ)$, and hence also that of $H^4(X,\ZZ)$, is at least 1. 
\end{proof}

\subsection{Anti-holomorphic involutions}
Using hyper-K\"ahler rotation, we can deduce results on real structures from results on anti-symplectic involutions:
\begin{proof}[Proof of \Cref{cor:main:NoMaxRealStructureHKnOdd}]
	Since maximality is a topological property, we can use hyper-K\"ahler rotation to change the complex structure. More precisely, let $X$ be a hyper-K\"ahler manifold of K3$^{[n]}$-type ($n\geq 2$), and let $\sigma$ be a real structure with respect to the original complex structure $I$. By \Cref{prop:HKRotation}, there is another complex structure $K$ on $X$ such that $\sigma$ is holomorphic and anti-symplectic. Applying \Cref{thm:main:NonMaxHKnOdd} to the anti-symplectic holomorphic involution $\sigma$ on $(X, K)$, we deduce that $\sigma$ is not maximal.    
\end{proof}

\section{Final comments and questions}\label{sec:final_comments}

\subsection{Generalization beyond biregular involutions}
By inspecting the proof of \Cref{thm:main:NonMaxHKnOdd}, one sees that what is actually proved is the following more general result.

\begin{theorem}
\label{thm:GeneralForm}
    Let $X$ be a compact hyper-K\"ahler manifold of $\K3^{[n]}$-type with $n\geq 2$. 
    Let $\sigma\in \Mon_{\Hdg}(X)$ be an order-2 monodromy operator of $X$ preserving Hodge structure on $H^2(X,\ZZ)$. If $\sigma$ is anti-symplectic, i.e.~acts on $H^{2,0}(X)$ by $-\id$,
    then $\sigma$ acts on $H^2(X,\F2)\oplus H^4(X, \F2)$ non-trivially. In other words, the Comessatti characteristic of the involution acting on $H^2(X,\ZZ)$ and $H^4(X, \ZZ)$ cannot be simultaneously zero.
\end{theorem}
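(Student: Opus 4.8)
The plan is to re-run the proof of \Cref{thm:main:NonMaxHKnOdd} essentially verbatim, isolating the only points where the biregularity of $\sigma$ and the full maximality hypothesis actually entered, and checking that they survive under the present weaker assumptions. First I would assume, for contradiction, that $\sigma$ acts trivially on $H^2(X,\F2)\oplus H^4(X,\F2)$; by \Cref{lemma:EquivalentConditionsForSplitting} this is the same as saying that the Comessatti characteristics $\lambda(H^2(X,\ZZ),\sigma)$ and $\lambda(H^4(X,\ZZ),\sigma)$ both vanish. The vanishing on $H^2$ already yields $H^2(X,\ZZ)=H^2(X,\ZZ)^\sigma\oplus H^2(X,\ZZ)^{\sigma-}$, and since eigenspaces of an isometry for distinct eigenvalues are orthogonal, this is an orthogonal decomposition of BBF-lattices, i.e.\ exactly hypothesis \eqref{eqn:SplittingH2}. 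It then remains to observe that the proof of \Cref{lemma:LatticeClassification} uses, beyond lattice theory, only that $\sigma\in\Mon^2(X)$ (so Markman forces $\overline\sigma=\pm\id$ on $A_L$) and that $\sgn(L^\sigma)=(1,-)$, $\sgn(L^{\sigma-})=(2,-)$. The second is where ``holomorphic'' was invoked, but it survives: since $\sigma$ is anti-symplectic it is $-\id$ on $(H^{2,0}\oplus H^{0,2})_\RR$, so $\sgn(L^{\sigma-})\geq(2,-)$ by the Hodge--Riemann relations; and since $\sigma\in\Mon^2(X)$ has spinor norm $1$ it preserves the positive cone $\mathcal C_X$, so for a Kähler class $\kappa$ the fixed class $\kappa+\sigma(\kappa)$ has positive BBF-square, giving $\sgn(L^\sigma)=(1,-)$. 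Hence \Cref{lemma:LatticeClassification} applies and we split into its four cases.

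In Cases 1 and 2 ($L^\sigma$ or $L^{\sigma-}$ unimodular, covering $n=2$ and all odd $n\geq3$), I would use \Cref{cor:deltaEigen} and \Cref{lemma:MarkingParallelTransport} to produce a graded ring isomorphism, in fact a parallel transport operator, $\phi\colon H^*(X,\ZZ)\xrightarrow{\cong}H^*(S^{[n]},\ZZ)$ for a K3 surface $S$, carrying a primitive eigenvector $\epsilon$ of $\sigma$ ($\sigma$-anti-invariant in Case 2, $\sigma$-invariant in Case 1) to $\delta=\tfrac12[E]$. Then $\iota:=\phi\sigma\phi^{-1}$ restricts to an element of $W(H^2(S^{[n]},\ZZ))=\Mon^2(S^{[n]})$, and since the classification shows $\epsilon^\perp$ meets the relevant eigenlattice nontrivially, I can pick a primitive $\alpha\in\delta^\perp$ in $H^2(S^{[n]},\ZZ)^{\iota-}$ (Case 2, $\iota(\delta)=\delta$) or in $H^2(S^{[n]},\ZZ)^{\iota}$ (Case 1, $\iota(\delta)=-\delta$), and extend it to an integral basis of $H^2(S,\ZZ)$. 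Using \Cref{cor:property_3} with \Cref{lemma:Property_2} to handle the sign $\tau(\iota)=\pm1$, a short computation gives $\iota(u)=u$ and $\iota(v)=-v$ for $u:=\1_{-(n-2)}\p_{-1}(\alpha)\p_{-1}(\alpha)|0\rangle$ and $v:=\1_{-(n-2)}\p_{-2}(\alpha)|0\rangle$, hence $w:=\1_{-(n-2)}\m_{1,1}(\alpha)|0\rangle=\tfrac12(u-v)$ satisfies $\iota(w)=w+v$. Since $v$ and $w$ are members of an integral basis of $H^4(S^{[n]},\ZZ)$ by \Cref{thm:LiQinWang-IntegralBasis} and \Cref{rmk:KeyRelation}, the $G$-submodule $\ZZ v\oplus\ZZ w$ has Comessatti characteristic $1$; by \Cref{lemma:ComessattiPrimitiveSubModule}, $\lambda(H^4(S^{[n]},\ZZ),\iota)\geq1$, and transporting back by the equivariant $\phi$, $\lambda(H^4(X,\ZZ),\sigma)\geq1$, contradicting the assumption.

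In Cases 3 and 4 (only $n\geq4$ even, with $\{|A_{L^\sigma}|,|A_{L^{\sigma-}}|\}=\{2,n-1\}$, so $\tau(\sigma)=-1$ resp.\ $\tau(\sigma)=+1$), I would pass instead to $Q(X,\ZZ)=H^4(X,\ZZ)/\Sym^2 H^2(X,\ZZ)$ with its Mukai-lattice structure from \Cref{thm:Markman-Q}: $\gamma:=\tfrac12\overline{c}_2(X)$ is primitive of square $2n-2$, $L':=\gamma^\perp\cong H^2(X,\ZZ)$ via a Hodge isometry $e$ with $\sigma(e(\alpha))=\tau(\sigma)\,e(\sigma(\alpha))$. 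By Wall's transitivity theorem on the Mukai lattice, choose $\delta'\in L'$ with $(\delta')^2=2-2n$, divisibility $2n-2$, and $\delta'+\gamma$ divisible by $2n-2$; set $w:=\frac{\delta'+\gamma}{2n-2}\in Q(X,\ZZ)$. Writing $\delta:=e^{-1}(\delta')=\delta_++\delta_-$ in the $\sigma$-eigenspaces and using $A_{L^\sigma}$, $A_{L^{\sigma-}}$, one gets $\delta=(n-1)x+2y$ in Case 3 and $\delta=2x+(n-1)y$ in Case 4 with $x,y$ primitive in the eigenlattices; a direct computation using $\sigma_Q(\gamma)=\gamma$ and the intertwining relation then yields $w-\sigma_Q(w)=e(x)$ in Case 3 and $=e(y)$ in Case 4, an element not divisible by $2$. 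Hence $\lambda(Q(X,\ZZ),\sigma_Q)\geq1$, and by \Cref{lemma:ComessattiQuotient} applied to the equivariant surjection $H^4(X,\ZZ)\twoheadrightarrow Q(X,\ZZ)$, $\lambda(H^4(X,\ZZ),\sigma)\geq1$, again a contradiction.

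The main obstacle is not any new computation but the bookkeeping of the first paragraph: one must verify that the maximality of \Cref{thm:main:NonMaxHKnOdd} was used \emph{only} to obtain the splitting \eqref{eqn:SplittingH2} (which here is free from $\lambda(H^2(X,\ZZ),\sigma)=0$) and that ``holomorphic'' was used \emph{only} to pin down $\sgn(L^\sigma)$. Once these are settled, every remaining step is a statement about the monodromy representation on $H^2\oplus H^4$ and about lattices, and is insensitive to whether $\sigma$ is induced by an automorphism of $X$.
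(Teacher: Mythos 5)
Your proposal is correct and follows essentially the same route as the paper: the paper's own proof likewise observes that the only use of biregularity is the signature claim $\sgn(L^{\sigma})=(1,-)$ in \Cref{lemma:LatticeClassification}, and replaces it by exactly your argument that a monodromy operator of trivial spinor norm preserves the positive cone, so $\omega+\sigma^*(\omega)$ is a $\sigma$-invariant class of positive square. The rest of your write-up is a (correct) verbatim re-run of the proof of \Cref{thm:main:NonMaxHKnOdd}, which the paper simply asserts goes through unchanged.
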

\begin{proof}
    The only place in the proof of \Cref{thm:main:NonMaxHKnOdd} where we used that $\sigma$ is a geometric involution is at the beginning of the proof of \Cref{lemma:LatticeClassification}, where we argued that the signature of $H^2(X,\ZZ)^\sigma$ is $(\geq 1, -)$ since $\sigma$ preserves a K\"ahler class. But the claim for the signature holds in general for any Hodge monodromy operator: since $\sigma\in \Mon^2(X)$ is of trivial spinor norm, it preserves $\mathcal{C}^{\circ}_X$, the connected component of the positive cone containing K\"ahler classes. Since $\mathcal{C}^{\circ}_X$ is convex, for any K\"ahler class $\omega$, the class $\frac{\omega+\sigma^*(\omega)}{2}$ is $\sigma$-invariant with positive square. Hence the signature of $H^2(X,\ZZ)^\sigma$ is $(\geq 1, -)$. The rest of the proof of \Cref{lemma:LatticeClassification} goes through verbatim. 
\end{proof}

In addition to \Cref{thm:main:NonMaxHKnOdd} as a special case, \Cref{thm:GeneralForm} can be applied more generally to birational automorphisms, since
a birational automorphism on a compact hyper-K\"ahler manifold induces a Hodge monodromy operator on its cohomology. Consequently, \Cref{thm:GeneralForm} implies the following result:
\begin{cor}
\label{cor:Bir}
 Let $X$ be a compact hyper-K\"ahler manifold of $\K3^{[n]}$-type with $n\geq 2$. Then any anti-symplectic birational involution acts non-trivially on $H^2(X,\F2)\oplus H^4(X, \F2)$.
\end{cor}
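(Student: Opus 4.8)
The plan is to deduce \Cref{cor:Bir} directly from \Cref{thm:GeneralForm}; the only thing to verify is that an anti-symplectic birational involution produces an anti-symplectic order-two Hodge monodromy operator on $H^*(X,\ZZ)$.

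First I would recall why a birational self-map $f\colon X\dashrightarrow X$ of a compact hyper-K\"ahler manifold induces an element of $\Mon_{\Hdg}(X)$. By Huybrechts, $f$ is an isomorphism in codimension one, so the closure of its graph defines a correspondence inducing a graded ring automorphism $f^*$ of $H^*(X,\ZZ)$ whose restriction to $H^2$ is a Hodge isometry for the Beauville--Bogomolov--Fujiki form; moreover this correspondence is a parallel transport operator, so $f^*\in\Mon(X)$, and since $f$ preserves the Hodge decomposition (holomorphic $2$-forms being a birational invariant), we get $f^*\in\Mon_{\Hdg}(X)$. This is exactly the input recorded in the remark preceding the statement. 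Next, if $f$ is an involution then $f\circ f=\id_X$ as birational maps, whence $(f^*)^2=\id$ on cohomology; and if $f$ is anti-symplectic then $f^*$ acts by $-\id$ on the one-dimensional space $H^{2,0}(X)$, so in particular $f^*\neq\id$ and $f^*$ is a genuine order-two element of $\Mon_{\Hdg}(X)$ which is anti-symplectic in the sense of \Cref{thm:GeneralForm}. Applying \Cref{thm:GeneralForm} to $\sigma:=f^*$ then yields that $f^*$ acts non-trivially on $H^2(X,\F2)\oplus H^4(X,\F2)$, which is the assertion.

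The only genuinely non-formal ingredient — and the point I would expect to be the (minor) main obstacle — is that the induced automorphism of the \emph{whole} cohomology ring, not merely of $H^2(X,\ZZ)$, is a parallel transport operator; this matters because \Cref{thm:GeneralForm} also constrains the action on $H^4$, and for $\K3^{[n]}$-type with $n$ large $H^*(X,\QQ)$ is not generated as a ring by $H^2$. I would not expect this to be a real difficulty: it is part of the machinery already invoked in \Cref{section:monodromy representation} and \Cref{lemma:MarkingParallelTransport}, and it is precisely what the remark before the statement asserts. If one preferred to be fully self-contained here, one could instead check that $f^*|_{H^2}\in\Mon^2(X)$ via Huybrechts' birational Torelli theorem, invoke the isomorphism $\Mon(X)\cong\Mon^2(X)$ valid for $\K3^{[n]}$-type, and then identify $f^*$ on the full cohomology with the corresponding monodromy operator.
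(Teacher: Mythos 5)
Your overall strategy is the paper's: \Cref{cor:Bir} is obtained by feeding the induced cohomological involution into \Cref{thm:GeneralForm}, and your verification of the hypotheses (order two on cohomology, $-\id$ on $H^{2,0}(X)$, hence genuinely of order two) is fine. The problem is the mechanism by which you produce the element of $\Mon_{\Hdg}(X)$. You assert that, because $f$ is an isomorphism in codimension one, the closure of its graph $\overline{\Gamma_f}$ induces a graded ring automorphism of $H^*(X,\ZZ)$ that is a parallel transport operator. That is false in general: $\overline{\Gamma_f}$ does induce the correct Hodge isometry on $H^2$, but on higher cohomology the correspondence by $\overline{\Gamma_f}$ need not be a ring homomorphism, nor even an isomorphism (Mukai flops are the standard counterexample). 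This is precisely the pitfall that the remark immediately following \Cref{cor:Bir} warns against: the action of $f$ on $H^*(X)$ is defined not by $\overline{\Gamma_f}$ but by the specialization $\operatorname{sp}(\overline{\Gamma_F})$ of the graph of an honest isomorphism $F$ between deformations of $X$ over a punctured curve, via Huybrechts' theorem. It is this specialized correspondence that is a parallel transport operator on all of $H^*(X,\ZZ)$ and hence lies in $\Mon_{\Hdg}(X)$; the distinction matters here because \Cref{thm:GeneralForm} constrains the action on $H^4$, which for large $n$ is not generated by $H^2$.

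Your fallback route --- check $f^*|_{H^2}\in\Mon^2(X)$ via the birational Torelli theorem and extend through $\Mon(X)\cong\Mon^2(X)$ --- does produce a bona fide order-two anti-symplectic element of $\Mon_{\Hdg}(X)$ to which \Cref{thm:GeneralForm} applies, so it can close the gap, provided you also make precise what ``the action of $f$ on $H^4(X,\F2)$'' means and check that it coincides with this monodromy extension. With the paper's definition (the specialization correspondence) the identification is automatic, since that correspondence is itself a parallel transport operator and $\Mon(X)\to\Mon^2(X)$ is injective for $\K3^{[n]}$-type; with the graph-closure definition it would not be. So the fix is available and close to what you sketch, but the primary route as written contains a step that genuinely fails.
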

It will be interesting to extract some nice geometric consequences (\textit{e.g.}~certain generalization of non-maximality) from the non-triviality of the induced involution on cohomology with $\F2$-coefficients.
\begin{rmk}
 Let us clarify the action of a birational automorphism $f$ on the cohomology of $X$: it is in general \textit{not} given by the self-correspondence induced by $\overline{\Gamma_f}$, the closure of the graph of $f$, but by the following procedure. By \cite[Theorem 2.5]{Huybrechts-KahlerCone}, there exist two families of smooth hyper-K\"ahler manifolds $\mathcal{X}_1, \mathcal{X}_2$  over a pointed smooth curve $(S, 0)$ both with fiber over $0$ isomorphic to $X$, and an $S$-birational isomorphisms $F \colon \mathcal{X}_1\dashrightarrow \mathcal{X}_2$, such that $F$ restricted to the fibers over $0$ is $f$, and over $S\backslash\{0\}$, $F$ is a biregular isomorphism. 
 The closure of the graph of the isomorphism $F$ is a cycle in $\mathcal{X}_1\times_{S} \mathcal{X}_2$, its specialization at $0\in S$ gives rise to a cycle in $X\times X$:
 $$\gamma:= \operatorname{sp}(\overline{\Gamma_{F}}).$$
 Then the action of $f$ on the cohomology $H^*(X)$ used in \Cref{cor:Bir} is given by the correspondence by $\gamma$.
\end{rmk}

\subsection{Hyper-K\"ahler varieties of other deformation types}
Our main results on hyper-K\"ahler manifolds of K3$^{[n]}$-type lead us to the following natural question.
\begin{question}
    In general, do there exist maximal brane involutions on compact hyper-K\"ahler manifolds of dimension $>2$? How about the other known deformation types \textit{e.g.} Kum\(_n\)-type, OG10-type and OG6-type?
\end{question}

It would be extremely interesting to exploit the general distinctive properties of compact hyper-Kähler manifolds in order to answer this question. One important extra structure on their cohomology is the Looijenga--Lunts--Verbitsky (LLV) Lie algebra action \cite{LooijengaLunts} \cite{Verbitsky-CohomologyofHK}. We did not explicitly use the LLV-action in this paper, but it plays a crucial role in the proof of many results that we used.

\begin{rmk}
    Several generalizations of hyper-K\"ahler manifolds to the singular setting exist in the literature. For example, the study of symplectic orbifolds was initiated by Fujiki \cite{Fujiki-SymplecticOrbifolds}, and the more general notion of symplectic varieties was introduced by Beauville \cite{Beauville-SymplecticSingularities}. In contrast to the non-existence results of maximal involutions obtained in this paper in the smooth setting, it is relatively easy to construct examples of maximal involution on singular symplectic varieties. Indeed, given a space with maximal involution, Franz \cite{Franz} showed that any symmetric power of the space equipped with the induced involution is again maximal. Hence for a K3 surface $S$ equipped with a maximal real structure or anti-symplectic involution, the symplectic orbifold $S^{(n)}$ is maximal.
\end{rmk}

\begin{rmk}[Other varieties with $c_1=0$]
    Recall that the Beauville--Bogomolov decomposition theorem \cite{Beauville} says that any compact K\"ahler manifold with vanishing first Chern class admits a finite \'etale cover that is isomorphic to a product of a complex torus with strict Calabi--Yau manifolds and hyper-K\"ahler manifolds. Maximal real abelian varieties exist in any dimension \cite{Comessatti-RealAVI} \cite{Comessatti-RealAVII}. It is expected that maximal real smooth Calabi--Yau hypersurfaces can be constructed by patch-working techniques in any dimension. For the moment complete proofs are available only up to dimension 3, see \cite{Demory} and the references therein.
\end{rmk}

\subsection{(BBB)-brane involutions on compact hyper-Kähler manifolds}
\label{subsec:BBB-not-optimal}
\Cref{thm:main:NonMaxHKnOdd} and \Cref{cor:main:NoMaxRealStructureHKnOdd} can be reinterpreted as the claim that \textit{the Smith inequality \Cref{thm:SmithThom} is \emph{not} optimal for anti-symplectic or anti-holomorphic involutions on compact hyper-K\"ahler manifolds of K3$^{[n]}$-type for $n\geq 2$.}

Of course, \Cref{thm:main:NonMaxHKnSymp} can be reformulated similarly, but we want to argue that the upper bound given by the Smith inequality for holomorphic symplectic involutions on compact hyper-K\"ahler manifolds is presumably \textit{very far} from optimal. Already for K3 surfaces, any symplectic involution has 8 isolated fixed points, whose total $\F2$-Betti number is 8, which is much smaller than the one imposed by the Smith inequality (24 in this case). 

In fact, we have more precise information on the fixed loci of symplectic involutions on compact hyper-K\"ahler manifolds of \textit{known} deformation types. Let $X$ be a compact hyper-K\"ahler manifold equipped with a holomorphic symplectic involution $\sigma$.
\begin{itemize}
    \item If $X$ is of K3$^{[n]}$-type, by \cite[Theorem 1.0.3]{Kam_Mon_Obl-SymplecticAutomorphisms}, the fixed loci of $\sigma$ is the disjoint union of hyper-K\"ahler manifolds deformation equivalent to Hilbert schemes of points on K3 surfaces\footnote{The precise number of connected components in each dimension is given in  \cite[Theorem 1.0.3]{Kam_Mon_Obl-SymplecticAutomorphisms}.}.
    A computation in the first few values of $n$ shows that the total Betti number of the fixed locus tends to be much smaller than the total Betti number of $X$.
    For example, when $n=2$, the fixed locus consists of a K3 surface and 28 points, whose  total Betti number is 52; when $n=3$, the fixed locus consists of 8 K3 surfaces and 64 points, whose total Betti number is 256; when \(n=4\), the fixed locus consists of 126 points, 28 K3 surfaces and a hyper-Kähler manifold of K3\(^{[2]}\)-type, whose total Betti number is 1122. The manifold $X$ has total Betti number 324 for \(n=2\), 3200 for \(n=3\) and 25650 for \(n=4\). 
   
    \item If $X$ is of $\operatorname{Kum}_n$-type, some partial results are obtained in \cite[Theorem 1.0.4]{Kam_Mon_Obl-SymplecticAutomorphisms}.
    \item If $X$ is of OG6-type, by \cite[Theorem 1.1]{GGO}, $\sigma$ acts trivially on the second cohomology. Such involutions are classified in \cite[Theorem 5.2]{Mongardi-Wandel-AutOG}, and their fixed loci are always disjoint unions of K3 surfaces and isolated points.
    \item If $X$ is of OG10-type, by \cite[Theorem 1.1]{GGOV}, there is no non-trivial finite-order symplectic automorphism.
\end{itemize}

\begin{question}
    Can we establish a general sharper upper bound for the total $\F2$-Betti number of the fixed locus of a holomorphic symplectic involution on a compact hyper-K\"ahler manifold, than the one provided by the Smith inequality?
\end{question}

We provide a proof of \Cref{thm:main:NonMaxHKnSymp} covering also OG6-type hyper-K\"ahler manifolds.
The key observation is the following elementary fact in topology:
\begin{lemma}
\label{lemma:b*=chi}
    Let $X$ be a compact manifold equipped with an involution $\tau$ acting non-trivially on $H^*(X,\QQ)$. Assume $H^{\operatorname{odd}}(X, \QQ)=0$ and  $H^{\operatorname{odd}}(X^\tau, \F2)=0$. Then $\tau$ is not maximal.
\end{lemma}
\begin{proof}
    Let $b_*(-, \F2)$ denote the total $\F2$-Betti number. 
    \begin{align*}
        &b_*(X^{\tau}, \F2)\\
        =&\chi(X^\tau)\\
        =&2 \chi(X/\tau)-\chi(X)\\
        =& \sum_i \left(2\dim H^i(X,\QQ)^\tau-\dim H^i(X,\QQ)\right)\\
        <&\sum_i\dim H^i(X,\QQ)\\
        \leq& b_*(X, \F2)
    \end{align*}
    where the first equality is by the hypothesis that $H^{\operatorname{odd}}(X^\tau, \F2)=0$, the third equality is by the vanishing of $H^{\operatorname{odd}}(X,\QQ)$, the strict inequality follows from the assumption that $\tau$ acts non-trivially on $H^*(X,\QQ)$, the last inequality is from the universal coefficient theorem. 
\end{proof}
\begin{cor}
\label{cor:NoMaxBBB}
    A non-trivial holomorphic symplectic involution on a compact hyper-K\"ahler manifold of K3$^{[n]}$-type or OG6-type is not maximal. 
\end{cor}
\begin{proof}
It suffices to check the conditions of \Cref{lemma:b*=chi}. By \cite[Proposition 10]{Beauville2} for K3$^{[n]}$-type, \cite[Theorem 5.2 and Remark 6.9]{Mongardi-Wandel-AutOG} for OG6-type, there is no nontrivial automorphisms acting trivially on $H^*(X, \QQ)$.  By \cite{Goettsche} for K3$^{[n]}$-type, \cite{MRS-OG6} for OG6-type,
$H^{\operatorname{odd}}(X,\QQ)=0$.  By \cite[Theorem 1.0.3]{Kam_Mon_Obl-SymplecticAutomorphisms} for K3$^{[n]}$-type, \cite{GGO} and \cite{Mongardi-Wandel-AutOG} for OG6-type,
the fixed locus $X^\tau$ is a disjoint union of hyper-K\"ahler manifolds of K3$^{[m]}$-type (for various $m$), hence $H^{\operatorname{odd}}(X^{\tau}, \F2)=0$.
\end{proof}


    

\subsection{Galois-maximality}
 The non-existence of maximal brane involutions on manifolds of K3\(^{[n]}\) type is obtained by proving that the action of the involution is not trivial on \(H^*(X,\mathbb{F}_2)\), which is part of condition 3 of \Cref{prop:MaximalityViaCohomology}.
 Another way to obtain the non-existence of maximal brane involutions concerns the behavior of the Leray--Serre spectral sequence associated with the involution. Involutions for which the Leray--Serre spectral sequence degenerates are called \textit{Galois-maximal}, which is a weaker version of the maximality.
 
 It is then natural to investigate the degeneration of the Leray--Serre spectral sequence for compact hyper-K\"ahler manifolds with real structures. 

\begin{question}
   Let $X$ be a compact hyper-K\"ahler manifolds equipped with a real structure. If $X(\RR)\neq\emptyset$, when does the spectral sequence 
		\begin{equation}
			E_2^{p,q}=H^p(G, H^q(X, \F2))\Rightarrow H^{p+q}_G(X, \F2)
		\end{equation}
		degenerates at $E_2$ ? 
\end{question}


\newcommand{\etalchar}[1]{$^{#1}$}
\bibliographystyle{abbrv}
\bibliography{main}

\medskip \medskip

\noindent Simone Billi\\
\noindent{Fakultät für Mathematik und Informatik, Universität des Saarlandes, Campus E2.4, 66123
Saarbrücken, Germany}\\
\medskip \noindent{\texttt{simone.billi.96@gmail.com}}

\noindent Lie Fu\\
\noindent{Universit\'e de Strasbourg, Institut de recherche mathématique avancée (IRMA),  France} \\
\medskip \noindent{\texttt{lie.fu@math.unistra.fr}}

\noindent Annalisa Grossi\\
\noindent {Alma Mater Studiorum - Università di Bologna, Dipartimento di Matematica, Piazza 
	di porta San Donato 5, 40126 Bologna}\\
\medskip \noindent{\texttt{annalisa.grossi3@unibo.it}}

\noindent{Viatcheslav Kharlamov}\\
\noindent{Universit\'e de Strasbourg, Institut de recherche mathématique avancée (IRMA), France}\\
\medskip \noindent{\texttt{kharlam@unistra.fr}}

\end{document}